\numberwithin{equation}{subsection}
\newtheorem{thm}{Theorem}[subsection]
\newtheorem*{thm*}{Theorem}
\newtheorem*{mainthm}{Main Theorem}
\newtheorem{cor}[thm]{Corollary}
\newtheorem*{cor*}{Corollary}
\newtheorem{lem}[thm]{Lemma}
\newtheorem{prop}[thm]{Proposition}
\newtheorem{prop-const}[thm]{Proposition-Construction}
\newtheorem{conjecture}{Conjecture}[subsection]
\newtheorem*{conjecture*}{Conjecture}
\newtheorem*{princ*}{Principle}
\theoremstyle{remark}
\newtheorem{rem}[thm]{Remark}
\newtheorem{example}[thm]{Example}
\newcounter{steps}[thm]
\newcommand{\xar}[1]{\xrightarrow{#1}}
\newcommand{\rar}[1]{\xar{#1}}
\newcommand{\isom}{\rar{\simeq}}
\newcommand{\into}{\hookrightarrow}
\newcommand{\onto}{\twoheadrightarrow}
\newcommand{\Loc}{\on{Loc}}
\newcommand{\bDelta}{\mathbf{\Delta}}
\newcommand{\bA}{{\mathbb A}}
\newcommand{\bB}{{\mathbb B}}
\newcommand{\bD}{{\mathbb D}}
\newcommand{\bG}{{\mathbb G}}
\newcommand{\bQ}{{\mathbb Q}}
\newcommand{\bV}{{\mathbb V}}
\newcommand{\cI}{{\mathcal I}}
\newcommand{\sA}{{\EuScript A}}
\newcommand{\sC}{{\EuScript C}}
\newcommand{\sD}{{\EuScript D}}
\newcommand{\sE}{{\EuScript E}}
\newcommand{\sF}{{\EuScript F}}
\newcommand{\sG}{{\EuScript G}}
\newcommand{\sH}{{\EuScript H}}
\newcommand{\sL}{{\EuScript L}}
\newcommand{\sM}{{\EuScript M}}
\newcommand{\sN}{{\EuScript N}}
\newcommand{\sO}{{\EuScript O}}
\newcommand{\sP}{{\EuScript P}}
\newcommand{\sS}{{\EuScript S}}
\newcommand{\sW}{{\EuScript W}}
\newcommand{\fG}{{\mathfrak G}}
\newcommand{\fZ}{{\mathfrak Z}}
\newcommand{\fb}{{\mathfrak b}}
\newcommand{\fg}{{\mathfrak g}}
\newcommand{\fh}{{\mathfrak h}}
\newcommand{\fk}{{\mathfrak k}}
\newcommand{\fl}{{\mathfrak l}}
\newcommand{\fn}{{\mathfrak n}}
\newcommand{\fs}{{\mathfrak s}}
\newcommand{\ft}{{\mathfrak t}}
\newcommand{\fz}{{\mathfrak z}}
\newcommand{\on}{\operatorname}
\newcommand{\ul}{\underline}
\newcommand{\mathendash}{\text{\textendash}}
\newcommand{\Ker}{\on{Ker}}
\newcommand{\Coker}{\on{Coker}}
\newcommand{\End}{\on{End}}
\newcommand{\Hom}{\on{Hom}}
\newcommand{\Ext}{\on{Ext}}
\newcommand{\Aut}{\on{Aut}}
\newcommand{\Spec}{\on{Spec}}
\newcommand{\Spf}{\on{Spf}}
\newcommand{\id}{\on{id}}
\newcommand{\Ad}{\on{Ad}}
\newcommand{\ind}{\on{ind}}
\newcommand{\Rep}{\mathsf{Rep}}
\newcommand{\act}{\on{act}}
\newcommand{\actson}{\curvearrowright}
\newcommand{\coact}{\on{coact}}
\renewcommand{\dot}{\bullet}
\newcommand{\Fun}{\on{Fun}}
\newcommand{\vph}{\varphi}
\newcommand{\Vect}{\mathsf{Vect}}
\newcommand{\Res}{\on{Res}}
\newcommand{\Gr}{\on{Gr}}
\newcommand{\Whit}{\mathsf{Whit}}
\newcommand{\LocSys}{\on{LocSys}}
\newcommand{\Op}{\on{Op}}
\renewcommand{\mod}{\mathendash\mathsf{mod}}
\newcommand{\bimod}{\mathendash\mathsf{bimod}}
\newcommand{\sinf}{\!\frac{\infty}{2}} 
\newcommand{\colim}{\on{colim}}
\newcommand{\Cat}{\mathsf{Cat}}
\newcommand{\DGCat}{\mathsf{DGCat}}
\newcommand{\ShvCat}{\mathsf{ShvCat}}
\renewcommand{\lim}{\on{lim}}
\newcommand{\Ind}{\mathsf{Ind}}
\newcommand{\Pro}{\mathsf{Pro}}
\newcommand{\TwoHom}{\mathsf{Hom}}
\newcommand{\TwoEnd}{\mathsf{End}}
\newcommand{\Tot}{\on{Tot}}
\newcommand{\heart}{\heartsuit}
\newcommand{\Oblv}{\on{Oblv}}
\newcommand{\Av}{\on{Av}}
\newcommand{\ld}{\check}
\newcommand{\Hecke}{\on{Hecke}}
\newcommand{\Heckez}{\Hecke_{\fz}}
\newcommand{\Perf}{\mathsf{Perf}}
\newcommand{\Coh}{\mathsf{Coh}}
\newcommand{\IndCoh}{\mathsf{IndCoh}}
\newcommand{\QCoh}{\mathsf{QCoh}}
\newcommand{\Alg}{\mathsf{Alg}}
\newcommand{\CoAlg}{\mathsf{CoAlg}}
\newcommand{\ComAlg}{\mathsf{ComAlg}}
\newcommand{\Lie}{\on{Lie}}
\renewcommand{\o}[1]{\mathring{#1}}
\renewcommand{\subset}{\subseteq}
\renewcommand{\supset}{\supseteq}
\renewcommand{\sl}{\fs\fl}
\newcommand{\rightrightrightarrows}{
\mathrel{\vcenter{\mathsurround0pt
\ialign{##\crcr
\noalign{\nointerlineskip}$\rightarrow$\crcr
\noalign{\nointerlineskip}$\rightarrow$\crcr
\noalign{\nointerlineskip}$\rightarrow$\crcr
}}}}
\newcommand{\ord}{\on{ord}}
\newcommand{\biggg}{\bBigg@{4}}
\newcommand{\Biggg}{\bBigg@{5}}
\date{\today}
\begin{document}

\frenchspacing

\setlength{\epigraphwidth}{0.4\textwidth}
\renewcommand{\epigraphsize}{\footnotesize}

\begin{abstract}

We prove the rank 1 case of a conjecture of 
Frenkel-Gaitsgory: critical level Kac-Moody representations
with regular central characters 
localize onto the affine Grassmannian. 
The method uses an analogue in local geometric Langlands
of the existence of Whittaker models for most
representations of $GL_2$ over a non-Archimedean field.

\end{abstract}

\title{Affine Beilinson-Bernstein localization at the critical level for $GL_2$}

\author{Sam Raskin}

\address{The University of Texas at Austin, 
Department of Mathematics, 
RLM 8.100, 2515 Speedway Stop C1200, 
Austin, TX 78712}

\email{sraskin@math.utexas.edu}

\maketitle

\setcounter{tocdepth}{1}
\tableofcontents

\section{Introduction}

\subsection{}

More than a decade ago, Frenkel and Gaitsgory initiated an ambitious
program to relate geometric representation theory of
(untwisted) affine Kac-Moody algebras at critical level
to geometric Langlands, following Beilinson-Drinfeld 
\cite{hitchin} and \cite{chiral} and Feigin-Frenkel, e.g., 
\cite{ff-critical}.

We refer the reader to \cite{fg2}
for an introduction to this circle of ideas. 
The introduction to \cite{fg-spherical}
and the work \cite{quantum-langlands-summary} 
may be helpful supplements.

While Frenkel-Gaitsgory were extraordinarily successful
in developing representation theory at critical level
(highlights include \cite{fg1}, \cite{fg2}, 
\cite{fg-wakimoto}, \cite{fg-loc}, \cite{fg-spherical},
and \cite{dmod-aff-flag}),
their ambitious program left many open problems. 
Most of these problems 
are dreams that are not easy to formulate precisely. 

In contrast, their conjecture on critical level
localization for the affine Grassmannian is a concrete
representation theoretic problem. 
It remains the major such problem left
open by their work.
In this paper, we prove the
Frenkel-Gaitsgory localization conjecture for rank 1 groups.

Below, we recall the context for and statement of the 
Frenkel-Gaitsgory conjecture,
the progress that they made on it, and outline the argument
used in the present paper for $GL_2$.

\subsection{Notation}\label{ss:intro-notation}

In what follows, $G$ denotes a split reductive group over a
field $k$ of characteristic $0$. We fix $B \subset G$ a
Borel subgroup with unipotent radical $N$ and Cartan $T = B/N$.
We let $\check{G}$ denote the Langlands dual group to $G$, and similarly
$\check{B}$ and so on.

We let e.g. $G(K)$ denote the algebraic loop group of $G$, which 
is a group indscheme of ind-infinite type. We let $G(O) \subset G(K)$
denote its arc subgroup and $\Gr_G \coloneqq G(K)/G(O)$ the affine
Grassmannian. We refer to \cite{hitchin} for further discussion of these
spaces and \cite{dmod} for definitions of $D$-modules
in this context.

We follow the notational convention that all categories are assumed derived;
e.g., $A\mod$ denotes the DG (derived) category of $A$-modules. 
For $\sC$ a DG category with a given $t$-structure, we let
$\sC^{\heart}$ denote the corresponding abelian category.

\subsection{Affine Kac-Moody algebras}

Before recalling the Frenkel-Gaitsgory conjecture, 
we need to review the representation theory of affine Kac-Moody 
algebras at critical level.

\subsection{}

Recall that for a \emph{level} $\kappa$,
by which we mean an $\Ad$-invariant symmetric 
bilinear form on $\fg$, there is an associated 
central extension:

\[
0 \to k \to \widehat{\fg}_{\kappa} \to 
\fg((t)) \coloneqq \fg \underset{k}{\otimes} k((t)) \to 0.
\]

\noindent This extension is defined by a standard 2-cocycle
that vanishes on $\fg[[t]] \coloneqq \fg\otimes_k k[[t]]$; 
in particular, the embedding 
$\fg[[t]] \into \fg((t))$ canonically lifts to 
an embedding $\fg[[t]] \into \widehat{\fg}_{\kappa}$.

\subsection{}

By a \emph{representation} of $\widehat{\fg}_{\kappa}$ on a
(classical) vector space $V \in \Vect^{\heart}$,
we mean an action of the Lie algebra $\widehat{\fg}_{\kappa}$
such that every $v \in V$ is annihilated by
$t^N\fg[[t]]$ for $N \gg 0$ and such that 
$1 \in k \subset \widehat{\fg}_{\kappa}$ acts
by the identity. 

For instance, the \emph{vacuum} module
$\bV_{\kappa} \coloneqq \ind_{\fg[[t]]}^{\widehat{\fg}_{\kappa}}(k)$
is such a representation. Here $\ind$ denotes induction, and
we are abusing notation somewhat: 
we really mean to induce from $k \oplus \fg[[t]]$ the
module $k$ on which $k$ acts by the identity and $\fg[[t]]$ acts
trivially;
since we only consider representations on which 
$k \subset \widehat{\fg}_{\kappa}$ acts by the identity, we
expect this does not cause confusion.

We denote the abelian category of representations of 
$\widehat{\fg}_{\kappa}$ by $\widehat{\fg}_{\kappa}\mod^{\heart}$.
The appropriate DG category $\widehat{\fg}_{\kappa}\mod$ was defined in 
\cite{dmod-aff-flag} \S 23; see \cite{km-indcoh}, 
\cite{whit} Appendix A, or \cite{methods} for other expositions.

We recall the pitfall that the forgetful
functor $\Oblv:\widehat{\fg}_{\kappa}\mod \to \Vect$
is not conservative, i.e., it sends non-zero objects to 
zero.\footnote{See \cite{whit} \S 1.18 for some discussion of this point.}

But one key advantage of $\widehat{\fg}_{\kappa}\mod$ over other
possible ``derived categories" of $\widehat{\fg}_{\kappa}$-modules is that
it admits a \emph{level $\kappa$ action of $G(K)$}: see 
\cite{methods} \S 11 for the construction and definitions. 

\subsection{}

We let $U(\widehat{\fg}_{\kappa})$ denote the 
(twisted) topological enveloping algebra of $\widehat{\fg}_{\kappa}$
(with the central element $1 \in \widehat{\fg}_{\kappa}$ set to the
identity).
For our purposes, 
$U(\widehat{\fg}_{\kappa})$ is the pro-representation
of $\widehat{\fg}_{\kappa}$:

\[
\underset{n}{\lim} \, \ind_{t^n \fg[[t]]}^{\widehat{\fg}_{\kappa}}(k) \in 
\Pro(\widehat{\fg}_{\kappa}\mod^{\heart}).
\]

The underlying pro-vector space is naturally an 
$\overset{\rightarrow}{\otimes}$-algebra
algebra in the sense of \cite{methods} \S 3 
by construction,
its \emph{discrete} modules (in $\Vect^{\heart}$) 
are the same as (classical) representations of $\widehat{\fg}_{\kappa}$.

\subsection{}

Let $D_{\kappa}(\Gr_G)$ denote the DG category of $\kappa$-twisted
$D$-modules on $\Gr_G$. There is a global sections functor:

\[
\Gamma^{\IndCoh}(\Gr_G,-): D_{\kappa}(\Gr_G) \to \widehat{\fg}_{\kappa}\mod.
\]

\noindent This functor is a morphism of categories acted on by
$G(K)$ and sends the skyscraper $D$-module $\delta_1 \in D_{\kappa}(\Gr_G)$
to the vacuum module $\bV_{\kappa}$.

\subsection{Affine Beilinson-Bernstein localization?}\label{ss:bb-loc}

Recall the finite-dimensional \emph{Beilinson-Bernstein} localization theorem:

\begin{thm}[\cite{bb-loc}]\label{t:bb}

The functor:

\[
\Gamma(G/B,-):D(G/B) \to \fg\mod_0 
\]

\noindent is a $t$-exact equivalence of categories. Here
$D(G/B)$ is the DG category of $D$-modules and $\Gamma(G/B,-)$
is the left $D$-module global sections functor;
$\fg\mod_0$ is the DG category of modules
over $U(\fg) \otimes_{Z(\fg)} k$ for $Z(\fg)$ is the
center of $U(\fg)$ and $Z(\fg) \to k$ the restriction of
the augmentation $U(\fg) \to k$.

\end{thm}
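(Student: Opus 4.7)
The plan is to exhibit $\Gamma(G/B,-)$ as half of an adjoint equivalence. The left adjoint should be the localization functor
\[
\Delta \colon \fg\mod_0 \to D(G/B), \qquad \Delta(M) \coloneqq D_{G/B} \otimes^{\bL}_{U(\fg)} M,
\]
where $D_{G/B}$ is viewed as a right $U(\fg)$-module via the infinitesimal $\fg$-action on $G/B$ by vector fields. A direct check shows that the augmentation ideal $Z(\fg)^+$ acts by zero on the algebra of global differential operators on $G/B$, so $\Delta$ factors through $\fg\mod_0$ and is left adjoint to $\Gamma(G/B,-)$ by construction.

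The crux of the argument is $D$-affineness of $G/B$: that $\Gamma(G/B,-)$ is $t$-exact and conservative on the abelian category of quasi-coherent $D_{G/B}$-modules. For $t$-exactness, I would filter a given $D$-module $\cF$ by a good filtration and reduce to the Serre-type vanishing $H^{>0}(G/B,\cO(\lambda)) = 0$ for $\lambda$ sufficiently positive, using ampleness of $\cO(\rho)$. For conservativity, one argues inductively along the Bruhat stratification: assuming $\Gamma(\cF)=0$, the maximal stratum on which $\cF$ is supported must be empty, using exactness plus the long exact sequence for the relevant open-closed decomposition. Combined with the algebra identification $\Gamma(G/B, D_{G/B}) \simeq U(\fg) \otimes_{Z(\fg)} k$ — which follows from surjectivity of the infinitesimal-action map together with a check of central characters via the Harish-Chandra isomorphism — one obtains an abelian equivalence by a standard Morita-style argument. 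The derived statement then follows because both DG categories are generated under shifts and colimits by their hearts, and $\Gamma$ is $t$-exact.

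The principal obstacle is the cohomology vanishing step, where both the dominance and the regularity of $\rho$ are essential: it is exactly here that the argument is sensitive to the choice of central character, and analogous statements for other characters either fail outright or require passing to a twisted version of $D_{G/B}$. This sensitivity is precisely what makes the Frenkel-Gaitsgory analogue on $\Gr_G$ so delicate.
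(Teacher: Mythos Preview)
The paper does not prove this statement: Theorem~\ref{t:bb} is quoted as background from \cite{bb-loc} and is used only to motivate the analogous affine question. There is no proof in the paper to compare your proposal against.

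That said, your sketch is a reasonable outline of the classical argument, with one soft spot. The conservativity step is not usually done by induction on the Bruhat stratification; rather, once you know $H^{>0}(G/B,\cF)=0$ for every $D$-module $\cF$, you deduce that the counit $D_{G/B}\otimes_{U_0}\Gamma(\cF)\to\cF$ is surjective (since $D_{G/B}$ is generated by its global sections), and then faithfulness follows because the kernel also has vanishing higher cohomology. Your stratification argument would need an extra input to get off the ground, since $\Gamma(\cF)=0$ does not by itself say anything about the restriction of $\cF$ to an open stratum.
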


Almost as soon as Beilinson and Bernstein proved their localization theorem, 
there was a desire for an affine analogue that would apply for
$\Gr_G$ or the affine flag variety. 
Results soon emerged in work of Kashiwara-Tanisaki, beginning with
\cite{kt1} for so-called \emph{negative} levels $\kappa$. 

The results of Kashiwara-Tanisaki suffice for applications to 
Kazhdan-Lusztig problems. 
However, their theorems are less
satisfying than Theorem \ref{t:bb}: they do not provide an equivalence
of categories, but only a fully faithful functor. Conceptually,
this is necessarily the case because for negative $\kappa$,
the center of $U(\widehat{\fg}_{\kappa})$ consists only
of scalars, so it is not possible to define an analogue
of the category $\fg\mod_0$.\footnote{However, see \cite{beilinson-heisenberg}
for some speculations; the suggestion is that $\widehat{\fg}_{\kappa}\mod$
should be considered not as decomposing over the spectrum of its
center but over a moduli of local systems on the punctured disc.}

As observed by Frenkel-Gaitsgory, this objection does not apply 
at \emph{critical} level, as we recall below.

\subsection{Critical level representation theory}\label{ss:intro-crit}

For the so-called \emph{critical} value of $\kappa$,
the representation theory of the Kac-Moody algebra behaves quite differently 
from other levels. For completeness, we recall that
critical level is $\frac{-1}{2}$ times the Killing form. 
We let $crit$ denote the corresponding symmetric bilinear 
form; in particular, we use $\widehat{\fg}_{crit}$ (resp. $\bV_{crit}$)
in place of $\widehat{\fg}_{\kappa}$ (resp. $\bV_{\kappa}$).

\begin{thm}[Feigin-Frenkel]\label{t:feigin-frenkel}

\begin{enumerate}

\item 

The (non-derived) center 
$\fZ$ of $U(\widehat{\fg}_{crit})$ is canonically isomorphic to the
commutative pro-algebra of functions on the 
ind-scheme $\Op_{\ld{G}}$ of \emph{opers (on the punctured disc)}
for the Langlands dual group $\ld{G}$:

\[
\Op_{\ld{G}} \coloneqq \big(\ld{f}+\ld{\fb}((t))\big)dt/\ld{N}(K)
\]

\noindent where $\ld{N}(K) \subset \ld{G}(K)$ acts on 
$\ld{\fg}((t))dt$ by gauge transformations and $\check{f}$ is
a principal nilpotent element with $[\check{\rho},\check{f}] = -\check{f}$.

We recall that, as for the Kostant slice, 
$\Op_{\ld{G}}$ is (somewhat non-canonically) isomorphic
to an affine space that is infinite-dimensional in both
ind and pro senses (like the affine space corresponding to the
$k$-vector space $k((t))$).

\item 

The natural map:

\[
\fZ \to \fz\coloneqq \End_{\widehat{\fg}_{crit}\mod^{\heart}}(\bV_{crit})
\]

\noindent is surjective and fits into a commutative diagram:

\[
\xymatrix{
\fZ \ar[r] \ar[d]^{\simeq} & \fz \ar[d]^{\simeq} \\
\Fun(\Op_{\ld{G}}) \ar[r] & \Fun(\Op_{\ld{G}}^{reg}).
}
\]

\noindent Here $\Op_{\ld{G}}^{reg} \coloneqq 
(\ld{f}+\ld{\fb}[[t]])dt/\ld{N}(O)$ is
the scheme of \emph{regular} opers, 
on the (non-punctured) disc; 
we recall that the natural
map $\Op_{\ld{G}}^{reg} \to \Op_{\ld{G}}$ is a closed embedding.

\end{enumerate}

\end{thm}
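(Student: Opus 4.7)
The overall strategy is to first establish part (2) --- the identification of the vacuum endomorphism algebra $\fz$ with $\Fun(\Op_{\ld{G}}^{reg})$ --- by vertex-algebra methods, and then deduce part (1) via a factorization argument that transports the statement from the formal disc to the punctured disc.

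For the vacuum center, I would view $\bV_{crit}$ as a vertex algebra with $\fz$ being its vertex-algebra center. Equipping $\bV_{crit}$ with the PBW filtration gives $\gr \bV_{crit} \cong \Sym(t^{-1}\fg[t^{-1}]) = \Fun(J\fg^{*})$, since the critical-level cocycle is lower-order in this filtration. Consequently $\gr \fz$ sits in $\Fun(J\fg^{*})^{JG}$; a jet-space version of Kostant's theorem (essentially flatness of the characteristic polynomial map in every jet order) identifies this invariant algebra with $\Fun\bigl(J(\fg^{*}/\!/G)\bigr) \cong \Fun\bigl(J(\ld{\fh}/\!/W)\bigr)$, which is also $\gr \Fun(\Op_{\ld G}^{reg})$ under the classical Miura map. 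This bounds $\gr \fz$ from above. For the matching lower bound, I would embed $\bV_{crit}$ into a Wakimoto module $W_{crit}$ and introduce the Feigin-Frenkel screening operators $S_i$; the Chevalley-Eilenberg computation for $\ld{\fn}[[t]]$ identifies $\bigcap_i \Ker S_i$ inside an appropriate Heisenberg subspace of $W_{crit}$ with $\Fun(\Op_{\ld G}^{reg})$, and one verifies that each such element is central in $\widehat{\fg}_{crit}$. Combining the two bounds gives $\fz \cong \Fun(\Op_{\ld G}^{reg})$.

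For part (1), the topological center $\fZ$ acts on $\bV_{crit}$ through $\fz$, yielding a tautological map $\fZ \to \fz$. The dependence on the coordinate $t$ is packaged by the chiral/factorization algebra $\cA_{crit}$ attached to $\bV_{crit}$: its center is a commutative factorization algebra whose fiber at a point was just identified with $\Fun(\Op_{\ld G}^{reg})$. By Beilinson-Drinfeld factorization principles, this center must coincide with the factorization algebra of opers on the varying disc; its global sections on the punctured disc then give $\fZ \cong \Fun(\Op_{\ld G})$, and the map $\fZ \to \fz$ is tautologically restriction along the closed embedding $\Op_{\ld G}^{reg} \hookrightarrow \Op_{\ld G}$. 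That this embedding is closed is elementary, following from the fact that $\ld{N}(K)$ acts freely on the relevant slice.

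The main obstacle is the vertex-algebra calculation: constructing the screening operators at critical level and proving that their joint kernel in the Wakimoto realization has exactly the predicted size. This requires the cohomological identification of a suitable Chevalley-Eilenberg complex for $\ld{\fn}[[t]]$ with $\Fun(\Op_{\ld G}^{reg})$, together with the observation that $S_i$ commutes with the $\widehat{\fg}_{\kappa}$-action precisely when $\kappa = crit$ --- this is what forces the critical level to enter. The jet-space Kostant bound and the passage from $\fz$ to $\fZ$ via factorization are comparatively formal once this vertex-algebra input is in hand.
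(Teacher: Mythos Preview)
The paper does not give its own proof of this theorem: immediately after the statement it refers to \cite{ff-critical} and \cite{frenkel-wakimoto} for most of the assertions, with the surjectivity of $\Fun(\Op_{\ld{G}}) \hookrightarrow \fZ$ attributed to \cite{hitchin} Theorem~3.7.7. Your outline --- PBW filtration and jet-space Kostant for the upper bound on $\fz$, Wakimoto realization with screening operators for the lower bound, and a factorization argument to pass from $\fz$ to $\fZ$ --- is precisely the standard Feigin--Frenkel/Beilinson--Drinfeld argument that those references carry out, so there is nothing to compare: your sketch and the cited literature agree.

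One small comment on emphasis: the paper's footnote notes that what is actually used later is only the existence of the map $\Fun(\Op_{\ld{G}}) \to \fZ$ with good properties, not that it is an isomorphism. Your outline treats the isomorphism as the goal, which is fine, but the hardest ingredient you flag --- the screening-operator computation at the vacuum level --- already suffices for everything the paper needs downstream; the full identification of $\fZ$ (your factorization step plus the Beilinson--Drinfeld argument that the map is onto) is logically a bonus from the paper's point of view.
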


We refer to \cite{ff-critical} and \cite{frenkel-wakimoto} 
for proofs of most of these
statements; the only exception is that the map
$\Fun(\Op_{\ld{G}}) \into \fZ$ constructed using \cite{ff-critical}
is an isomorphism, which is shown as \cite{hitchin} 
Theorem 3.7.7.\footnote{In fact, the mere existence of this 
map (and its good properties) is all we really 
need. That the map is an isomorphism is nice, but not
strictly necessary.}

We refer to \cite{fg2} \S 1 and \cite{hitchin} \S 3 
for an introduction to opers. We highlight, as in 
\emph{loc. cit}., that
$\Op_{\ld{G}}$ (resp. $\Op_{\ld{G}}^{reg}$) is
a moduli space of de Rham $\ld{G}$-local systems
on the punctured (resp. non-punctured disc)
with extra structure.

\begin{rem}\label{r:opers-defin}

The definition of opers here is slightly different from the
original one used by Beilinson-Drinfeld and rather
follows the definition advocated by Gaitsgory. In this definition,
an isogeny of reductive groups induces an isomorphism on 
spaces of opers, unlike in \cite{hitchin}. For $\ld{G}$ semisimple,
the definition here coincides with the definition in \cite{hitchin}
for the associated adjoint group. We refer to \cite{barlev-notes}
for a more geometric discussion.

\end{rem}

\subsection{Localization at critical level}\label{ss:intro-loc-start}

The functor:

\[
\Gamma^{\IndCoh}: D_{crit}(\Gr_G) \to \widehat{\fg}_{crit}\mod
\]

\noindent fails to be an equivalence for two related reasons. 

First, recall that $\Gamma^{\IndCoh}(\delta_1) = \bV_{crit}$. As for
any skyscraper $D$-module, $\End(\delta_1) = k$,
while by Theorem \ref{t:feigin-frenkel}, $\bV_{crit}$ has a large
endomorphism algebra. Worse still, $\bV_{crit}$ has
large self-$\Ext$s by \cite{frenkel-teleman} and \cite{fg2} \S 8.

Moreover, there are central character restrictions on
the essential image of $\Gamma^{\IndCoh}$. Say 
$M \in \widehat{\fg}_{crit}\mod^{\heart}$
is \emph{regular} if $I \coloneqq \Ker(\fZ \to \fz)$
acts on $M$ trivially, and let 
$\widehat{\fg}_{crit}\mod_{reg}^{\heart} 
\subset \widehat{\fg}_{crit}\mod^{\heart}$ denote the corresponding
subcategory (which is not closed under extensions).
Then for any $\sF \in D_{crit}(\Gr_G)$,
the cohomology groups of $\Gamma^{\IndCoh}(\Gr_G,\sF) \in \widehat{\fg}_{crit}\mod$
will be regular, for the same reason as the analogous statement
in the finite-dimensional setting.

\subsection{}

In \cite{fg2}, Frenkel and Gaitsgory in effect proposed that these
are the only obstructions. We recall their conjecture now.

First, in \cite{dmod-aff-flag} \S 23, an appropriate
DG category $\widehat{\fg}_{crit}\mod_{reg}$
character was constructed:
we review the construction in \S \ref{s:central}.
There is a canonical action of the symmetric monoidal DG category
$\QCoh(\Op_{\ld{G}}^{reg})$
on $\widehat{\fg}_{crit}\mod_{reg}$ commuting with the critical level
$G(K)$-action.\footnote{There are actually important 
technical issues involving
this $G(K)$-action that should probably be 
overlooked at the level of an introduction;
we refer to \S \ref{ss:intro-eq-renorm} and \S \ref{ss:eq-renorm} 
for further discussion.}

Next, recall that geometric Satake \cite{mirkovic-vilonen}
gives an action of $\Rep(\ld{G}) = \QCoh(\bB \ld{G})$ on $D_{crit}(\Gr_G)$ by 
convolution. 

Moreover, $\Op_{\ld{G}}^{reg}$ has a canonical $\ld{G}$-bundle;
indeed, $\Op_{\ld{G}}^{reg}$ is the moduli of $\ld{G}$-local systems
on the formal disc $\sD = \Spec(k[[t]])$ with extra structure, giving a map:

\[
\Op_{\ld{G}}^{reg} \to \LocSys_{\ld{G}}(\sD) = \bB \ld{G}.
\]

\noindent In particular, there is a canonical symmetric monoidal
functor:

\[
\Rep(\ld{G}) \to \QCoh(\Op_{\ld{G}}^{reg}).
\] 

According to Beilinson-Drinfeld's \emph{birth of opers} theorem
from \cite{hitchin},
$\Gamma^{\IndCoh}$ is a canonically morphism of $(G(K),\Rep(\ld{G}))$-bimodule 
categories (c.f. \S \ref{s:localization}).

\begin{conjecture}[Frenkel-Gaitsgory, \cite{fg2} Main conjecture 8.5.2]\label{conj:intro-fg-loc}

The induced functor:

\[
\Gamma^{\Hecke}:
D_{crit}(\Gr_G) \underset{\Rep(\ld{G})}{\otimes}
\QCoh(\Op_{\ld{G}}^{reg}) \to 
\widehat{\fg}_{crit}\mod_{reg}
\]

\noindent is a $t$-exact equivalence of DG categories.

\end{conjecture}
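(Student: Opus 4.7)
The plan is to upgrade $\Gamma^{\Hecke}$ to an equivalence by reducing to its behavior after passing to Whittaker coinvariants, then to account for the small ``non-Whittaker'' complement using the rank one structure of $GL_2$. First I would construct the left adjoint $\Loc^{\Hecke}$ of $\Gamma^{\Hecke}$ and verify continuity, $G(K)$-linearity, and compatibility with the $\QCoh(\Op_{\ld{G}}^{reg})$-action; this turns the problem into showing that the unit and counit of the adjunction are isomorphisms. Since both sides are tensored over $\QCoh(\Op_{\ld{G}}^{reg})$, I can check these fiberwise over formal neighborhoods of points of $\Op_{\ld{G}}^{reg}$, effectively fixing a central character.

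The heart of the argument is a Whittaker reduction. Fix a nondegenerate character of $N(K)$ and consider the Whittaker coinvariants functor $\Whit_*$ on $\widehat{\fg}_{crit}\mod_{reg}$ and its analogue on $D_{crit}(\Gr_G)$. Both should become identified with $\QCoh(\Op_{\ld{G}}^{reg})$: the $D$-module side via the Whittaker variant of geometric Satake, the Kac-Moody side via a critical level Whittaker equivalence. It follows that $\Gamma^{\Hecke}$ becomes an equivalence after applying $\Whit_*$. The crucial next step is to show that $\Whit_*$, supplemented by a controlled functor computing the non-Whittaker part, is conservative on each side. This is the geometric analogue of the classical fact that every irreducible smooth representation of $GL_2$ over a non-Archimedean local field either admits a Whittaker model or is a character, which is what the abstract refers to as Whittaker models for ``most'' representations.

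To treat the non-Whittaker complement, I would install a semi-orthogonal decomposition on each side cut out by ``Whittaker support,'' where the Whittaker-vanishing piece corresponds geometrically to the non-open $N(K)$-orbits on $\Gr_G$. For $GL_2$ there is essentially one such stratum up to $T$-action, and I would match the resulting subcategories using Wakimoto-type modules and a semi-infinite cohomology computation on the Kac-Moody side. Because the complement is so small in rank one, this matching should be amenable to direct analysis, whereas for higher rank one would face a much more elaborate stratification parameterized by standard parabolics.

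The principal obstacle, as I see it, is the conservativity step: proving that the combined Whittaker and non-Whittaker functors really detect all objects of $\widehat{\fg}_{crit}\mod_{reg}$, despite the failure of $\Oblv$ to be conservative. This is where the rank one assumption genuinely bites, and where the equivariant renormalization subtleties of the $G(K)$-action flagged in the introduction must be handled with care. Once conservativity is in hand, the equivalence follows from the two matched pieces, and $t$-exactness can be checked on standard generators: vacuum modules $\bV_{crit}$ twisted by central characters on the Kac-Moody side, versus twisted $\delta_1$-type objects on the geometric side, both of which sit in the respective hearts.
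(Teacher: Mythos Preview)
Your broad strategy---match on Whittaker, handle a small complement, combine via conservativity---is the same shape as the paper's argument, and you correctly locate the crux in the conservativity step. But the proposal has real gaps precisely there, and your description of the complement is off.

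First, the complement. You propose a semi-orthogonal decomposition by ``Whittaker support,'' with the vanishing locus corresponding to non-open $N(K)$-orbits on $\Gr_G$, claiming ``essentially one such stratum up to $T$-action.'' But $N(K)$-orbits on $\Gr_G$ are indexed by the full coweight lattice, and $T(K)$ does not collapse them to a single stratum in any way that makes this manageable. The paper does not use $N(K)$-orbit geometry at all. Instead, the non-Whittaker complement is controlled by $\o{I}$-equivariant objects (radical of Iwahori), and the matching there is not via Wakimoto modules but is imported wholesale from Frenkel--Gaitsgory's prior work on Iwahori localization. So your proposed identification of the complement, and your method for matching it, are both different from what actually works.

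Second, and more seriously, you acknowledge that conservativity is the principal obstacle but offer no mechanism for it. The paper's entire contribution is a concrete such mechanism: a ``Whittaker inflation'' technique using Heisenberg group actions on finite jet groups $G_n$ to show that $(N_n,\psi)$-invariants detect everything regular, combined with a ``descent'' induction through congruence subgroups $K_n$ that exploits the rank-one fact that $\fg_{reg} = \fg \setminus 0$ for $\sl_2$. Without something playing this role, your plan stalls at exactly the step you flag.

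Third, you note the renormalization issue (no a priori $G(K)$-action on $\widehat{\fg}_{crit}\mod_{reg}$) but do not address it. The paper cannot run the clean argument either: it works instead with the naive category $\widehat{\fg}_{crit}\mod_{reg,naive}$ (which does carry the action), proves generation and $t$-exactness there via separate lemmas, and then transfers to the renormalized category via boundedness estimates. Your ``check fiberwise over $\Op_{\ld{G}}^{reg}$'' does not help with this and is not how the argument proceeds.
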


We can now state:

\begin{mainthm}[Thm. \ref{t:main}]

Conjecture \ref{conj:intro-fg-loc} is true for $G$ of semisimple 
rank $1$.

\end{mainthm}

\begin{cor}\label{c:hecke-d-mod}

For $\chi \in \Op_{\ld{G}}^{reg}(k)$ a regular oper (defined over $k$), 
let $\widehat{\fg}_{crit}\mod_{\chi}^{\heart}$ denote the abelian
category of $\widehat{\fg}_{crit}$-modules on which
$\fZ$ acts through its quotient $\fZ \onto \fz \overset{\chi}{\onto} k$,
and let $\widehat{\fg}_{crit}\mod_{\chi}$ denote the appropriate
DG category. 

Then for $G = GL_2$, the functor:

\[
D_{crit}(\Gr_G) \underset{\Rep(\ld{G})}{\otimes} \Vect \to 
\widehat{\fg}_{crit}\mod_{\chi}
\]

\noindent induced by global sections 
is a $t$-exact equivalence, where $\Vect$ is a
$\Rep(\ld{G})$-module category via the map
$\Spec(k) \xar{\chi} \Op_{\ld{G}}^{reg} \to \bB \ld{G}$.

\end{cor}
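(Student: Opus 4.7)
The plan is to deduce Corollary \ref{c:hecke-d-mod} directly from the Main Theorem by base change along the regular oper $\chi : \Spec(k) \to \Op_{\ld{G}}^{reg}$. Since $G = GL_2$ has semisimple rank $1$, the Main Theorem supplies a $t$-exact equivalence
\[
\Gamma^{\Hecke} : D_{crit}(\Gr_G) \underset{\Rep(\ld{G})}{\otimes} \QCoh(\Op_{\ld{G}}^{reg}) \xrightarrow{\simeq} \widehat{\fg}_{crit}\mod_{reg}
\]
of $\QCoh(\Op_{\ld{G}}^{reg})$-module categories. I would then apply $(-) \underset{\QCoh(\Op_{\ld{G}}^{reg})}{\otimes} \Vect$ to both sides, where $\Vect$ is a $\QCoh(\Op_{\ld{G}}^{reg})$-module via the skyscraper at $\chi$, and identify each side with the corresponding term in the Corollary.

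On the source, associativity of the relative tensor product, together with the fact that the $\Rep(\ld{G})$-action on $\Vect$ in the Corollary is by definition the one factoring as $\Spec(k) \xrightarrow{\chi} \Op_{\ld{G}}^{reg} \to \bB\ld{G}$, yields
\[
\Big( D_{crit}(\Gr_G) \underset{\Rep(\ld{G})}{\otimes} \QCoh(\Op_{\ld{G}}^{reg}) \Big) \underset{\QCoh(\Op_{\ld{G}}^{reg})}{\otimes} \Vect \simeq D_{crit}(\Gr_G) \underset{\Rep(\ld{G})}{\otimes} \Vect,
\]
which is precisely the source of the functor in the Corollary.

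On the target, the required identification is
\[
\widehat{\fg}_{crit}\mod_{reg} \underset{\QCoh(\Op_{\ld{G}}^{reg})}{\otimes} \Vect \simeq \widehat{\fg}_{crit}\mod_{\chi}.
\]
Unwinding, tensoring a $\QCoh(\Op_{\ld{G}}^{reg})$-module category with the skyscraper at $\chi$ forces the augmentation ideal $\Ker(\chi) \subset \Fun(\Op_{\ld{G}}^{reg})$ to act by zero; via the isomorphism $\fz \simeq \Fun(\Op_{\ld{G}}^{reg})$ of Theorem \ref{t:feigin-frenkel} and the construction of the $\QCoh(\Op_{\ld{G}}^{reg})$-action on $\widehat{\fg}_{crit}\mod_{reg}$ from \cite{dmod-aff-flag} \S 23 through the central action of $\fZ$, this is exactly the condition that $\fZ$ acts through the composite $\fZ \to \fz \xrightarrow{\chi} k$, i.e., the defining condition of $\widehat{\fg}_{crit}\mod_{\chi}$. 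The induced functor is $t$-exact because $\Gamma^{\Hecke}$ is $t$-exact by the Main Theorem and because the restriction along the closed point $\chi$ is $t$-exact at the abelian level.

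The step requiring most care is this last identification at the DG level: one must verify that the categorical base change along the closed $k$-point $\chi$ of the ind-scheme $\Op_{\ld{G}}^{reg}$ recovers the DG category $\widehat{\fg}_{crit}\mod_{\chi}$ (rather than some completion or naive derived fiber) and that the two $t$-structures agree. This is essentially a formal statement about module categories over $\QCoh(X)$ at a closed point combined with the explicit construction of the central action of $\fZ$ on $\widehat{\fg}_{crit}\mod_{reg}$, and so should follow from the general framework recalled in \S\ref{s:central}; I do not anticipate novel difficulty here beyond what is already needed to state the Main Theorem correctly.
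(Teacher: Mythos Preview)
Your approach is correct and is exactly the deduction the paper has in mind: the corollary is stated without proof, so it is intended to follow from the Main Theorem by tensoring the $\QCoh(\Op_{\ld{G}}^{reg})$-linear equivalence $\Gamma^{\Hecke}$ with $\Vect$ along $\chi$, using associativity on the source and taking the resulting category on the target as the definition of the ``appropriate DG category'' $\widehat{\fg}_{crit}\mod_{\chi}$. Your caveat in the last paragraph is well placed but not an obstruction here: once the Main Theorem is proved, the $\QCoh(\Op_{\ld{G}}^{reg})$-action and $t$-structure on $\widehat{\fg}_{crit}\mod_{reg}$ are nailed down (cf.\ Theorem \ref{t:pgl2-action} and the surrounding discussion), and the $t$-structure on the fiber is the one for which the conservative functor $i_{\chi,*}$ back to $\widehat{\fg}_{crit}\mod_{reg}$ is $t$-exact, so $t$-exactness of the induced functor is automatic from $t$-exactness of $\Gamma^{\Hecke}$.
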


\begin{cor}\label{c:ab-equiv}

Let $G = GL_2$ and let $\chi_1,\chi_2 \in \Op_{\ld{G}}^{reg}(k)$ be
two regular opers (defined over $k$). Then any isomorphism 
of the underlying $\ld{G}$-local systems of $\chi_1$ and $\chi_2$
gives rise to an equivalence of abelian categories:

\[
\widehat{\fg}_{crit}\mod_{\chi_1}^{\heart} \simeq 
\widehat{\fg}_{crit}\mod_{\chi_2}^{\heart}.
\]

\end{cor}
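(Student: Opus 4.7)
The plan is to deduce Corollary \ref{c:ab-equiv} directly from Corollary \ref{c:hecke-d-mod} by observing that the right-hand side of the equivalence there depends on the oper $\chi$ only through its underlying $\ld{G}$-local system.

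First, I would apply Corollary \ref{c:hecke-d-mod} to each of $\chi_1$ and $\chi_2$ separately, obtaining $t$-exact equivalences of DG categories
\[
\Gamma^{\Hecke}_{\chi_i}: D_{crit}(\Gr_G) \underset{\Rep(\ld{G})}{\otimes} \Vect_{\chi_i} \xrightarrow{\simeq} \widehat{\fg}_{crit}\mod_{\chi_i},
\]
where the subscript $\chi_i$ on the left records the $\Rep(\ld{G})$-module structure on $\Vect$ arising from the composite $\Spec(k) \xrightarrow{\chi_i} \Op_{\ld{G}}^{reg} \to \bB\ld{G}$. Passing to hearts using $t$-exactness, the proof of Corollary \ref{c:ab-equiv} reduces to exhibiting a $t$-exact equivalence between the two tensor product categories on the left.

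The second step is the key observation: the $\Rep(\ld{G})$-module structure on $\Vect_{\chi_i}$ is by construction obtained by pulling back along $\Rep(\ld{G}) = \QCoh(\bB\ld{G}) \to \Vect$, the fiber functor at the point $\chi_i: \Spec(k) \to \bB\ld{G}$. Hence this module structure only depends on the map $\Spec(k) \to \bB\ld{G}$ (i.e., on the underlying $\ld{G}$-local system on the formal disc), and \emph{not} on the enhancement of this local system to an oper. Concretely, any chosen isomorphism of the underlying $\ld{G}$-local systems of $\chi_1$ and $\chi_2$ produces an equivalence of $\Rep(\ld{G})$-module categories $\Vect_{\chi_1} \simeq \Vect_{\chi_2}$ (both being $\Vect$ with the standard $t$-structure and the transition being the identity functor on underlying categories).

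Tensoring this equivalence with $D_{crit}(\Gr_G)$ over $\Rep(\ld{G})$ gives the desired $t$-exact equivalence between the two tensor product categories, hence, by the first step, a $t$-exact equivalence of DG categories $\widehat{\fg}_{crit}\mod_{\chi_1} \simeq \widehat{\fg}_{crit}\mod_{\chi_2}$, which restricts to the sought equivalence of hearts. The only real point that requires care, and thus the main obstacle, is making sure that ``$t$-exactness'' behaves well under relative tensor product with $\Rep(\ld{G})$; this reduces to the fact that the two equivalences $\Gamma^{\Hecke}_{\chi_i}$ are already $t$-exact (given by Corollary \ref{c:hecke-d-mod}) and that the transition on the $\Vect$ factor is the identity on underlying categories, so no additional homological argument is required beyond what is packaged into the main theorem.
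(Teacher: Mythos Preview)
Your proposal is correct and matches the paper's implicit argument: the paper states Corollary \ref{c:ab-equiv} immediately after Corollary \ref{c:hecke-d-mod} without a separate proof, treating it as the evident consequence that the source category $D_{crit}(\Gr_G) \underset{\Rep(\ld{G})}{\otimes} \Vect$ depends only on the composite $\Spec(k) \to \bB\ld{G}$, i.e., on the underlying local system rather than its oper enhancement. Your write-up makes this explicit and handles the $t$-exactness correctly.
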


\begin{rem}

We highlight a wrong perspective on Corollary \ref{c:ab-equiv};
this remark may safely be skipped. 

For $G = PGL_2$, one can show that the group scheme
$\Aut$ of automorphisms of the formal disc acts transitively 
on $\Op_{\ld{G}}^{reg}$, giving rise to an elementary
construction of equivalences of categories
as in Corollary \ref{c:ab-equiv} in this case.

However, these are not the equivalences produced by 
Corollary \ref{c:ab-equiv}. First, at the level of
DG categories, the equivalences using the action of $\Aut$
are not $G(K)$-equivariant: the $G(K)$-actions differ
via the action of $\Aut$ on $G(K)$.
In contrast, 
the equivalences produced using 
Corollary \ref{c:hecke-d-mod} are 
manifestly $G(K)$-equivariant.

Concretely, this implies that for a $k$-point
$g\in G(K)$, if 
$g \cdot \bV_{crit} \coloneqq
\ind_{\Ad_g(\fg[[t]])}^{\widehat{\fg}_{crit}}(k)$ 
and $g \cdot \bV_{crit,\chi} \coloneqq 
(g \cdot \bV_{crit}) \otimes_{\fz,\chi} k$, then
Corollary \ref{c:ab-equiv} maps $g \cdot \bV_{crit,\chi_1}$ 
to $g \cdot \bV_{crit,\chi_2}$. For 
$\gamma \in \Aut$ and $\chi_2 = \gamma \cdot \chi_1$, the
resulting isomorphism produced \emph{using $\gamma$}
(not Corollary \ref{c:ab-equiv}) rather
sends $g \cdot \bV_{crit,\chi_1}$ to 
$\gamma(g) \cdot \bV_{crit,\chi_2}$.

In addition, one can see that the equivalences 
produced using the $\Aut$ action
depend on 
isomorphisms of underlying $\ld{G}_{\ld{B}}^{\wedge}$-bundles
of regular opers (in this $PGL_2$ case), 
not merely the underlying $\ld{G}$-bundles.

\end{rem}

\subsection{Viewpoints} 

We refer to the introduction and \S 2 of
\cite{fg-loc} for a discussion of Conjecture \ref{conj:intro-fg-loc} and
its consequences. We highlight some ways of thinking about it here.

\begin{itemize}

\item For the representation theorist, Theorem \ref{t:main} provides
an affine analogue of Beilinson-Bernstein similar to 
their original result, c.f. the discussion in \S \ref{ss:bb-loc}.
The equivalences of Corollary \ref{c:ab-equiv} provide analogues
of translation functors at critical level. By Theorem \ref{t:fg-summary},
the content of Theorem \ref{t:main} amounts to a structure theorem
for regular $\widehat{\fg}_{crit}$-modules (for $\fg = \sl_2$).

\item For the number theorist, Theorem \ref{t:main} provides
a first non-trivial test of Frenkel-Gaitsgory's proposal
\cite{fg2} for local geometric Langlands beyond Iwahori invariants.

Roughly, Frenkel-Gaitsgory propose that for $\sigma$ a 
de Rham $\ld{G}$-local system on the punctured disc, there
should be an associated DG category $\sC_{\sigma}$ with 
an action of $G(K)$.\footnote{Most invariantly, this action
should have critical level, which is (slightly non-canonically) equivalent
to level $0$.}
This construction should mirror the usual local Langlands correspondence,
leading to many expected properties of this assignment, c.f. 
\cite{quantum-langlands-summary}.

A striking part of their proposal does not have an arithmetic
counterpart. For $\chi$ an oper with underlying local system
$\sigma$, Frenkel-Gaitsgory propose $\sC_{\sigma} = 
\widehat{\fg}_{crit}\mod_{\chi} \in G(K)\mod_{crit}$, 
where we use similar notation to Corollary \ref{c:hecke-d-mod}.
We remark that Frenkel-Zhu \cite{frenkel-zhu} and Arinkin \cite{dima-opers}
have shown that any such $\sigma$ admits an oper structure
(assuming, to simplify the discussion, that $\sigma$ is a field-valued point). 

In particular, one expects equivalences as in Corollary \ref{c:ab-equiv},
at least on the corresponding derived categories. 

Our results provide the first verification 
beyond Iwahori invariants of their ideas. 

\end{itemize}

\begin{rem}

We have nothing to offer to the combinatorics of representations.
The previous results of Frenkel-Gaitsgory suffice\footnote{Up to mild
central character restrictions coming from \cite{dmod-aff-flag}.
These restrictions are understood among experts
to be inessential.} to treat problems of Kazhdan-Lusztig nature,
c.f. \cite{arakawa-fiebig}.

\end{rem}

\subsection{Previously known results}

Frenkel-Gaitsgory were able to show the following results, valid
for any reductive $G$.

We let $I = G(O) \times_G B$ be the Iwahori subgroup of
$G(O)$ and $\o{I} = G(O) \times_G N$ its prounipotent
radical.

\begin{thm}\label{t:fg-summary}

The functor $\Gamma^{\Hecke}$ is fully faithful, preserves
compact objects, and 
is an equivalence on $\o{I}$-equivariant categories.
Moreover, the restriction of $\Gamma^{\Hecke}$ to the
$\o{I}$-equivariant category 
$D_{crit}(\Gr_G)^{\o{I}} \underset{\Rep(\ld{G})}{\otimes}
\QCoh(\Op_{\ld{G}}^{reg})$ is $t$-exact.

\end{thm}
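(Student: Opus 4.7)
My plan is to anchor the argument at the pro-unipotent level and bootstrap the statements about the full category from the $G(K)$-equivariance of $\Gamma^{\Hecke}$. Recall that $\Gamma^{\Hecke}$ is a morphism of $(G(K),\Rep(\ld{G}))$-bimodule categories (the Beilinson-Drinfeld birth-of-opers theorem), and that $D_{crit}(\Gr_G)$ is compactly generated by $G(K)$-translates of its $\o{I}$-equivariant subcategory. Hence the Iwahori-level equivalence controls full faithfulness and compact preservation on the full category.

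For the $\o{I}$-equivariant equivalence I would pass to the affine flag variety $\Fl_G = G(K)/I$, where the category $D_{crit}(\Fl_G)^{\o{I}}$ admits a very explicit combinatorial structure: standards, costandards, and tilting objects indexed by the affine Weyl group, with computable morphisms between them. The $\o{I}$-equivariant category on $\Gr_G$ is recovered from that on $\Fl_G$ by the standard convolution/averaging procedure along $\Fl_G \to \Gr_G$. On the Kac-Moody side, the combinatorial counterpart is supplied by the Wakimoto modules, whose construction and character formulas make them the natural candidates for the image of the standards under $\Gamma^{\Hecke}$. Matching the two descriptions \emph{as modules over} $\Fun(\Op_{\ld{G}}^{reg}) \simeq \fz$, via the Feigin-Frenkel isomorphism of Theorem \ref{t:feigin-frenkel}, and then computing $\Ext$ groups between standards and costandards on both sides relative to $\Fun(\Op_{\ld{G}}^{reg})$, should yield both full faithfulness and essential surjectivity at the $\o{I}$-level.

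Granted the equivariant equivalence, $t$-exactness follows because the Wakimoto modules (and the corresponding $D$-modules on orbit closures) are acyclic for $\Gamma^{\IndCoh}$, so exactness at the level of the hearts lifts to honest $t$-exactness. To upgrade to full faithfulness on the full category, I exploit $G(K)$-equivariance: any compact $\sF$ in the source is, up to a $G(K)$-translate, built from $\o{I}$-equivariant data, and because $\Gamma^{\Hecke}$ together with the pro-unipotent averaging adjunctions are $G(K)$-equivariant, computation of $\Hom(\sF,\sG)$ reduces after averaging to an $\o{I}$-equivariant Hom, hence to the case already handled. Compact object preservation then follows from the compact case on $\o{I}$-invariants together with the fact that $G(K)$-translation and pro-unipotent averaging preserve compactness.

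The hardest step, as in all work on affine Beilinson-Bernstein, is the $\o{I}$-equivariant matching, and specifically the construction of a canonical identification \emph{as $\Fun(\Op_{\ld{G}}^{reg})$-modules, not merely as bare Kac-Moody modules,} between Wakimoto modules and their $D$-module counterparts. This is what forces one to invoke the refined form of the Beilinson-Drinfeld analysis of $\Gamma^{\IndCoh}(\delta_1) = \bV_{crit}$ together with the Wakimoto-module calculations of Frenkel-Gaitsgory in \cite{fg-wakimoto}, \cite{dmod-aff-flag}, and \cite{fg-loc}; in particular, one must track how the center $\fz$ acts on Wakimoto modules in terms of opers, which is a nontrivial computation even once the Feigin-Frenkel isomorphism is taken as input.
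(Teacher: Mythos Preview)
Your proposal is essentially the original Frenkel--Gaitsgory route, and the paper largely treats Theorem~\ref{t:fg-summary} the same way: it does not reprove the $\o{I}$-equivariant equivalence but cites \cite{fg-loc} (see Theorem~\ref{t:iwahori} and Proposition~\ref{p:hecke-true} in the body), and your sketch via Wakimoto modules and standard/costandard matching over $\fz$ is exactly the strategy of \emph{loc.~cit.} Compact preservation is not argued via the $\o{I}$-level at all; it drops out directly from Lemma~\ref{l:conv-adj}, which shows convolution with $\bV_{crit}$ preserves compacts because convolution with any compact in $D_{crit}(\Gr_G)$ has a continuous right adjoint.

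Where the paper genuinely diverges from your outline is in the proof of full faithfulness (Appendix~\ref{a:ff}). You deduce it from the $\o{I}$-equivalence by reducing arbitrary $\Hom$s to $\o{I}$-equivariant ones; the paper instead reduces to the \emph{spherical} category $D_{crit}^{\Heckez}(\Gr_G)^{G(O),+}$ via the same convolution trick (Lemma~\ref{l:conv-adj}) and then proves that $\Gamma^{\Hecke,naive}$ is an equivalence there by a Whittaker argument (Proposition~\ref{p:sph-+} and Corollary~\ref{c:ff-sph}). The point is that on both sides $\Av_!^{\psi}$ from spherical invariants to the Whittaker category is $t$-exact and conservative on hearts (by \cite{fgv} on the $D$-module side and by \cite{fg1} Theorem~5.3 plus affine Skryabin on the Kac--Moody side), so comonadicity forces the spherical equivalence once the Whittaker equivalence is known. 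This bypasses the Wakimoto-module and $\o{I}$-orbit combinatorics entirely for full faithfulness; the price is that one must import affine Skryabin and the geometric Casselman--Shalika formula instead.

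One imprecision worth flagging: your phrase ``any compact $\sF$ in the source is, up to a $G(K)$-translate, built from $\o{I}$-equivariant data'' is not quite the mechanism. Compact objects are $K_n$-equivariant for some $n$ but are not in general translates of $\o{I}$-equivariant objects. The correct reduction (which the paper makes explicit) is the identity $\ul{\Hom}(\sF,\sG) = \ul{\Hom}(\delta_1,\on{inv}\bD\sF \star \sG)$ in the $G(O)$-equivariant category, valid for any compact $\sF$ because $G(K)/G(O)$ is ind-proper. Your averaging heuristic points at the right adjunction, but the clean statement is Lemma~\ref{l:conv-adj}.
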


\begin{rem}

The fully faithfulness is \cite{fg2} Theorem 8.7.1; we
give a simpler proof of this result in Appendix \ref{a:ff}.
The existence of the continuous right adjoint $\Loc^{\Hecke}$
is proved as in \cite{dmod-aff-flag} \S 23.5-6.
The equivalence on $\o{I}$-equivariant
categories and $t$-exactness of the functor 
is Theorem 1.7 of \cite{fg-loc}.\footnote{The results
we cite here are not formulated in exactly the given form in
the cited works. For the purposes of the introduction,
we ignore this issue and address these gaps in the body of the paper.}

\end{rem}

\subsection{Methods}

Below, we outline the proof of Theorem \ref{t:main}. However,
to motivate this, we highlight a methodological point.

Across their works at critical level, Frenkel and Gaitsgory
use remarkably little about actual critical level representations.
Indeed, they rely primarily on Feigin and Frenkel's early results,
some basic properties of Wakimoto modules, and the Kac-Kazhdan 
theorem.

But using the action of $G(K)$ on $\widehat{\fg}_{crit}\mod$ and
constructions/results from geometric Langlands,
Frenkel and Gaitsgory were able to prove deep results about representations
at critical level; see e.g. \cite{dmod-aff-flag}. 

In other words, their works highlight an important
methodological point: the theory of group actions on categories
provides a bridge:

\[
\Bigg\{
\begin{tabular}{l}
Geometry and higher \\
representation
theory of \\
groups
\end{tabular}
\Bigg\} 
\begin{tikzpicture}
\node at (0,0) (v0) {};
\node at (3,0) (v1) {};
\draw [->,
line join=round,
decorate, decoration={
coil,
segment length=5,
amplitude=1,
post length=0pt
}]
    (v0)--(v1);
\node at (1.5,.75) 
{\footnotesize $\begin{tabular}{c}
Group actions \\ on categories
\end{tabular}$};
\end{tikzpicture}
\Bigg\{
\begin{tabular}{l}
Representation theory \\
of Lie algebras
\end{tabular}
\Bigg\} 
\] 

For loop groups in particular, a great deal
was known at the time about $G(O)$ and Iwahori 
invariants: see e.g. \cite{mirkovic-vilonen}, 
\cite{arkhipov-bezrukavnikov}, 
\cite{arkhipov-bezrukavnikov-ginzburg}, and \cite{abbgm}.

More recently, 
Whittaker invariants have been added to the list: see
\cite{whit}. These can be used to simplify many arguments
from Frenkel-Gaitsgory, as e.g. in Appendix \ref{a:ff}.

\subsection{}

As we outline below, our methods are in keeping with the
above. The main new idea and starting point of the present paper, 
Theorem \ref{t:generic},
is exactly about the higher representation theory of
$PGL_2(K)$.

\subsection{}

Group actions on categories inherently involve derived
categories. Therefore, one has the striking fact that
although Corollary \ref{c:ab-equiv} is about
abelian categories (of modules!), the proof we give
involves sophisticated homological methods and careful
analysis of objects in degree $-\infty$ in various
DG categories.

\subsection{Sketch of the proof}\label{ss:sketch}

We now give the Platonic ideal of the proof of the main theorem.

\subsection{}

First, one readily reduces to proving Conjecture \ref{conj:intro-fg-loc} 
for any $G$ of semisimple rank $1$; for us, it is convenient to
focus on $G = PGL_2$.

\subsection{}\label{ss:plato}

The following result is one of the key new ideas of this paper:

\begin{thm*}[Thm. \ref{t:generic}]

Let $G = PGL_2$ and let $\sC$ be acted on by $G(K)$ (perhaps 
with level $\kappa$).

Then $\sC$ is generated under the action of $G(K)$ by its Whittaker category
$\Whit(\sC) \coloneqq \sC^{N(K),\psi}$ 
and its $\o{I}$-equivariant category 
$\sC^{\o{I}}$.

\end{thm*}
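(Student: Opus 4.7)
The goal is to show the full $G(K)$-stable subcategory $\sD \subseteq \sC$ generated by $\Whit(\sC) \cup \sC^{\o{I}}$ equals $\sC$, by showing its right orthogonal $\sD^{\perp}$ vanishes. Adjunctions between forgetful and averaging functors (standard for the prounipotent $\o{I}$, and developed in \cite{whit} for the Whittaker pair) identify $\sD^{\perp}$ with the subcategory of objects $c \in \sC$ for which $\Av_*^{g \o{I} g^{-1}}(c) = 0$ and $\Av_*^{g N(K) g^{-1},\psi^g}(c) = 0$ for every $g \in G(K)$. So I must show that these two uniform vanishings force $c = 0$, a rank-one statement.

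First I would parameterize the conjugates. The $T(K)$-orbit of $\psi$ exhausts the non-trivial smooth characters of $N(K) \simeq K$, so the second condition says the semi-infinite $N(K)$-coinvariants $J(c)$ of $c$ (in the sense of \cite{whit}) carry no generic $T(K)$-character. The conjugates of $\o{I}$ are the prounipotent radicals of the Iwahoris indexed by oriented edges of the Bruhat--Tits tree of $PGL_2$; together with the Iwasawa factorization $G(K) = N(K) T(K) G(O)$ and the absence of intermediate parabolics in rank one, this exhausts the relevant stabilizer data.

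The heart of the argument is a categorical \emph{Kirillov-model} theorem for $PGL_2$: beyond the Whittaker-generic family, the only remaining representation-theoretic stratum is the trivial one, and it is detected by $\o{I}$-invariants. Concretely, I would combine (i) the vanishing of all generic-character components of $J(c)$ coming from the Whittaker hypothesis with (ii) a spectral decomposition of the $T(K)$-action on the trivial-character component of $J(c)$; the latter piece is forced to vanish by the $\o{I}$-invariant hypothesis together with the Iwasawa decomposition. With $J(c) = 0$, a mirabolic recovery argument closes $c = 0$.

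The main obstacle is step (ii), the categorical Kirillov reduction. The classical $PGL_2(F)$-argument that a smooth irrep without a Whittaker model is trivial rests on $p$-adic dimension counts with no direct categorical analogue, so a substitute must be constructed by hand --- likely via an explicit $T(K)$-eigenspace decomposition on Jacquet-type functors, bootstrapped against the Whittaker technology of \cite{whit}. The rank-one hypothesis is essential here: in higher rank, intermediate parabolics and their cuspidal representations produce strata lying in neither $\Whit(\sC)$ nor $\sC^{\o{I}}$, breaking the Kirillov dichotomy on which the whole argument rests.
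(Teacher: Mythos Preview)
Your proposal identifies a natural strategy inspired by the classical $p$-adic picture, but the central step is left as a gap you yourself flag: the ``categorical Kirillov reduction'' (step (ii)) is not carried out, and there is no evident mechanism to replace the dimension-count underlying the $p$-adic argument. Working directly with the $N(K)$-Jacquet object $J(c)$ and attempting a $T(K)$-spectral decomposition runs into the usual difficulties with the non-compact group $N(K)$; this is exactly the problem the adolescent Whittaker machinery of \cite{whit} was built to circumvent, and your plan does not explain how to get traction here.

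The paper's argument is structurally different and bypasses this obstacle entirely. Rather than analyzing $\sD^{\perp}$ via Jacquet functors, it filters $\sC$ by congruence invariants $\sC^{K_n}$ and proves $\sC^{K_n} \subset \sC'$ by induction on $n$. The base case $n=1$ reduces to the finite-dimensional statement that a category with $G$-action is generated by its $N$-invariants (\cite{bzgo}), which produces $\sC^{K_1}$ from $\sC^{\o{I}}$. For the inductive step, one studies $\sC^{K_{n+1}}$ via the $G_{n+1}$-action and Fourier transform along the abelian normal subgroup $\fg \otimes \bG_a \subset G_{n+1}$: this decomposes $\sC^{K_{n+1}}$ over $\fg$, and the paper's Theorem~\ref{t:cons} (proved via Heisenberg group representation theory) shows that the part supported on $\fg_{reg}$ is controlled by $\Whit^{\leq n+1}(\sC)$, hence lies in $\sC'$. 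The complement is supported at $0 \in \fg$ --- and here the rank-one hypothesis enters as the identity $\fg_{reg} = \fg \setminus 0$ for $\fg = \sl_2$ --- so equals $\sC^{K_n}$, which lies in $\sC'$ by induction. This ``descent'' argument never touches $N(K)$-coinvariants or $T(K)$-spectral theory; the hard work is done by finite-dimensional Heisenberg groups acting on $\sC^{K_n}$.
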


The relation to the equivalence part of the 
Frenkel-Gaitsgory conjecture is immediate:
By fully faithfulness of $\Gamma^{\Hecke}$ (Theorem \ref{t:fg-summary}),
Theorem \ref{t:main} is reduced
to showing essential surjectivity.
Applying Theorem \ref{t:generic} to the essential image of
$\Gamma^{\Hecke}$, one immediately obtains Theorem \ref{t:main}
from Theorem \ref{t:fg-summary} and:

\begin{thm*}[Thm. \ref{t:whit-equiv}]

For any reductive $G$, the 
functor $\Gamma^{\Hecke}$ induces an equivalence on Whittaker 
categories.

\end{thm*}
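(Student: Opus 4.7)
The strategy is to identify both Whittaker categories independently with $\QCoh(\Op_{\ld{G}}^{reg})$, and then verify that $\Gamma^{\Hecke}$ intertwines these identifications.

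First, on the D-module side, I would invoke the geometric Casselman--Shalika theorem of Frenkel--Gaitsgory--Vilonen (and its derived/critically-twisted refinement), which provides an equivalence $\Whit(D_{crit}(\Gr_G)) \simeq \Rep(\ld{G})$ of $\Rep(\ld{G})$-module categories, where the action on the left is via geometric Satake convolution. The Whittaker-invariants functor on $G(K)\mod$ preserves colimits---a consequence of the identification of Whittaker invariants with Whittaker coinvariants at the level of DG categories, c.f.\ \cite{whit}---so it commutes with relative tensor products. This yields
\[
\Whit\!\Bigl(D_{crit}(\Gr_G) \underset{\Rep(\ld{G})}{\otimes} \QCoh(\Op_{\ld{G}}^{reg})\Bigr) \simeq \Rep(\ld{G}) \underset{\Rep(\ld{G})}{\otimes} \QCoh(\Op_{\ld{G}}^{reg}) = \QCoh(\Op_{\ld{G}}^{reg}).
\]

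Second, on the Kac--Moody side, I would apply the main result of \cite{whit}, which supplies a canonical equivalence $\Whit(\widehat{\fg}_{crit}\mod_{reg}) \simeq \QCoh(\Op_{\ld{G}}^{reg})$ of $\QCoh(\Op_{\ld{G}}^{reg})$-module categories, with the Whittaker average of the vacuum module mapped to $\sO_{\Op_{\ld{G}}^{reg}}$.

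Third, I would check compatibility: $\Gamma^{\Hecke}$ is a morphism of $(G(K),\Rep(\ld{G}))$-bimodule categories, hence induces a $\QCoh(\Op_{\ld{G}}^{reg})$-linear functor on Whittaker categories. Both sides being free of rank one over $\QCoh(\Op_{\ld{G}}^{reg})$, it suffices to compute the image of the distinguished generator (corresponding to the trivial $\ld{G}$-representation under Casselman--Shalika). This generator, a clean $*$-extension of the Whittaker sheaf from the relevant $N(K)$-orbit, is sent by $\Gamma^{\IndCoh}$ to the Whittaker coinvariants of $\bV_{crit}$; Beilinson--Drinfeld's ``birth of opers'' theorem (\cite{hitchin}) identifies the latter, after tensoring with $\Fun(\Op_{\ld{G}}^{reg})$ over $\fz$, with $\sO_{\Op_{\ld{G}}^{reg}}$, matching the identification above.

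The principal obstacles will be of a technical ($\infty$-categorical) flavor rather than conceptual. The first is justifying the interchange of $\Whit$ with the relative tensor product $\otimes_{\Rep(\ld{G})}$; this requires the equivalence of Whittaker invariants and coinvariants in the loop-group context, together with a careful handling of the renormalization issues for $\widehat{\fg}_{crit}\mod_{reg}$ flagged in \S\ref{ss:intro-eq-renorm}. The second is reconciling the renormalization and $\Rep(\ld{G})$-linearity conventions between \cite{whit} and the present framework so that the Whittaker identification on the Kac--Moody side is compatible with the Satake action used to form the tensor product. The underlying ``hard mathematics''---geometric Casselman--Shalika, and Feigin--Frenkel/Beilinson--Drinfeld's computation of Whittaker vectors in the vacuum module---is essentially already in the literature.
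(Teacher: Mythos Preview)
Your approach is essentially the same as the paper's: both use Frenkel--Gaitsgory--Vilonen for the $D$-module side, affine Skryabin for the Kac--Moody side, the identification of Whittaker invariants with coinvariants to pass the functor through the tensor product, and then verify compatibility on the rank-one generator. One clarification: the paper sidesteps the renormalization concern you flag by proving the Whittaker equivalence for $\Gamma^{\Hecke,naive}$ into $\widehat{\fg}_{crit}\mod_{reg,naive}$ (where the $G(K)$-action is manifest), and the compatibility check there uses only that the Drinfeld--Sokolov functor $\Psi$ sends $\bV_{crit}$ to $\sO_{\Op_{\ld{G}}^{reg}}$ by construction of the affine Skryabin equivalence, rather than invoking birth of opers.
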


\noindent The latter result is an essentially immediate 
consequence of the affine Skryabin theorem from \cite{whit}
and the classical work \cite{fgv}.

\subsection{}

Theorem \ref{t:generic} warrants some further discussion.

First, this result mirrors the fact that for 
$PGL_2$ over a local, non-Archmidean
field, irreducible representations admit Whittaker models, or 
else are one of the two 1-dimensional characters trivial
on the image of $SL_2$.

We now give an intentionally informal heuristic for Theorem \ref{t:generic}
that may safely be skipped.

For general reductive
$G$ and $\sC \in G(K)\mod_{crit}$, let $\sC^{\prime} \subset \sC$
be the subcategory 
generated under the $G(K)$-action 
by $\Whit(\sC)$. 

Assuming some form of local geometric Langlands, one expects
that the local Langlands parameters of $\sC/\sC^{\prime}$ to consist
only of those $\sigma \in \LocSys_{\ld{G}}(\o{\sD})$ that
lift to a point of $\LocSys_{\ld{P}}(\o{\sD})$
at which the map $\LocSys_{\ld{P}}(\o{\sD}) \to \LocSys_{\ld{G}}(\o{\sD})$
is singular; here $\ld{P}$ is some parabolic subgroup of $\ld{G}$
and $\o{\sD} = \Spec(k((t)))$ is the formal punctured disc.

For $\ld{G} = SL_2$, the only parabolic we need to consider is
the Borel $\ld{B}$. Then $\sigma \in \LocSys_{\ld{B}}$ is
the data of an extension: 

\[
0 \to (\sL,\nabla) \to (\sE,\nabla) \to 
(\sL^{\vee},\nabla) \to 0
\]

\noindent  
$(\sL,\nabla)$ a line bundle
with connection on the punctured disc (and $\sL^{\vee}$ equipped with the
dual connection to that of $\sL$). At such a point, the
cokernel of the map of tangent spaces induced
by $\LocSys_{\ld{B}}(\o{\sD}) \to 
\LocSys_{\ld{G}}(\o{\sD})$ is:

\[
H_{dR}^1(\o{\sD},(\ld{\fg}/\ld{\fb})_{\sigma}) = 
H_{dR}^1(\o{\sD},(\sL^{\vee},\nabla)^{\otimes 2}).
\]

\noindent This group will vanish unless $(\sL,\nabla)^{\otimes 2}$
is trivial, i.e., unless $(\sE,\nabla) \in \LocSys_{\ld{G}}(\o{\sD})$
or its quadratic twist has unipotent monodromy. 

It is expected that for $\sD \in G(K)\mod_{crit}$ with local Langlands
parameters having unipotent monodromy (resp. up to twist by a 1-dimensional
character) is generated under the $G(K)$-action 
by its Iwahori invariants (resp. 
its Iwahori invariants twisted by a suitable 
character of $I$ trivial on $\o{I}$).

This justifies that for $G = PGL_2$, one should expect
$\sC/\sC^{\prime}$ above to be generated by its $\o{I}$-invariants.
(And in fact, following the above reasoning, 
one can refine Theorem \ref{t:generic} to show 
that $\sC$ is even generated by $\Whit(\sC)$ and
its invariants with respect to the Iwahori subgroup of $SL_2(K)$.) 

\subsection{}\label{ss:intro-generic-pf}

The argument we provide for Theorem \ref{t:generic} is novel.
Its decategorified version gives a new proof of the corresponding
result in usual harmonic analysis.

We use the perspective of \cite{whit} on Whittaker categories,
which allows us to study the Whittaker construction
via (finite-dimensional!) algebraic groups. (We summarize
the most relevant parts of \cite{whit} in 
\S \ref{ss:ad-whit-review}.)

In \S \ref{s:heisenberg}, we introduce a new technique in the 
finite-dimensional settings suggested by \cite{whit}, which
we call \emph{Whittaker inflation}. In that context,
Theorem \ref{t:cons} 
shows that subcategories generated under 
group actions by Whittaker invariants
are large in a suitable sense. 
These ideas apply for a general reductive
group $G$, and have counterparts in the decategorified setting.

In \S \ref{s:adolescent}, we introduce a method of \emph{descent}
that is specific for $PGL_2$. Combined with the results of
\S \ref{s:heisenberg}, descent immediately gives Theorem 
\ref{t:generic}.

\subsection{}\label{ss:intro-eq-renorm}

For clarity, we highlight that there is one technical issue
in the above argument: there is not an a priori
$G(K)$-action on $\widehat{\fg}_{crit}\mod_{reg}$,
so the above argument does not apply as is.
Instead, there is a closely related but inequivalent category, 
$\widehat{\fg}_{crit}\mod_{reg,naive}$, with an evident $G(K)$-action
(coming from \cite{methods}).
We refer to \S \ref{ss:eq-renorm} for a more technical discussion
of this point.

This distinction makes the second half of the paper more technical,
and requires finer analysis than was suggested in \S \ref{ss:plato}.

\subsection{}

The $t$-exactness in Theorem \ref{t:main} is proved by another
instance of the descent argument highlighted above.
The details are in \S \ref{s:exactness}, 
with some auxiliary support in \S \ref{s:bddness}. 

\subsection{}

Finally, we highlight that the vast majority of the intermediate
results in this paper apply to general reductive groups $G$.
In particular, this includes the results of
\S \ref{s:heisenberg}, which are a key ingredient in the proof
of Theorem \ref{t:main}.

The descent arguments discussed above are where
we use that $G = PGL_2$; here the key input is that
every element in the Lie algebra $\fg = \sl_2$
is either regular or $0$. The situation
strongly suggests that there should be some (more complicated)
generalization of the descent method that applies 
for higher rank groups as well.

\subsection{Structure of this paper}

The first part of the paper is purely geometric, primarily
involving monoidal categories of $D$-modules on algebraic groups.

In \S \ref{s:heisenberg}, we introduce the \emph{inflation}
method discussed above. In \S \ref{s:conv}, we provide
some refinements of these ideas that are needed later
in the paper; this section includes some results
on Whittaker models for the finite-dimensional
group $G$ that are of independent interest.

In \S \ref{s:adolescent}, we prove our theorem on
the existence of Whittaker models for 
most categorical representations of $PGL_2(K)$ and introduce
the \emph{descent} argument discussed above.

\subsection{}

The second part of the paper applies the 
above material to critical level Kac-Moody representations.

In \S \ref{s:central}, we introduce the
DG category $\widehat{\fg}_{crit}\mod_{reg}$ following
\cite{dmod-aff-flag}. To study this DG category using
group actions, we import the main results from 
\cite{methods} here.

In \S \ref{s:localization}, we recall in detail
the key constructions from the formulation of
Conjecture \ref{conj:intro-fg-loc}. We formulate three
lemmas from which we deduce our main result, Theorem \ref{t:main}. 

The proofs of these lemmas occupy 
\S \ref{s:essential-surjectivity}-\ref{s:bddness}.
Roughly, \S \ref{s:essential-surjectivity} is devoted
to showing that the functor $\Gamma^{\Hecke}$ 
is essentially surjective,
while \S \ref{s:exactness} is devoted to showing that
it is $t$-exact. The final section, \S \ref{s:bddness},
provides additional technical support 
related to the distinction between
$\widehat{\fg}_{crit}\mod_{reg}$ and
$\widehat{\fg}_{crit}\mod_{reg,naive}$. 

Finally, \S \ref{s:iwahori-and-whittaker} collects
results on the behavior of $\Gamma^{\Hecke}$ on
Iwahori and Whittaker equivariant categories;
the former results are due to Frenkel-Gaitsgory \cite{fg-loc},
while the latter are original. 

\subsection{} There are two appendices. 

In Appendix \ref{a:gamma}, we compare our construction 
of the global sections functor to the more classical
one used by Kashiwara-Tanisaki, 
Beilinson-Drinfeld and Frenkel-Gaitsgory.

In Appendix \ref{a:ff}, we reprove the 
Frenkel-Gaitsgory theorem that $\Gamma^{\Hecke}$
is fully faithful.
 
\subsection{Acknowledgements}

We thank Dima Arinkin, Sasha Beilinson, Dario Beraldo, 
David Ben-Zvi, Roman Bezrukavnikov, Justin Campbell,
Vladimir Drinfeld, Gurbir Dhillon, Ivan Mirkovic, and David Yang 
for their encouragement and
for helpful conversations related to this material. 

We especially thank Dennis Gaitsgory for 
sharing many inspiring ideas on 
Kac-Moody algebras and loop group actions over the
years. In particular,
the crucial idea of using Heisenberg groups to
prove Theorem \ref{t:cons} was inspired by his work
\cite{lesjours} \S 2.

\section{Preliminary material}\label{s:notation}

\subsection{} 

In this section, we collect some notation
and constructions that will be used throughout the paper.

\subsection{}

As in \S \ref{ss:intro-notation}, we always work over a field
$k$ of characteristic $0$. 

\subsection{Reductive groups}

Throughout the paper, $G$ denotes a split reductive group,
$B$ denotes a fixed Borel with unipotent radical $N$
and Cartan $T = B/N$.

We let $\Lambda = \Hom(T,\bG_m)$ be the lattice of
weights of $T$ and $\Hom(\bG_m,T)$ the lattice of
coweights. Let $\rho \in \Lambda\otimes \bQ $ be the half sum of
positive roots and $\check{\rho} \in \check{\Lambda} \otimes \bQ$
be the half sum of positive coroots. We denote the pairing
between $\Lambda$ and $\check{\Lambda}$ by $(-,-)$.

We let e.g. $\fg$ denote the Lie algebra of $G$,
$\fb$ the Lie algebra of $B$, and so on.

We let $\ld{G}$ denote the Langlands dual group to $G$,
considered as an algebraic group over $k$. It naturally
comes equipped with a choice Borel $\ld{B}$ with 
radical $\ld{N}$ and Cartan $\ld{T} = \ld{B}/\ld{N}$.

\subsection{Higher categories}

Following standard conventions in the area, we freely
use Lurie's theory \cite{htt} \cite{higheralgebra} of
higher category theory. To simplify the terminology,
we use \emph{category} to mean \emph{$(\infty,1)$-category}.

\subsection{DG categories}

We let $\DGCat_{cont}$ denote the symmetric monoidal
category of presentable (in particular, cocomplete) 
DG categories, referring
to \cite{grbook} Chapter I for more details.
As in \emph{loc. cit}., the binary product underlying this
symmetric monoidal structure is denoted $\otimes$.
We recall that $\Vect \in \DGCat_{cont}$ is the
unit for this tensor product.

\subsection{}

For $\sA \in \Alg(\DGCat_{cont})$ an algebra
in this symmetric monoidal category, we typically write
$\sA\mod$ for $\sA\mod(\DGCat_{cont})$, i.e., 
the category of modules for $\sA$ in $\DGCat_{cont}$.

\subsection{}

For $\sC$ a DG category and $\sF,\sG \in \sC$, 
we use the notation $\ul{\Hom}_{\sC}(\sF,\sG)$ 
to denote the corresponding object of $\Vect$,
as distinguished from the 
corresponding $\infty$-groupoid 
$\Hom_{\sC}(\sF,\sG) = \Omega^{\infty} \ul{\Hom}_{\sC}(\sF,\sG)$.

\subsection{}

For $\sC,\sD$ objects of a 2-category (i.e., 
(meaning: $(\infty,2)$-category) $\mathsf{C}$,
we use the notation $\TwoHom_{\mathsf{C}}(\sC,\sD) \in \Cat$ 
to denote the corresponding category of maps.

When $\mathsf{C}$ is enriched over $\DGCat_{cont}$,
we use the same notation for the DG category of maps.
E.g., this applies for $\mathsf{C} = \DGCat_{cont}$ 
or $\mathsf{C} = \sA\mod$ for
$\sA$ as above.

\subsection{}

We use the notation $(-)^{\vee}$ to denote duals of
dualizable objects in symmetric monoidal categories.
In particular, for $\sC \in \DGCat_{cont}$ dualizable
in the sense of \cite{grbook}, we let 
$\sC^{\vee} \in \DGCat_{cont}$ denote
the corresponding dual category.

\subsection{}

For a DG category $\sC$ with $t$-structure, we use
cohomological notation: $\sC^{\leq 0}$ denotes the connective
objects and $\sC^{\geq 0}$ denotes the coconnective objects.
We let $\sC^{\heart} = \sC^{\leq 0} \cap \sC^{\geq 0}$ denote the
heart of the $t$-structure. 

\subsection{Classical objects}

Where we wish to say that an object lives
in some traditional $(1,1)$-category, we often refer to it
as \emph{classical}. So e.g., a \emph{classical vector
space} refers to an object of $\Vect^{\heart}$,
while a \emph{classical (ind)scheme} is being distinguished 
from a DG (ind)scheme.

\subsection{$D$-modules}

For an indscheme $S$ of ind-finite type, we let
$D(S)$ denote the DG category of $D$-modules on $S$ as
defined in \cite{grbook}. For a map $f:S \to T$, we let
$f^!$ and $f_{*,dR}$ denote the corresponding $D$-module functors.

We recall that for $S$ an indscheme of possibly infinite type,
there are two categories of $D$-modules, denoted
$D^*(S)$ and $D^!(S)$. We refer to \cite{dmod} for the definitions
in this setting.

\subsection{Group actions on categories}

We briefly recall some constructions from the
theory of group actions on categories.

\subsection{}

Suppose $H$ is a Tate group indscheme in the
sense of \cite{methods} \S 7, i.e., $H$ is
a group indscheme that admits a group subscheme 
$K \subset H$ such that $H/K$ is an indscheme of ind-finite type.

We recall from \cite{dmod} the category
$D^*(H)$ is canonically monoidal. 
By definition, we let $H\mod$ denote the category 
$D^*(H)\mod$ and refer to objects of this
category as categories with a \emph{strong $H$-action}.
We typically omit the adjective \emph{strong}; 
where we refer only to an $H$-action, we mean a strong $H$-action.

For $\sC \in H\mod$ and $\sF \in D^*(H)$, we let
$\sF \star -: \sC \to \sC$ denote the (\emph{convolution}) 
functor defined by the action.

\subsection{}

For $\sC \in H\mod$, we have the \emph{invariants
category} and \emph{coinvariants} categories:

\[
\sC^H \coloneqq \ul{\Hom}_{H\mod}(\Vect,\sC), \hspace{.5cm}
\sC_H \coloneqq \Vect \underset{D^*(H)}{\otimes} \sC.
\]

\noindent Here $\Vect$ is given the \emph{trivial} $H$-action.

We let $\Oblv:\sC^H \to \sC$ denote the forgetful functor.
Recall from \cite{beraldo-*/!} \S 2 and \S 4 
that if $H$ is a group scheme with 
prounipotent tail, then $\Oblv:\sC^H \to \sC$ admits
a continuous right adjoint $\Av_* = \Av_*^H$ that is
functorial in $\sC$. The composition
$\Oblv\Av_*:\sC \to \sC$ is given by convolution with the
constant $D$-module $k_H \in D^*(H)$. 

More generally, as in \cite{beraldo-*/!} \S 2.5.4, for
any character $\psi:H \to \bG_a$, we may form the
twisted invariants and coinvariants categories:

\[
\sC^{H,\psi}, \sC_{H,\psi}.
\] 

\noindent We use similar notation to the above,
though (for $H$ a group scheme) 
we often write $\Av_*^{H,\psi} = \Av_*^{\psi}$ to emphasize
the character.

\subsection{}

For $\sC$ with a right $H$-action and $\sD$ with a left
$H$-action, we let $\sC \overset{H}{\otimes} \sD$
denote the $H$-invariants for the induced diagonal
action on $\sC \otimes \sD$.

\subsection{}

Given a central extension
$\widehat{H}$ of $H$ by a torus $T$ and an element
$\lambda \in \ft^{\vee}$, we have a category
$H\mod_{\lambda}$ of categories acted on by
$H$ with \emph{level} $\lambda$, and such that
for $\lambda = 0$ we have $H\mod_0 = H\mod$.
We refer to \cite{methods} \S 11.3 and \cite{whit} \S 1.30
for definitions.

For $H = G(K)$ the loop group, ad-invariant
symmetric bilinear forms $\kappa:\fg \otimes \fg \to k$ 
define the above data, c.f. \emph{loc. cit}. In particular,
we obtain $G(K)\mod_{\kappa}$ for any $\kappa$.

In the presence of a level, we can form
invariants and coinvariants for group indschemes
$H^{\prime}$ equipped with a map $H^{\prime} \to H$
and a trivialization\footnote{One can do better: the
important thing is to have a specified action of $H^{\prime}$
on $\Vect$ with the given level.} of the corresponding central
extension of $H^{\prime}$.

For instance, for $H = G(K)$, this applies to 
$N(K)$ and $G(O)$, or any subgroup of either. 
Indeed, the Kac-Moody extension is canonically trivialized
over each of these subgroups.

Where the level is obviously implied, we sometimes allow ourselves
simply to refer to \emph{$H$-actions}, \emph{$H$-equivariant functors},
and so on.

\subsection{}

We recall from \cite{methods} that for $H$ as above,
there is a canonical category $\fh\mod$ of
modules for the Lie algebra $\fh$ of $H$
and a canonical action of $H$ on $\fh\mod$. 
We remind that if $H$ is not of finite type, the forgetful functor
$\fh\mod \to \Vect$ is not conservative.

One has similar reasoning in the presence of a level.
For instance, we have a canonical object
$\widehat{\fg}_{\kappa}\mod \in G(K)\mod_{\kappa}$.
We refer to \cite{methods} \S 11 for further discussion.

\subsection{}

We will sometimes reference the
theory of weak actions of Tate group indschemes.
We let $H\mod_{weak}$ denote the category of DG categories
with weak $H$-actions, defined as in \cite{methods} \S 7.
We use the notation $\sC \mapsto \sC^{H,w},\sC_{H,w}$ to
denote weak invariants and coinvariants functors.

\subsection{}

We will frequently reference compatibilities between
$t$-structures and group actions. We refer to \cite{whit} Appendix B
and \cite{methods} \S 10 for definitions and basic results.

\subsection{}

Finally, we end with informal remarks. 

The theory of loop group actions
on DG categories, especially weak actions,
is somewhat involved to set up, c.f. \cite{methods}.
With that said, as a black box, the theory is fairly intuitive
to use and provides quite useful insights. 

Therefore, we hope that the sometimes frequent references to 
\cite{methods} and the more formal parts (e.g., Appendix B)
of \cite{whit} will not cause the reader too much
indigestion. 

\section{Whittaker inflation}\label{s:heisenberg}

\subsection{}

The main result of this section is Theorem \ref{t:cons},
which is one of the key innovations of this paper.
For higher jet groups $G_n$ (see below) of a reductive group $G$, 
this result precisely measures how much information is
lost by the corresponding analogues of the Whittaker model.

The proof uses some constructions with Heisenberg group actions
on categories, which we recall here. This material
is a categorical version of the usual representation 
theory of Heisenberg groups over finite fields. Similar ideas
were used in \cite{lesjours}, though the
application was of different nature there.

\subsection{}\label{ss:gn-notation}

For $H$ an algebraic group and $n \geq 1$, we let
$H_n$ denote the algebraic group of maps from 
$\Spec(k[[t]]/t^n)$ to $H$. In particular, $H_1 = H$.

Let $\{e_i \in \fn\}_{i \in \cI_G}$ be Chevalley generators 
of $\fn$ indexed
by $\cI_G$ the set of simple roots. 
Let $\psi:N_n \to \bG_a$ be defined as the composition:

\[
N_n \to N_n/[N,N]_n = \prod_{i \in \cI_G} (\bG_a)_n \cdot e_i \xar{\on{sum}}
(\bG_a)_n = (\bG_a) \underset{k}{\otimes} k[[t]]/t^n \to \bG_a
\]

\noindent where the last map is induced by the functional:

\[
\begin{gathered}
k[[t]]/t^n \to k \\
\sum a_i t^i \mapsto a_{n-1}.
\end{gathered}
\]

For the remainder of this section, we assume that 
$n$ is at least $2$. 
The main result of this section answers the question:
for $\sC \in G_n\mod$, how much information do the invariants
$\sC^{N_n,\psi}$ remember about $\sC$? 

\subsection{}\label{ss:fourier-decomp}

As $n \geq 2$, we have a homomorphism:

\begin{equation}\label{eq:gn-lower-central}
\begin{gathered}
\fg \otimes \bG_a \to G_n \\
(\xi \in \fg) \mapsto \exp(t^{n-1} \xi). 
\end{gathered}
\end{equation}

\noindent This map realizes
$\fg \otimes \bG_a$ as a normal subgroup of $G_n$.
Note that the adjoint action of $G_n$ on this
normal subgroup is given by:

\[
G_n \xar{\on{ev}} G \overset{\text{adjoint}}{\actson} \fg.
\]

If $\sC$ is acted on by $G_n$, it is thus acted on by
$\fg \otimes \bG_a$ by restriction, or using
Fourier transform, by $D(\fg^{\vee})$ equipped
with the $\overset{!}{\otimes}$-tensor product.
(We omit the tensoring with $\bG_a$ because we are not
concerned with the additive structure on $\fg^{\vee}$ here.)

Fix a symmetric, linear
$G$-equivariant identification $\kappa:\fg \simeq \fg^{\vee}$
for the remainder of this section. Therefore, $\sC$ is acted
on by $D(\fg)$ with its $\overset{!}{\otimes}$-monoidal structure.
In particular, for $S$ a scheme mapping to $\fg$, we may
form $\sC|_S \coloneqq \sC \otimes_{D(\fg)} D(S)$.

Define $\sC_{reg}$ as $\sC|_{\fg_{reg}}$
where $\fg_{reg} \subset \fg$ is the subset of regular elements.
We have adjoint functors:

\[
j^!:\sC \rightleftarrows \sC_{reg}:j_{*,dR}
\]

\noindent with the right adjoint $j_{*,dR}$ being
fully faithful: indeed, these properties are inherited from
the corresponding situation $j^!:D(\fg) \rightleftarrows D(\fg_{reg}):j_{*,dR}$
for $j:\fg_{reg} \into \fg$ the embedding.

Because $\fg_{reg} \subset \fg$ is closed under the
adjoint action of $G$, and since $G_n$ acts on 
$\fg^{\vee} \simeq \fg$ through the adjoint action of $G$,
it follows that $\sC_{reg}$ is acted on by $G_n$
so that the comparison functors with $\sC$ are $G_n$-equivariant.

\subsection{Main theorem}

We have $\fg \otimes \bG_a \cap N_n = \fn \otimes \bG_a$,
and under the Fourier transform picture above,
we have:

\[
\sC^{\fn \otimes \bG_a,\psi|_{\fn \otimes \bG_a}} \simeq 
\sC|_{f+\fb}.
\]

\noindent Here $f$ a principal nilpotent
whose image in $\fg/\fb \simeq \fn^{\vee}$ is 
$\psi|_{\fn \otimes \bG_a}$.

In particular, because $f+\fb \subset \fg_{reg}$, it follows
that $\sC^{N_n,\psi} \simeq \sC_{reg}^{N_n,\psi}$.
The following result states that this is the only
loss in $(N_n,\psi)$-invariants.

\begin{thm}\label{t:cons}

The functor:

\[
G_n\mod_{reg} \xar{\sC \mapsto \sC^{N_n,\psi}} \DGCat_{cont}
\]

\noindent is conservative, where $G_n\mod_{reg} \subset G\mod$
is the full subcategory consisting of $\sC$ with $\sC_{reg} = \sC$.

\end{thm}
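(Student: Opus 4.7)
The strategy is to convert vanishing of $(N_n,\psi)$-invariants into vanishing of $D$-module support on the Kostant slice, then exploit the additional $G$-equivariance (coming from the constant loops $G \subset G_n$) to spread this vanishing across all of $\fg_{reg}$ via the adjoint action.

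I would first note the following compatibility: the conjugation action of $G_n$ on its normal subgroup $\exp(t^{n-1}\fg) \simeq \fg$ from \eqref{eq:gn-lower-central} factors through the evaluation $G_n \to G$ and coincides with the adjoint representation. Hence for any $\sC \in G_n\mod$, the $D(\fg)$-action obtained by Fourier transform is canonically $G$-equivariant, with $G \subset G_n$ the subgroup of constant loops acting on $\fg$ by $\Ad$; equivalently, the two actions assemble into an action of the monoidal category $D(\fg) \rtimes D(G)$ on $\sC$.

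Next, let $e \in \fn$ be a principal nilpotent forming an $\sl_2$-triple with $f$. Then $\fg^e \subset \fn \subset \fb$, so the Kostant slice $\kappa \coloneqq f + \fg^e$ lies in $f+\fb$. Assuming $\sC^{N_n,\psi} = \sC|_{f+\fb} = 0$, further $!$-restriction to the closed embedding $\kappa \into f+\fb$ gives $\sC|_\kappa = 0$. Classically, the action map $\mu: G \times \kappa \to \fg_{reg}$, $(g,x) \mapsto \Ad_g x$, is smooth and surjective: it is a principal bundle for the universal centralizer group scheme over $\kappa$, a smooth commutative group scheme of relative dimension $\rank G$. Since $\sC = \sC_{reg}$, smooth descent for $D$-module categorical actions implies that $\sC$ vanishes if and only if its $\mu^!$-pullback $\sC \otimes_{D(\fg),\mu^!} D(G \times \kappa)$ vanishes. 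Using the automorphism $\Psi: G \times \fg \xar{\simeq} G \times \fg$, $(g,x) \mapsto (g, \Ad_g x)$, which satisfies $\mu = \pi_\fg \circ \Psi$ for $\pi_\fg: G \times \fg \to \fg$ the second projection, the $G$-equivariance from the previous paragraph provides an identification
\[
\sC \otimes_{D(\fg),\mu^!} D(G \times \kappa) \;\simeq\; D(G) \otimes \sC|_\kappa.
\]
The right-hand side vanishes, so $\sC = 0$.

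The principal obstacle is making this last identification precise in the DG-categorical setting. While morally immediate from the $D(\fg) \rtimes D(G)$-module structure, the formal check requires carefully unwinding how the $G$-action on $\sC$ intertwines the $\mu^!$- and $\pi_\fg^!$-pullback $D(\fg)$-module structures on $D(G \times \kappa)$; I would formulate this as a lemma about $D(\fg) \rtimes D(G)$-modules, asserting that for any $G$-equivariant map $T \to \fg$, the pullback $\sC|_T$ carries a $G$-action related to the free action on the $G$-factor. Smoothness of $\mu$, smooth descent for categorical actions on $\fg_{reg}$, and the inclusion $\fg^e \subset \fb$ are standard classical inputs that I would cite.
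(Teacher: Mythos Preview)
Your argument contains a genuine gap at the very first step: the identification $\sC^{N_n,\psi} = \sC|_{f+\fb}$ is false. What Fourier transform along $\exp(t^{n-1}\fg)$ gives you is $\sC^{\fn \otimes \bG_a,\,\psi|_{\fn\otimes\bG_a}} = \sC|_{f+\fb}$, where $\fn \otimes \bG_a = \exp(t^{n-1}\fn)$ is only the \emph{top graded piece} $N_n^1$ of $N_n$. The full $(N_n,\psi)$-invariants are obtained from $\sC|_{f+\fb}$ by taking further $(N_n/N_n^1,\psi)$-invariants, and vanishing of those further invariants does not imply vanishing of $\sC|_{f+\fb}$ or of $\sC|_\kappa$: taking invariants for a unipotent group against a nontrivial character is, after Fourier, restriction of a sheaf of categories to a single point, which is far from conservative.

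What your argument actually establishes is that $\sC|_{f+\fb} = 0 \Rightarrow \sC = 0$ on $G_n\mod_{reg}$, i.e.\ conservativity of $\sC \mapsto \sC^{N_n^1,\psi}$. This is exactly the paper's base case ($m=1$) and, for $n=2$, the full theorem (since there $N_2/N_2^1 = N$ acts freely on $f+\fb$ with quotient $\kappa$, so $\sC^{N_2,\psi} \simeq \sC|_\kappa$ and your spreading argument via $G\times\kappa \to \fg_{reg}$ finishes it; this is the paper's \S\ref{ss:n=2}). But conservativity of $N_n^1$-invariants is strictly \emph{weaker} than conservativity of $N_n$-invariants: since $\sC^{N_n,\psi}$ factors through $\sC^{N_n^1,\psi}$, the implication goes the wrong way. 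For $n \geq 3$, climbing from $N_n^1$ up to $N_n$ is precisely the nontrivial content, and the paper handles it by an induction along the filtration $N_n^1 \subset N_n^2 \subset \cdots \subset N_n^n = N_n$, exhibiting at each step a Heisenberg group acting on the intermediate invariants so that passage to the next stage is invariants for a Lagrangian, which is conservative by the categorical Stone--von~Neumann theorem. Your proposal is missing this entire mechanism.
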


Here are some consequences.

\begin{cor}\label{c:hecke}

For every $\sC \in G_n\mod$, the convolution functor:

\[
D(G_n)^{N_n,-\psi} \underset{\sH_{N_n,\psi}}{\otimes} \sC^{N_n,
\psi} \to \sC
\]

\noindent is fully faithful with essential image $\sC_{reg}$.
Here $\sH_{N_n,\psi} = D(G_n)^{N_n \times N_n,(\psi,-\psi)}$
is the appropriate Hecke category for the pair $(G_n,(N_n,\psi))$.

\end{cor}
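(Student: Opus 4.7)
The plan is to identify $\sD \coloneqq D(G_n)^{N_n,-\psi} \otimes_{\sH_{N_n,\psi}} \sC^{N_n,\psi}$ with $\sC_{reg}$ as $G_n$-module categories via the convolution functor $F$, using Theorem \ref{t:cons} as the main tool. First I would observe that $F$ factors through $\sC_{reg} \hookrightarrow \sC$: since $\sC^{N_n,\psi} = \sC_{reg}^{N_n,\psi}$ (recorded just before Theorem \ref{t:cons}), every input to $F$ lies in $\sC_{reg}$, and $\sC_{reg}$ is closed under the $G_n$-action because $\fg_{reg}$ is $\Ad$-invariant. Let $\ol F : \sD \to \sC_{reg}$ denote the resulting functor; as $\sC_{reg} \hookrightarrow \sC$ is fully faithful, the corollary reduces to showing $\ol F$ is an equivalence of $G_n$-module categories.

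To apply Theorem \ref{t:cons}, I would check two inputs. The first is that $\sD \in G_n\mod_{reg}$, for which it suffices to show $D(G_n)^{N_n,-\psi}$ itself lies in $G_n\mod_{reg}$ (the regularity of $\sD$ then follows because the $G_n$-action on the tensor product factors through the $G_n$-action on this first factor). The key Fourier-transform calculation is that right $(\fn\otimes\bG_a,-\psi)$-equivariance localizes the right $V\coloneqq \fg\otimes\bG_a$-action to the slice $f+\fb\subset\fg_{reg}$, and the normality of $V$ in $G_n$ then transfers this constraint to the left $V$-action up to an $\Ad(G)$-twist, placing the left support in $\Ad(G)\cdot(f+\fb)=\fg_{reg}$. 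The second input is that $\ol F$ induces an equivalence on Whittaker invariants: a standard bar-style identification gives $(D(G_n)^{N_n,-\psi})^{N_n,\psi}\simeq \sH_{N_n,\psi}$, hence $\sD^{N_n,\psi}\simeq \sH_{N_n,\psi}\otimes_{\sH_{N_n,\psi}}\sC^{N_n,\psi}\simeq \sC_{reg}^{N_n,\psi}$, and this natural equivalence is exactly $\ol F^{N_n,\psi}$.

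With these two inputs, the fiber and cofiber of $\ol F$ in $G_n\mod_{reg}$ both have vanishing Whittaker invariants, hence vanish by Theorem \ref{t:cons}, so $\ol F$ is an equivalence. The main obstacle I anticipate is the support calculation for $D(G_n)^{N_n,-\psi}$, namely transferring the Whittaker constraint from the right $V$-action (where it is manifest via Fourier) to the left $V$-action (which is what governs membership in $G_n\mod_{reg}$). Making this precise requires careful Fourier-transform bookkeeping exploiting the normality of $V$ in $G_n$, together with the standard fact that the $\Ad(G)$-orbit of the Kostant-type slice $f+\fb$ exhausts $\fg_{reg}$; the remaining steps are formal manipulations of equivariant categories.
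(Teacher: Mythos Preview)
Your proof is correct and follows the same approach as the paper: factor through $\sC_{reg}$, then apply Theorem \ref{t:cons} after checking the induced functor is an equivalence on $(N_n,\psi)$-invariants. The paper's two-sentence proof leaves implicit the verification that the source $\sD$ lies in $G_n\mod_{reg}$, which you correctly identify as needing the normality-of-$V$ argument transferring the right Whittaker support constraint to the left $\fg$-spectrum; this is a genuine detail the paper glosses over, and your treatment of it is sound.
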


\begin{proof}

Note that this functor is $G_n$-equivariant and that its essential
image factors through $G_n$ (by the above analysis).
Therefore, by Theorem \ref{t:cons}, it suffices to show that
it is an equivalence on $(N_n,\psi)$-invariants, which is clear.

\end{proof}

\begin{cor}

Observe that $D(G_n)_{reg}$ admits a unique monoidal structure
such that the localization functor 
$D(G_n) \to D(G_n)_{reg}$ is monoidal.

Then $D(G_n)_{reg}$ and $\sH_{N_n,\psi}$ (as defined in the previous corollary)
are Morita equivalent, with bimodule $D(G_n)^{N_n,\psi}$ defining
this equivalence.

\end{cor}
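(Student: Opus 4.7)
The plan is to deduce this from Corollary~\ref{c:hecke} by specializing to $\sC = D(G_n)_{reg}$ (which equals its own regular part) and then checking that the two compositions of $D(G_n)^{N_n,\psi}$ with its ``opposite'' bimodule $D(G_n)^{N_n,-\psi}$ compute the two algebras.

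First, I would verify the parenthetical observation that the localization $D(G_n) \to D(G_n)_{reg}$ carries a (unique) monoidal structure. Writing $D(G_n)_{reg} = D(G_n)\otimes_{D(\fg)} D(\fg_{reg})$, the subcategory $\ker = \{\sF : j^!\sF = 0\}$ consists of those objects set-theoretically supported over $\fg \setminus \fg_{reg}$ via the Fourier decomposition of $\fg\otimes \bG_a \subset G_n$ from \S\ref{ss:fourier-decomp}. Because the adjoint $G$-action on $\fg$ preserves $\fg_{reg}$ and the $G_n$-action on $\fg\otimes\bG_a$ factors through this adjoint action, convolution on either side by an arbitrary $\sG \in D(G_n)$ preserves $\ker$; hence $\ker$ is a two-sided monoidal ideal, giving the desired monoidal structure on the quotient $D(G_n)_{reg}$. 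A parallel argument shows $D(G_n)^{N_n,\psi}$ is naturally a $(D(G_n)_{reg},\sH_{N_n,\psi})$-bimodule: the $D(G_n)$-action by convolution factors through $D(G_n)_{reg}$ because the observation after \S\ref{ss:fourier-decomp} gives $\sC^{N_n,\psi} = \sC_{reg}^{N_n,\psi}$ for any $\sC \in G_n\mod$, applied here to $\sC = D(G_n)$ viewed with the outer $G_n$-action.

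Next I would verify the two Morita composition identities. For the first,
\[
D(G_n)^{N_n,-\psi} \underset{\sH_{N_n,\psi}}{\otimes} D(G_n)^{N_n,\psi} \xrightarrow{\sim} D(G_n)_{reg},
\]
apply Corollary~\ref{c:hecke} with $\sC = D(G_n)_{reg} \in G_n\mod$ (the action being by, say, left translation, the other $G_n$-factor being used to form $D(G_n)^{N_n,\psi}$). Since $\sC_{reg} = \sC$, the essential image is all of $\sC$, and fully faithfulness upgrades this to an equivalence; moreover the equivalence is manifestly one of $D(G_n)_{reg}$-bimodules because it is constructed by convolution. For the second composition,
\[
D(G_n)^{N_n,\psi} \underset{D(G_n)_{reg}}{\otimes} D(G_n)^{N_n,-\psi} \xrightarrow{\sim} \sH_{N_n,\psi},
\]
observe that because the $(N_n,\pm\psi)$-invariants of any $G_n$-module coincide with those of its regular part, the $D(G_n)$-actions on both bimodules factor through $D(G_n)_{reg}$, so the tensor product may equally well be computed as the relative tensor product over $D(G_n)$. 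The latter collapses by the standard manipulation $D(G_n)^{N_n,\psi} \otimes_{D(G_n)} D(G_n)^{N_n,-\psi} = \sH_{N_n,\psi}$ (realizing $D(G_n)^{N_n,\pm\psi}$ as $\Vect_{\psi} \otimes_{D(N_n)} D(G_n)$ and applying associativity).

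I expect the only genuine obstacle to be bookkeeping: keeping the left/right actions of $G_n$ on $G_n$ straight, verifying the monoidal quotient structure on $D(G_n)_{reg}$ carefully, and checking that the equivalences above respect the bimodule structures rather than merely the underlying DG categories. The conceptual input—conservativity and the consequent ``all of $\sC_{reg}$'' essential image statement—is entirely supplied by Theorem~\ref{t:cons} and Corollary~\ref{c:hecke}, so no new representation-theoretic content is needed beyond what this section has already developed.
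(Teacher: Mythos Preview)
Your proposal is correct and is precisely the natural argument the paper has in mind; indeed, the paper states this corollary without proof, treating it as immediate from Corollary~\ref{c:hecke}. The two Morita composition identities you check---the first directly from Corollary~\ref{c:hecke} applied to $\sC = D(G_n)$ (or $D(G_n)_{reg}$), the second by the idempotence of the localization $D(G_n) \to D(G_n)_{reg}$ reducing the tensor to one over $D(G_n)$---are exactly what is needed, and your bookkeeping of left/right actions and bimodule structures is in order.
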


The remainder of this section is devoted to the proof of 
Theorem \ref{t:cons}.

\subsection{Example: $n = 2$ case}\label{ss:n=2}

First, we prove Theorem \ref{t:cons} in the $n = 2$ case.
This case is simpler than the general case, and contains one of
the main ideas in the proof of the general case.

Note that by Fourier transform along $\fg \otimes \bG_a \subset G_2$,
an action of $G_2$ on $\sC$ is equivalent to the datum of 
$G$ on $\sC$, and an action of $(D(\fg),\overset{!}{\otimes})$
on $\sC$ as an object of $G\mod$ (where $G$ acts on $D(\fg)$
by the adjoint action). In the sheaf of categories language
\cite{shvcat}, we obtain:

\[
G_2\mod \simeq \ShvCat_{/(\fg/G)_{dR}}.
\]

The functor of $(N_2,\psi)$-invariants then corresponds
to global sections of the sheaf of categories
over $(f+\fb/N)_{dR}$, i.e., the de Rham space of the Kostant slice.
Recall that the Kostant slice $f+\fb/N$ is an affine scheme
and maps smoothly to $\fg/G$ with image $\fg_{reg}/G$.

As the Kostant slice is a scheme (not a stack), 
\cite{shvcat} Theorem 2.6.3 implies that
$(f+\fb/N)_{dR}$ is 1-affine. In particular, its global
sections functor is conservative.

Therefore, it suffices to note that pullback of sheaves of categories
along the map $(f+\fb/N)_{dR} \to (\fg_{reg}/G)_{dR}$ is conservative.
However, in the diagram:

\[
\xymatrix{
f+\fb/N \ar[r] \ar[d] & \fg_{reg}/G \ar[d] \\
(f+\fb/N)_{dR} \ar[r] & (\fg_{reg}/G)_{dR}
}
\]

\noindent pullback for sheaves of categories along the vertical
maps is conservative for formal reasons (e.g., write
de Rham as the quotient by the infinitesimal groupoid),
and conservativeness of pullback along the upper arrow follows from 
descent of sheaves of categories 
along smooth (or more generally fppf) covers, 
c.f. \cite{shvcat} Theorem 1.5.2.
This implies that pullback along 
the bottom arrow is conservative as well.

\begin{rem}

It follows from the above analysis that 
the Hecke algebra $\sH_2$ (in the notation of Corollary \ref{c:hecke})
is equivalent to $D$-modules on the group scheme
of regular centralizers.

\end{rem}

\subsection{Heisenberg groups}\label{ss:heis-defin}

We will deduce the general case of 
Theorem \ref{t:cons} from the representation 
theory of Heisenberg groups, which we digress to discuss now.

Let $V$ be a finite-dimensional vector space. In the following
discussion, we do not distinguish
between $V$ and the additive group scheme $V \otimes_k \bG_a$.

Let $H = H(V)$ denote the corresponding \emph{Heisenberg group};
by definition, $H$ is the semidirect product:

\[
V \ltimes (V^{\vee} \times \bG_a)
\]

\noindent where $V$ acts on $V^{\vee} \times \bG_a$ via:

\[
v \cdot (\lambda,c) = (\lambda,c+\lambda(v)), 
\hspace{.5cm} (v,\lambda,c) \in V \times V^{\vee} \times \bG_a.
\]

\begin{rem}\label{r:heis-sympl}

Note that 
$H$ only depends on the symplectic vector space $W = V \times V^{\vee}$, not on
the choice of polarization $V \subset W$. But the above presentation
is convenient for our purposes.

\end{rem}

\subsection{}\label{ss:heis-reg-defin}

Observe that $\bG_a \subset H$ is central.
In particular, $D(\bA^1) \overset{\on{Fourier}}{\simeq} D(\bG_a)$
maps centrally to $H$, where we use $D(\bA^1)$ to indicate
that we consider the $\overset{!}{\otimes}$-monoidal structure
and $D(\bG_a)$ to indicate the convolution monoidal structure.

Let $H\mod_{reg} \subset H\mod$ denote the subcategory
where $D(\bA^1)$ acts through its localization
$D(\bA^1\setminus 0)$, i.e., where all Fourier coefficients
are non-zero. 

\begin{thm}\label{t:heisenberg}

The functor:

\[
H\mod_{reg} \xar{\sC \mapsto \sC^V} D(\bA^1\setminus 0)\mod
\]

\noindent is an equivalence.

\end{thm}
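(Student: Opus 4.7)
The plan is to reorganize the datum of an $H$-action using the abelian normal subgroup $P \coloneqq V^\vee \times \bG_a \subset H$. Since $P$ is abelian, restricting the $H$-action to $P$ and applying Fourier transform expresses that part of the datum as an action of $(D(V \times \bA^1), \overset{!}{\otimes})$ on $\sC$. The remaining data is a compatible action of the quotient $H/P \simeq V$, and the subcategory $H\mod_{reg}$ is precisely the one where the $D(V \times \bA^1)$-action factors through the open localization $D(V \times (\bA^1 \setminus 0))$.

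Next, I compute how $V \subset H$ conjugates $P$: trivially on $V^\vee$, and by $c \mapsto c + \lambda(v)$ on $\bG_a$. Dualizing this shear and transferring it to the Fourier-dual coordinates $V \times \bA^1$, one obtains that $V$ acts on $V \times \bA^1$ by the shear $(\mu, \psi) \mapsto (\mu + \psi v, \psi)$. The crucial geometric observation is that, restricted to the open subset $V \times (\bA^1 \setminus 0)$, this $V$-action is \emph{free}: the projection $V \times (\bA^1 \setminus 0) \to \bA^1 \setminus 0$ exhibits it as a trivial $V$-torsor, with trivializing section $\psi \mapsto (0, \psi)$.

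With this in hand, sheaf-of-categories descent along this $V$-torsor (in the style of \cite{shvcat}) should furnish an equivalence between $H\mod_{reg}$ and $D(\bA^1 \setminus 0)\mod$. Tracing through the definitions, the descended category is precisely $\sC^V$ computed for the restriction of the original $H$-action along $V \subset H$: the two match because $V$'s action on the Fourier-dual coordinates is exactly the shear encoding the commutator relations of $H$, and nothing extraneous.

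The main obstacle will be making the $2$-categorical descent rigorous, in particular identifying sheaves of categories on $\bA^1 \setminus 0$ with $D(\bA^1 \setminus 0)\mod$. This amounts to $1$-affineness of $\bA^1 \setminus 0$, together with descent of sheaves of categories along the smooth $V$-torsor above, both of which are available via \cite{shvcat}. The argument is in essentially the same spirit as the reduction used in the $n = 2$ case in \S \ref{ss:n=2}, which rested on $1$-affineness of the de Rham space of an affine scheme. A secondary subtlety is verifying that the Fourier transform along $P$ is itself compatible with the $2$-categorical structures in play, but this is routine.
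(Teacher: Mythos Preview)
Your proposal is correct and follows essentially the same approach as the paper: Fourier transform along the abelian normal subgroup $V^{\vee} \times \bG_a$, translate the residual $V$-action to the shear action on $V \times \bA^1$, observe freeness on $V \times (\bA^1 \setminus 0)$, and conclude via sheaf-of-categories descent in the style of \cite{shvcat}. The paper phrases the last step directly at the de Rham level, identifying the quotient $(V_{dR} \times (\bA^1 \setminus 0)_{dR})/V_{dR}$ with $(\bA^1 \setminus 0)_{dR}$, but this is equivalent to your $1$-affineness phrasing.
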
 

\begin{cor}\label{c:heisenberg}

The functor of $V$-invariants is conservative on $H\mod_{reg}$.

\end{cor}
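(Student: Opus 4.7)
The plan is to read off Corollary \ref{c:heisenberg} as an immediate consequence of Theorem \ref{t:heisenberg}, with essentially no additional work. An equivalence of (2-)categories is automatically conservative in any reasonable sense: it reflects zero objects, and more generally it reflects equivalences between morphisms. Since Theorem \ref{t:heisenberg} provides an equivalence $H\mod_{reg} \xrightarrow{\sim} D(\bA^1\setminus 0)\mod$ implemented by the $V$-invariants 2-functor, any $\sC \in H\mod_{reg}$ with $\sC^V \simeq 0$ must itself be zero, which is exactly the assertion.

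The one linguistic point worth pinning down is what ``conservative'' should mean for a 2-functor between 2-categories of module categories. The interpretation compatible with the intended downstream use (in the proof of Theorem \ref{t:cons}) is the object-level one: if $\sC^V \simeq 0$ in $D(\bA^1\setminus 0)\mod$, then $\sC \simeq 0$ in $H\mod_{reg}$, and hence $\sC$ vanishes as a DG category. The plan is to verify this by simply transporting the zero object back through the inverse of the equivalence supplied by Theorem \ref{t:heisenberg}; no computation is involved. Similarly, if a desired stronger reading is wanted — that a morphism $F : \sC \to \sD$ in $H\mod_{reg}$ is an equivalence whenever $F^V$ is — this too is automatic from the fact that $(-)^V$ is an equivalence of 2-categories.

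There is no genuine obstacle to carrying this out; the corollary is essentially a linguistic unpacking of the theorem, and the only substantive content lives in Theorem \ref{t:heisenberg} itself. Accordingly, the full write-up I envisage is one sentence: \emph{immediate from Theorem \ref{t:heisenberg}, since any equivalence of categories is conservative}.
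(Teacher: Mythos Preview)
Your proposal is correct and matches the paper's approach exactly: the corollary is stated without proof immediately after Theorem \ref{t:heisenberg}, and your one-sentence justification (an equivalence of categories is conservative) is precisely the intended reasoning.
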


\begin{proof}[Proof of Theorem \ref{t:heisenberg}]

Note that by duality, $V$ acts on $V \times \bA^1$; 
explicitly, this is given by the formula:

\[
v \cdot (w,c) \coloneqq (w-c\cdot v,c)
\]

By Fourier transform along $V^{\vee} \times \bG_a \subset H$,
we see that an $H$-action on $\sC$ is equivalent
to giving a $V$-action on $\sC$ (where $V$ is given its natural
additive structure), and
an additional
$(D(V \times \bA^1),\overset{!}{\otimes})$-action on $\sC$ in the
category $V\mod$. 

Using the sheaf of categories
language \cite{shvcat},
this is equivalent to the data
of a sheaf of categories on $(V_{dR} \times \bA_{dR}^1)/V_{dR}$,
where we are quotienting using the above action.
The corresponding object of $H\mod$ lies in $H\mod_{reg}$
if and only if the sheaf of categories is pushed forward
from:

\[
(V_{dR} \times \bA_{dR}^1\setminus 0)/V_{dR} = \bA_{dR}^1\setminus 0.
\]

Therefore, we obtain an equivalence of the above type. Geometrically,
this equivalence is given by taking global sections of a
sheaf of categories, which for 
$(V_{dR} \times \bA_{dR}^1)/V_{dR}$
corresponds to taking (strong) $V$-invariants for the corresponding
$H$-module category.

\end{proof}

\subsection{Proof of Theorem \ref{t:cons}}

We now return to the setting of Theorem \ref{t:cons}.
The remainder of this section is devoted to
the proof of this result.

In what follows, for 
$\fh$ a nilpotent Lie algebra, we let $\exp(\fh)$ denote
the corresponding unipotent algebraic group.

Let $N_n^m = \exp(t^{n-m}\fn[[t]]/t^n\fn[[t]]) \subset N_n$
for $1 \leq m \leq n$. For example, for $m = 1$ we recover
the group $\fn \otimes \bG_a \subset G_n$.

We will show by induction on $m$ that the functor
of $(N_n^m,\psi)$-invariants is conservative on $G_n\mod_{reg}$.

\subsection{}

As a base case, we first show the claim for $m = 1$. 

Here the assertion follows 
by the argument of \S \ref{ss:n=2}.
Indeed, we have a homomorphism $G_2 \to G_n$
which identifies $G \subset G_2,G_n$ and
$\fg \otimes \bG_a \subset G_2,G_n$.
Restricting along this homomorphism, we obtain
that $\psi$-invariants for 
$N \cdot N_n^1 \subset N_n$ is conservative,
and a fortiori, $(N_n^1,\psi)$-invariants is as well.

\subsection{}\label{ss:n/2}

We\footnote{The arguments in \S \ref{ss:n/2} and 
\S \ref{ss:2m+1} are not needed in the case $\fg = \sl_2$,
which is what we use for our application to the
localization theorem. Indeed, for $\fg = \sl_2$, 
in the argument in \S \ref{ss:cons-pf}, one only
needs to consider (in the notation of \emph{loc. cit}.) 
$r = 1$, in which case $\fg_{1-r} = \ft$ is abelian,
hence the last equation in \eqref{eq:h0} holds for 
trivial reasons. Given that equality, the rest of
the argument goes through for $m \geq 2$.

In other words, the reader who is only interested in 
Theorem \ref{t:main} can safely skip \S \ref{ss:n/2} 
and \S \ref{ss:2m+1}.}
now observe that the above argument extends
to treat any $m \leq \frac{n}{2}$.

In this case, the subalgebra 
$t^{n-m} \fg[[t]]/t^n\fg[[t]] \subset 
\fg[[t]]/t^n\fg[[t]] = \Lie(G_n)$ is abelian.
Clearly this subalgebra is normal; the adjoint
action of $G_n$ on it is given via the representation:

\[
G_n \to G_m \actson \Lie(G_m) = \fg[[t]]/t^m \fg[[t]]
\overset{t^m \cdot -}{\simeq} 
t^{n-m} \fg[[t]]/t^n\fg[[t]].
\]

We therefore have a homomorphism:

\[
G_n \ltimes t^{n-m} \fg[[t]]/t^n\fg[[t]] \otimes \bG_a
\to G_n
\]

\noindent whose restriction to $G_n$
is the identity and whose restriction
to $t^{n-m} \fg[[t]]/t^n\fg[[t]] \otimes \bG_a$
is the exponential of the embedding
$t^{n-m} \fg[[t]]/t^n\fg[[t]] \into \fg[[t]]/t^n\fg[[t]]$.

Considering $\sC$ as a category acted on by
$G_n \ltimes t^{n-m} \fg[[t]]/t^n\fg[[t]]$ via the
above map and Fourier transforming as in
Example \ref{ss:n=2}, we can view this action 
as the data of making $\sC$ into a sheaf of categories on
$(\fg[[t]]/t^m\fg[[t]])_{dR}/G_{n,dR}$. 
Here we have identified the dual of
$(t^{n-m} \fg[[t]]/t^n\fg[[t]])$ with
$\fg[[t]]/t^m\fg[[t]]$ via the pairing
$(\xi_1,\xi_2) \mapsto \Res(t^{-n} \kappa(\xi_1,\xi_2) dt)$
(for $\kappa$ as above).

Define: 

\[
(\fg[[t]]/t^m\fg[[t]])_{reg} = 
\fg[[t]]/t^m\fg[[t]] \times_{\fg} \fg_{reg}.
\]

\noindent By the regularity assumption on $\sC$, the 
above sheaf of categories is pushed-forward
from:

\[
(\fg[[t]]/t^m\fg[[t]])_{reg,dR}/G_{n,dR}.
\]

Then $(N_n^m,\psi)$-invariants 
correspond to global sections of
$(f+\fb[[t]]/t^m\fb[[t]])_{dR}$ with 
coefficients in the above sheaf of categories.
As the map:

\[
(f+\fb[[t]]/t^m\fb[[t]]) \to 
(\fg[[t]]/t^m\fg[[t]])_{reg}/G_m
\]

\noindent is a smooth cover (as it is obtained by applying
jets to a smooth cover), the same is true of:

\[
(f+\fb[[t]]/t^m\fb[[t]]) \to 
(\fg[[t]]/t^m\fg[[t]])_{reg}/G_n.
\]

\noindent As
$f+\fb[[t]]/t^m\fb[[t]]$ is a scheme, the reasoning
of \S \ref{ss:n=2} gives us the desired result.

\subsection{}

In \S \ref{ss:2m+1}, we will give a separate
argument to treat the case $n = 2m-1$; of course, this
is only possible for $n$ odd. The argument
is not complicated, but a little involved to set up, so 
we postpone the argument for the moment.

Combined with \S \ref{ss:n/2}, this gives the result
for all $m \leq \frac{n+1}{2}$.

\subsection{}

We now perform the induction; 
we assume the conservativeness for $m - 1$ and 
show it for our given $m \leq n$. By the inductive
hypothesis as established above (though postponed in one
case to \S \ref{ss:2m+1}), we may assume 
$m \geq \frac{n+2}{2}$.

We will give the argument here by another inductive argument. 
As above, let $\fg = \oplus_s \fg_s$
be the principal grading defined by the coweight
$\check{\rho}:\bG_m \to G^{ad}$ of the adjoint
group $G^{ad}$ of $G$. So for example, $e_i \in \fg_1$
and $\fn = \oplus_{s \geq 1} \fg_s$. For $r \geq 1$, let
$\fn_{\geq r} \coloneqq \oplus_{s \geq r} \fg_s$.

Now define: 

\[
N_n^{m,r} \coloneqq 
\exp\big(t^{n-m+1}\fn[[t]]+t^{n-m} \fn_{\geq r}[[t]]/t^n\fn[[t]]\big) \subset N_n^m \subset N_n.
\]

\noindent We will show by descending induction on $r \geq 1$ that 
$(N_n^{m,r},\psi)$-invariants is conservative. Note that this
result is clear from our hypothesis on $m$ 
for $r \gg 0$, since then $\fn_{\geq r} = 0$ and $N_n^{m,r} = N_n^{m-1}$.
Moreover, a proof for all $r$ implies the next step in the induction
with respect to $m$, since $N_n^{m,1} = N_n^m$, so would complete
the proof of Theorem \ref{t:cons}.

\subsection{}\label{ss:cons-pf}

For $r \geq 1$, assume the conservativeness (in the regular setting) of
$(N_n^{m,r+1},\psi)$-invariants; we will deduce it for
$N_n^{m,r}$. The idea is to make a Heisenberg group act
on $(N_n^{m,r+1},\psi)$-invariants so that invariants with respect
to a Lagrangian gives $(N_n^{m,r},\psi)$-invariants.

\setcounter{steps}{0}
\step 

Define $\fh_0 \subset \Lie(G_n) = \fg[[t]]/t^n\fg[[t]]$ as:

\[
t^{m-1} \fg_{1-r} \oplus \Lie(N_n^{m,r}).
\] 

Observe that $\fh_0$ is a Lie subalgebra. 
Indeed: 

\begin{equation}\label{eq:h0}
\begin{gathered} 
[t^{m-1} \fg_{1-r},t^{n-m+1} \fg[[t]]] \subset t^n\fg[[t]],  \\
[t^{m-1} \fg_{1-r},t^{n-m}\fn_{\geq r}]\subset t^{n-1} \fn,
\text{ and } \\
[t^{m-1} \fg_{1-r},t^{m-1} \fg_{1-r}] \subset 
t^{2m-2} \fg[[t]] \subset t^n\fg[[t]]
\end{gathered}
\end{equation}

\noindent where the last embedding uses 
the assumption $m\geq \frac{n+2}{2}$.

In the same way, we see that\footnote{The same is true for
$r$ instead of $r+1$, but the statement with the character is not.}
 $\Lie(N_n^{m,r+1})$
is a normal Lie subalgebra of $\fh_0$, and that
for $\xi \in \fh_0$ and $\vph \in \Lie(N_n^{m,r+1})$,
$\psi([\xi,\vph]) = 0$.

Moreover, $\fh_0$ is nilpotent, so exponentiates
to a group $H_0 \subset G_n$.\footnote{The embedding exponentiates 
because $\fh_0 \subset \fn + t \fg[[t]]/t^n\fg[[t]]$, i.e.,
the Lie algebra of a unipotent subgroup of $G_n$. Here we use
that $m \geq 2$.}
Combining this with the above, we see that 
$H_0$ acts on $(N_n^{m,r+1},\psi)$-invariants for any category
with an action of $G_n$.

\step

Let $\fg_{1-r}^{\prime} \subset \fg_{1-r}$ denote
$\Ad_f^{2r-1}(\fg_r)$.
Observe that the pairing:

\begin{equation}\label{eq:psi-pairing}
\psi([-,-]):\fg_r \otimes \fg_{1-r} \to k
\end{equation} 

\noindent induces a perfect pairing between
$\fg_r$ and $\fg_{1-r}^{\prime}$. Indeed,
the diagram:

\[
\xymatrix{
\fg_r \otimes \fg_r \ar[r]^{\id \otimes \Ad_f^{2r-1}} & 
\fg_r \otimes \fg_{1-r} \ar[r]^{\id \otimes \Ad_f} \ar[dr]^{\psi([-,-])} &
\fg_r \otimes \fg_{-r} \ar[d]^{-\kappa(-,-)} \\
& & k
}
\]

\noindent commutes\footnote{Proof: write $\psi(-)$ as $\kappa(f,-)$
and use $\Ad$-invariance of $\kappa$.}, and
$\Ad_f^{2r}:\fg_r \to \fg_{-r}$ is an isomorphism by
$\sl_2$-representation theory.

Define $\fh_0^{\prime} \subset \fh_0$ as:

\[
t^{m-1} \fg_{1-r}^{\prime} \oplus \Lie(N_n^{m,r}).
\]

Again, $\fh_0^{\prime}$ integrates to a group $H_0^{\prime}$.

\step 

Finally, recall that the adjoint action 
of $H_0$ fixes $N_n^{m,r+1} \subset H_0$ and preserves
its character $\psi$ to $\bG_a$. Let $K \subset N_n^{m,r+1}$ be the
kernel of $\psi$: clearly $K$ is normal in $H_0$. 

One immediately observes that $H \coloneqq H_0^{\prime}/K$ is a Heisenberg
group. The central $\bG_a$ is induced by the map:

\[
\bG_a = N_n^{m,r+1}/K \to H_0^{\prime}/K = H.
\]

\noindent The vector space defining the Heisenberg group
is $t^{n-m} \fg_r$, and its dual is embedded as
$t^{m-1}\fg_{1-r}^{\prime} = H_0^{\prime}/K$.

Now observe that our Heisenberg group $H$ acts on
$\sC^{N_n^{m,r+1},\psi}$ for any $\sC$ acted on by $G_n$,
with its central $\bG_a$ acting through the exponential character.
Now the result follows from Corollary \ref{c:heisenberg}.

\subsection{}\label{ss:2m+1}

As above, it remains to show the
result in the special case that
$n = 2m-1$ for some $m \geq 2$.
We do so below.

\setcounter{steps}{0}
\step 

We need some auxiliary constructions.

Let $\xi \in \fg_{reg}$ be a $k$-point (i.e., a regular
element of $\fg$ in the usual sense).
Let $\fz_{\xi} \subset \fg$ denote the centralizer
of $\xi$. 

Then $\fg/\fz_{\xi}$ carries an alternating form:

\[
(\vph_1,\vph_2)_{\xi} \coloneqq \kappa(\xi,[\vph_1,\vph_2]) = 
\kappa([\xi,\vph_1],\vph_2).
\]

\noindent The second equality holds as $\kappa$ is 
$G$-invariant, and shows that $(-,-)_{\xi}$ descends
to $\fg/\fz_{\xi}$. Moreover, as $\kappa$ is non-degenerate,
we see from the last expression 
that $(-,-)_{\xi}$ is non-degenerate on 
$\fg/\fz_{\xi}$, hence symplectic.

\step In the above setting, suppose that 
$\xi$ lies in the Kostant slice $f+\fb$.

In this case, we claim that the composition 
$\fn \into \fg \onto \fg/\fz_{\xi}$ is injective,
and that $\fn \subset \fg/\fz_{\xi}$ is Lagrangian
with respect to the symplectic form $(-,-)_{\xi}$.

Indeed, it is standard that $\fz_{\xi} \cap \fn = 0$
(this is the infinitesimal version of the freeness
of the action of $N$ on $f+\fb$), giving the
injectivity. 

We now claim that $\fn$ is isotropic for the above form.
For $\vph_1,\vph_2 \in \fn$, we have:

\[
(\vph_1,\vph_2)_{\xi} = 
\kappa(\xi,[\vph_1,\vph_2])
\]

\noindent by definition; we claim this inner
product is zero. Let $\fg = \oplus_s \fg_s$ 
be the \emph{principal grading} of $\fg$, i.e., the grading
defined by the coweight
$\check{\rho}:\bG_m \to G^{ad}$.
Then $[\vph_1,\vph_2] \in 
[\fn,\fn] = \oplus_{s \geq 2} \fg_s$, while
$\xi \in f+\fb \subset \oplus_{s \geq -1} \fg_s$.
By invariance of $\kappa$, for 
$\widetilde{\xi} \in \fg_s,\widetilde{\vph} \in \fg_r$,
we have $\kappa(\widetilde{\xi},\widetilde{\vph}) = 0$
unless $r+s = 0$, giving the claim.

Finally, $2\dim(\fn)+\dim(\fz_{\xi}) = 
\dim(\fn)+\dim(\fn^-)+\dim(\ft) = \dim(\fg)$, so
$\fn \subset \fg/\fz_{\xi}$ is in fact Lagrangian.

\step\label{st:reg-symplectic-bundle}

Next, we observe that the above generalizes to the 
scheme-theoretic situation in which we allow
$\xi$ to vary.

More precisely, let $\widetilde{\sW} 
= \fg \otimes \sO_{\fg_{reg}}$
be the constant vector bundle on $\fg_{reg}$
with fiber $\fg$. This bundle carries
a subbundle $\fz \subset \widetilde{W}$ 
of regular centralizers; e.g., the fiber
of $\fz$ at $\xi \in \fg(k)$ is $\fz_{\xi}$. 

The quotient:

\[
\sW \coloneqq 
\widetilde{\sW}/\fz
\]

\noindent is a
vector bundle on $\fg_{reg}$. Our earlier
construction defines a symplectic form on
$\sW$. Moreover, after pulling back along 
the embedding $i:f+\fb \into \fg_{reg}$, 
the constant bundle with fiber $\fn$ defines
a Lagrangian subbundle of the vector bundle $i^*(\sW)$.

\step 

\newcounter{steps-aux}
\setcounter{steps-aux}{\value{steps}}

We now record some general results in the above setting.

Let $S$ be a scheme of finite type and let 
$\sW$ be a symplectic vector bundle on $S$.
We denote the total space of $\sW$ by the same
notation.

Define the \emph{Heisenberg group scheme} 
$\sH = \sH(\sW)$ over $S$ as the extension:

\[
0 \to \bG_{a,S} \to \sH \to \sW \to 0
\]

\noindent where $\sH = \sW \times_S \bG_{a,S}$ as 
a scheme, and the group law is given by the formula:

\[
(w_1,\lambda_1) \cdot (w_2,\lambda_2) =
(w_1+w_2,\lambda_1+\lambda_2+\frac{1}{2}(w_1,w_2)),
\hspace{.5cm} (w_i,\lambda_i) \in \sW \times_S \bG_{a,S}
\]

\noindent where the term $(w_1,w_2)$
denotes the symplectic pairing.

\begin{example}

For example, if $S = \Spec(k)$ and 
$\sW = W = V \times V^{\vee}$ with the evident symplectic
form, then the above recovers the Heisenberg group
denoted $H(V)$ earlier. 

\end{example}

In the general setting above, let $\bB_S \sH = S/\sH$ denote
classifying space of the group scheme $\sH$.
By a (strong) action of $\sH(\sW)$ on a category, we mean
a sheaf of categories on $(\bB_S \sH)_{dR}$; 
by 1-affineness of $S_{dR}$ and of the morphism
$\sH_{dR} \to S_{dR}$ (\cite{shvcat} Theorem 2.6.3),
this data is equivalent to that of a module category for
$D(\sH) \in \Alg(D(S)\mod)$ 
with its natural convolution monoidal structure.
We denote the corresponding 2-category by $\sH\mod$.

As when working over a point, we have a
subcategory $\sH\mod_{reg} \subset \sH\mod$:
Fourier transform for the central 
$\bG_{a,S} \subset \sH$ makes any object of
$\sH\mod$ into a 
$(D(S \times \bA^1),\overset{!}{\otimes})$ module category,
and we ask that this action factors
through $D(S \times (\bA^1 \setminus 0))$.

\begin{lem}\label{l:heis-bundle}

Suppose $\sN \subset \sW$ is a Lagrangian subbundle.
Then the functor of strong $\sN$-invariants defines
an equivalence:

\[
\sH\mod_{reg} \isom D(S \times (\bA^1\setminus 0))\mod.
\]

\end{lem}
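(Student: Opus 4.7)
The plan is to run the proof of Theorem \ref{t:heisenberg} in families over $S$. As a first reduction, smooth descent for sheaves of categories on $S_{dR}$ (cf.\ \cite{shvcat} Theorem 1.5.2) allows us to pass to a smooth cover of $S$. The open substack of the relative Lagrangian Grassmannian of $\sW$ parametrizing Lagrangian subbundles transverse to $\sN$ is smooth over $S$; after smooth base change we may therefore assume that $\sN$ admits a Lagrangian complement $\sN' \subset \sW$.

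With $\sN'$ fixed, the natural map $\sN \times_S \sN' \times_S \bG_{a,S} \to \sH$ is an isomorphism of schemes and identifies $\sH$ with the semidirect product $\sN \ltimes (\sN' \times_S \bG_{a,S})$ of group schemes over $S$, where $\sN$ acts on $\sN' \times_S \bG_{a,S}$ by the shearing $n \cdot (n',\lambda) = (n', \lambda + (n,n'))$ via the symplectic pairing. Since $\sN$ is Lagrangian, this pairing restricts to a perfect pairing $\sN \otimes_{\cO_S} \sN' \to \cO_S$, giving a canonical identification $\sN \simeq (\sN')^{\vee}$.

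Apply the relative Fourier transform along the commutative normal vector-bundle subgroup $\sN' \times_S \bG_{a,S} \subset \sH$. As in the proof of Theorem \ref{t:heisenberg}, an $\sH$-action on $\sC \in D(S)\mod$ thereby becomes equivalent to an $\sN$-action on $\sC$ together with an $\sN$-equivariant action of $(D(\sN \times_S \bA^1_S), \overset{!}{\otimes})$, where $\sN$ acts on $\sN \times_S \bA^1_S$ by the shearing $m \cdot (n,c) = (n - cm, c)$. Using the relative sheaf-of-categories formalism and $1$-affineness of the de Rham prestacks at hand (\cite{shvcat} Theorem 2.6.3), this datum is equivalent to a sheaf of categories on the quotient stack $(\sN_{S,dR} \times_S \bA^1_{S,dR})/\sN_{S,dR}$. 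The regularity condition $\sH\mod_{reg}$ is exactly the condition that this sheaf of categories is pushed forward from the open locus $c \neq 0$; but on this locus the shearing action of $\sN$ is free with quotient $S \times (\bA^1 \setminus 0)$ (via $(n,c) \mapsto c$). Combined with $1$-affineness of $(S \times (\bA^1 \setminus 0))_{dR}$, this yields the claimed equivalence with $D(S \times (\bA^1 \setminus 0))\mod$, manifestly implemented by strong $\sN$-invariants as in the point case.

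The main obstacle is the relative sheaf-of-categories formalism itself: one needs to verify, over the base $S$, that strong $\sH$-actions on $D(S)$-module categories are equivalent to sheaves of categories on $(\bB_S \sH)_{dR}$, and that relative Fourier transform along a commutative unipotent vector-bundle-type subgroup of a group scheme over $S$ implements the expected equivalence of module categories. Both reduce to $1$-affineness of de Rham prestacks of finite-type smooth algebraic stacks, as provided by \cite{shvcat}. Once the formalism is in place, the resulting equivalence is canonical because strong $\sN$-invariants does not reference $\sN'$, so the a priori non-canonical choice of $\sN'$ after smooth base change descends without further work.
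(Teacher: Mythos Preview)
Your proof is correct and follows essentially the same approach as the paper: reduce to the case where $\sN$ admits a Lagrangian complement via descent for sheaves of categories on $S_{dR}$, then run the argument of Theorem \ref{t:heisenberg} in families. The only cosmetic differences are that the paper uses \'etale descent (finding Darboux coordinates \'etale locally, then a complement via the Bruhat decomposition Zariski locally) whereas you use smooth descent via the open locus in the relative Lagrangian Grassmannian, and the paper is terser about the relative formalism where you spell out more of the details.
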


\begin{proof}

In the case where $\sW$ admits a Lagrangian splitting
$\sW = \sN \times \sN^{\vee}$, the same argument 
as over a point applies. 

\'Etale locally, such a splitting exists: indeed,
\'etale locally, $\sW$ admits Darboux coordinates
(as a torsor for a smooth group scheme is \'etale locally
trivial), and then by the Bruhat decomposition for
the Lagrangian Grassmannian, $\sN$ admits a
complement after a further Zariski localization.

Therefore, we obtain the result by \'etale descent for
sheaves of categories on $S_{dR}$, 
see \cite{shvcat} Corollary 1.5.4.

\end{proof}

We also need a mild extension of the above.

Suppose we are given a vector bundle 
$\widetilde{\sW}$ on $S$ equipped with 
an epimorphism $\pi:\widetilde{\sW} \onto \sW$.
We form the group scheme $\widetilde{\sH} \coloneqq
\sH \times_{\sW} \widetilde{\sW}$, i.e.,
the pullback of the extension $\sH$ of $\sW$ to 
$\widetilde{\sW}$. 
We can again speak of (strong) $\widetilde{\sH}$-actions;
we define regularity as for $\sH$, i.e., with
respect to the central $\bG_a$.

\begin{lem}\label{l:heis-bundle-2}

Suppose $\sN \subset \sW$ is a Lagrangian subbundle,
and suppose we are given a lift $\sN \into \widetilde{\sW}$
of this embedding over $\pi$. In particular,
we obtain an embedding of the additive group scheme
$\sN$ into $\widetilde{\sH}$.

Then the functor of (strong) $\sN$-invariants is
conservative on $\widetilde{\sH}\mod_{reg}$.

\end{lem}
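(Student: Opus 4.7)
The plan is to reduce Lemma \ref{l:heis-bundle-2} to Lemma \ref{l:heis-bundle} by Zariski-localizing on $S$, exploiting a splitting of $\pi: \widetilde{\sW} \onto \sW$ that is compatible with the given lift $\sN \into \widetilde{\sW}$.

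First I would observe that the short exact sequence $0 \to K \to \widetilde{\sW} \to \sW \to 0$ of vector bundles splits Zariski-locally, and that the splitting can be chosen to restrict to the given lift on $\sN$. Indeed, given any initial local splitting, write the given lift as $n \mapsto (n, \varphi(n))$ for some $\varphi \in \Hom_{\sO_S}(\sN, K)$, extend $\varphi$ to some $\widetilde{\varphi} \in \Hom_{\sO_S}(\sW, K)$ (possible Zariski-locally, as $\sW$ is locally projective), and modify the splitting by $\widetilde{\varphi}$. A direct computation with the group law of $\widetilde{\sH}$ — the defining cocycle $\tfrac{1}{2}(\pi(\tilde w_1), \pi(\tilde w_2))$ factors through $\pi$ — then shows such a splitting induces an isomorphism of group schemes $\widetilde{\sH} \simeq \sH \times_S K$ over $S$, under which the subgroup $\sN$ sits inside the $\sH$-factor (exactly as in Lemma \ref{l:heis-bundle}), the central $\bG_{a,S}$ of $\widetilde{\sH}$ is identified with that of $\sH$, and $K$ appears as a commuting direct factor.

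In this locally trivialized setting the assertion is immediate: an $\widetilde{\sH}$-action on $\sC$ is equivalent to a strong $\sH$-action together with a commuting strong $K$-action, the regularity condition on $\widetilde{\sH}\mod_{reg}$ translates into regularity of the $\sH$-action, and $\sN$-invariants is computed inside the $\sH$-factor while retaining the commuting $K$-action. By Lemma \ref{l:heis-bundle} this is in fact an equivalence on the $\sH$-side, \emph{a fortiori} conservative. To pass from local to global I would invoke étale descent for sheaves of categories on $S_{dR}$ (\cite{shvcat} Corollary 1.5.4), just as in the proof of Lemma \ref{l:heis-bundle}: both the vanishing condition $\sC = 0$ and the formation of the $\sN$-invariants functor are local on $S$, so conservativity globalizes. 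The only mildly delicate step is arranging the compatibility of the local splitting with the given lift of $\sN$ at the outset; everything after that is either a direct application of Lemma \ref{l:heis-bundle} or a routine descent argument.
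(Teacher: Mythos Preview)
Your proof is correct and follows essentially the same route as the paper's: reduce by Zariski descent to the affine case, produce a splitting of $\pi$ compatible with the given lift of $\sN$, and thereby reduce to Lemma \ref{l:heis-bundle}. The only cosmetic difference is that you explicitly decompose $\widetilde{\sH} \simeq \sH \times_S K$ as a direct product and track the $K$-factor, whereas the paper simply restricts along the resulting section $\sH \to \widetilde{\sH}$ and applies Lemma \ref{l:heis-bundle} directly; both amount to the same thing.
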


\begin{proof}

As in the proof of Lemma \ref{l:heis-bundle}, 
we are reduced by Zariski descent to the case where
$S$ is affine.

In this case, the embedding
$\sN \into \widetilde{\sW}$ extends
to a map $\sW \to \widetilde{\sW}$ splitting
the projection (because $\sW/\sN$ is a vector bundle).
This gives a map
$\sH \to \widetilde{\sH}$ splitting the canonical projection
that is the identity on the 
centrally embedded $\bG_{a,S}$, and which is compatible
with embeddings from $\sN$. Therefore, the result
in this case follows from Lemma \ref{l:heis-bundle}.

\end{proof}

We remark that $\widetilde{\sW}$ inherits an 
alternating form from $\sW$, and $\widetilde{\sH}$
may be interpreted as a degenerate version of a Heisenberg
group scheme. 

\setcounter{steps}{\value{steps-aux}}
\step 

We can now conclude the argument. We remind that
we have assumed $n = 2m-1$ for some $m \geq 2$.

We have the following extension of Lie algebras,
which is between abelian Lie algebras:

\[
\xymatrix{
0 \ar[r] &  t^{n-m+1} \fg[[t]]/t^n\fg[[t]] 
\ar@{=}[d] \ar[r] &
t^{n-m} \fg[[t]]/t^n \fg[[t]] \ar@{=}[d] \ar[r] & 
t^m\fg[[t]]/t^{m+1}\fg[[t]] \ar[d]^{\simeq} \ar[r] & 
0. \\
& t^m \fg[[t]]/t^n \fg[[t]] &
t^{m-1} \fg[[t]]/t^n\fg[[t]] &
\ul{\fg} 
}
\]

\noindent Here we write $\ul{\fg}$ to emphasize we
are considering the \emph{abelian} Lie algebra with vector
space $\fg$.

As an extension of vector spaces, the above 
has an obvious splitting
$(\xi \in \ul{\fg}) \mapsto t^{n-m} \xi$,
so we see that the corresponding
Lie algebra is a Heisenberg Lie algebra
for the degenerate alternating form:

\[
\begin{gathered}
\fg \otimes \fg \to t^{n-1}\fg[[t]]/t^n\fg[[t] \subset 
t^m \fg[[t]]/t^n\fg[[t]] \\
(\xi_1,\xi_2) \mapsto [t^{m-1} \xi_1,t^{m-1} \xi_2].
\end{gathered}
\]

Passing to algebraic groups, we see that an 
action of $\exp(t^{m-1} \fg[[t]]/t^n \fg[[t]])$
on $\sC$ amounts to the following data.
First, performing a Fourier transform
along the central $\exp(t^m \fg[[t]]/t^n \fg[[t]]) = 
t^m \fg[[t]]/t^n \fg[[t]] \otimes \bG_a$,
we obtain a sheaf of categories on:

\[
((t^m \fg[[t]]/t^n \fg[[t]])^{\vee})_{dR} \simeq 
(\fg[[t]]/t^{n-m} \fg[[t]])_{dR} = 
(\fg[[t]]/t^{m-1} \fg[[t]])_{dR}
\]

\noindent where the $\simeq$ is constructed 
as in \S \ref{ss:n/2}; we denote the sheaf of 
categories corresponding to $\sC$ by $\mathsf{C}$.
The remaining data encoding the full 
$\exp(t^{m-1} \fg[[t]]/t^n \fg[[t]])$-action amounts
to an action of a degenerate Heisenberg group 
$\widetilde{\sH}$ on $\mathsf{C}$. In detail:
form a constant vector bundle 
on $(\fg[[t]]/t^{m-1} \fg[[t]])$
with fiber $\fg$, and equip it with the
(degenerate) alternating form whose fiber
at $\xi \in (\fg[[t]]/t^{m-1} \fg[[t]])$ is:

\[
(\vph_1,\vph_2) \in \fg \times \fg \mapsto 
\kappa([\xi(0),\vph_1],\vph_2)
\]

\noindent where $\xi(0)$ indicates the image
of $\xi$ in $\fg$ obtained by $t \mapsto 0$.
The corresponding Heisenberg group scheme 
$\widetilde{\sH}$ defined by this
data acts strongly on $\mathsf{C}$.

In these terms, $\sC^{N_n^{m-1},\psi}$ 
is calculated as global sections of $\mathsf{C}$
on $(f+\fb[[t]]/t^{m-1} \fb[[t]])_{dR}$;
by \S \ref{ss:n/2}, the assignment
$(\sC \in G_n\mod_{reg}) \mapsto 
\sC^{N_n^{m-1},\psi}$ is conservative.

Now observe that the constant vector bundle $\sN$ on 
$f+\fb[[t]]/t^{m-1}\fb[[t]]$ with fiber
$\fn$ satisfies the assumptions of 
Lemma \ref{l:heis-bundle-2} by 
Step \ref{st:reg-symplectic-bundle}, where the
notation of Step \ref{st:reg-symplectic-bundle}
matches that of Lemma \ref{l:heis-bundle-2}
(up to pulling back from $\fg_{reg}$ or $f+\fb$).
We obtain $\sC^{N_n^m,\psi}$ by passing to 
invariants for this Lagrangian subbundle; by Lemma 
\ref{l:heis-bundle-2}, that functor is
conservative, giving the claim.

\section{Convolution for finite Whittaker categories}\label{s:conv}

\subsection{} In this section, we extend the results from
\S \ref{s:heisenberg}. These extensions are given in \S \ref{ss:reg-res}. 
This material plays technical roles in 
\S \ref{s:exactness} and \S \ref{s:bddness}.
The reader may safely skip this section on 
a first read and refer back where necessary.

Key roles are played by Theorems \ref{t:whit-conv} and \ref{t:bbm-2-adolescent}.
The author finds these results to be of independent interest.\footnote{For
instance, using Theorem \ref{t:bbm-2-adolescent} and standard
arguments (relying on \cite{b-fn}), one obtains geometric proofs of
\cite{ginzburg-whittaker-hecke} Theorem 1.6.3 (similarly, Proposition 3.1.2).
In particular, these arguments show that the $t$-exactness from
\emph{loc. cit}. Theorem 1.6.3 applies as well in the $\ell$-adic
context in characteristic $p$ (using Artin-Schreier sheaves
instead of exponential $D$-modules, and needing no special reference
to \cite{b-fn} because ``non-holonomic" objects are meaningless here).} 

\subsection{Main result} 

The first main result of this section is the following:

\begin{thm}\label{t:whit-conv}

For any $n \geq 1$ and any $\sC \in G_n\mod$, the convolution functor:

\[
D(G_n)^{N_n,-\psi} \otimes \sC^{N_n,\psi} \to \sC
\]

\noindent admits a left adjoint. Here $D(G_n)^{N_n,-\psi}$ is the
equivariant category for the action of $N_n$ on $G_n$ on the right.

Moreover, this left adjoint is isomorphic to the composition:

\[
\sC \xar{\coact[-2\dim G_n]} D(G_n) \overset{N_n}{\otimes} \sC 
\xar{\Av_*^{N_n,-\psi} \otimes \id_{\sC}}
D(G_n)^{N_n,-\psi} \otimes \sC^{N_n,\psi}.
\]

\noindent (Because of the diagonal $N_n$-equivariance and by
unipotence of $N_n$, the functor 
$\Av_*^{N_n,-\psi} \otimes \id_{\sC}[2\dim N_n]$ may be replaced
by $\Av_*^{N_n,-\psi} \otimes \Av_*^{N_n,-\psi}[2\dim N_n]$
or $\id_{D(G_n)} \otimes \Av_*^{N_n,-\psi}[2\dim N_n]$.)

\end{thm}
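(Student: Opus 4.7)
The plan is to compute the left adjoint termwise after factoring the convolution as
\[
D(G_n)^{N_n,-\psi} \otimes \sC^{N_n,\psi} \xar{\Oblv \otimes \Oblv} D(G_n) \otimes \sC \xar{\act} \sC,
\]
where $\act$ denotes the structural action. Each factor admits a continuous left adjoint: for the forgetful functors this is standard because $N_n$ is a unipotent group of finite type, yielding the Whittaker averagings $\Av_!^{N_n,\pm\psi}$ related to the right adjoints by the canonical identity $\Av_!^{N_n,\pm\psi} \simeq \Av_*^{N_n,\pm\psi}[2\dim N_n]$ (c.f.\ \cite{beraldo-*/!}); for the action $\act$, since $G_n$ is a smooth affine algebraic group of finite type, the standard coaction $\coact$ shifted by $-2\dim G_n$ provides the left adjoint. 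The latter reduces, via the formalism of $D(G_n)$-module categories, to the comparison $\act^! \simeq \act^*[2\dim G_n]$ along the smooth morphism $G_n \times X \to X$ of relative dimension $\dim G_n$, propagated from the universal case to arbitrary strong $G_n$-module categories.

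Composing these adjoints yields a candidate formula
\[
\sC \xar{\coact[-2\dim G_n]} D(G_n) \otimes \sC \xar{\Av_!^{N_n,-\psi} \otimes \Av_!^{N_n,\psi}} D(G_n)^{N_n,-\psi} \otimes \sC^{N_n,\psi}
\]
for the left adjoint to the convolution. To transcribe this into the form asserted in the theorem, I would exploit the fact that $\coact$ is intrinsically equivariant for the diagonal $G_n$-action on $D(G_n) \otimes \sC$ (where $G_n$ acts on $D(G_n)$ by right translation and on $\sC$ by the given action), and hence in particular for the diagonal $N_n$-action with trivial character. Consequently, the coaction factors canonically through the balanced tensor product $D(G_n) \overset{N_n}{\otimes} \sC$; on this quotient, applying $\Av_*^{N_n,-\psi}$ on the first factor forces the second factor to pick up the complementary character $\psi$, making the second averaging operation redundant up to the usual unipotent shift $[2\dim N_n]$. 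Substituting $\Av_!^{N_n,\pm\psi} \simeq \Av_*^{N_n,\pm\psi}[2\dim N_n]$ and collecting shifts produces the stated composition, and the parenthetical alternatives follow from the same diagonal-equivariance redundancy on the balanced tensor product.

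The main obstacle is the verification that the coaction genuinely provides the left adjoint to $\act$ in the strong-action DG formalism, together with the careful cohomological bookkeeping when converting between $\Av_!$ and $\Av_*$ and when factoring through the balanced tensor product. Once the diagonal equivariance of $\coact$ is correctly set up, all remaining manipulations are formal; a secondary technical point is matching the shift conventions with the identification of the dualizing complex of $G_n$, but this is standard in the smooth finite-type setting.
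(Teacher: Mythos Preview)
Your factorization of the convolution is correct, as is the identification of the left adjoint to $\Oblv\otimes\Oblv$. The gap is at precisely the step you flag as ``the main obstacle'': the claim that $\coact[-2\dim G_n]$ is left adjoint to $\act: D(G_n)\otimes\sC \to \sC$ is false. Your justification reduces this to the relation $\act^! \simeq \act^*[2\dim G_n]$ for the smooth action morphism $a:G_n\times X\to X$, together with an adjunction $(\act^*,\act_*)$. But that adjunction is a feature of quasi-coherent sheaves, not of $D$-modules. The $D$-module pushforward $a_{*,dR}$ (which is what $\act$ is) has the naive pullback $a^![-2\dim G_n]$ as a left adjoint only when $a$ is proper; since $G_n$ is affine of positive dimension, $a$ never is. Concretely, already for $\pi:\bA^1\to\Spec(k)$ one has $\ul{\Hom}_{D(\bA^1)}\big(\pi^!(k)[-2],\delta_0\big)\simeq k[1]$ while $\pi_{*,dR}(\delta_0)\simeq k$, so $\pi^![-2]$ is not left adjoint to $\pi_{*,dR}$.

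This is not a bookkeeping issue: it is the entire content of the theorem. The paper avoids the problem by reformulating the assertion as Theorem~\ref{t:bbm-2-adolescent}. There the convolution is expressed not via the pair $(\coact,\act)$ but via the composite $D(G_n)\overset{N_n}{\otimes}\sC \xar{\Av_*^{\Delta G_n}} D(G_n)\overset{G_n}{\otimes}\sC \simeq \sC$, and the pair $(\Oblv,\Av_*^{\Delta G_n})$ \emph{is} adjoint by construction. The identification of the resulting left adjoint with $\Av_*^{\psi,-\psi}[2\dim N_n]$ is then carried out case by case: Springer theory and \cite{bbm} for $n=1$, and the Fourier and Heisenberg-group analysis of \S\ref{s:heisenberg} for $n\geq 2$. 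Your termwise-adjoint strategy, even if patched to use the genuine left adjoint $\act^L$ in place of $\coact[-2\dim G_n]$, would produce a formula involving $\act^L$ rather than $\coact$, and would not recover the stated description without further argument.
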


The proof bifurcates into the cases $n \geq 2$ and $n = 1$. In the
former case, the argument is quite similar to the proof of 
Theorem \ref{t:cons}.

\subsection{Reformulation}

First, we begin with a somewhat more convenient formulation of
Theorem \ref{t:whit-conv}.

\begin{thm}\label{t:bbm-2-adolescent}

Let $n \geq 1$ and let $\sC \in G_n \times G_n\mod$.
Then the left adjoint to:\footnote{Note that $(\psi,-\psi)$
restricted to the diagonal $\Delta N_n$ is the trivial character.}

\[
\sC^{N_n \times N_n,(\psi,-\psi)} \xar{\Oblv} \sC^{\Delta N_n} \xar{\Av_*^{\Delta G_n}}
\sC^{\Delta G_n}
\]

\noindent is defined, where $\Delta:G_n \to G_n \times G_n$ is the diagonal 
embedding. For convenience, we denote this left adjoint
by $\Av_!^{\psi,-\psi}$.

Moreover, the canonical natural transformation:

\[
\Av_!^{\psi,-\psi} \to \Av_*^{\psi,-\psi}[2\dim N_n] \in 
\TwoHom_{\DGCat_{cont}}(\sC^{\Delta G}, \sC^{N \times N,(\psi,-\psi)})
\]

\noindent is an equivalence.

\end{thm}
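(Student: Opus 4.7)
The plan is to factor the composite through $\sC^{\Delta N_n}$ and compute the left adjoint termwise, reducing the real content to a form of unipotent Poincar\'e duality. The second leg $\Av_*^{\Delta G_n/\Delta N_n}: \sC^{\Delta N_n} \to \sC^{\Delta G_n}$ tautologically has left adjoint $\Oblv^{\Delta G_n \to \Delta N_n}$, so everything hinges on computing the left adjoint of the forgetful $\sC^{N_n \times N_n,(\psi,-\psi)} \to \sC^{\Delta N_n}$ and matching it, up to the shift $[2\dim N_n]$, with the naive $*$-averaging $\sC^{\Delta N_n} \to \sC^{N_n \times N_n,(\psi,-\psi)}$ induced by $\Av_*^{N_n \times N_n,(\psi,-\psi)}$.

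I would handle this key step via the semidirect decomposition $N_n \times N_n = \Delta N_n \ltimes K$, where $K \coloneqq 1 \times N_n$ is normal and $\Delta N_n$ acts by conjugation on the second factor. Since $\psi$ factors through $N_n^{ab}$ and so is invariant under inner automorphisms, the character $(\psi,-\psi)$ restricts to the $\Delta N_n$-invariant character $-\psi$ on $K$ and to the trivial character on $\Delta N_n$. This produces an equivalence $\sC^{N_n \times N_n,(\psi,-\psi)} \simeq (\sC^{K,-\psi})^{\Delta N_n}$ under which the forgetful to $\sC^{\Delta N_n}$ is induced by $\Oblv: \sC^{K,-\psi} \to \sC$ applied on $\Delta N_n$-invariants. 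The technical input then needed is the following unipotent Poincar\'e duality: for any affine unipotent algebraic group $H$ of dimension $d$ acting strongly on a DG category $\sD$ and any character $\chi$, the functor $\Oblv: \sD^{H,\chi} \to \sD$ admits both adjoints, realized as convolution with $\pi_* \cL_\chi$ and $\pi_! \cL_\chi$ for $\pi:H \to \on{pt}$ and $\cL_\chi$ the exponential $D$-module; the canonical trivialization $\omega_H \simeq k_H[2d]$ lifts to a canonical equivalence $\Av_!^{H,\chi} \simeq \Av_*^{H,\chi}[2d]$. One proves this by d\'evissage from $H = \bG_a$.

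Applying the duality with $H = K$ acting on $\sD = \sC^{\Delta N_n}$ produces the left adjoint of $(\sC^{K,-\psi})^{\Delta N_n} \to \sC^{\Delta N_n}$ as $\Av_*^{K,-\psi}[2\dim N_n]$, and composing with $\Oblv^{\Delta G_n \to \Delta N_n}$ delivers $\Av_!^{\psi,-\psi}$. The semidirect factorization $\Av_*^{N_n \times N_n,(\psi,-\psi)} \simeq \Av_*^{\Delta N_n} \circ \Av_*^{K,-\psi}$ — in which the outer leg is free on input already $\Delta N_n$-equivariant — then identifies this composite with $\Av_*^{\psi,-\psi}[2\dim N_n]$, and the equivalence assembled this way is, by construction, the canonical natural transformation in the statement.

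I expect the main obstacle to be the rigorous implementation of unipotent Poincar\'e duality in the strong, character-twisted categorical setting: while morally $\pi_! \cL_\chi \simeq \pi_* \cL_\chi[2d]$ is obvious, transporting it through the $D$-module convolution formalism of \cite{methods} so as to produce the asserted \emph{canonical} natural transformation (rather than merely an abstract equivalence) requires careful bookkeeping of coherence data. A secondary subtlety is verifying that the residual $\Delta N_n$-action on $\sC^{K,-\psi}$ is well-defined as a strong action at the $(\infty,1)$-categorical level, which reduces to the strict $N_n$-invariance of $-\psi$ together with checking compatibilities of the relevant unit/counit $2$-cells coming from the semidirect structure.
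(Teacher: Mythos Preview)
Your reduction to the second leg is fine, but the ``unipotent Poincar\'e duality'' you invoke is false as stated, and this is where the argument breaks. For $H$ unipotent acting on $\sD$, the forgetful functor $\Oblv:\sD^{H,\chi}\to\sD$ is fully faithful, so both $\Av_!^{H,\chi}\circ\Oblv$ and $\Av_*^{H,\chi}\circ\Oblv$ are the identity on $\sD^{H,\chi}$. If $\Av_!^{H,\chi}\simeq\Av_*^{H,\chi}[2\dim H]$ held globally, restricting to the essential image of $\Oblv$ would force $\sD^{H,\chi}$ to be $[2\dim H]$-periodic, which it is not (already for $H=\bG_a$, $\sD=D(\bG_a)$, any $\chi$, where $\sD^{H,\chi}\simeq\Vect$). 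What is true is only that $a_!\simeq a_*[-2\dim H]$ for the action map $a:H\times X\to X$, which gives a relation between $\Oblv\Av_!$ and $\Oblv\Av_*$ as endofunctors of $\sD$ convolving with the appropriate kernel---but this does not lift through $\Oblv$ to the identification you want. Your d\'evissage from $\bG_a$ would run into exactly this obstruction. (A smaller issue: $K=1\times N_n$ does not act on $\sC^{\Delta N_n}$ since $\Delta N_n$ is not normal in $N_n\times N_n$; but even the charitable reading---taking $\Delta N_n$-invariants of the $K$-averaging on $\sC$---still requires the false global identity.)

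The whole content of the theorem is that $\Av_!^{\psi,-\psi}\simeq\Av_*^{\psi,-\psi}[2\dim N_n]$ holds when the source is $\sC^{\Delta G_n}$, not all of $\sC$: the $\Delta G_n$-equivariance is doing real work. For $n=1$ this is already nontrivial---the case $\sC=D(G)$ is exactly \cite{bbm} Theorem~1.5(2)---and the paper handles general $\sC$ by combining \cite{bbm} (which gives the identity on $N^-\times N^-$-equivariant objects) with the Springer splitting to reduce $\Delta G$-equivariant objects to that case. For $n\geq 2$ the paper instead Fourier-transforms along the central $\fg\otimes\bG_a$ to reinterpret everything via sheaves of categories over $\fg_{dR}/G_{dR}$ and the Kostant slice, then climbs up through $N_n$ inductively using categorical Heisenberg representation theory (Lemma~\ref{l:av-lagr}: for Lagrangians in a Heisenberg module with nonzero central character, $\Av_!\simeq\Av_*[\text{shift}]$ genuinely holds, because both are equivalences). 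None of this is accessible from general unipotent nonsense.
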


\begin{rem}

In the case $n = 1$ and $\sC = D(G) \in G \times G\mod$,
Theorem \ref{t:bbm-2-adolescent} is \cite{bbm} Theorem 1.5 (2).
However, even in the $n = 1$ case, the result is new 
e.g. for $\sC = D(G \otimes G)$.

\end{rem}

\begin{rem}

In \S \ref{s:central}, we will only need the $n>1$ case 
of Theorem \ref{t:whit-conv}. We include the proof in the $n = 1$
case only for the sake of completeness.

\end{rem}

\begin{proof}[Proof that Theorem \ref{t:bbm-2-adolescent} implies 
Theorem \ref{t:whit-conv}]

Suppose $\sC \in G_n\mod$ is given. We form 
$D(G_n) \otimes \sC \in G_n \times G_n \mod$. By 
Theorem \ref{t:bbm-2-adolescent} (and changing $\psi$ by a sign), 
the map:

\[
\big(D(G_n) \otimes \sC\big)^{N_n \times N_n,(-\psi,\psi)} =
D(G_n)^{N_n,-\psi} \otimes \sC^{N_n,\psi} 
 \xar{\Oblv} D(G_n) \overset{N_n}{\otimes} \sC
\xar{\Av_*^{\Delta G_n}}
D(G_n) \overset{G_n}{\otimes} \sC \isom \sC.
\]

\noindent By definition, the resulting functor is the convolution
functor, so that convolution functor admits a left adjoint.
We similarly obtain the formula for the left adjoint in 
Theorem \ref{t:whit-conv}.

\end{proof}

Below we prove Theorem \ref{t:bbm-2-adolescent}, splitting
it up into different cases. 

\subsection{Proof of Theorem \ref{t:bbm-2-adolescent} for $n = 2$}

We freely use the notation and observations from \S \ref{ss:n=2}. 

As in \emph{loc. cit}., we have:

\[
G_2 \times G_2\mod \simeq \ShvCat_{/\fg_{dR}/G_{dR} \times \fg_{dR}/G_{dR}}.
\]

\noindent Let $\sC \in G_2\times G_2\mod$, and let $\mathsf{C}$ denote
the corresponding sheaf of categories 
on $\fg_{dR}/G_{dR} \times \fg_{dR}/G_{dR}$. The following
commutative diagram provides a dictionary between these two perspectives:

\[
\begin{gathered}
\sC^{N_2 \times N_2,(\psi,-\psi)} \simeq  
\Gamma((f+\fb)_{dR}/N_{dR} \times (-f+\fb)_{dR}/N_{dR},\mathsf{C}) \\
\sC^{\Delta N_2} \simeq 
\Gamma((\fb\times \fb+\Delta^-\fg)_{dR}/N_{dR},\mathsf{C}) \\
\sC^{\Delta G_2} \simeq
\Gamma(\Delta^-\fg_{dR}/G_{dR},\mathsf{C}).
\end{gathered}
\]

\noindent The averaging functor 
$\sC^{N_2 \times N_2,(\psi,-\psi)} \to \sC^{\Delta G_2}$
corresponds to $!$-pullback and then $*$-pushforward 
(in the $D$-module sense, which tautologically 
adapts to sheaves of categories on 
de Rham stacks) along the correspondence:

\[
\xymatrix{
& (f+\fb)/N \ar[dr] \ar[dl]_{\Delta^-} & \\
(f+\fb)/N \times (-f+\fb)/N & & \fg/G.
}
\]

\noindent The left map $\Delta^-$ is a closed embedding because the Kostant
slice $(f+\fb)/N$ is an affine scheme, so $!$-pullback along it
admits a left adjoint.
The right map is smooth, so $!$-pullback along it equals $*$-pullback up 
to shift; in particular, the relevant $*$-pushforward admits a left adjoint.

This shows that our $*$-averaging functor admits a left adjoint in 
this case. That the comparison map 
$\Av_!^{\psi,-\psi} \to \Av_*^{\psi,-\psi}[2\dim N_2]$ effects this
isomorphism follows from the above analysis.

\subsection{Proof of Theorem \ref{t:bbm-2-adolescent} for $n>2$}

The argument proceeds as in the proof of Theorem \ref{t:cons};
we use the notation from that proof in what follows.

First, observe that it is equivalent to show that the left adjoint
$\Av_!^{\psi} = \Av_!^{N_n,\psi}$ to 
$\Av_*^{\Delta(G_n)}:\sC^{N_n \times 1,\psi} \to \sC^{\Delta(G_n)}$ is 
defined, with the natural map $\Av_!^{\psi} \to \Av_*^{\psi}[2\dim N_n]$
being an isomorphism; indeed, $\Av_*^{\Delta G_n}$ factors as:

\[
\sC^{N_n \times 1,\psi} \xar{\Av_*^{\Delta N_n}} 
(\sC^{N_n \times 1,\psi})^{\Delta N_n} = 
\sC^{N_n\times N_n,(\psi,-\psi)} \xar{\Av_*^{\Delta G_n}} \sC^{\Delta G_n}
\]

\noindent and the first functor admits the fully faithful left adjoint
$\Oblv$.

By induction on $m$, we will show that the appropriate left adjoint
$\Av_!^{N_n^m,\psi}:\sC^{\Delta G_n}\to \sC^{N_n^m \times 1,\psi}$
is defined, and that the natural map $\Av_!^{N_n^m,\psi} \to 
\Av_*^{N_n^m,\psi}[2\dim N_n^m]$ is an equivalence.

As in the proof of Theorem \ref{t:cons}, the base case $m = 1$
is a consequence of the $n = 2$ case proved in \S \ref{ss:n=2}.
Moreover, as in \S \ref{ss:n/2}, essentially the 
same argument applies for 
$m \leq \frac{n}{2}$. 
As in \S \ref{ss:2m+1}, the natural generalization
of Lemma \ref{l:av-lagr} vector bundles with alternating
bilinear forms allows us to 
deduce the special case where $n = 2m-1$; we
omit the details, which are quite similar to \S \ref{ss:2m+1}.

Now in what follows, we assume $m \geq \frac{n+2}{2}$.
By descending induction on $r$, 
we will show that the appropriate left adjoint
$\Av_!^{N_n^{m,r},\psi}:\sC^{\Delta G_n}\to \sC^{N_n^{m,r} \times 1,\psi}$
is defined, and that the natural map $\Av_!^{N_n^{m,r},\psi} \to 
\Av_*^{N_n^{m,r},\psi}[2\dim N_n^{m,r}]$ is an equivalence.
The base case $r \gg 0$ amounts to the inductive hypothesis for
$m-1$.

To perform the induction, 
we use the following observation.

\begin{lem}\label{l:av-lagr}

Let $V$ be a finite-dimensional 
vector space over $k$ and let
$H = H(V)$ be the associated Heisenberg group, as in \S \ref{ss:heis-defin}.

Let $\sC \in H\mod_{reg}$. Then the functor
$\Av_*^V:\sC^{V^{\vee}} \to \sC^{V}$ is an equivalence.

Moreover, if we (appropriately) denote the inverse functor
$\Av_!^{V^{\vee}}$, then the natural map $\Av_!^{V^{\vee}} \to 
\Av_*^{V^{\vee}}[2\dim V]$ is an equivalence.

\end{lem}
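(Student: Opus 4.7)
The plan is to apply Theorem \ref{t:heisenberg} to both Lagrangian polarizations of the symplectic vector space $W = V \times V^\vee$, and then to verify the averaging identity by a direct computation on a single generator.

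By Remark \ref{r:heis-sympl}, the group $H$ depends only on the symplectic structure of $W$, and $V, V^\vee \subset H$ are interchangeable as choices of Lagrangian. Hence Theorem \ref{t:heisenberg} yields two equivalences
\[
\Phi_V, \Phi_{V^\vee} \colon H\mod_{reg} \isom D(\bA^1 \setminus 0)\mod,
\]
defined respectively by $\Phi_V(\sC) = \sC^V$ and $\Phi_{V^\vee}(\sC) = \sC^{V^\vee}$, each implemented by Fourier transform along the common central $\bG_a \subset H$. Since this central $\bG_a$ does not depend on the choice of polarization, $\Phi_V$ and $\Phi_{V^\vee}$ induce the same $D(\bA^1 \setminus 0)$-module structure, so that $\sC^V$ and $\sC^{V^\vee}$ are canonically isomorphic as $D(\bA^1 \setminus 0)$-modules for every $\sC \in H\mod_{reg}$.

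For the first claim, by functoriality in $\sC$ and the equivalence $\Phi_V$, it suffices to show that the composite $\Av_*^V \circ \Oblv \colon \sC^{V^\vee} \to \sC^V$ realizes this canonical isomorphism on a single compact generator. I take $\sC_0 \in H\mod_{reg}$ corresponding under $\Phi_V^{-1}$ to $D(\bA^1 \setminus 0) \in D(\bA^1 \setminus 0)\mod$; explicitly, $\sC_0 = D(V \times (\bA^1 \setminus 0))$ carries the universal Heisenberg action over $\bA^1 \setminus 0$ in which $V$ acts by translation on the first factor, $V^\vee$ acts by the family of characters $(v, \alpha) \mapsto e^{\alpha \lambda(v)}$, and the central $\bG_a$ acts by $(c, \alpha) \mapsto e^{\alpha c}$. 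On $\sC_0$, both $\sC_0^V$ and $\sC_0^{V^\vee}$ identify tautologically with $D(\bA^1 \setminus 0)$, and $\Av_*^V \circ \Oblv$ reduces to $D$-module pushforward of the constant sheaf $k_V$ along the projection $V \times (\bA^1 \setminus 0) \to \bA^1 \setminus 0$, giving the identity up to a cohomological shift.

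For the second assertion, both $\Av_!^{V^\vee} \colon \sC^V \to \sC^{V^\vee}$ (the inverse of $\Av_*^V \circ \Oblv$) and $\Av_*^{V^\vee} \circ \Oblv[2\dim V] \colon \sC^V \to \sC \to \sC^{V^\vee}$ are natural in $\sC$, so it again suffices to compare them on $\sC_0$. There the claim reduces to the standard shift relation $p_! \simeq p_*[2\dim V]$ for the projection $p \colon V \to \mathrm{pt}$, applied in the family over $\bA^1 \setminus 0$; the natural map between the two sides, arising from the general $(\Av_!, \Av_*)$-comparison for unipotent group actions, is thereby an equivalence. The main obstacle is keeping careful track of shifts, Fourier transform conventions, and the identifications across the various $D(\bA^1 \setminus 0)$-module structures; beyond that, the proof reduces to a computation with constant sheaves in a smooth affine family.
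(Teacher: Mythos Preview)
Your approach is valid but more elaborate than what the paper does. The paper's proof is a one-liner: ``Immediate from the proof of Theorem \ref{t:heisenberg}.'' The point is that the proof of that theorem does not merely produce an abstract equivalence $H\mod_{reg} \simeq D(\bA^1\setminus 0)\mod$; it identifies $\sC \in H\mod_{reg}$ with a sheaf of categories on $(V_{dR} \times (\bA^1\setminus 0)_{dR})/V_{dR} = (\bA^1\setminus 0)_{dR}$. In that model, $\sC^{V^\vee}$ is restriction to the slice $\{0\} \times (\bA^1\setminus 0)$, $\sC^V$ is descent to the quotient, and the functor $\Av_*^V \circ \Oblv$ is the composite $*$-pushforward along $\{0\} \times (\bA^1\setminus 0) \hookrightarrow V \times (\bA^1\setminus 0) \to (\bA^1\setminus 0)$, which is an isomorphism since the slice maps isomorphically to the quotient. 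The shift identity similarly becomes the standard $!$/$*$ comparison for this closed-open geometry. No reduction to a generator is needed.

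Your route---invoking Theorem \ref{t:heisenberg} for both polarizations and then checking on the generator $\sC_0$---works, because both $\sC \mapsto \sC^V$ and $\sC \mapsto \sC^{V^\vee}$ are $\DGCat_{cont}$-linear and colimit-preserving, so a natural transformation between them is determined by its component at a generator. But your description of the Heisenberg action on $\sC_0 = D(V \times (\bA^1\setminus 0))$ is slightly off: from the proof of Theorem \ref{t:heisenberg}, $V$ acts via $v \cdot (w,c) = (w - cv, c)$, so the translation is scaled by the $\bA^1$-coordinate, not a plain translation on the first factor. Also, your phrase ``giving the identity up to a cohomological shift'' is vague at exactly the point where precision matters; you should state explicitly what the shift is and verify it matches the $[2\dim V]$ in the second assertion. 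None of this is fatal, but the paper's direct geometric reading avoids this bookkeeping entirely.
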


\begin{proof}

Immediate from the proof of Theorem \ref{t:heisenberg}.

\end{proof}

The relevant Heisenberg group is constructed as follows.
Here we use notation parallel to the proof of 
Theorem \ref{t:cons},
but the meanings are different in the present context.

Define $\fh_0$ as 
$\Lie(N_n^{m,r} \times 1)+ \Delta(t^{m-1} \fg_{1-r})
\subset \Lie(G_n \times G_n)$. Define $\fh_0^{\prime}$ 
similarly, but with $\fg_{1-r}^{\prime}$ in place of
$\fg_{1-r}$ (in the notation of \S \ref{ss:cons-pf}).

As in the proof of Theorem \ref{t:cons},
these are nilpotent Lie subalgebras of $\Lie(G_n \times G_n)$,
and there are associated unipotent subgroups
$H_0^{\prime} \subset H_0 \subset G_n \times G_n$.
And again by the same argument as in \emph{loc. cit}.,
$(N_n^{m,r+1} \times 1) \subset H_0$ is normal, and its character
is stabilized by the adjoint action of $H_0$.
We again let $K \subset (N_n^{m,r+1} \times 1)$ denote the kernel
of the character and $H \coloneqq H_0^{\prime}/K$; again,
$H$ is a Heisenberg group.

By induction, we have a $!$-averaging functor:

\[
\Av_!^{N_n^{m,r+1},\psi} = \Av_*^{N_n^{m,r+1}}[2\dim N_n^{m,r+1}]:\sC^{\Delta G_n}
\to \sC^{N_n^{m,r+1} \times 1,\psi}
\]

\noindent which evidently lifts to invariants for the additive
subgroup $\Delta(\fg_{1-r}^{\prime}) \subset H$.
By Lemma \ref{l:av-lagr}, we can $!$-average 
$(\sC^{N_n^{m,r+1} \times 1,\psi})^{\Delta(\fg_{1-r}^{\prime})} \to 
\sC^{N_n^{m,r} \times 1,\psi}$, and this coincides with $*$-averaging
up to suitable shift (and moreover, the resulting functor gives an 
equivalence $(\sC^{N_n^{m,r+1} \times 1,\psi})^{\Delta(\fg_{1-r}^{\prime})} 
\isom \sC^{N_n^{m,r} \times 1,\psi}$). This gives the claim.

\subsection{Proof of Theorem \ref{t:bbm-2-adolescent} for $n = 1$}\label{ss:bmm-pf-n=1}

Let $B^-$ be a Borel opposed to $B$ with
radical $N^-$.  

\setcounter{steps}{0}
\step\label{st:spr}

We have the functor:
 
\[
\Psi:\sC^{\Delta G} \xar{\Oblv} \sC^{\Delta B^-} 
\xar{\Av_*}
\sC^{N^- \times N^- \cdot \Delta T}.
\]

\noindent This functor admits the left adjoint:

\[
\Xi:\sC^{N^- \times N^- \cdot \Delta T} \xar{\Oblv} \sC^{\Delta B^-}
\xar{\Av_!} \sC^{\Delta G}
\]

\noindent with $\Av_! = \Av_*[2\dim G/B^-]$ by
properness of $G/B^-$.

Recall from \cite{mirkovic-vilonen-but-not-that-one} 
that the counit map $\Xi \Psi \to \id$ splits.
Indeed, as in \emph{loc. cit}., $\Xi \Psi$ is computed
as convolution with the Springer sheaf
in $D(G)^{\Ad G} = D(\Delta G\backslash (G \times G)/\Delta G)$,
and by an argument in \emph{loc. cit}. using the decomposition theorem, 
the Springer sheaf admits the skyscraper sheaf at $1 \in G\overset{\Ad}{/} G$ 
as a summand.

In particular, every $\sF \in \sC^{\Delta G}$ is a summand of
an object of the form $\Xi(\sF^{\prime})$.

\step 

Next, we recall a key result of \cite{bbm}. Theorem 1.1 (1)
of \emph{loc. cit}. implies that we can 
$!$-average $N^-$-equivariant objects to
be $(N,\psi)$-equivariant, and this $!$-average coincides
with the $*$-averaging after shift by $2\dim N$.
(Note that the authors work in the setting of perverse sheaves,
but their argument works in this generality: c.f. 
the proof of \cite{whit} Theorem 2.7.1.) 

Applying this for $G \times G$ instead, 
we see that for $\sF \in \sC^{N^- \times N^-}$
(or $\sF \in \sC^{N^- \times N^- \cdot \Delta T}$),
we can form $\Av_!^{(\psi,-\psi)} \sF \in \sC^{N \times N,(\psi,-\psi)}$,
and the natural map:

\[
\Av_!^{(\psi,-\psi)} \sF \to \Av_*^{(\psi,-\psi)} \sF[4\dim N]
\]

\noindent is an isomorphism.

\step 

Now suppose that $\sF \in \sC^{N^- \times N^- \cdot \Delta T}$.
We claim that $\Av_!^{\psi,-\psi} \sF$ coincides with
$\Av_!^{\psi,-\psi} \Xi(\sF)$; in particular, the latter term is defined.

By base-change, $\Av_!^{\psi,-\psi} \Xi(\sF)$ should be computed
as follows. 
We have a functor:
 
\[
\Av_! = \Av_*[2\dim G/B^-]: D(N\backslash G) \overset{B^-}{\otimes} \sC \to
D(N\backslash G) \overset{G}{\otimes} \sC = \sC^{\Delta N}.
\]

\noindent Also, $\sF$ defines an object 
$\widetilde{\sF}$ (i.e., $\omega_{N\backslash G} \widetilde{\boxtimes} \sF$) 
in $D(N\backslash G) \overset{B^-}{\otimes} \sC$. Finally, the recipe
says that to compute
$\Av_!^{\psi,-\psi} \Xi(\sF)$, we should 
form $\Av_!(\widetilde{\sF}) \in \sC^{\Delta N}$ and 
then further $!$-average to $\sC^{N \times N,(\psi,-\psi)}$.

Observe that $\widetilde{\sF}$ carries a canonical Bruhat filtration.
More precisely, for $w$ an element of the Weyl group $W$, let
$i_w$ denote the locally closed embedding
$N\backslash NwB^- \into N\backslash G$. Let 
${\sF^w \in D(N \backslash N w B^-) \overset{B^-}{\otimes} \sC}$ 
be the object induced by $\sF$, 
so $\widetilde{\sF}$ is filtered with subquotients
$i_{w,*,dR}(\sF^w)$.

Let $N^w = N \cap \Ad_w(B^-)$.
Then $D(N \backslash N w B^-) \overset{B^-}{\otimes} \sC 
\simeq \sC^{\Delta N^w}$,
since $N w B^- = N \overset{N^w}{\times} B^-$,
where $N^w$ maps to $B^-$ via $\Ad_{w^{-1}}$.
The object $\sF^w$ is then\footnote{Here
$g \cdot \sF$ is by definition $\delta_g \convolution \sF$,
and we are using the diagonal action of $G$ on $\sC$.}
$w \cdot \sF$, which
we note is equivariant for 
$\Ad_{\Delta w} (N^- \times N^- \cdot \Delta T) \supset 
N^w \times N^w \supset \Delta N^w$.

Then observe that up to cohomological shift,
$\Av_! i_{w,*,dR}(\sF^w) \in \sC^{\Delta N}$
is obtained by $*$-averaging
$w \cdot \sF$ from $\Delta N^w$ to 
$\Delta N$, since $\Av_!$ is $!$-averaging
from $B^-$ to $G$, and therefore coincides with $*$-averaging
up to shift.

Now for $w \neq 1$, recall that the character $\psi$ is non-trivial
on $N \cap \Ad_w(N^-)$. Therefore, $!$-averaging to
$(N\times N,(\psi,-\psi))$-equivariance vanishes
on $\sC^{N^w \times N^w}$. In particular, this $!$-averaging is defined.
(The same applies for $*$-averaging.)

This vanishing implies:

\[
\Av_!^{N \times N,(\psi,-\psi)} \Xi(\sF) = 
\Av_!^{N \times N,(\psi,-\psi)} \sF^1.
\]

\noindent (Here $1 \in W$ is the unit in the Weyl group.) We
note that $\sF^1 = \Oblv \sF \in \sC = D(NB^-) \overset{B^-}{\otimes} \sC$.
Since this last $!$-averaging is defined by \cite{bbm} Theorem 1.1 (1),
we obtain the result.

\step We have now shown $\Av_!^{\psi,-\psi} \sF$ is defined
for $\sF \in \sC^{\Delta G}$.
All that is left is to check that the natural map:

\[
\Av_!^{\psi,\psi} \sF \to \Av_*^{\psi,\psi} \sF [2\dim N]
\]

\noindent is an isomorphism. 

We may assume $\sF = \Xi \sG$ for $\sG \in \sC^{N^- \times N^- \cdot \Delta T}$.
In this case, the assertion is a straightforward verification 
in the above argument.

\subsection{Application: construction of resolutions}\label{ss:reg-res}

For the remainder of the section, we assume $n \geq 2$. 

For $\sC \in G_n\mod$, let 
$j^!:\sC \rightleftarrows \sC_{reg}:j_{*,dR}$
be as in \S \ref{ss:fourier-decomp}.

For $\sC = D(G_n)$, let $\delta_1 \in D(G_n)$ be the skyscraper
$D$-module at the identity, and let 
$\delta_1^{reg} \coloneqq j_{*,dR}j^!(\delta_1)$. Note that
for any $\sC \in G_n\mod$, 
the convolution functor $\delta_1^{reg} \star -$ is isomorphic to 
$j_{*,dR}j^!$ as endofunctors of $\sC$.

\begin{lem}\label{l:delta-reg}

$\delta_1^{reg}$ lies in the full subcategory of $D(G_n)$
generated by the essential image of 
the functor:

\[
D(G_n)^{N_n,-\psi,+} \times D(G_n)^{N_n,\psi,+} \to 
D(G_n)^{N_n,-\psi} \otimes D(G_n)^{N_n,\psi} \to 
D(G_n)
\]

\noindent under finite colimits and direct summands. Here the first
factor $D(G_n)^{N_n,-\psi}$ has invariants taken on the right,
$D(G_n)^{N_n,\psi}$ has invariants on the left, and both
terms are considered with their natural $t$-structures.

\end{lem}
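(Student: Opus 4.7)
The plan is to combine Theorem \ref{t:whit-conv} and Corollary \ref{c:hecke} to present $\delta_1^{reg}$ as the image under convolution of an explicit bounded-below object in $D(G_n)^{N_n,-\psi} \otimes D(G_n)^{N_n,\psi}$, then extract a finite-colimit presentation by exploiting the boundedness of $\delta_1^{reg}$ itself.

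First, applying Theorem \ref{t:whit-conv} with $\sC = D(G_n)$ provides a continuous left adjoint $L$ to the convolution functor $\mathsf{conv}$. Corollary \ref{c:hecke} ensures the essential image of $\mathsf{conv}$ generates $D(G_n)_{reg}$ under colimits. Combining these with a standard Yoneda-type argument, the comonad $\mathsf{conv} \circ L$ on $D(G_n)$ is identified with the colocalization $j_{*,dR} j^!$ onto $D(G_n)_{reg}$; in particular $\mathsf{conv}(L(\delta_1)) \simeq \delta_1^{reg}$.

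Second, one uses the explicit description of $L$ from Theorem \ref{t:whit-conv}: $L(\delta_1)$ arises from the coaction $\coact(\delta_1) \in D(G_n) \otimes_{N_n} D(G_n)$ (pullback along multiplication, shifted by $[-2\dim G_n]$) followed by the $*$-averaging $\Av_*^{N_n,-\psi} \otimes \id$. Since $\delta_1$ is in the heart and the coaction is essentially supported on the antidiagonal of $G_n \times G_n$, the object $\coact(\delta_1)[-2\dim G_n]$ lives in a bounded range of degrees. As $N_n$ is unipotent, $\Av_*^{N_n,-\psi}$ is $t$-bounded (shifting by at most $\dim N_n$), so $L(\delta_1)$ is bounded-below in the tensor product.

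Third, the delicate step is converting this bounded-below object of the tensor product into a finite-colimit-of-pure-tensors presentation. Here one exploits that $\delta_1^{reg}$ is itself cohomologically bounded: it fits into a fiber sequence $i_* i^! \delta_1 \to \delta_1 \to \delta_1^{reg}$ coming from the $D(\fg)$-module structure and the open/closed decomposition of $\fg$ along $\fg_{reg}$, both of whose outer terms are bounded. A sufficiently coarse Postnikov truncation of $L(\delta_1)$ therefore still maps to $\delta_1^{reg}$ under $\mathsf{conv}$, and each finite Postnikov stage decomposes, up to taking summands, as a finite sum of pure tensors $\sF_1 \otimes \sF_2$ with $\sF_1,\sF_2$ bounded-below Whittaker objects.

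The main obstacle is rigorously executing the third step, particularly controlling the $t$-structure on the tensor product $D(G_n)^{N_n,-\psi} \otimes D(G_n)^{N_n,\psi}$ and verifying that each Postnikov stage genuinely decomposes into finitely many pure tensors with bounded-below factors. A cleaner alternative would replace the Postnikov argument with an explicit finite Koszul-type resolution: the identification (sketched after Corollary \ref{c:hecke} for $n=2$, expected to generalize for all $n \geq 2$) of $\sH_{N_n,\psi}$ with modules over $D$-modules on the finite-type scheme of regular centralizers suggests the diagonal of this scheme admits a finite Koszul resolution, which would transform into a finite presentation of the unit of the Hecke tensor product by pure tensors of bounded-below Whittaker objects.
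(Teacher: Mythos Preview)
Your step 2 contains a genuine error: the comonad $\mathsf{conv}\circ L$ is \emph{not} the colocalization $j_{*,dR}j^!$. The convolution functor factors as
\[
D(G_n)^{N_n,-\psi}\otimes D(G_n)^{N_n,\psi}\xrightarrow{\,p\,} D(G_n)^{N_n,-\psi}\underset{\sH_{N_n,\psi}}{\otimes} D(G_n)^{N_n,\psi}\simeq D(G_n)_{reg}
\]
by Corollary~\ref{c:hecke}, so your claim amounts to saying the left adjoint $p^L$ is fully faithful, i.e.\ that $p$ is a colocalization. There is no ``standard Yoneda-type argument'' that produces this from the mere fact that the image of $\mathsf{conv}$ generates $D(G_n)_{reg}$. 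Concretely, already for $n=2$ the Hecke category $\sH_{N_2,\psi}$ identifies with $D$-modules on the regular centralizer group scheme, whose fibers include tori; the counit $p\circ p^L\to\id$ then picks up the de Rham cohomology of those fibers and is not an isomorphism.

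The paper's argument takes a different route: instead of the continuous left adjoint $L$, it uses the \emph{discontinuous right adjoint} $\mathsf{conv}^R$. The key point is that $p$ above admits a \emph{monadic} right adjoint (this is where Theorem~\ref{t:whit-conv} enters, via \cite{shvcat} Corollary~C.2.3: passing to $(N_n,\psi)$-invariants shows the $\sH_{N_n,\psi}$-action map has an $\sH$-linear left adjoint). Monadicity yields the bar resolution $\delta_1^{reg}\simeq |T^{\dot}(\delta_1^{reg})|$ for $T=\mathsf{conv}\circ\mathsf{conv}^R$. Now holonomicity makes $\delta_1^{reg}$ compact, so it is a retract of a finite partial realization $|T^{\dot}(\delta_1^{reg})|_{\leq r}$; each $T^k(\delta_1^{reg})$ is $\mathsf{conv}$ of a bounded-below object (since $T$ is left $t$-exact up to shift), and bounded-below objects in $D(G_n\times G_n)^{N_n\times N_n,(-\psi,\psi)}$ lie in the subcategory generated by bounded-below pure tensors.

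As an aside, your step 3 is actually easier than you fear (were step 2 correct): since $L$ is a continuous left adjoint it preserves compacts, so $L(\delta_1)$ is already compact in the tensor product and hence a retract of a finite colimit of pure tensors of compact, hence bounded, objects---no Postnikov truncation needed. But this observation does not rescue the argument, since the identification $\mathsf{conv}(L(\delta_1))\simeq\delta_1^{reg}$ fails.
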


\begin{proof}

Suppose $\sC \in G_n\mod$. 
By Theorem \ref{t:whit-conv}, the convolution functor:

\[
D(G_n)^{N_n,-\psi} \otimes \sC^{N_n,\psi} \to \sC
\]

\noindent admits a left adjoint. Moreover, this left adjoint
is a morphism in $G_n\mod$ (where a priori, it is lax). 
Passing to $(N_n,\psi)$-invariants, we see that the functor:

\[
\sH_{N_n,\psi} \otimes \sC^{N_n,\psi} \to \sC^{N_n,\psi}
\] 

\noindent admits a left adjoint that is a morphism of
$\sH_{N_n,\psi}$-module categories (for $\sH_{N_n,\psi}$ as in
Corollary \ref{c:hecke}). 

By the above remarks and \cite{shvcat} Corollary C.2.3, 
the morphism:

\[
D(G_n)^{N_n,-\psi} \otimes \sC^{N_n,\psi} \to 
D(G_n)^{N_n,-\psi} \underset{\sH_{N_n,\psi}}{\otimes} \sC^{N_n,\psi}
\]

\noindent admits a monadic (discontinuous!) right adjoint. 
By Corollary \ref{c:hecke}, the
right hand side maps isomorphically onto $\sC_{reg}$.

Let $\on{conv}:D(G_n)^{N_n,-\psi} \otimes \sC^{N_n,\psi} \to \sC$
denote the convolution functor, let $\on{conv}^R$ denote
its (discontinuous!) right adjoint, and let 
$T = \on{conv} \circ \on{conv}^R:\sC \to \sC$ denote the corresponding
monad. Clearly $\on{conv}$ factors through $\sC_{reg}$, and
$\on{conv}^R \circ j_{*,dR}$ is the right adjoint to the corresponding
functor $D(G_n)^{N_n,-\psi} \otimes \sC^{N_n,\psi} \to \sC_{reg}$.

Therefore, the monadic conclusion above shows that for any
$\sF \in \sC_{reg} \overset{j_{*,dR}}{\subset} \sC$, the geometric realization 
$|T^{\dot}(\sF)| \in \sC$ maps isomorphically onto $\sF$.

We now specialize to the case $\sC = D(G_n)$ and $\sF = \delta_1^{reg}$.
Note that $\delta_1^{reg}$ is holonomic in $D(G_n)$ and therefore
compact. Therefore, as:

\[
\delta_1^{reg} = |T^{\dot}(\delta_1^{reg})| = 
\underset{r}{\colim} \, |T^{\dot}(\delta_1^{reg})|_{\leq r}
\]

\noindent (for $|-|_{\leq r}$ the usual partial geometric realization,
i.e., the colimit over $\bDelta_{\leq r}^{op}$), we obtain
that $\delta_1^{reg}$ is a direct summand
of $|T^{\dot}(\delta_1^{reg})|_{\leq r}$ for some $r$.

We conclude in noting that $T$ is left $t$-exact up to shift as
$\on{conv}$ is both left and right $t$-exact up to shift.
Any object of $D(G_n)^{N_n,-\psi} \otimes D(G_n)^{N_n,\psi} = 
D(G_n \times G_n)^{N_n \times N_n,(-\psi,\psi)}$ bounded
cohomologically bounded from below lies in the full
subcategory generated by the image of
$D(G_n)^{N_n,-\psi,+} \times D(G_n)^{N_n,\psi,+}$, so we obtain the
claim.

\end{proof}

We obtain the following result, which is a sort of effective
version of Theorem \ref{t:cons}.

\begin{cor}\label{c:reg-eff}

Suppose that $n \geq 2$ and $\sC \in G_n\mod$. Then for
any $\sF \in \sC_{reg}$, $\sF$ lies in the full subcategory
of $\sC$ generated under finite colimits and direct summands by 
the essential image of the convolution functor:

\[
D(G_n)^{N_n,-\psi} \otimes \sC^{N_n,\psi} \to \sC.
\] 

Moreover, if $\sC$ has a $t$-structure compatible 
with the
action of $G_n$ on it, and if $\sF \in \sC_{reg} \cap \sC^+$, then 
$\sF$ lies in the full subcategory of $\sC$ generated
under finite colimits and direct summands by
the essential image of the convolution functor:

\[
D(G_n)^{N_n,-\psi,+} \times \sC^{N_n,\psi,+} \to 
D(G_n)^{N_n,-\psi} \otimes \sC^{N_n,\psi} \to \sC.
\] 

\end{cor}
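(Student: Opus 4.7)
The plan is to derive Corollary \ref{c:reg-eff} from Lemma \ref{l:delta-reg} by convolving the resolution of $\delta_1^{reg}$ produced there against $\sF$. First I would recall that for $\sF \in \sC_{reg}$, full faithfulness of $j_{*,dR}$ yields $\sF \simeq j_{*,dR}j^!\sF$, and by the observation just before Lemma \ref{l:delta-reg} the endofunctor $j_{*,dR}j^!$ of $\sC$ is given by convolution with $\delta_1^{reg}$; hence $\sF \simeq \delta_1^{reg} \star \sF$. Then I would invoke Lemma \ref{l:delta-reg} to present $\delta_1^{reg} \in D(G_n)$ as belonging to the full subcategory generated under finite colimits and direct summands by objects of the form $\sG_1 \star \sG_2$, with $\sG_1 \in D(G_n)^{N_n,-\psi,+}$ (right-invariant) and $\sG_2 \in D(G_n)^{N_n,\psi,+}$ (left-invariant).

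Next I would transport this presentation through the convolution functor $-\star \sF : D(G_n) \to \sC$. Since this functor is continuous and additive, it preserves both finite colimits and direct summands, so $\sF$ is generated in $\sC$ in the same sense by the objects $(\sG_1 \star \sG_2) \star \sF \simeq \sG_1 \star (\sG_2 \star \sF)$. The decisive point is that $\sG_2$ is left $(N_n,\psi)$-equivariant on $G_n$, so $\sG_2 \star \sF$ automatically lies in $\sC^{N_n,\psi}$; hence each such building block lies in the essential image of $D(G_n)^{N_n,-\psi} \otimes \sC^{N_n,\psi} \to \sC$, yielding the first assertion.

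For the bounded-below refinement, suppose $\sF \in \sC^+$ and the $t$-structure on $\sC$ is compatible with the $G_n$-action. The compatibility implies that the convolution $D(G_n) \otimes \sC \to \sC$ is left $t$-exact up to a fixed finite shift (finiteness of $\dim G_n$ is used here), so bounded-below objects go to bounded-below objects. Since $\Oblv : \sC^{N_n,\psi} \to \sC$ is $t$-exact by unipotence of $N_n$, I would conclude that $\sG_2 \star \sF \in \sC^{N_n,\psi,+}$ whenever $\sG_2 \in D(G_n)^{N_n,\psi,+}$ and $\sF \in \sC^+$. Combined with the first part, this gives the refined claim.

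I do not anticipate a serious obstacle: the entire conceptual content, namely the resolution of $\delta_1^{reg}$, is already extracted in Lemma \ref{l:delta-reg}, which in turn rests on Theorem \ref{t:whit-conv}. The remaining work is formal, and the only point requiring some care is ensuring that bounded-below-ness propagates correctly through the convolution steps, which is handled by the $t$-structure compatibility hypothesis.
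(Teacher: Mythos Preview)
Your proposal is correct and follows essentially the same approach as the paper: both arguments write $\sF \simeq \delta_1^{reg} \star \sF$, invoke Lemma \ref{l:delta-reg} to resolve $\delta_1^{reg}$ in terms of convolutions $\sG_1 \star \sG_2$, and then observe that $\sG_2 \star \sF \in \sC^{N_n,\psi}$ (and lies in $\sC^{N_n,\psi,+}$ under the $t$-structure hypothesis). The paper's proof is simply a more compressed version of what you wrote.
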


\begin{proof}

Suppose $\sG_1\in D(G_n)^{N_n,-\psi,+}$ and $\sG_2 \in 
D(G_n)^{N_n,\psi,+}$, with conventions for the actions as in 
Lemma \ref{l:delta-reg}.
Then $\sG_2 \star \sF \in \sC^{N_n,\psi}$, so 
$\sG_1 \star \sG_2 \star \sF \in \sC$ lies in the essential
image of the convolution functor. 

Moreover, in the presence of a $t$-structure on $\sC$ as in the 
second part of the assertion, 
$\sG_2 \star \sF \in \sC^{N_n,\psi,+}$ and
$\sG_1 \star \sG_2 \star \sF \in \sC^+$ lies in the essential
image of the functor considered in the second part.

Now we obtain the result by Lemma \ref{l:delta-reg}.

\end{proof}

\begin{cor}\label{c:av-!-cons}

For any $\sC \in G_n\mod$, the functor 
$\Av_!^{\psi,-\psi}:\sC \to D(G_n)^{N_n,\psi} \otimes \sC$
restricts to a conservative functor on $\sC_{reg}$.

\end{cor}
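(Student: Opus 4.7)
The plan is to deduce this as a short, essentially formal consequence of Corollary \ref{c:reg-eff} combined with the adjunction furnished by Theorem \ref{t:whit-conv}. By construction, $\Av_!^{\psi,-\psi}$ is the left adjoint to the convolution functor
\[
\on{conv}\colon D(G_n)^{N_n,-\psi} \otimes \sC^{N_n,\psi} \to \sC,
\]
so the statement to prove is that no nonzero object of $\sC_{reg}$ is right-orthogonal to the essential image of $\on{conv}$.

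Concretely, I would take $\sF \in \sC_{reg}$ with $\Av_!^{\psi,-\psi}(\sF) \simeq 0$ and aim to show $\sF \simeq 0$. By the adjunction, the vanishing hypothesis says exactly that
\[
\ul{\Hom}_{\sC}(\sF, \on{conv}(\sG)) \simeq 0 \qquad \text{for every } \sG \in D(G_n)^{N_n,-\psi} \otimes \sC^{N_n,\psi},
\]
i.e.\ $\sF$ lies in the right-orthogonal of the essential image of $\on{conv}$.

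On the other hand, Corollary \ref{c:reg-eff}, applied to $\sF \in \sC_{reg}$, places $\sF$ itself in the full subcategory $\cD \subset \sC$ generated under finite colimits and direct summands by that same essential image. Since $\ul{\Hom}_{\sC}(\sF,-)$ sends finite colimits to finite limits and respects direct summands, the orthogonality of $\sF$ to the generators of $\cD$ propagates to all of $\cD$; applying this to $\sF \in \cD$ itself yields $\ul{\Hom}_{\sC}(\sF,\sF) \simeq 0$, hence $\sF \simeq 0$, as desired.

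There is no real obstacle here: all the substantive work has already been carried out in the proofs of Theorem \ref{t:cons}, Lemma \ref{l:delta-reg}, and Corollary \ref{c:reg-eff}; conservativity of $\Av_!^{\psi,-\psi}|_{\sC_{reg}}$ is then a standard ``generation meets orthogonality'' argument. The only point meriting a line of care is keeping straight the conventions ($\psi$ vs.\ $-\psi$, left vs.\ right actions on $G_n$) in the target of $\Av_!^{\psi,-\psi}$, but this is purely bookkeeping.
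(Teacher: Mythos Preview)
Your argument is correct and is in fact a bit slicker than the paper's. Both proofs rest on Corollary~\ref{c:reg-eff}, but the paper proceeds by first extracting from (the proof of) that corollary the statement that for nonzero $\sF \in \sC_{reg}$ there is some compact $\sG \in D(G_n)^{N_n,\psi}$ with $\sG \star \sF \neq 0$; it then invokes the duality $D(G_n)^{N_n,\psi} \simeq (D(G_n)^{N_n,-\psi})^{\vee}$ to express $\sG \star \sF$ as a functional applied to $\Av_*^{\psi,-\psi}(\sF)$, deducing $\Av_*^{\psi,-\psi}(\sF) \neq 0$, and finally uses the identification $\Av_!^{\psi,-\psi} \simeq \Av_*^{\psi,-\psi}[2\dim N_n]$ from Theorem~\ref{t:whit-conv}. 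Your route bypasses the duality computation and the $\Av_! = \Av_*[\text{shift}]$ step entirely: you use only the bare adjunction $(\Av_!^{\psi,-\psi}, \on{conv})$ together with the statement of Corollary~\ref{c:reg-eff} as written, via the standard observation that the class $\{X : \ul{\Hom}(\sF,X) \simeq 0\}$ is closed under finite colimits and retracts. The paper's version is a touch more concrete (it names the object detecting nonvanishing), while yours is more purely formal and shorter.
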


\begin{proof}

Let $\sF \in \sC_{reg}$, and assume $\sF$ is non-zero. 
We need to show that $\Av_!^{\psi,-\psi}(\sF) \neq 0$.

By Corollary \ref{c:reg-eff}, there exists
$\sG \in D(G_n)^{N_n,\psi}$ with $\sG \star \sF \neq 0$
in $\sC^{N_n,\psi}$. As $D(G_n)^{N_n,\psi}$ is compactly
generated, we may assume that $\sG$ is compact.

Note that $D(G_n)^{N_n,\psi}$ is canonically dual as a DG category
to $D(G_n)^{N_n,-\psi}$. Let $\bD \sG:D(G_n)^{N_n,-\psi} \to \Vect$
denote the functor dual to the compact object $\sG$ (explicitly,
this functor is given as $\Hom$ out of the Verdier dual to $\sG$). 

Then the convolution $\sG \star \sF$ may be calculated
by forming $\Av_*^{\psi,-\psi}(\sF) \in 
D(G_n)^{N_n,-\psi} \otimes \sC^{N_n,\psi}$
and then applying $\bD \sG \otimes \id_{\sC^{N_n,\psi}}$.
In particular, we deduce that $\Av_*^{\psi,-\psi}(\sF)$ is non-zero.
As $\Av_*^{\psi,-\psi}(\sF)$ coincides with 
$\Av_!^{\psi,-\psi}(\sF)$ up to shift, we obtain the claim.

\end{proof}

\section{Most $PGL_2$-representations are generic}\label{s:adolescent}

\subsection{}

We now prove the following result.

\begin{thm}\label{t:generic}

Let $G = PGL_2$ and let $\sC$ be acted on by $G(K)$ with level $\kappa$.
Then $\sC$ is generated under the action of $G(K)$ by 
$\Whit(\sC)$ and $\sC^{\o{I}}$ where $\o{I} \subset G(K)$ 
is the radical of the Iwahori subgroup. That is, any
subcategory of $\sC$ that is closed under colimits,
contains $\Whit(\sC)$ and
$\sC^{\o{I}}$, and is closed under the $G(K)$ action is $\sC$ itself.

\end{thm}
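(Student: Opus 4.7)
The plan is to show $\sC' = \sC$, where $\sC' \subset \sC$ is the smallest cocomplete full subcategory stable under the $G(K)$-action containing $\Whit(\sC)$ and $\sC^{\o{I}}$. Let $K^n \coloneqq \ker(G(O) \to G_n)$ for $n \geq 1$. Because the $K^n$ are prounipotent with $\bigcap_n K^n = \{1\}$, the union $\bigcup_{n \geq 1} \sC^{K^n}$, together with its $G(K)$-translates, generates $\sC$ under colimits; so it suffices to prove $\sC^{K^n} \subset \sC'$ for all $n \geq 1$. I would argue this by induction on $n$. The inductive step for $n \geq 2$ exploits the results of \S\ref{s:heisenberg} together with the $PGL_2$-specific fact that $\fg = \fg_{reg} \sqcup \{0\}$, and the base case $n = 1$ uses the analogous fact that $N \simeq \bG_a$ has the character stratification $\{0\} \sqcup (\bA^1 \setminus 0)$.

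For the inductive step, fix $n \geq 2$ and assume $\sC^{K^{n-1}} \subset \sC'$. The category $\sC^{K^n}$ carries a natural $G_n$-action, and Fourier transform along the central abelian normal subgroup $\exp(t^{n-1}\fg) \subset G_n$, as in \S\ref{ss:fourier-decomp}, decomposes $\sC^{K^n}$ according to its support in $\fg^\vee \simeq \fg$. Since $\fg = \sl_2$ satisfies $\fg = \fg_{reg} \sqcup \{0\}$, only two strata appear: the stratum at $0 \in \fg$, equal to the subcategory on which $\exp(t^{n-1}\fg)$ acts trivially, i.e., $\sC^{K^{n-1}}$, which is in $\sC'$ by induction; and the stratum over $\fg_{reg}$, namely $(\sC^{K^n})_{reg}$, to which the results of \S\ref{s:heisenberg} apply. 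By Corollary \ref{c:reg-eff}, every object of $(\sC^{K^n})_{reg}$ lies in the subcategory of $\sC^{K^n}$ generated under finite colimits and summands by the image of the convolution $D(G_n)^{N_n,-\psi} \otimes (\sC^{K^n})^{N_n,\psi} \to \sC^{K^n}$. Because $\sC'$ is $G(K)$-stable and this convolution is realized by the $G_n$-action, it suffices to prove $(\sC^{K^n})^{N_n,\psi} \subset \sC'$.

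This last containment is the descent step, where I expect the main technical work. The character $\psi:N_n \to \bG_a$ of \S\ref{ss:gn-notation} is \emph{not} the restriction of the residue character on $N(K)$ defining $\Whit(\sC)$, since the latter vanishes on $N(O) \supset N_n$. The fix is to translate by the cocharacter $t^{-(n-1)\check\rho} \in T(K) \subset G(K)$: this element normalizes $N(K)$ and conjugates the residue character into a character whose restriction to $N_n$ matches the paper's $\psi$, up to a standard sign. Consequently, $t^{-(n-1)\check\rho}\cdot \Whit(\sC) \subset \sC'$ automatically, and I would use the adolescent Whittaker formalism of \cite{whit}, combined with the convolution-adjoint machinery of \S\ref{s:conv} (in particular Theorem \ref{t:whit-conv}), to identify this translated Whittaker subcategory with $(\sC^{K^n})^{N_n,\psi}$ up to further $K^n$-averaging, thereby placing the latter inside $\sC'$. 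The base case $n = 1$ is parallel but does not use Theorem \ref{t:cons}: Fourier transform along $N \simeq \bG_a$ on $\sC^{K^1}$ splits it into the $N$-invariant part, equal to $\sC^{\o{I}}$, and a generic part; since $T$ acts transitively on the non-zero $N$-characters, the generic part is generated under the $G$-action by $(\sC^{K^1})^{N,\psi}$, which is again related to a $T(K)$-translate of $\Whit(\sC)$ by the same residue computation.

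The principal obstacle is the bookkeeping in the descent identification between the finite Whittaker invariants $(\sC^{K^n})^{N_n,\psi}$ and the $G(K)$-translates of $\Whit(\sC)$; once this is set up cleanly, the argument reduces to the conservativeness/effectivity statements of \S\ref{s:heisenberg}. The use of $PGL_2$ (specifically $\fg_{reg} = \fg \setminus \{0\}$ for $n \geq 2$, and $\dim N = 1$ for $n = 1$) is exactly what ensures that the Fourier decomposition reduces to two strata at each step of the induction, which is why the descent, as stressed in the introduction, does not extend verbatim to higher rank.
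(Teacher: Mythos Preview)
Your proposal is correct and follows the paper's strategy: induct on $n$ to show $\sC^{K_n} \subset \sC'$, using the Fourier decomposition along $\exp(t^{n-1}\fg)$ together with the $PGL_2$-specific fact $\fg_{reg} = \fg \setminus \{0\}$ for the inductive step, and the adolescent Whittaker theorem to place $(\sC^{K_n})^{N_n,\psi} = \Whit^{\leq n}(\sC)$ inside $\sC'$. The one genuine difference is the base case: you run a hand-rolled Fourier argument along $N \simeq \bG_a$ (so you need both $\sC^{\o{I}}$ and $\Whit^{\leq 1}(\sC)$ at $n=1$), whereas the paper simply cites \cite{bzgo}, which says that any $\sD$ acted on by a finite-dimensional reductive $G$ is generated under the $G$-action by $\sD^N$ --- so only $\sC^{\o{I}}$ is needed there, and that step is not rank-1 specific. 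Two minor corrections: your invocation of Theorem \ref{t:whit-conv} from \S\ref{s:conv} is unnecessary --- the descent identification you flag as the principal obstacle is exactly the content of Theorem \ref{t:whit} (reviewed in \S\ref{ss:ad-whit-review}), and that is all the paper uses at this point (Theorem \ref{t:whit-conv} enters only later, for the $t$-exactness arguments in \S\ref{s:exactness}--\ref{s:bddness}); and the correct translation is by $\check\rho(t^{-n})$ rather than $\check\rho(t^{-(n-1)})$, since the residue character picks out the coefficient of $t^{-1}$ while $\psi$ on $N_n$ picks out that of $t^{n-1}$.
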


\begin{rem}

This result is reminiscent of the existence of Whittaker
models for those irreducible smooth representations
of $GL_2$ over a locally compact non-Archimedean field with
non-trivial restriction to $SL_2$.

However, in Theorem \ref{t:generic}, $\o{I}$ cannot\footnote{However,
$\o{I}$ can be strengthened somewhat: one can take
invariants with respect to the Iwahori subgroup of $SL_2(K)$,
i.e., the canonical degree $2$ cover of Iwahori.}
be replaced by $G(K)$:
this can be seen by applying Bezrukavnikov's theory \cite{bezrukavnikov-hecke}
to local systems with non-trivial unipotent monodromy
(c.f. with the ideas of \cite{arinkin-gaitsgory} in the spherical setting).
Note that such local systems are outside the scope of 
arithmetic Langlands
because they are not semisimple.

\end{rem}

\subsection{Review of adolescent Whittaker theory}\label{ss:ad-whit-review}

We prove Theorem \ref{t:generic} using the theory of \cite{whit} \S 2.
For convenience, we review this here.

Let $G$ be an adjoint\footnote{This is only for the convenience
of using the action of $\check{\rho}(t) \in G(K)$ on $\sC$.
In fact, \cite{whit} uses different indexing conventions than we use here,
and which are better adapted to a general reductive group.}
group and let $\sC \in G(K)\mod_{\kappa}$ be acted on by
$G(K)$ with some level $\kappa$. 
We use the notation of \S \ref{s:heisenberg}.
Let $K_n \subset G(O) \subset G(K)$ denote the $n$th congruence subgroup
and observe that $G_n$ acts on $\sC^{K_n}$.

For $n>0$, define $\Whit^{\leq n}(\sC) \coloneqq (\sC^{K_n})^{N_n,\psi}$.
There is a natural functor $\Whit^{\leq n+1}(\sC) \to \Whit^{\leq n}(\sC)$:

\[
\sF \mapsto \Av_*^{K_n} (-\check{\rho}(t) \star \sF)
\]

\noindent and which is denoted $\iota_{n,n+1}^!$ in \emph{loc. cit}.

\begin{thm}[\cite{whit} Theorem 2.7.1]\label{t:whit}

The functor $\iota_{n,n+1}^!$ admits a left adjoint
$\iota_{n,n+1,!}$. This left adjoint is given by convolution with
some $D$-module on $G(K)$. 

Moreover, there is a natural equivalence:

\[
\underset{n,\iota_{n,n+1,!}}{\colim} \, \Whit^{\leq n}(\sC) \isom 
\Whit(\sC)
\in \DGCat_{cont}.
\]

\noindent The structural functors $\Whit^{\leq n}(\sC) \to \Whit(\sC)$
are left adjoint to the natural functors
$\Av_*^{K_n} \circ (\delta_{n\check{\rho}(t)} \star -):
\Whit(\sC) \to \Whit^{\leq n}(\sC)$.
In particular,
every object $\sF \in \Whit(\sC)$ is canonically a colimit (in $\sC$)
of objects $\sF_n$ with $\delta_{\check{\rho}(t^n)} \star \sF \in 
\Whit^{\leq n}(\sC)$.

\end{thm}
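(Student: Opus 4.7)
The plan is to proceed in two stages. First, I would identify the left adjoint $\iota_{n,n+1,!}$ as an explicit convolution operator, reducing the question to the finite-dimensional Whittaker framework of Section \ref{s:conv}. Second, I would exhibit $\Whit(\sC)$ as the asserted colimit via a cofinality argument driven by the dilating action of $\check\rho(t)$ on $G(K)$.

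For stage one, I would factor $\iota_{n,n+1}^! = \Av_*^{K_n} \circ (\delta_{-\check\rho(t)} \star -)$. The second factor is an equivalence (convolution by an invertible $D$-module), so the problem reduces to producing a left adjoint to an averaging functor between Whittaker-type invariant subcategories. After $\check\rho(t)$-conjugation, the $K_{n+1}$-equivariance is transported to equivariance for $\Ad_{-\check\rho(t)}(K_{n+1})$; modulo $K_m$ for $m \gg n$, the averaging question takes place inside the finite jet group $G_m$, between invariants for two closely related subgroups each carrying a Whittaker-type character. Theorem \ref{t:bbm-2-adolescent} then supplies both the existence of the left adjoint and its identification with the $*$-averaging up to cohomological shift. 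Unwinding the construction exhibits $\iota_{n,n+1,!}$ as convolution on $G(K)$ with a specific $D$-module, namely a $!$-averaged translate of the skyscraper at $\check\rho(t)$.

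For stage two, I first construct the structural functor $\Whit^{\leq n}(\sC) \to \Whit(\sC)$ as the left adjoint to $\Av_*^{K_n} \circ (\delta_{n\check\rho(t)} \star -)$; its existence and convolution presentation follow by iterating the argument of stage one. Tautological adjunction compatibility produces a canonical comparison functor
\[
\underset{n,\iota_{n,n+1,!}}{\colim} \, \Whit^{\leq n}(\sC) \longrightarrow \Whit(\sC).
\]
To prove it is an equivalence, I would verify (a) that the family of right adjoints $\Av_*^{K_n} \circ (\delta_{n\check\rho(t)} \star -): \Whit(\sC) \to \Whit^{\leq n}(\sC)$ is jointly conservative, and (b) that the natural map from any $\sF \in \Whit(\sC)$ into the corresponding system is an equivalence. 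Both reduce to the geometric observation that the filtered family $\Ad_{-n\check\rho(t)}N(O)$ exhausts $N(K)$ and that the associated characters interpolate $\psi|_{N(K)}$, so $(N(K),\psi)$-invariance is naturally the filtered colimit of its truncations along $\check\rho(t^n)$. The final assertion---that each $\sF \in \Whit(\sC)$ is a colimit of objects becoming adolescent after $\delta_{\check\rho(t^n)}$-convolution---is then a direct readout of the colimit presentation.

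The principal obstacle is stage one: although the left adjoint to $\iota_{n,n+1}^!$ exists formally by presentability, identifying it with convolution by a concrete $D$-module is not formal. This is exactly the work done by Theorem \ref{t:bbm-2-adolescent}, whose \emph{raison d'\^etre} is to supply the BBM-style identification $\Av_! \simeq \Av_*[2\dim N_n]$ on mixed Whittaker equivariant categories; deploying it here requires careful bookkeeping of the subgroups $\Ad_{\pm n \check\rho(t)} K_n$ and of the characters they inherit. The colimit description in stage two is then a comparatively formal consequence of this convolution presentation together with the cofinality of the $\check\rho(t)$-translation system.
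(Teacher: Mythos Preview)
The paper does not prove this theorem: Theorem \ref{t:whit} is stated as a citation of \cite{whit} Theorem 2.7.1 and is used as a black box throughout (for instance in the descent argument of \S\ref{ss:descent}, in Corollary \ref{c:crit-whit-naive}, and in the proof of Theorem \ref{t:whit-equiv}). There is therefore no proof in the paper to compare your proposal against.

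That said, your proposed argument has a genuine gap. You invoke Theorem \ref{t:bbm-2-adolescent} to supply the left adjoint $\iota_{n,n+1,!}$, but that theorem concerns a structurally different situation: it treats $\sC \in G_n \times G_n\mod$ and the averaging functor from $\sC^{N_n \times N_n,(\psi,-\psi)}$ to $\sC^{\Delta G_n}$, i.e.\ passage between bi-Whittaker invariants and diagonal $G_n$-invariants. The transition $\iota_{n,n+1}^!$ is instead a map between $(N_{n+1},\psi)$-invariants inside $\sC^{K_{n+1}}$ and $(N_n,\psi)$-invariants inside $\sC^{K_n}$ for a single $\sC \in G(K)\mod_\kappa$; after your $\check\rho(t)$-conjugation and reduction modulo $K_m$, the subgroups in play are not of the form $N_m \times N_m$ and $\Delta G_m$, and no $G_m \times G_m$-action on the relevant category is in evidence. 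The BBM-type input that the actual proof in \cite{whit} uses is tailored to the specific adolescent subgroups $\Ad_{n\check\rho(t)}(K_n) \cdot N(O)$ and their characters, not to the diagonal/bi-Whittaker configuration of Theorem \ref{t:bbm-2-adolescent}. So while your instinct that a BBM-style $\Av_! \simeq \Av_*[\text{shift}]$ identification is the crux is correct, the theorem you cite does not deliver it in this setting, and the ``careful bookkeeping'' you allude to would in fact require reproving the relevant portion of \cite{whit} rather than quoting a result from the present paper.
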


\subsection{Proof of Theorem \ref{t:generic}}\label{ss:descent}

We can now prove the main theorem of this section. Below, $G = PGL_2$.

Let $\sC^{\prime} \subset \sC$ be a $G(K)$-subcategory containing
$\Whit(\sC)$ and $\sC^{\o{I}}$, and we
wish to show that $\sC^{\prime} = \sC$.

Recall that $\sC = \colim_n \sC^{K_n} \in \DGCat_{cont}$. Therefore, 
it suffices to show that $\sC^{\prime}$ contains $\sC^{K_n}$ for all $n \geq 1$.
We do this by induction on $n$.

In the base case $n = 1$, recall that for any $\sD$ acted on by\footnote{Finite-dimensional, and here arbitrary reductive is fine.} $G$, $\sD$ is
the minimal cocomplete subcategory of itself closed under the $G$-action
and containing $\sD^N$; indeed, this follows from the main
theorem of \cite{bzgo}.\footnote{Or it follows from usual Beilinson-Bernstein
localization theory: by reduction to the case $\sD = D(G)$, one finds
that $\sD^{G,w}$ is a colocalization of 
$(\sD^N)^{T,w}$, and then use conservativeness of weak invariants
(\cite{shvcat} Theorem 2.2.2).}
Applying this to $\sD = \sC^{K_1}$, we find that 
$\sC^{K_1}$ can be generated from $\sC^{\o{I}}$ using the
action of $G \subset G(K)$.

Now suppose the claim is true for $n$, and let us show it for $n+1$.
Note that $n+1 \geq 2$, so we may apply the methods
of \S \ref{s:heisenberg} to $\sC^{K_{n+1}}$ with
its canonical $G_{n+1}$-action.
In the notation of \emph{loc. cit}., we have adjoint functors:

\[
j^!:\sC^{K_{n+1}} \rightleftarrows (\sC^{K_{n+1}})_{reg}:j_{*,dR}.
\]

\noindent Note that 
$(\sC^{K_{n+1}})_{reg} \subset \sC^{\prime}$
by Corollary \ref{c:heisenberg}, as
$\Whit^{\leq n+1}(\sC) = (\sC^{K_{n+1}})^{N_{n+1},\psi} 
\subset \sC^{\prime}$ by hypothesis on $\sC^{\prime}$
(and Theorem \ref{t:whit}).

Therefore, it suffices to show that $\Ker(j^!) \subset \sC^{\prime}$.
Then we observe that $\fg_{reg} = \fg\setminus 0$ for $\fg = \sl_2$,
so (in the notation of \S \ref{ss:fourier-decomp}),
$\Ker(j^!) = \sC^{K_{n+1}}|_0 = \sC^{K_n}$ as we have the short
exact sequence:

\[
1 \to \fg \otimes \bG_a \to K_{n+1} \to K_n \to 1.
\]

\noindent But $\sC^{K_n} \subset \sC^{\prime}$ by induction.

\begin{rem}

The above is the \emph{descent} method discussed in the 
introduction. As this argument plays a key role in the 
paper, we reiterate the idea: with notation as above,
for $\sC \in G(K)\mod_{\kappa}$, 
$\Ker(\Av_*^{K_n}:\sC^{K_{n+1}} \to \sC^{K_n})$ 
is the category $(\sC^{K_{n+1}})_{reg}$, understood
in the sense of \S \ref{ss:fourier-decomp} for the
corresponding $G_{n+1}$-action. By 
Theorem \ref{t:cons} and Theorem \ref{t:whit}, 
this kernel may therefore be functorially described in terms of
the Whittaker model for $\sC$. 

One can then try to verify some property of objects
of $\sC$ as follows:

\begin{enumerate}

\item Reduce to showing the property for objects
in $\sC^{K_n}$ for some $n$.

\item Use the Whittaker model and the above observations
to inductively reduce to the $n = 1$ case.

\item Use \cite{bzgo} to reduce the $n = 1$ case to
a property of objects in $\sC^{\o{I}}$.

\end{enumerate}

\end{rem}

\section{Kac-Moody modules with central character}\label{s:central}

\subsection{}

In this section, we study categories of 
critical level Kac-Moody representations
with central character restrictions. We refer back to \S \ref{ss:intro-crit}
for a review of standard notation at critical level. 

First, for 
reductive $G$ and any $n \geq 0$, we will introduce a certain category:

\[
\widehat{\fg}_{crit}\mod_{\ord_n,naive} \in \DGCat_{cont}
\]

\noindent with a critical level $G(K)$-action. 

In the above, the subscript
$\ord_n$ indicates that we look at $\widehat{\fg}_{crit}$-modules
on which the center $\fZ$ acts through 
a certain standard quotient $\fZ \onto \fz_n$, and in a suitable derived
sense. Equivalently, under Feigin-Frenkel, these can be thought
of as representations \emph{scheme-theoretically} supported on 
$\Op_{\ld{G}}^{\leq n} \subset \Op_{\ld{G}}$, where 
$\Op_{\ld{G}}^{\leq n}$ are opers with singularity of 
order $\leq n$, c.f. \cite{hitchin} \S 3.8 or \cite{fg2} \S 1.

For $n = 0$, $\fz_0 = \fz$. Here the
central character condition is the regularity assumption from 
\S \ref{ss:intro-loc-start}, so we use the notation $reg$ in place
of $\ord_0$.

In the spirit of \cite{methods}, the subscript $naive$ indicates that
this is not the best derived category to consider. 
For instance, 
$\widehat{\fg}_{crit}\mod_{\ord_n,naive}$ is not compactly
generated. And for $n = 0$, the analogue of Conjecture \ref{conj:intro-fg-loc}
fails for it.

Following \cite{dmod-aff-flag} \S 23, we introduce a somewhat
better \emph{renormalized}
category $\widehat{\fg}_{crit}\mod_{\ord_n}$. This category
will have a forgetful functor:

\[
\widehat{\fg}_{crit}\mod_{\ord_n} \to 
\widehat{\fg}_{crit}\mod_{\ord_n,naive}
\]

\noindent that is $t$-exact for suitable $t$-structures and an 
equivalence on eventually coconnective subcategories.

However, this renormalization procedure is somewhat subtle,
and there are many basic questions about
$\widehat{\fg}_{crit}\mod_{\ord_n}$ that I do not know how
to answer. For instance, I cannot generally
show that there is a $G(K)$-action on 
$\widehat{\fg}_{crit}\mod_{\ord_n}$ compatible with the
forgetful functor above. We refer to \S \ref{ss:eq-renorm} for
further discussion.

The material of this section is technical. 
Proposition \ref{p:central-naive} and Lemma \ref{l:ren} are the key points.
After understanding the statements of these results, the reader
should be equipped to move on to future sections.

Finally, we highlight that the material of this section relies
on \cite{methods} \S 11 and extends the material
from \emph{loc. cit}.

\subsection{Notation at critical level}

As in \cite{methods} \S 11, we use the following notation.
We refer to \cite{fg2} \S 1 for background on opers.

First, $\Op_{\ld{G}}$ denotes the indscheme of $\ld{G}$-opers
on the punctured disc. We let $\Op_{\ld{G}}^{\leq n} \subset
\Op_{\ld{G}}$ denote the subscheme of opers with singularities
of order $\leq n$.

We remind that $\Op_{\ld{G}}^{\leq n}$ is affine for every
$n$; we let $\fz_n$ denote the corresponding algebra of
functions, so $\Op_{\ld{G}}^{\leq n} = \Spec(\fz_n)$. 
We remind that $\fz_n$ is a polynomial algebra in infinitely
many variables.

We let $\fZ$ denote the commutative $\overset{!}{\otimes}$-algebra
$\lim_n \fz_n \in \Pro\Vect^{\heart}$, 
the limit being taken in $\Pro\Vect^{\heart}$;
we refer to \cite{methods} for the terminology on topological
algebras used here. We remark that $\Op_{\ld{G}} = \Spf(\fZ)$.

By Feigin-Frenkel (see \cite{ff-critical} and \cite{hitchin} \S 3),
$\fZ$ naturally identifies with $U(\widehat{\fg}_{crit})$,
the twisted topological enveloping algebra of 
$\widehat{\fg}_{crit}$.

We let 
$\bV_{crit,n} \coloneqq 
\ind_{t^n\fg[[t]]}^{\widehat{\fg}_{crit}}(k) \in 
\widehat{\fg}_{crit}\mod^{\heart}$.

\subsection{Naive categories}

We begin with some preliminary notations.

First, if $\sA \in \CoAlg(\DGCat_{cont})$ and 
$\sM$ (resp. $\sN$) is a right (resp. left) comodule for $\sA$,
we let:

\[
\sM \overset{\sA}{\otimes} \sN \in \DGCat_{cont}
\] 

\noindent denote the cotensor product of these comodules.
By definition, this means we regard $\sA$ as an algebra in the opposite category
$\DGCat_{cont}^{op}$ and form the usual tensor product there.
This cotensor product may be calculated as a totalization in $\DGCat_{cont}$:

\[
\sM \overset{\sA}{\otimes} \sN  = 
\Tot \Big( \sM \otimes \sN \rightrightarrows 
\sM \otimes \sA \otimes \sN \rightrightrightarrows \ldots \Big).
\]

Next, for $S$ a reasonable indscheme in the sense of
\cite{methods} \S 6, recall that we have the compactly generated
DG category $\IndCoh^*(S) \in \DGCat_{cont}$.
This construction is 
covariantly functorial in $S$. In particular, if
$S$ is a reasonable indscheme that is 
\emph{strict},\footnote{See \emph{loc. cit}. for the definition.
The relevance here is that this condition implies e.g. that
the natural functor $\IndCoh^*(S) \otimes \IndCoh^*(S) 
\to \IndCoh^*(S \times S)$ is an equivalence.}
$\IndCoh^*(S)$ is canonically a cocommutative coalgebra
in $\DGCat_{cont}$. 

\subsection{}\label{ss:naive-defin}

Note that $\Op_{\ld{G}}$ is a strict, reasonable indscheme.
By \cite{methods} Theorem 11.18.1,  
$\widehat{\fg}_{crit}\mod \in G(K)\mod_{crit}$ is canonically an 
$\IndCoh^*(\Op_{\ld{G}})$-comodule (in $G(K)\mod_{crit}$). 

For $n \geq 0$, define:

\[
\widehat{\fg}_{crit}\mod_{\ord_n,naive} \coloneqq
\IndCoh^*(\Op_{\ld{G}}^{\leq n}) 
\overset{\IndCoh^*(\Op_{\ld{G}})}{\otimes} 
\widehat{\fg}_{crit}\mod \in G(K)\mod_{crit}.
\]

Let $i_n$ denote the embedding $\Op_{\ld{G}}^{\leq n} \to
\Op_{\ld{G}}$. We abuse notation in letting
$i_{n,*}: \widehat{\fg}_{crit}\mod_{\ord_n,naive} \to 
\widehat{\fg}_{crit}\mod$ denote the functor 
$i_{n,*}^{\IndCoh} \overset{\IndCoh^*(\Op_{\ld{G}})}{\otimes}
\id_{\widehat{\fg}_{crit}\mod}$. By \cite{methods} Lemma 6.17.1-2,
this functor admits a continuous right adjoint
$i_n^! \overset{\IndCoh^*(\Op_{\ld{G}})}{\otimes}
\id_{\widehat{\fg}_{crit}\mod}$, which we also denote
$i_n^!$. 
Note that $i_{n,*}$ and $i_n^!$ are (by construction) morphisms
of $\IndCoh^*(\Op_{\ld{G}})$-module categories.

Similarly, for $m \geq n$, we have a natural adjunction:

\[
i_{n,m,*}:
\widehat{\fg}_{crit}\mod_{\ord_n,naive} \to 
\widehat{\fg}_{crit}\mod_{\ord_m,naive}:
i_{n,m}^!
\]

\noindent with $i_{n,*} = i_{m,*} \circ i_{n,m,*}$. 
Note that $i_{n,m,*}$ actually admits a \emph{left} adjoint
$i_{n,m}^*$ as well as a right adjoint; this follows because
the closed embedding $i_{n,m}:\Op_{\ld{G}}^{\leq n} \into 
\Op_{\ld{G}}^{\leq m}$ is a finitely presented regular embedding.

\begin{rem}

For a reasonable indscheme $S$, we let $\IndCoh^!(S)$ denote the 
dual DG category to $\IndCoh^*(S)$; this construction is contravariantly 
functorial
in $S$. For strict $S$, $\IndCoh^!(S)$ is therefore a 
symmetric monoidal category.

In these terms, we can reformulate the above definition
(to use monoidal categories instead of ``comonoidal" categories):

\[
\widehat{\fg}_{crit}\mod_{\ord_n,naive} = 
\TwoHom_{\IndCoh^!(\Op_{\ld{G}})\mod}(\IndCoh^!(\Op_{\ld{G}}^{\leq n}),
\widehat{\fg}_{crit}\mod).
\]

\end{rem}

\subsection{}\label{ss:op-action}

We record what symmetries the above construction provides.

As indicated above, there is an evident critical level
$G(K)$-action on $\widehat{\fg}_{crit}\mod_{\ord_n,naive}$.

Moreover, $\widehat{\fg}_{crit}\mod_{\ord_n,naive}$
is an $\IndCoh^*(\Op_{\ld{G}}^{\leq n})$-comodule category,
or equivalently, an $\IndCoh^!(\Op_{\ld{G}}^{\leq n})$-module
category. Because $\Op_{\ld{G}}^{\leq n}$ is the spectrum
of a polynomial algebra (on infinitely many generators),
the natural symmetric monoidal functor
$\QCoh(\Op_{\ld{G}}^{\leq n}) \to 
\IndCoh^!(\Op_{\ld{G}}^{\leq n})$ is an equivalence.
Therefore, we may as well
regard $\widehat{\fg}_{crit}\mod_{\ord_n,naive}$ as equipped
with a $\QCoh(\Op_{\ld{G}}^{\leq n})$-action commuting
with the critical level $G(K)$-action.

In our notation, we regard $G(K)$ as acting
on the left on $\widehat{\fg}_{crit}\mod_{\ord_n,naive}$
by convolution $-\star -$, and we regard
$\QCoh(\Op_{\ld{G}}^{\leq n})$ as acting on the right
by an action functor:

\[
\widehat{\fg}_{crit}\mod_{\ord_n,naive} \otimes 
\QCoh(\Op_{\ld{G}}^{\leq n}) 
\xar{ - \underset{\Op_{\ld{G}}^{\leq n}}{\otimes} - }
\widehat{\fg}_{crit}\mod_{\ord_n,naive}.
\]

\subsection{}

The following result summarizes the basic properties of the above
construction.

\begin{prop}\label{p:central-naive}

\begin{enumerate}

\item\label{i:naive-1} The functor $i_{n,*}: \widehat{\fg}_{crit}\mod_{\ord_n,naive} \to 
\widehat{\fg}_{crit}\mod$ is comonadic, and in particular,
conservative.

\item\label{i:naive-2} $\widehat{\fg}_{crit}\mod_{\ord_n,naive}$
admits a unique $t$-structure for which 
$i_{n,*}$ is $t$-exact.

\item\label{i:naive-3} The natural map:

\[
\underset{m \geq n}{\colim} \, i_{n,m}^! i_{n,m,*} \to 
i_n^!i_{n,*}
\]

\noindent is an isomorphism.

\item\label{i:naive-4} The natural functor:

\[
\underset{n,i_{n,m,*}}{\colim} \, \widehat{\fg}_{crit}\mod_{\ord_n,naive}
\to \widehat{\fg}_{crit}\mod \in \DGCat_{cont}
\]

\noindent is an equivalence.

\end{enumerate}

\end{prop}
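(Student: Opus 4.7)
The plan is to reduce each of the four parts to a corresponding statement at the level of $\IndCoh^*$ of the reasonable strict indscheme $\Op_{\ld{G}}$ and its closed subschemes $\Op_{\ld{G}}^{\leq n}$, and then transport through the cotensor construction with the $\IndCoh^*(\Op_{\ld{G}})$-comodule $\widehat{\fg}_{crit}\mod$. The inputs I lean on from \cite{methods} are: (a) for a closed embedding of reasonable indschemes, the $\IndCoh^*$-pushforward is fully faithful with continuous right adjoint given by $!$-pullback; (b) $\IndCoh^*$ of a reasonable indscheme is the colimit under $*$-pushforward of $\IndCoh^*$ of its closed subschemes; and (c) the $t$-structure on $\widehat{\fg}_{crit}\mod$ is compatible with the $\IndCoh^*(\Op_{\ld{G}})$-coaction.

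For part (1), the point is that $i_{n,*}^{\IndCoh}$ is fully faithful with right adjoint $i_n^!$ by (a), so after cotensoring with $\widehat{\fg}_{crit}\mod$ the functor $i_{n,*}$ is again fully faithful with right adjoint $i_n^!$. Any fully faithful left adjoint is automatically comonadic: its comonad $i_{n,*} i_n^!$ is idempotent, and the category of comodules coincides with the essential image of $i_{n,*}$, which is $\widehat{\fg}_{crit}\mod_{\ord_n,naive}$. For part (2), I would define the $t$-structure on $\widehat{\fg}_{crit}\mod_{\ord_n,naive}$ by declaring $X$ to be (co)connective iff $i_{n,*}(X)$ is; uniqueness is then immediate from the $t$-exactness requirement and the conservativity in part (1). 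Well-definedness reduces to showing that the essential image of $i_{n,*}$ is closed under the truncation functors of $\widehat{\fg}_{crit}\mod$, which via the colimit in (b) reduces further to the analogous statement for the finite-type closed embeddings $i_{n,m}: \Op_{\ld{G}}^{\leq n} \into \Op_{\ld{G}}^{\leq m}$; for these embeddings of affine schemes, the $t$-exactness of $i_{n,m,*}^{\IndCoh}$ and $i_{n,m}^!$ is standard.

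For parts (3) and (4), the common workhorse is the colimit formula of (b). Part (3) unwinds directly: using the factorization $i_n^! = i_{n,m}^! \circ i_m^!$ together with the unit of the adjunction $(i_{m,*}, i_m^!)$, one assembles the comparison map, and the colimit presentation $\IndCoh^*(\Op_{\ld{G}}) = \colim_m \IndCoh^*(\Op_{\ld{G}}^{\leq m})$ immediately exhibits it as an equivalence. For part (4), the same presentation, after cotensoring with $\widehat{\fg}_{crit}\mod$, yields the required equivalence, provided one can interchange the colimit over $n$ with the cotensor product. This interchange holds because the colimit is filtered and the transition functors $i_{n,m,*}$ are morphisms of $\IndCoh^*(\Op_{\ld{G}})$-comodules.

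The main obstacle I anticipate is part (2): confirming that the essential image of $i_{n,*}$ is stable under truncation (equivalently, that the idempotent comonad $i_{n,*} i_n^!$ is $t$-exact) requires extracting the compatibility of the $t$-structure on $\widehat{\fg}_{crit}\mod$ with the filtration $\{\Op_{\ld{G}}^{\leq n}\}$. This compatibility is implicit in \cite{methods} but must be made explicit here. The colimit-interchange in (4) is the other potentially subtle point, but becomes routine once one recognizes that a filtered colimit commutes with the cobar totalization computing the cotensor product, given the strict reasonable structure of $\Op_{\ld{G}}$.
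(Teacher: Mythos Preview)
Your approach to (1) via full faithfulness of $i_{n,*}$ is correct and cleaner than the paper's route. The paper instead establishes comonadicity by a Beck--Chevalley argument: using a splitting $\Op_{\ld{G}} \simeq \Op_{\ld{G}}^{\leq n} \times \bA^\infty$, it rewrites the cotensor as $\TwoHom_{\IndCoh^!(\bA^\infty)}(\Vect,\widehat{\fg}_{crit}\mod)$ and observes that the cobar diagram is comonadic because the multiplication on $\IndCoh^!(\bA^\infty)$ admits a bimodule-linear left adjoint. Your observation bypasses this: since the adjunction $(i_{n,*}^{\IndCoh}, i_n^{!,\IndCoh})$ lives in $\IndCoh^*(\Op_{\ld{G}})$-comodules (the right adjoint is a comodule map by base change), cotensoring with $\widehat{\fg}_{crit}\mod$ preserves it, and the unit remains an isomorphism. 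A pleasant consequence is that (3) becomes vacuous: both sides are the identity.

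There are, however, genuine gaps in your treatment of (2) and (4).

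For (2), the claim that $i_{n,m}^!$ is $t$-exact is false: for a regular closed embedding of positive codimension, the $!$-pullback on $\IndCoh^*$ (equivalently $\QCoh$ here) shifts by the codimension. More seriously, the proposed reduction to ``the analogous statement for $i_{n,m}$'' is circular, since the target $\widehat{\fg}_{crit}\mod_{\ord_m,naive}$ has no $t$-structure yet. What is actually needed, and what the paper does, is to show directly that the comonad $i_{n,*}i_n^!$ on $\widehat{\fg}_{crit}\mod$ is left $t$-exact. The paper computes it as the composition
\[
\widehat{\fg}_{crit}\mod \xar{\coact} \IndCoh^*(\Op_{\ld{G}}) \otimes \widehat{\fg}_{crit}\mod \xar{\pi_*^{\IndCoh}\otimes\id} \IndCoh^*(\bA^\infty) \otimes \widehat{\fg}_{crit}\mod
\]
followed by its right adjoint, and checks each piece is $t$-exact using \cite{methods} Lemma 11.13.1. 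Given this, your desired statement (closure of the essential image under truncations) follows by the standard argument.

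For (4), the assertion that ``a filtered colimit commutes with the cobar totalization'' is not justified by the reasons you give: filtered colimits in $\DGCat_{cont}$ do not commute with totalizations in general, and neither filteredness nor the comodule compatibility of the transition maps changes this. The paper's fix is to rewrite the colimit as the limit along the right adjoints $i_{n,m}^!$, commute that limit with the $\Tot$ (limits commute with limits), and then commute the inner limit past the tensor factors in each cobar term using dualizability of $\IndCoh^*(\Op_{\ld{G}})$ and $\widehat{\fg}_{crit}\mod$. Alternatively, given your full faithfulness from (1), you could argue directly that the comparison map is fully faithful and that its essential image contains the compact generators $\bV_{crit,m}$, hence is everything.
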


\begin{proof}

Let $\bA^{\infty} \coloneqq \colim_r \bA^r$, i.e., the
ind-finite type indscheme version of infinite-dimensional affine space.
Using standard choices of coordinates on $\Op_{\ld{G}}$, one
find an isomorphism $\Op_{\ld{G}} = \Op_{\ld{G}}^{\leq n} \times \bA^{\infty}$
so that the diagram:

\[
\xymatrix{
\Op_{\ld{G}}^{\leq n} \ar[d]^{i_n} \ar[drr]^{\id \times 0} \\
\Op_{\ld{G}}
\ar[rr]^{\simeq} & &
\Op_{\ld{G}}^{\leq n} \times \bA^{\infty} 
}
\]

\noindent commutes.

We then have:\footnote{Of course, $\IndCoh^!(\bA^{\infty})$ and $\IndCoh^*(\bA^{\infty})$
coincide with usual $\IndCoh$ as $\bA^{\infty}$ is locally of finite type.
We include the notation to clarify whether this category is being
viewed as an algebra or coalgebra in $\DGCat_{cont}$.}

\[
\TwoHom_{\IndCoh^!(\bA^{\infty})\mod}(\Vect,
\widehat{\fg}_{crit}\mod) =
\Vect
\overset{\IndCoh^*(\bA^{\infty})}{\otimes} 
\widehat{\fg}_{crit}\mod \isom 
\widehat{\fg}_{crit}\mod_{\ord_n,naive} \in G(K)\mod_{crit}.
\]

Take $\sA \coloneqq \IndCoh^!(\bA^{\infty})$ as a monoidal category.
Note that the monoidal product:

\[
-\overset{!}{\otimes}-:
\sA \otimes \sA \isom 
\IndCoh^!(\bA^{\infty} \times \bA^{\infty}) \xar{\Delta^!}
\IndCoh^!(\bA^{\infty}) = \sA
\]

\noindent admits a left adjoint $\Delta_*^{\IndCoh}$ that is a morphism
of $\sA$-bimodule categories (by the projection formula).
It is easy to see in this setting that for any 
$\sA$-module category $\sM$, the action functor:

\[
\act:\sA \otimes \sM \to \sM
\]

\noindent admits a continuous left adjoint $\act^L$ that is a morphism
of $\sA$-module categories, where the left hand side is regarded
as an $\sA$-module via the action on the first factor.
It follows that for any pair of $\sA$-module categories
$\sM,\sN$, the cosimplicial category:

\[
\TwoHom_{\DGCat_{cont}}(\sM,\sN) \rightrightarrows
\TwoHom_{\DGCat_{cont}}(\sA \otimes \sM,\sN) \rightrightrightarrows \ldots
\]

\noindent satisfies the comonadic Beck-Chevalley conditions.\footnote{See 
\cite{higheralgebra} \S 4.7.6 or \cite{shvcat} \S C for background
on the Beck-Chevalley theory; our terminology here
is taken from the latter source. We especially note \cite{shvcat}
Lemma C.2.2, which is essentially dual to the present assertion.}
Applying this for $\sM = \Vect$ and $\sN = \widehat{\fg}_{crit}\mod$,
we obtain \eqref{i:naive-1}.

Next, we show \eqref{i:naive-4}. We calculate:

\[
\begin{gathered}
\underset{n,i_{n,m,*}}{\colim} \, \widehat{\fg}_{crit}\mod_{\ord_n,naive}
= 
\underset{n,i_{n,m}^!}{\lim} \, \widehat{\fg}_{crit}\mod_{\ord_n,naive} =
\\
\underset{n}{\lim} \, \Big( \IndCoh^*(\Op_{\ld{G}}^{\leq n}) 
\overset{\IndCoh^*(\Op_{\ld{G}})}{\otimes} 
\widehat{\fg}_{crit}\mod\Big) = \\
\underset{n}{\lim} \, \Tot\Big(
\IndCoh^*(\Op_{\ld{G}}^{\leq n}) \otimes \IndCoh^*(\Op_{\ld{G}})^{\otimes \dot}
\otimes \widehat{\fg}_{crit}\mod\Big) = \\
 \Tot\underset{n}{\lim} \Big(
\IndCoh^*(\Op_{\ld{G}}^{\leq n}) \otimes \IndCoh^*(\Op_{\ld{G}})^{\otimes \dot}
\otimes \widehat{\fg}_{crit}\mod\Big) \overset{\star}{=} \\
\Tot \Big(
\big(\underset{n}{\lim} \, \IndCoh^*(\Op_{\ld{G}}^{\leq n})\big) 
\otimes \IndCoh^*(\Op_{\ld{G}})^{\otimes \dot}
\otimes \widehat{\fg}_{crit}\mod\big) = \\
\Tot \Big(
\IndCoh^*(\Op_{\ld{G}})\big) 
\otimes \IndCoh^*(\Op_{\ld{G}})^{\otimes \dot}
\otimes \widehat{\fg}_{crit}\mod\Big) = \\
\IndCoh^*(\Op_{\ld{G}}) 
\overset{\IndCoh^*(\Op_{\ld{G}})}{\otimes} 
\widehat{\fg}_{crit}\mod = \widehat{\fg}_{crit}\mod
\end{gathered}
\]

\noindent as desired; here the only non-trivial manipulations
are the first, which expresses that a colimit in $\DGCat_{cont}$ under left 
adjoints is canonically isomorphic to the limit under right adjoints,
and the one labeled $\star$, where the limit past tensor
products is justified because we are tensoring with compactly 
generated, hence dualizable, DG categories.

We deduce \eqref{i:naive-3} immediately from \eqref{i:naive-4} and
\cite{dgcat} Lemma 1.3.6. 

It remains to show \eqref{i:naive-2}. Given \eqref{i:naive-1},
a standard argument reduces us to checking that
$i_{n,*}i_n^!$ is left $t$-exact. 

By the above Beck-Chevalley analysis,
$i_{n,*}i_n^!$ may be calculated by applying the composition:

\[
\widehat{\fg}_{crit}\mod \xar{\coact}
\IndCoh^*(\Op_{\ld{G}}) \otimes \widehat{\fg}_{crit}\mod 
\xar{\pi_*^{\IndCoh} \otimes \id}
\IndCoh^*(\bA^{\infty}) \otimes \widehat{\fg}_{crit}\mod
\]

\noindent and then applying the right adjoint to this composition;
here $\pi:\Op_{\ld{G}} \to \bA^{\infty}$ denotes the projection.
It suffices to show the composition is
$t$-exact (for the tensor product $t$-structure on the right hand side);
we will show each of the functors appearing here is $t$-exact.
The functor $\coact$ is $t$-exact by 
\cite{methods} Lemma 11.13.1. 
Then $\pi_*^{\IndCoh}$ is $t$-exact because $\pi$ is affine,
and similarly for
$\pi_*^{\IndCoh} \otimes \id$ by \cite{whit} Lemma B.6.2.

\end{proof}

\subsection{} 

We continue our study of $\widehat{\fg}_{crit}\mod_{\ord_n,naive}$.

\begin{lem}\label{l:reg-ab}

Suppose $\sF \in \widehat{\fg}_{crit}\mod^{\heart}$. Then the adjunction map
$H^0(i_{n,*}i_n^!(\sF)) \to \sF \in \widehat{\fg}_{crit}\mod^{\heart}$
is a monomorphism with image the maximal submodule of
$\sF$ on which $\fZ$ acts through $\fz_n$.

\end{lem}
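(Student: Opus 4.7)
The plan is to reduce the lemma to identifying the heart of $\widehat{\fg}_{crit}\mod_{\ord_n,naive}$, via the $t$-exact functor $i_{n,*}$, with the full subcategory of $\widehat{\fg}_{crit}\mod^\heart$ consisting of modules annihilated by the ideal $I_n \coloneqq \ker(\fZ \twoheadrightarrow \fz_n)$. Granting this identification, the lemma will follow by a formal argument with the adjunction $i_{n,*} \dashv i_n^!$.

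More precisely, by Proposition \ref{p:central-naive}(\ref{i:naive-2}), $i_{n,*}$ is $t$-exact, so $i_n^!$ is left $t$-exact. Restricting to hearts yields an adjunction $i_{n,*}|_\heart \dashv H^0 i_n^!|_\heart$ between abelian categories. Under the identification of hearts, $i_{n,*}|_\heart$ becomes the inclusion of a full subcategory of $\widehat{\fg}_{crit}\mod^\heart$ which is closed under subobjects, so its right adjoint is the ``maximal $I_n$-annihilated submodule'' functor $\sF \mapsto \{m \in \sF : I_n \cdot m = 0\}$. Since $i_{n,*}$ is $t$-exact, we have a canonical identification $H^0(i_{n,*} i_n^!(\sF)) \simeq i_{n,*}(H^0 i_n^!(\sF))$, and the counit $H^0(i_{n,*} i_n^!(\sF)) \to \sF$ is then exactly the inclusion of this maximal submodule, yielding both the monomorphism claim and the description of the image.

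For the identification of hearts, the idea is to use the product decomposition $\Op_{\ld G} \simeq \Op_{\ld G}^{\leq n} \times \bA^\infty$ appearing in the proof of Proposition \ref{p:central-naive}, together with the $t$-exactness of the coaction of $\IndCoh^*(\Op_{\ld G})$ on $\widehat{\fg}_{crit}\mod$ (\cite{methods} Lemma 11.13.1). This reduces the question to the analogous statement for the coaction of $\IndCoh^*(\bA^\infty)$: an object of the heart of the cotensor product $\Vect \overset{\IndCoh^*(\bA^\infty)}{\otimes} \widehat{\fg}_{crit}\mod$ is equivalent to a classical Kac-Moody module whose $\fZ$-module structure (coming from the coaction) factors through $\fz_n$. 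The main obstacle is carrying this out rigorously in the pro-ind setting; this can be handled by combining the colimit presentation of Proposition \ref{p:central-naive}(\ref{i:naive-4}) with passage to finite-dimensional approximations $\bA^r \subset \bA^\infty$, where the assertion reduces to the classical Koszul computation that $H^0 R\Hom_{k[y_1, \ldots, y_r]}(k, M)$ is the maximal submodule of $M$ annihilated by $(y_1, \ldots, y_r)$ for any $k[y_1, \ldots, y_r]$-module $M$.
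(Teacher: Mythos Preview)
Your approach works, but the logical structure is inverted relative to the paper's and ends up redundant. You reduce the lemma to the heart identification (which in the paper is Corollary~\ref{c:reg-ab}, deduced \emph{from} this lemma), and then to prove the heart identification you invoke the Koszul computation of $H^0 R\Hom_{k[y_1,\ldots,y_r]}(k,M)$. But that computation is exactly the statement of the lemma for $\fZ$-modules, not the heart identification per se; once you have it (together with the fact that the coaction on hearts encodes the $\fZ$-action), the lemma follows directly, and the heart identification is a consequence rather than an input. So your paragraph~3 already proves the lemma, and the detour through paragraphs~1--2 is unnecessary. A minor point: your reference to Proposition~\ref{p:central-naive}\eqref{i:naive-4} is misplaced, since that item concerns the colimit over the singularity order $n$, not over finite approximations $\bA^r \subset \bA^\infty$.

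The paper's argument is more direct and avoids both the heart identification and the finite-approximation bookkeeping. The key observation is that the forgetful functor $\widehat{\fg}_{crit}\mod \to \Vect$ lifts to a functor $\Oblv^{enh}: \widehat{\fg}_{crit}\mod \to \IndCoh^*(\Op_{\ld{G}})$ that is a morphism of $\IndCoh^*(\Op_{\ld{G}})$-comodule categories; by \cite{methods} Lemma~11.13.1 it is $t$-exact, and on hearts it is restriction along $\fZ \hookrightarrow U(\widehat{\fg}_{crit})$, hence exact and conservative. Being a comodule map, $\Oblv^{enh}$ intertwines $i_{n,*}i_n^!$ with the analogous endofunctor on $\IndCoh^*(\Op_{\ld{G}})$, so the lemma reduces in one step to the same statement for discrete $\fZ$-modules, where it is immediate.
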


\begin{proof}

The forgetful functor $\widehat{\fg}_{crit}\mod \to \Vect$ admits
a unique lift $\widehat{\fg}_{crit}\mod \xar{\Oblv^{enh}} 
\IndCoh^*(\Op_{\ld{G}}) = \fZ\mod_{ren} \xar{\Oblv} \Vect$ 
with $\Oblv^{enh}$ a morphism of $\IndCoh^*(\Op_{\ld{G}})$-comodule
categories. By \cite{methods} Lemma 11.13.1, $\Oblv^{enh}$ is 
$t$-exact, and on the hearts of the $t$-structure corresponds
to restriction of modules along the homomorphism 
$\fZ \into U(\widehat{\fg}_{crit})$.

As $\Oblv^{enh}$ is a map of $\IndCoh^*(\Op_{\ld{G}})$-comodule categories,
it intertwines $i_{n,*}i_n^!$ with the similar
functor on $\IndCoh^*(\Op_{\ld{G}})$. It is clear that
$H^0$ of that functor extracts the maximal submodule on which
$\fZ$ acts through $\fz_n$, giving the claim.

\end{proof}

\begin{cor}\label{c:reg-ab}

The map $\widehat{\fg}_{crit}\mod_{\ord_n,naive}^{\heart}
\to \widehat{\fg}_{crit}\mod^{\heart}$ is fully faithful.
Its essential image is the full subcategory of the target
consisting of modules on which $\fZ$ acts through
$\fz_n$.

\end{cor}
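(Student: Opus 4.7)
The plan is to deduce both claims from Lemma~\ref{l:reg-ab} together with the structural properties collected in Proposition~\ref{p:central-naive}, after first making one preliminary observation about how the center acts.

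The preliminary observation is that for any $\sM \in \widehat{\fg}_{crit}\mod_{\ord_n,naive}^{\heart}$, the action of $\fZ$ on $i_{n,*}\sM$ factors through the quotient $\fZ \onto \fz_n$. This follows from the discussion in \S\ref{ss:op-action}: the category $\widehat{\fg}_{crit}\mod_{\ord_n,naive}$ carries a canonical $\QCoh(\Op_{\ld{G}}^{\leq n})$-module structure (equivalently, an $\IndCoh^*(\Op_{\ld{G}}^{\leq n})$-comodule structure), and $i_{n,*}$ was constructed as a morphism of $\IndCoh^*(\Op_{\ld{G}})$-comodule categories via the pushforward $i_{n,*}^{\IndCoh}$. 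Consequently the tautological $\fZ$-action on any object of $\widehat{\fg}_{crit}\mod$ in the image of $i_{n,*}$ is pulled back from the $\fz_n$-action on the corresponding object of $\widehat{\fg}_{crit}\mod_{\ord_n,naive}$.

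For fully faithfulness on hearts, fix $\sM, \sN \in \widehat{\fg}_{crit}\mod_{\ord_n,naive}^{\heart}$. By adjunction,
\[
\Hom_{\widehat{\fg}_{crit}\mod}(i_{n,*}\sM, i_{n,*}\sN) = \Hom_{\widehat{\fg}_{crit}\mod_{\ord_n,naive}}(\sM, i_n^! i_{n,*}\sN).
\]
Since $i_{n,*}$ is $t$-exact by Proposition~\ref{p:central-naive}\eqref{i:naive-2}, its right adjoint $i_n^!$ is left $t$-exact, so $i_n^!i_{n,*}\sN$ is coconnective and the right-hand side coincides with $\Hom_{\sC^{\heart}}(\sM, H^0(i_n^!i_{n,*}\sN))$. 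Thus fully faithfulness reduces to showing the unit map $\sN \to H^0(i_n^!i_{n,*}\sN)$ is an isomorphism in $\sC^{\heart}$. By conservativity and $t$-exactness of $i_{n,*}$ (Proposition~\ref{p:central-naive}\eqref{i:naive-1},\eqref{i:naive-2}), I can apply $i_{n,*}$ and work in $\widehat{\fg}_{crit}\mod^{\heart}$ instead; Lemma~\ref{l:reg-ab} then identifies $H^0(i_{n,*}i_n^!i_{n,*}\sN)$ with the maximal submodule of $i_{n,*}\sN$ on which $\fZ$ acts through $\fz_n$, and by the preliminary observation this submodule is all of $i_{n,*}\sN$.

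For the essential image, the preliminary observation shows the image is contained in the claimed subcategory. Conversely, given $\sF \in \widehat{\fg}_{crit}\mod^{\heart}$ with $\fZ$-action through $\fz_n$, set $\sM \coloneqq H^0(i_n^!\sF)$, which lies in $\sC^{\heart}$ by left $t$-exactness of $i_n^!$. Applying the $t$-exact functor $i_{n,*}$ and invoking Lemma~\ref{l:reg-ab} gives $i_{n,*}\sM = H^0(i_{n,*}i_n^!\sF) = \sF$, the last equality being the hypothesis. The only nontrivial step is the preliminary observation; everything downstream is a formal consequence of Lemma~\ref{l:reg-ab} and the $t$-structure machinery of Proposition~\ref{p:central-naive}.
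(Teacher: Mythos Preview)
Your proof is correct and is precisely the unpacking of what the paper means by ``Immediate from Lemma~\ref{l:reg-ab} and Proposition~\ref{p:central-naive}~\eqref{i:naive-1}.'' The preliminary observation you isolate is exactly what is implicit in the proof of Lemma~\ref{l:reg-ab} (via $\Oblv^{enh}$ being a morphism of $\IndCoh^*(\Op_{\ld{G}})$-comodule categories), and the rest of your argument is the standard adjunction-plus-conservativity verification the paper omits. One tiny point: when you conclude that $i_{n,*}\sN \to H^0(i_{n,*}i_n^!i_{n,*}\sN)$ is an isomorphism, you are implicitly invoking the triangle identity to identify this unit-induced map with the inverse of the counit map from Lemma~\ref{l:reg-ab}; this is routine but worth noting.
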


\begin{proof}

Immediate from Lemma \ref{l:reg-ab} and 
Proposition \ref{p:central-naive} \eqref{i:naive-1}.

\end{proof}

\subsection{}

We use the notation: 

\[
\sP \coloneqq
\underset{m}{\lim} \, \bV_{crit,m} 
\in \Pro(\widehat{\fg}_{crit}\mod^{\heart}) \subset
\Pro(\widehat{\fg}_{crit}).
\]

\noindent Here the limit is over the natural structure
maps $\bV_{crit,m+1} \to \bV_{crit,m}$, and we emphasize that the 
limit occurs in the pro-category (or rather, in either pro-category).
We remark that the pro-object $\sP$ corepresents the forgetful
functor $\Oblv:\widehat{\fg}_{crit}\mod \to \Vect$: this is
clear of its restriction
to $\widehat{\fg}_{crit}\mod^+$, and then
the claim follows generally as the objects $\bV_{crit,m}$ are compact
in $\widehat{\fg}_{crit}\mod$. Clearly $\Oblv(\sP) \in \Pro(\Vect^{\heart})$
is $U(\widehat{\fg}_{crit})$; its $\overset{\rightarrow}{\otimes}$-algebra
structure may be seen using \cite{methods} Proposition 3.7.1. 

For $m \geq 0$, let $\bV_{\ord_n,m} \in 
\widehat{\fg}_{crit}\mod_{\ord_n,naive}^{\heart}$
denote the minimal quotient of $\bV_{crit,m}$ lying in
$\widehat{\fg}_{crit}\mod_{\ord_n,naive}^{\heart} \subset
\widehat{\fg}_{crit}\mod^{\heart}$, i.e.,
$\bV_{\ord_n,m} = \bV_{crit,m}/I_n$.

Define: 

\[
\sP_{\ord_n} \coloneqq
\underset{m}{\lim} \, \bV_{\ord_n,m} \in 
\Pro(\widehat{\fg}_{crit}\mod_{\ord_n,naive}^{\heart}) \subset 
\Pro(\widehat{\fg}_{crit}\mod_{\ord_n,naive})
\]

\noindent to be the corresponding pro-object; we again emphasize that the
displayed limit occurs in the pro-category.

There is an evident canonical morphism:

\[
\pi:\sP \to i_{n,*}\sP_{\ord_n} \in 
\Pro(\widehat{\fg}_{crit}\mod^{\heart}) \subset
\Pro(\widehat{\fg}_{crit}).
\]

\begin{lem}\label{l:pro-reg}

As an object of $\Pro(\widehat{\fg}_{crit}\mod_{\ord_n,naive}^+)$,
$\sP_{\ord_n}$ corepresents the composition:

\[
\widehat{\fg}_{crit}\mod_{\ord_n,naive}^+ \xar{i_{n,*}}
\widehat{\fg}_{crit}\mod^+ \xar{\Oblv} \Vect.
\]

More precisely, for $\sF \in \widehat{\fg}_{crit}\mod_{\ord_n,naive}^+$,
the composite map:

\[
\begin{gathered}
\ul{\Hom}_{\Pro(\widehat{\fg}_{crit}\mod_{\ord_n,naive})}
(\sP_{\ord_n},\sF) \to 
\ul{\Hom}_{\Pro(\widehat{\fg}_{crit}\mod)}
(i_{n,*}\sP_{\ord_n},i_{n,*}\sF) \xar{-\circ\pi} \\
\ul{\Hom}_{\Pro(\widehat{\fg}_{crit}\mod)}
(\sP,i_{n,*}\sF) \simeq \Oblv(i_{n,*}\sF) 
\end{gathered}
\]

\noindent is an isomorphism.

\end{lem}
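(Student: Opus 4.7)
The plan is to exploit the universal property of $\sP$ corepresenting $\Oblv$ on $\widehat{\fg}_{crit}\mod^+$ together with the full faithfulness of $i_{n,*}$ on $\widehat{\fg}_{crit}\mod_{\ord_n,naive}$. The latter follows from Proposition \ref{p:central-naive}(\ref{i:naive-1}) together with the fact that $i_{n,*}^{\IndCoh}\colon \IndCoh^*(\Op_{\ld{G}}^{\leq n}) \to \IndCoh^*(\Op_{\ld{G}})$ is fully faithful (a standard property of closed embeddings in $\IndCoh$), a property inherited through the tensor product defining the naive category. Full faithfulness gives $\ul{\Hom}_{\Pro(\widehat{\fg}_{crit}\mod_{\ord_n,naive})}(\sP_{\ord_n}, \sF) = \ul{\Hom}_{\Pro(\widehat{\fg}_{crit}\mod)}(i_{n,*}\sP_{\ord_n}, i_{n,*}\sF)$, so the lemma reduces to showing that precomposition with $\pi\colon \sP \to i_{n,*}\sP_{\ord_n}$ induces an equivalence $\ul{\Hom}(i_{n,*}\sP_{\ord_n}, i_{n,*}\sF) \to \ul{\Hom}(\sP, i_{n,*}\sF) = \Oblv(i_{n,*}\sF)$ for every $\sF \in \widehat{\fg}_{crit}\mod_{\ord_n,naive}^+$.

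I would next reduce to the case $\sF \in \widehat{\fg}_{crit}\mod_{\ord_n,naive}^{\heart}$ using $t$-exactness of $i_{n,*}$ (Proposition \ref{p:central-naive}(\ref{i:naive-2})): writing an eventually coconnective $\sF$ as a limit of its Postnikov truncations $\tau^{\leq r}\sF$ with cofibers in shifts of the heart, and noting that both sides of the comparison commute with such limits (the left side as pro-Hom out of a fixed pro-object, the right side because $\Oblv \circ i_{n,*}$ is $t$-exact), the claim reduces to the heart case by dévissage. For $\sF \in \widehat{\fg}_{crit}\mod_{\ord_n,naive}^{\heart}$, Corollary \ref{c:reg-ab} identifies $i_{n,*}\sF$ with a Kac-Moody module in the heart on which the ideal $I_n = \ker(\fZ \onto \fz_n)$ acts by zero; consequently every morphism $\bV_{crit,m} \to i_{n,*}\sF$ factors uniquely through the quotient $\bV_{\ord_n,m}$, and taking the colimit over $m$ yields the $H^0$ part of the comparison.

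The main obstacle is handling the derived $\ul{\Hom}$ in higher degrees, where $\Ext^{>0}(\bV_{\ord_n,m}, i_{n,*}\sF)$ need not vanish or agree with $\Ext^{>0}(\bV_{crit,m}, i_{n,*}\sF)$ for fixed $m$, since $\bV_{\ord_n,m}$ is not projective in $\widehat{\fg}_{crit}\mod^+$. To control this, I would invoke a PBW-type flatness of $\bV_{crit,m}$ over $\fZ$ (a consequence of the Feigin--Frenkel structural description of the center together with the standard induced-module structure of $\bV_{crit,m}$) to identify $\bV_{\ord_n,m} \simeq \bV_{crit,m} \otimes_{\fZ}^L \fz_n$; the computation of $\ul{\Hom}(\bV_{\ord_n,m}, i_{n,*}\sF)$ then reduces by change-of-rings to Koszul-type cohomology of the $t^m\fg[[t]]$-invariants $(i_{n,*}\sF)^{t^m\fg[[t]]}$ with respect to the $\fZ$-action. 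Because $I_n$ annihilates $i_{n,*}\sF$, the interior Koszul differentials vanish, and the extra Koszul classes are systematically killed in the pro-colimit over $m$ by the tower maps in $\sP_{\ord_n}$. Verifying this vanishing carefully, tracking the interplay of the Koszul generators with the transition maps, is the principal technical step.
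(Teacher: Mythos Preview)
Your two key inputs are correct and are exactly what the paper uses: full faithfulness of $i_{n,*}$ (your Step A) is equivalent to the paper's Step~2 (indeed, since $i_{n,m}^!$ is a morphism of $\IndCoh^*(\Op_{\ld{G}})$-comodules by base change, one has $i_{n,m}^! i_{n,m,*}=\id$ on the naive categories, and then Proposition~\ref{p:central-naive}\eqref{i:naive-3} gives $i_n^! i_{n,*}=\id$), and the flatness $\bV_{\ord_n,m}\simeq \bV_{crit,m}\otimes^L_{\fz_m}\fz_n$ is exactly the paper's Step~3 (citing \cite{fg2} Lemma~7.2.2). Where you diverge from the paper is in how you use the flatness. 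You pass immediately to $\widehat{\fg}_{crit}\mod$ and try to compute $\ul\Hom_{\widehat{\fg}_{crit}\mod}(\bV_{\ord_n,m},i_{n,*}\sF)$ directly via a Koszul resolution of $\fz_n$ over $\fz_m$, which produces the spurious exterior-algebra classes you describe; showing these die in the colimit over $m$ is genuinely delicate (the transition maps mix a change of ring $\fz_{m+1}\to\fz_m$ with a change of module $\ul\Hom(\bV_{crit,m},-)\to\ul\Hom(\bV_{crit,m+1},-)$, and neither effect alone kills the classes), and you leave it as an unfinished ``principal technical step.''

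The paper avoids this computation entirely. The point is that flatness says precisely that $\bV_{\ord_n,m}=i_{n,m}^*\bV_{crit,m}$, where $i_{n,m}^*$ is the \emph{left} adjoint to $i_{n,m,*}$ (available because $\Op_{\ld{G}}^{\leq n}\hookrightarrow\Op_{\ld{G}}^{\leq m}$ is a finitely presented regular embedding). Hence by adjunction
\[
\ul\Hom_{\widehat{\fg}_{crit}\mod_{\ord_n,naive}}(\bV_{\ord_n,m},\sF)=\ul\Hom_{\widehat{\fg}_{crit}\mod_{\ord_m,naive}}(\bV_{crit,m},i_{n,m,*}\sF),
\]
and then your own full-faithfulness observation (applied to $i_{m,*}$) gives $\ul\Hom_{\widehat{\fg}_{crit}\mod}(\bV_{crit,m},i_{n,*}\sF)$ immediately. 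Taking the colimit over $m$ finishes the proof with no Koszul bookkeeping and no reduction to the heart. So your ingredients are right, but the missing idea is to stay in the naive categories and use the $(i_{n,m}^*,i_{n,m,*})$ adjunction rather than resolving $\fz_n$ over $\fz_m$ inside $\widehat{\fg}_{crit}\mod$.
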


\begin{proof}

\step\label{st:pro-reg-1}

First, suppose $\sG \in \widehat{\fg}_{crit}\mod_{\ord_n,naive}^{\geq 0}$
has the property that $i_{n,*}\sG$ is compact in 
$\widehat{\fg}_{crit}\mod$. Then we claim that for any $r \geq 0$,
$\sG$ is compact as an 
object of the category 
$\widehat{\fg}_{crit}\mod_{\ord_n,naive}^{\geq -r}$.

Indeed, this is standard from Proposition \ref{p:central-naive} 
\eqref{i:naive-1}-\eqref{i:naive-2}: 
see the proof of \cite{methods} Lemma 6.11.2.

\step\label{st:pro-reg-2}

Suppose $\sG$ as above, and let $\sF \in 
\widehat{\fg}_{crit}\mod_{\ord_n,naive}^+$. Then we claim that
the natural map:

\begin{equation}\label{eq:hom-colim-ord_n}
\underset{m \geq n}{\colim} \, 
\ul{\Hom}_{\widehat{\fg}_{crit}\mod_{\ord_m,naive}}
(i_{n,m,*}\sG,i_{n,m,*}\sF) \to 
\ul{\Hom}_{\widehat{\fg}_{crit}\mod}
(i_{n,*}\sG,i_{n,*}\sF)  
\end{equation}

\noindent is an isomorphism.

Indeed, we have:

\[
\begin{gathered}
\underset{m \geq n}{\colim} \, 
\ul{\Hom}_{\widehat{\fg}_{crit}\mod_{\ord_m,naive}}
(i_{n,m,*}\sG,i_{n,m,*}\sF) = \\
\underset{m \geq n}{\colim} \, 
\ul{\Hom}_{\widehat{\fg}_{crit}\mod_{\ord_n,naive}}
(\sG,i_{n,m}^!i_{n,m,*}\sF) \overset{\text{Step \ref{st:pro-reg-1}}}{=}
\\
\ul{\Hom}_{\widehat{\fg}_{crit}\mod_{\ord_n,naive}}
(\sG,\underset{m \geq n}{\colim} \, i_{n,m}^!i_{n,m,*}\sF) 
\overset{\text{Prop. \ref{p:central-naive} \eqref{i:naive-3}}}
{=} \\
\ul{\Hom}_{\widehat{\fg}_{crit}\mod_{\ord_n,naive}}
(\sG,i_n^!i_{n,*}\sF) =  
\ul{\Hom}_{\widehat{\fg}_{crit}\mod}
(i_{n,*}\sG,i_{n,*}\sF).  
\end{gathered}
\]

\noindent We remark that if 
$\sF$ is in cohomological degrees $\geq -r$,
then each $i_{n,m}^!i_{n,m,*}(\sF)$ is as well (because the functors
$i_{n,m,*}$ are $t$-exact); this justifies the reference
to Step \ref{st:pro-reg-1}.
We also note that the composite identification here is easily seen to be
given by the map considered above.

\step\label{st:pro-reg-3}

Next, recall the functors $i_{n,m}^*$ from \S \ref{ss:naive-defin}.
We claim that $i_{n,m}^*(\bV_{crit,m}) = \bV_{\ord_n,m}$.
Clearly the right hand side is the top (= degree $0$) 
cohomology of the left hand side, so this amounts to arguing that
the lower cohomology groups vanish.

As in the argument for Lemma \ref{l:reg-ab}, we have a commutative 
diagram:\footnote{To be explicit, we remind that by the definition
from \cite{methods} \S 6, 
$\IndCoh^*(\Op_{\ld{G}}^{\leq n})$ is $\Ind(Coh(\Op_{\ld{G}}^{\leq n}))$.
As $\Op_{\ld{G}}^{\leq n}$ is the spectrum of a (infinitely generated)
polynomial algebra,
$\Coh(\Op_{\ld{G}}^{\leq n}) = \Perf(\Op_{\ld{G}}^{\leq n})$.
Therefore, $\IndCoh^*$ in the bottom row may be replaced by the more familiar
$\QCoh$. The functor $i_{n,m}^*$ in that bottom row is then the standard
pullback functor.}

\[
\xymatrix{
\widehat{\fg}_{crit}\mod_{\ord_m,naive} \ar[d] \ar[rr]^{i_{n,m}^*} && 
\widehat{\fg}_{crit}\mod_{\ord_n,naive} \ar[d] \\
\IndCoh^*(\Op_{\ld{G}}^{\leq m}) \ar[rr]^{i_{n,m}^*} &&
\IndCoh^*(\Op_{\ld{G}}^{\leq n}).
}
\]

\noindent The vertical arrows are the natural restriction maps,
and arise from $\Oblv^{enh}$ (from the proof of Lemma \ref{l:reg-ab}) 
and the evident identification
$\IndCoh^*(\Op_{\ld{G}}^{\leq n}) = 
\IndCoh^*(\Op_{\ld{G}}^{\leq n}) \overset{\IndCoh^*(\Op_{\ld{G}})}{\otimes}
\IndCoh^*(\Op_{\ld{G}})$, and similarly for $m$. 
These vertical arrows are $t$-exact
and conservative on bounded below subcategories as this
is true for $\Oblv^{enh}$.

The functor $i_{n,m}^*:\widehat{\fg}_{crit}\mod_{\ord_m,naive} \to 
\widehat{\fg}_{crit}\mod_{\ord_n,naive}$ is easily\footnote{For
one, it is (non-canonically) isomorphic to $i_{n,m}^!$ up to shift.
Alternatively, $i_{n,m,*}i_{n,m}^*$ is calculated as the composition:

\[
\begin{gathered}
\widehat{\fg}_{crit}\mod_{\ord_m,naive} \xar{\coact}
\IndCoh^*(\Op_{\ld{G}}^{\leq m}) 
\otimes \widehat{\fg}_{crit}\mod_{\ord_m,naive} 
\xar{i_{n,m,*}i_{n,m}^* \otimes \id} \\
\IndCoh^*(\Op_{\ld{G}}^{\leq m}) 
\otimes \widehat{\fg}_{crit}\mod_{\ord_m,naive} 
\xar{\Gamma^{\IndCoh}(\Op_{\ld{G}}^{\leq m},-) \otimes \id}
\widehat{\fg}_{crit}\mod_{\ord_m,naive}
\end{gathered}
\]

\noindent giving the claim by considering the standard
finite Koszul filtration on the endofunctor
$i_{n,m,*}i_{n,m}^*$ of $\IndCoh^*(\Op_{\ld{G}}^{\leq m})$.}
seen to be left $t$-exact up to shift. Therefore, it suffices
to see that the underlying object of
$\IndCoh^*(\Op_{\ld{G}}^{\leq n})$ 
defined by $i_{n,m}^*(\bV_{crit,m})$ lies in cohomological
degree $0$. 

This follows from the commutativity of the above
diagram and the fact that $\bV_{crit,m} \in 
\IndCoh^*(\Op_{\ld{G}}^{\leq m})^{\heart}$ defines a 
\emph{flat} sheaf by \cite{fg2} Lemma 7.2.2 (which is 
based on \cite{frenkel-eisenbud}).

\step 

We now deduce the claim.

In what follows, we consider $\bV_{crit,m}$ as an object
of $\widehat{\fg}_{crit}\mod_{\ord_m,naive}^{\heart}$; 
we let $i_{m,*} \bV_{crit,m}$ denote the corresponding
object of $\widehat{\fg}_{crit}\mod^{\heart}$.

For $\sF \in \widehat{\fg}_{crit}\mod_{\ord_n,naive}^+$, we calculate:

\[
\begin{gathered}
\ul{\Hom}_{\Pro(\widehat{\fg}_{crit}\mod_{\ord_n,naive})}
(\sP_{\ord_n},\sF) = 
\underset{m \geq n}{\colim} \,
\ul{\Hom}_{\widehat{\fg}_{crit}\mod_{\ord_n,naive}}
(\bV_{\ord_n,m},\sF) \overset{\text{Step \ref{st:pro-reg-3}}}{=} \\
\underset{m \geq n}{\colim} \,
\ul{\Hom}_{\widehat{\fg}_{crit}\mod_{\ord_n,naive}}
(i_{n,m}^*\bV_{crit,m},\sF) = 
\underset{m \geq n}{\colim} \,
\ul{\Hom}_{\widehat{\fg}_{crit}\mod_{\ord_m,naive}}
(\bV_{crit,m},i_{n,m,*}\sF) = \\
\underset{m \geq n}{\colim} \, \underset{r \geq m}{\colim} \,
\ul{\Hom}_{\widehat{\fg}_{crit}\mod_{\ord_r,naive}}
(i_{m,r,*}\bV_{crit,m},i_{n,r,*}\sF) 
\overset{\text{Step \ref{st:pro-reg-2}}}{=} \\
\underset{m \geq n}{\colim} \,
\ul{\Hom}_{\widehat{\fg}_{crit}\mod}
(i_{m,*}\bV_{crit,m},i_{n,*}\sF) =
\ul{\Hom}_{\Pro(\widehat{\fg}_{crit}\mod)}
(\sP,i_{n,*}\sF)
\end{gathered}
\]

\noindent as desired.

\end{proof}

In what follows, we let
$\Oblv:\widehat{\fg}_{crit}\mod_{\ord_n,naive} \to \Vect$
denote the forgetful functor considered above, i.e.,
$\Oblv i_{n,*}$.

\begin{cor}

The (non-cocomplete) 
DG category $\widehat{\fg}_{crit}\mod_{\ord_n,naive}^+$
is the bounded below derived category of its heart.

\end{cor}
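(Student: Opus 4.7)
The plan is to show the natural exact comparison functor $\Phi \colon D^+(\widehat{\fg}_{crit}\mod_{\ord_n,naive}^\heart) \to \widehat{\fg}_{crit}\mod_{\ord_n,naive}^+$ extending the inclusion of the heart is an equivalence. Write $\cA \coloneqq \widehat{\fg}_{crit}\mod_{\ord_n,naive}$ for brevity. Essential surjectivity of $\Phi$ is automatic from the existence of the bounded-below $t$-structure on $\cA^+$ furnished by Proposition~\ref{p:central-naive}~\eqref{i:naive-2}, since any object of $\cA^+$ is built from finitely many cohomology objects in $\cA^\heart$ via iterated extensions of shifts. The real content lies in fully faithfulness; since $\Phi$ is $t$-exact, a standard spectral sequence argument reduces this to matching $\ul\Hom_\cA(M, N[k])$ with $\Ext^k_{\cA^\heart}(M, N)$ for all $M, N \in \cA^\heart$ and all $k \geq 0$.

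First I would verify that $\cA^\heart$ is Grothendieck abelian, which is required even to define $D^+(\cA^\heart)$ in a reasonable manner. By Corollary~\ref{c:reg-ab}, $\cA^\heart$ is the full subcategory of the Grothendieck abelian category $\widehat{\fg}_{crit}\mod^\heart$ consisting of modules on which the ideal $\Ker(\fZ \onto \fz_n)$ acts by zero; this subcategory is closed under subobjects, quotients, and arbitrary direct sums computed in the ambient category, so is itself Grothendieck abelian.

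The main obstacle is the $\Ext$-agreement, and my approach relies on Lemma~\ref{l:pro-reg}: the pro-object $\sP_{\ord_n}$ pro-corepresents the forgetful functor $\Oblv$ on $\cA^+$. This furnishes a supply of ``pro-projective'' corepresentatives in $\cA^\heart$, namely the terms $\bV_{\ord_n,m}$, analogous to the role played by $\bV_{crit,m}$ in $\widehat{\fg}_{crit}\mod^\heart$. The decisive technical input is Step~3 of the proof of Lemma~\ref{l:pro-reg}: flatness of $\bV_{crit,m}$ over $\fz_m$ (from \cite{fg2} Lemma~7.2.2) implies that $\bV_{\ord_n,m} = i_{n,m}^* \bV_{crit,m}$ is concentrated in degree zero, so pro-projective resolutions in $\widehat{\fg}_{crit}\mod^\heart$ descend to pro-projective resolutions in $\cA^\heart$ without introducing higher $\Tor$-contributions. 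Since $\widehat{\fg}_{crit}\mod^+$ is $D^+$ of its heart by construction in \cite{dmod-aff-flag}, one can then compute both sides of the desired equality via such resolutions and match them. The delicate point will be handling the pro-limits correctly --- in particular, the continuity of $\ul\Hom(\sP_{\ord_n}, -)$ in the second argument and the interaction of pro-limits with $\Ext$ in positive degrees --- but this is controlled by the cohomological boundedness of the resolutions, together with the compactness properties from Step~1 of the proof of Lemma~\ref{l:pro-reg}.
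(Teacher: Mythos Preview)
Your approach and the paper's share the same essential input --- Lemma~\ref{l:pro-reg}, which says $\sP_{\ord_n}$ pro-corepresents $\Oblv$ on $\cA^+$ --- but the paper's proof is a two-line application of a black box. It simply notes that $U(\widehat{\fg}_{crit})_{\ord_n} \coloneqq \Oblv(\sP_{\ord_n}) \in \Pro(\Vect^{\heart})$ by construction (each $\bV_{\ord_n,m}$ lies in the heart), and then invokes \cite{methods} Proposition~3.7.1. That proposition is a general criterion: given a DG category with $t$-structure and a $t$-exact, conservative forgetful functor to $\Vect$ on the eventually coconnective part, pro-corepresented by an object whose underlying pro-vector space lies in $\Pro(\Vect^{\heart})$, the bounded-below subcategory is the bounded-below derived category of its heart (equivalently, discrete modules over the associated $\overset{\rightarrow}{\otimes}$-algebra). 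No reference to the ambient $\widehat{\fg}_{crit}\mod$ is needed once Lemma~\ref{l:pro-reg} is in hand.

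Your proposal instead tries to prove that criterion in situ, by descending pro-projective resolutions from $\widehat{\fg}_{crit}\mod^{\heart}$ to $\cA^{\heart}$ and invoking the already-known case for the ambient category. This can be made to work, but the detour is not needed: the flatness input from Step~3 of Lemma~\ref{l:pro-reg} has already been consumed in establishing that lemma, and once you know $\sP_{\ord_n}$ pro-corepresents $\Oblv$, the comparison of $\Ext$'s proceeds internally to $\cA$ without touching $\widehat{\fg}_{crit}\mod$ again. Two minor imprecisions in your sketch: essential surjectivity is not literally ``finitely many cohomology objects'' for unbounded-above objects in $\cA^+$ (you need a Postnikov-tower argument, which is routine once fully faithfulness is known); and the phrase ``pro-projective resolutions descend'' is doing real work that would need to be unpacked --- in practice what one does is resolve an arbitrary $M \in \cA^{\heart}$ by shifts of $\sP_{\ord_n}$ itself, which is exactly the content packaged into \cite{methods} Proposition~3.7.1.
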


\begin{proof}

Note that $U(\widehat{\fg}_{crit})_{\ord_n} \coloneqq 
\Oblv(\sP_{\ord_n}) \in \Pro(\Vect^{\heart})$ by 
construction. Therefore, the result follows from 
\cite{methods} Proposition 3.7.1. 

\end{proof}

It follows that $\widehat{\fg}_{crit}\mod_{\ord_n,naive}^+$
identifies with the similar category considered in the works
of Frenkel-Gaitsgory, e.g. in \cite{fg2} \S 23.

\subsection{Renormalization}

We now introduce a renormalized version of the 
above categories following \cite{dmod-aff-flag} \S 23.

Define $\widehat{\fg}_{crit}\mod_{\ord_n}^c \subset
\widehat{\fg}_{crit}\mod_{\ord_n,naive}$ as the full subcategory
of objects $\sF$ such that
$i_{n,*}(\sF)$ is compact in $\widehat{\fg}_{crit}\mod$.
By Proposition \ref{p:central-naive} and the similar fact
for $\widehat{\fg}_{crit}\mod$,
$\widehat{\fg}_{crit}\mod_{\ord_n}^c \subset 
\widehat{\fg}_{crit}\mod_{\ord_n,naive}^+$.

\begin{example}

For $m \geq n$, Koszul resolutions for the
finitely presented regular embedding
$\Op_{\ld{G}}^{\leq n} \into \Op_{\ld{G}}^{\leq m}$
imply that the functors
$i_{n,m}^!$ and $i_{n,m}^*$ map
$\widehat{\fg}_{crit}\mod_{\ord_m}^c$
to $\widehat{\fg}_{crit}\mod_{\ord_n}^c$.

\end{example}

\begin{example}

The objects $\bV_{\ord_n,m}$ lie in
$\widehat{\fg}_{crit}\mod_{\ord_n}^c$.
Indeed, for $0 \leq m \leq n$, 
$i_{n,*}\bV_{\ord_n,m} = \bV_{crit,m}$, clearly
giving the claim in this case. In general,
for $m \geq n$, we have $i_{n,m}^*\bV_{crit,m} = 
\bV_{\ord_n,m}$ as in Step \ref{st:pro-reg-3} from
the proof of Lemma \ref{l:pro-reg}, clearly
giving the claim.

\end{example}

Define $\widehat{\fg}_{crit}\mod_{\ord_n} = 
\Ind(\widehat{\fg}_{crit}\mod_{\ord_n}^c)$.
Define a $t$-structure on 
$\widehat{\fg}_{crit}\mod_{\ord_n}$ by taking
$\widehat{\fg}_{crit}\mod_{\ord_n}^{\leq 0}$ to be generated
under colimits by objects in 
$\widehat{\fg}_{crit}\mod_{\ord_n}^c \cap 
\widehat{\fg}_{crit}\mod_{\ord_n,naive}^{\leq 0}$.

We have a canonical functor
$\rho:\widehat{\fg}_{crit}\mod_{\ord_n} \to 
\widehat{\fg}_{crit}\mod_{\ord_n,naive}$: this is the unique
continuous functor with $\rho|_{\widehat{\fg}_{crit}\mod_{\ord_n}^c}$
the canonical embedding.

\begin{lem}[C.f. \cite{dmod-aff-flag} \S 23.2.2]\label{l:ren}

The functor $\rho$ is $t$-exact and induces an equivalence
on eventually coconnective subcategories.

\end{lem}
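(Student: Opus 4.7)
\medskip

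\textbf{Plan.} The plan is to reduce the whole lemma to a single structural claim about compact objects, namely that the naive $t$-structure on $\widehat{\fg}_{crit}\mod_{\ord_n,naive}$ restricts to one on the full subcategory $\widehat{\fg}_{crit}\mod_{\ord_n}^c$. Once this is known, $t$-exactness of $\rho$ and the equivalence on eventually coconnective subcategories both follow by standard Ind-completion and Postnikov-tower arguments.

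\medskip

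\textbf{Right $t$-exactness.} By construction $\widehat{\fg}_{crit}\mod_{\ord_n}^{\leq 0}$ is generated under colimits by objects that $\rho$ sends into $\widehat{\fg}_{crit}\mod_{\ord_n,naive}^{\leq 0}$; since $\rho$ commutes with colimits and the naive connective subcategory is closed under colimits, right $t$-exactness is automatic.

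\medskip

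\textbf{The key step: truncation of compact objects.} I would show that for $\sF \in \widehat{\fg}_{crit}\mod_{\ord_n}^c$, each cohomology object $H^k(\sF)$, computed with respect to the naive $t$-structure, again lies in $\widehat{\fg}_{crit}\mod_{\ord_n}^c$. Since $i_{n,*}$ is $t$-exact and conservative (Proposition \ref{p:central-naive}), this reduces to proving that the cohomology sheaves $H^k(i_{n,*}\sF)$ are compact in $\widehat{\fg}_{crit}\mod$. To this end, observe first that a compact object of $\widehat{\fg}_{crit}\mod$ that lies in the essential image of $i_{n,*}$ is, by Corollary \ref{c:reg-ab}, scheme-theoretically supported on $\Op_{\ld{G}}^{\leq n}$, i.e.\ its cohomologies are annihilated by a power of $I_n$. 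Using the Koszul/finite filtration with respect to (a lift to $U(\widehat{\fg}_{crit}) = \fZ$ of) a regular sequence cutting out $\Op_{\ld{G}}^{\leq n}$ inside $\Op_{\ld{G}}^{\leq m}$ for $m \gg n$, I can assemble $\sF$ out of finitely many objects obtained from the compact generators $\bV_{\ord_n,m}$ (which are compact in the renormalized sense, as already noted) and then pass this assembly to cohomology. Since finite colimits and retracts of compact objects are compact, the $H^k(i_{n,*}\sF)$ are compact in $\widehat{\fg}_{crit}\mod$, hence $H^k(\sF) \in \widehat{\fg}_{crit}\mod_{\ord_n}^c$.

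\medskip

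\textbf{Deducing left $t$-exactness and the equivalence on $+$ parts.} The key step implies that the subcategory $\widehat{\fg}_{crit}\mod_{\ord_n}^c$ inherits a (bounded) $t$-structure from the naive category, whose connective part is precisely $\widehat{\fg}_{crit}\mod_{\ord_n}^c \cap \widehat{\fg}_{crit}\mod_{\ord_n,naive}^{\leq 0}$, i.e.\ the set of generators used to define the renormalized $t$-structure. Ind-extending, $\sF \in \widehat{\fg}_{crit}\mod_{\ord_n}$ is coconnective in the renormalized sense if and only if $\rho(\sF)$ is coconnective in the naive sense, giving left $t$-exactness. Moreover, by the same step the hearts $\widehat{\fg}_{crit}\mod_{\ord_n}^\heart$ and $\widehat{\fg}_{crit}\mod_{\ord_n,naive}^\heart$ are identified by $\rho$. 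Now both $\widehat{\fg}_{crit}\mod_{\ord_n}^+$ and $\widehat{\fg}_{crit}\mod_{\ord_n,naive}^+$ are bounded-below derived categories of this common heart: for the naive side this was observed right after Lemma \ref{l:pro-reg} using the pro-generator $\sP_{\ord_n}$, and for the renormalized side it follows formally from compact generation by bounded objects in a single heart together with a Postnikov-tower argument. Thus $\rho$ is an equivalence on eventually coconnective subcategories.

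\medskip

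\textbf{Main obstacle.} The real content is the key step: verifying that cohomologies of compact objects of $\widehat{\fg}_{crit}\mod_{\ord_n}^c$ remain compact in $\widehat{\fg}_{crit}\mod$. This is the point at which one must genuinely engage with the topological enveloping algebra rather than reason formally, and it is also where the distinction between the naive and renormalized categories is felt: the renormalization is designed precisely so that the compact generators $\bV_{\ord_n,m}$ of the target heart lift to compact objects. I expect the Koszul/finite-filtration argument sketched above to be the technical crux, essentially identical in spirit to \cite{dmod-aff-flag} \S 23.2.2.
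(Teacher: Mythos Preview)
Your proposal has a genuine gap at the ``key step'': the claim that cohomologies of objects in $\widehat{\fg}_{crit}\mod_{\ord_n}^c$ again lie in $\widehat{\fg}_{crit}\mod_{\ord_n}^c$ is not established, and in fact the paper explicitly flags its most immediate consequence as an open question. If your key step held, every compact object (being bounded) would be a finite extension of compact objects in the heart, so $\widehat{\fg}_{crit}\mod_{\ord_n}$ would admit compact generators in $\widehat{\fg}_{crit}\mod_{\ord_n}^{\heart}$. The Remark immediately following Lemma \ref{l:ren} says precisely that the author does not know whether this is true (it is deduced for $G = PGL_2$, $n=0$ only \emph{a posteriori} from Theorem \ref{t:main}). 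So your key step is at least as hard as the lemma itself, and quite possibly harder.

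Your Koszul sketch does not close the gap. You write that $\sF$ can be ``assembled out of finitely many objects obtained from the compact generators $\bV_{\ord_n,m}$,'' but the $\bV_{\ord_n,m}$ are not known to generate $\widehat{\fg}_{crit}\mod_{\ord_n}^c$ as a thick subcategory --- that is again the content of the Remark. All that the definition gives you is that $i_{n,*}\sF$ is compact in $\widehat{\fg}_{crit}\mod$, i.e.\ lies in the thick subcategory generated by the $\bV_{crit,m}$; this does not produce a finite Koszul-type resolution of $\sF$ by the $\bV_{\ord_n,m}$. More basically, ``$H^k$ of compact is compact'' requires closure of compacts under kernels, which fails in general and would need a Noetherianity-type input you have not supplied.

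The paper's proof avoids this entirely. Rather than controlling individual compact objects, it works with the pro-generator $\sP_{\ord_n} = \lim_m \bV_{\ord_n,m}$: Step 2 shows $\Oblv|_{\widehat{\fg}_{crit}\mod_{\ord_n}^+}$ is conservative by a direct argument with the $\bV_{\ord_n,m}$; Step 3 proves left $t$-exactness of $\rho$ by computing $\ul{\Hom}(\bV_{crit,m}, i_{n,*}\rho(\sF))$ as a colimit over auxiliary objects $i_{n,r}^* i_{m,r,*}\bV_{crit,m}$ (using Proposition \ref{p:central-naive} \eqref{i:naive-3}); and Step 4 obtains the equivalence on $(+)$-parts by matching pro-left adjoints to $\Oblv$ on both sides and invoking the $\overset{\rightarrow}{\otimes}$-algebra formalism of \cite{methods} Proposition 3.7.1. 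None of this requires knowing that compacts are closed under truncation.
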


\begin{proof}

\step\label{st:ren-1} 

We collect some observations we will
need later.

Note that for any $m$,
$\bV_{\ord_n,m} \in \widehat{\fg}_{crit}\mod_{\ord_n}^c \subset
\widehat{\fg}_{crit}\mod_{\ord_n}$ lies in the heart of the $t$-structure;
indeed, it is connective by definition, and it is clear that
any object in $\widehat{\fg}_{crit}\mod_{\ord_n}^c$ 
that is coconnective in $\widehat{\fg}_{crit}\mod_{\ord_n,naive}$
is also coconnective in $\widehat{\fg}_{crit}\mod_{\ord_n}$.

In addition, the canonical map $\bV_{\ord_n,m+1} \to \bV_{\ord_n,m}$ is
an epimorphism in $\widehat{\fg}_{crit}\mod_{\ord_n}^{\heart}$.
Indeed, it suffices to show that the (homotopy) kernel of this
map is in cohomological degree $0$, and the above logic applies just
as well to see this.

\step\label{st:ren-2} Define $\Oblv:\widehat{\fg}_{crit}\mod_{\ord_n} \to \Vect$
as $\Oblv\circ \rho$. We claim that 
$\Oblv|_{\widehat{\fg}_{crit}\mod_{\ord_n}^+}$ is conservative and $t$-exact.

Suppose 
$\sF \in \widehat{\fg}_{crit}\mod_{\ord_n}^{\geq 0}$ with
$\Oblv(\sF) = 0$; it suffices 
to show that $H^0(\sF) = 0$. 
To this end, it suffices to 
show that any morphism 
$\eta:\sG \to \sF$ is nullhomotopic for
a connective object
$\sG \in \widehat{\fg}_{crit}\mod_{\ord_n}^c $.

Note that the top cohomology group $H^0(\sG)$ 
is finitely generated as a module
over $U(\widehat{\fg}_{crit})$, say by $v_1,\ldots,v_N \in H^r(\sG)$.
By Lemma \ref{l:pro-reg}, for each $i = 1, \ldots,N$ 
we can find $m_i \gg 0$ and a map
$\alpha_i:\bV_{\ord_n,m_i} \to \sG$ such that $H^0(\alpha_i)$
maps the vacuum vector in $\bV_{\ord_n,m_i}$ to $v_i$. 

Let $\alpha:\oplus_{i=1}^N \bV_{\ord_n,m_i} \to \sG$
be the induced map;
$\alpha$ is surjective on $H^0$ by design,
so $\Coker(\alpha)$ is in cohomological degrees 
$\leq -1$. It follows that $\sG \to \sF$
is nullhomotopic if and only if its composition
with $\alpha$ is. Therefore, it suffices to show
that any map $\bV_{\ord_n,m} \to \sF$ 
is nullhomotopic.

The map:

\[
\begin{gathered}
H^0\big(\ul{\Hom}_{\widehat{\fg}_{crit}\mod_{\ord_n}}
(\bV_{\ord_n,m},\sF)\big) =
\Hom_{\widehat{\fg}_{crit}\mod_{\ord_n}^{\heart}}
(\bV_{\ord_n,m},H^0(\sF)) \to \\
\Hom_{\widehat{\fg}_{crit}\mod_{\ord_n}^{\heart}}
(\bV_{\ord_n,m+1},H^0(\sF)) = 
H^0\big(\ul{\Hom}_{\widehat{\fg}_{crit}\mod_{\ord_n}}
(\bV_{\ord_n,m+1},\sF)\big) \in \Vect^{\heart}
\end{gathered}
\]

\noindent is injective by Step \ref{st:ren-1}.
But we have:

\[
\underset{m}{\colim} \, 
\ul{\Hom}_{\widehat{\fg}_{crit}\mod_{\ord_n}}
(\bV_{\ord_n,m},\sF) = \Oblv(\sF) = 0
\]

\noindent by Lemma \ref{l:pro-reg} (and compactness
of $\bV_{\ord_n,m}$), 
giving the claim.

\step\label{st:ren-3} 

We now show $t$-exactness of $\rho$.
Right $t$-exactness follows immediately from the
construction, so we show left $t$-exactness.

Let $m \geq n$ be fixed. It what follows, we 
regard $\bV_{crit,m}$ as an object of 
$\widehat{\fg}_{crit}\mod_{\ord_m}^c \subset 
\widehat{\fg}_{crit}\mod_{\ord_m,naive}$.

As $r \geq m$ varies, we have natural maps: 

\[
\ldots \to i_{n,r+1}^*i_{m,r+1,*}(\bV_{crit,m}) \to 
i_{n,r}^*i_{m,r,*}\bV_{crit,m} \to \ldots \to 
i_{n,m}^* \bV_{crit,m} \in \widehat{\fg}_{crit}\mod_{\ord_n}^c.
\]

\noindent We claim that for 
$\sF \in \widehat{\fg}_{crit}\mod_{\ord_n}$, the natural map:

\begin{equation}\label{eq:vm-reg-hom}
\underset{r}{\colim} \, 
\ul{\Hom}_{\widehat{\fg}_{crit}\mod_{\ord_n}}
(i_{n,r}^*i_{m,r,*}\bV_{crit,m},\sF) \to
\ul{\Hom}_{\widehat{\fg}_{crit}\mod}(\bV_{crit,m},i_{n,*} \rho(\sF)) 
\end{equation} 

\noindent is an isomorphism. Indeed, both sides commute with
colimits in $\sF$ by compactness, so we are reduced to the
case where $\sF \in \widehat{\fg}_{crit}\mod_{\ord_n}^c$. 
For such $\sF$, the claim follows from \eqref{eq:hom-colim-ord_n}.

Now suppose that $\sF \in \widehat{\fg}_{crit}\mod_{\ord_n}^{\geq 0}$.
As each object $i_{n,r}^*i_{m,r,*}\bV_{crit,m}$ is
connective in $\widehat{\fg}_{crit}\mod_{\ord_n}$,
\eqref{eq:vm-reg-hom} implies that
$\ul{\Hom}_{\widehat{\fg}_{crit}\mod}(\bV_{crit,m},i_{n,*}\rho(\sF)) \in 
\Vect^{\geq 0}$. As the objects $\bV_{crit,m}$ generate
$\widehat{\fg}_{crit}\mod$ under colimits, this implies that
$i_{n,*}\rho(\sF)$ lies in $\widehat{\fg}_{crit}\mod^{\geq 0}$,
i.e., $i_{n,*}\rho$ is left $t$-exact.

Finally, as $i_{n,*}$ is $t$-exact and conservative
by Proposition \ref{p:central-naive}, $\rho$ itself must be left $t$-exact.

\step Finally, we show that $\rho$ induces an equivalence on 
eventually coconnective subcategories.

By $t$-exactness of $\rho$, we have a commutative diagram:

\[
\xymatrix{
\widehat{\fg}_{crit}\mod_{\ord_n}^+ \ar[rr]^{\rho} \ar[dr] & &
\widehat{\fg}_{crit}\mod_{\ord_n,naive}^+ \ar[dl] \\
& \Vect^+
}
\]

\noindent with the diagonal arrows being the forgetful functors.
Each of these functors is conservative. 

Moreover, the forgetful functor 
$\widehat{\fg}_{crit}\mod_{\ord_n} \to \Vect$ is corepresented
by the pro-object:

\[
\underset{m}{\lim} \, \bV_{\ord_n,m} \in 
\Pro(\widehat{\fg}_{crit}\mod_{\ord_n}^c) \subset
\Pro(\widehat{\fg}_{crit}\mod_{\ord_n}).
\]

\noindent Indeed, this follows immediately from
Lemma \ref{l:pro-reg} and compactness of
$\bV_{\ord_n,m} \in \widehat{\fg}_{crit}\mod_{\ord_n}$.

Applying Lemma \ref{l:pro-reg} again, we see that
$\rho:\widehat{\fg}_{crit}\mod_{\ord_n}^+ \to 
\widehat{\fg}_{crit}\mod_{\ord_n,naive}^+$ intertwines
the pro-left adjoints to the forgetful functors in the
above diagram. Therefore, it induces an equivalence on the
corresponding $\overset{\rightarrow}{\otimes}$-algebras,
so we obtain the claim from \cite{methods} Proposition 3.7.1. 

\end{proof}

\begin{rem}

Unlike $\widehat{\fg}_{crit}\mod$, we 
are not aware of an explicit description of 
compact generators of $\widehat{\fg}_{crit}\mod_{\ord_n}$.
For instance, does $\widehat{\fg}_{crit}\mod_{\ord_n}$ admit
compact generators that admit weakly
$G(O)$-equivariant structures? Does it admit compact generators
lying in $\widehat{\fg}_{crit}\mod_{\ord_n}^{\heart}$?
(For $G = PGL_2$ and $n = 0$, the answer to both questions is
yes by Theorem \ref{t:main}.)

This general issue is closely
related to the technical problems highlighted in 
\S \ref{ss:eq-renorm}.

\end{rem}

\subsection{Equivariant renormalization}\label{ss:eq-renorm}

We now highlight a technical problem: there is not an evident
critical level $G(K)$-action on $\widehat{\fg}_{crit}\mod_{\ord_n}$.
(Similarly, we cannot construct a weak $G(K)$-action 
in the sense of \cite{methods}.)

\begin{conjecture}\label{conj:ord-n-conv}

For any $\sF \in D_{crit}^*(G(K))$ compact, define a functor:

\[
\chi_{\sF}:\widehat{\fg}_{crit}\mod_{\ord_n} \to 
\widehat{\fg}_{crit}\mod_{\ord_n}
\]

\noindent whose restriction to 
$\widehat{\fg}_{crit}\mod_{\ord_n}^c$ is calculated as the
composition:

\[
\widehat{\fg}_{crit}\mod_{\ord_n}^c \subset 
\widehat{\fg}_{crit}\mod_{\ord_n,naive}^+ \xar{\sF \star -}
\widehat{\fg}_{crit}\mod_{\ord_n,naive}^+ \simeq 
\widehat{\fg}_{crit}\mod_{\ord_n}^+ \subset 
\widehat{\fg}_{crit}\mod_{\ord_n}.
\]

Then we conjecture that $\chi_{\sF}$ is left $t$-exact up to shift.

\end{conjecture}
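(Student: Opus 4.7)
The plan is to reduce the conjecture to the corresponding bounded-amplitude statement for the naive convolution, where the critical level $G(K)$-action and its interaction with the $t$-structure are already under control.

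The first step is to verify the identification
\[
\rho \circ \chi_{\sF} \simeq (\sF \star -) \circ \rho
\]
of continuous functors $\widehat{\fg}_{crit}\mod_{\ord_n} \to \widehat{\fg}_{crit}\mod_{\ord_n,naive}$. By construction of $\chi_{\sF}$ via ind-extension from $\widehat{\fg}_{crit}\mod_{\ord_n}^c$, the two sides agree on compact objects; since both are continuous, they agree everywhere.

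Next, the compatibility of the critical level $G(K)$-action on $\widehat{\fg}_{crit}\mod$ with its $t$-structure (see \cite{methods} \S 10 and \cite{whit} Appendix B), combined with the $t$-exactness and conservativity of $i_{n,*}$ from Proposition \ref{p:central-naive}, implies that $\sF \star -$ on $\widehat{\fg}_{crit}\mod_{\ord_n,naive}$ is left $t$-exact up to some shift $N = N(\sF)$ depending only on $\sF$. Combining this with the $t$-exactness of $\rho$ and the fact that $\rho$ is an equivalence on eventually coconnective subcategories (Lemma \ref{l:ren}), one obtains the conjecture for any $M \in \widehat{\fg}_{crit}\mod_{\ord_n}^+$: since $\rho(M)$ is coconnective in the naive category, $\sF \star \rho(M)$ lies in naive degrees $\geq -N$, and transferring back through the equivalence on eventually coconnective subcategories places $\chi_{\sF}(M)$ in renormalized degrees $\geq -N$.

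The hard step, and the main obstacle, is to upgrade this from $\widehat{\fg}_{crit}\mod_{\ord_n}^+$ to all of $\widehat{\fg}_{crit}\mod_{\ord_n}^{\geq 0}$. Outside the eventually coconnective range, $\rho$ is not known to reflect the $t$-structure, so the naive estimate on $\rho(\chi_{\sF}(M)) = \sF \star \rho(M)$ does not, a priori, control $\chi_{\sF}(M)$ in the renormalized $t$-structure. Two complementary strategies suggest themselves: (a) present every $M \in \widehat{\fg}_{crit}\mod_{\ord_n}^{\geq 0}$ as a filtered colimit of objects in $\widehat{\fg}_{crit}\mod_{\ord_n}^+ \cap \widehat{\fg}_{crit}\mod_{\ord_n}^{\geq 0}$ and invoke continuity of $\chi_{\sF}$; or (b) produce compact generators of $\widehat{\fg}_{crit}\mod_{\ord_n}$ lying in the heart, which would reduce the problem to a bounded-amplitude check on these generators. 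Strategy (b) is tied to the open structural question flagged in the remark following Lemma \ref{l:ren}, and I expect the conjecture must ultimately be settled together with that question. For $G = PGL_2$ and $n = 0$, both obstacles should be surmountable after the fact using Theorem \ref{t:main} and the descent method of Section \ref{s:adolescent}.
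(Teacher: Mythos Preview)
Your analysis is essentially correct and aligns with the paper's treatment: this statement is a \emph{conjecture}, and the paper does not prove it in general. The paper only establishes the case $G = PGL_2$, $n = 0$ (Theorem \ref{t:pgl2-action}), and does so exactly as you suggest---as an immediate consequence of Theorem \ref{t:main}. Your identification of the core obstacle (that $\rho$ does not reflect the $t$-structure outside the eventually coconnective range) and its connection to the open structural question about compact generators is on the mark and matches the spirit of the remarks surrounding the conjecture.

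Two small points. First, your step 2 has a gap as stated: for general $M \in \widehat{\fg}_{crit}\mod_{\ord_n}^+$ (not compact), you cannot ``transfer back through the equivalence on eventually coconnective subcategories'' without already knowing that $\chi_{\sF}(M)$ is eventually coconnective, which is precisely the issue. The argument is clean only for compact $M$, where $\chi_{\sF}(M) \in \widehat{\fg}_{crit}\mod_{\ord_n}^+$ by construction; the extension to all of $\widehat{\fg}_{crit}\mod_{\ord_n}^+$ already runs into the same difficulty you flag in step 3. Second, for the $PGL_2$, $n=0$ case, the descent method is not needed once Theorem \ref{t:main} is in hand: the $t$-exact equivalence $\Gamma^{\Hecke}$ transports the strongly compatible $G(K)$-action and $t$-structure from $D_{crit}^{\Heckez}(\Gr_G)$ (where Lemma \ref{l:action-tstr-bdd} applies directly) to $\widehat{\fg}_{crit}\mod_{reg}$, and a comparison on compacts via $\rho$ shows this transported action agrees with $\chi_{\sF}$.
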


\begin{rem}

Assuming Conjecture \ref{conj:ord-n-conv}
if $K$ is prounipotent, say, then
we obtain $\widehat{\fg}_{crit}\mod_{\ord_n}^K \subset 
\widehat{\fg}_{crit}\mod_{\ord_n}$ as the
essential image of $\chi_{\delta_K}$.
Without assuming the conjecture, we are not otherwise aware of a good
definition of $\widehat{\fg}_{crit}\mod_{\ord_n}^K$.

\end{rem}

\begin{rem}

In the language of \cite{methods} \S 4.4, the above conjecture
asserts that 
the functor $\sF \star -:\widehat{\fg}_{crit}\mod_{\ord_n,naive} 
\to \widehat{\fg}_{crit}\mod_{\ord_n,naive}$ \emph{renormalizes}.

\end{rem}

\begin{rem}

Suppose Conjecture \ref{conj:ord-n-conv} holds for a reductive group $G$
and an integer $n \geq 0$. Then there exists a unique
critical level $G(K)$-action on $\widehat{\fg}_{crit}\mod_{\ord_n}$
such that:

\begin{itemize}

\item The functor $\rho$ upgrades to a morphism of categories
with critical level $G(K)$-actions.

\item The (critical level) $G(K)$-action on 
$\widehat{\fg}_{crit}\mod_{\ord_n}$ is strongly compatible
with the $t$-structure in the sense of \cite{methods} \S 10.12.

\end{itemize}

Indeed, this is essentially immediate from \cite{methods} Lemma 
8.16.4. 

\end{rem}

\begin{rem}

The technical issue associated with the above conjecture 
appears implicitly in \cite{dmod-aff-flag}.

In \S 4.1.4 of \emph{loc. cit}., Frenkel and Gaitsgory suggest
a definition of $\widehat{\fg}_{crit}\mod_{\ord_n}^K$ (adapted
to their particular setting). 
But their definition is not clearly
a good one: for example, it is not clear that 
their category carries the expected Hecke symmetries.
This issue is discussed somewhat in the remark in that same section.
Related to that discussion, 
Main Theorem 2 from \emph{loc. cit}. in effect verifies the
above conjecture in a special case. 

Combined with our proof of Theorem \ref{t:pgl2-action}, 
it may be fair to expect verifying Conjecture \ref{conj:ord-n-conv}
in a given instance requires substantial 
input from local geometric Langlands.

\end{rem}

As an immediate consequence of our main theorem, 
Theorem \ref{t:main}, we may deduce:

\begin{thm}\label{t:pgl2-action}

Suppose $G = PGL_2$ and $n = 0$. 
Then Conjecture \ref{conj:ord-n-conv} holds.

\end{thm}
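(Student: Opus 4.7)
The plan is to use Theorem \ref{t:main} to transport a well-understood $G(K)$-action from the geometric side of the Frenkel-Gaitsgory equivalence to $\widehat{\fg}_{crit}\mod_{reg}$, and then identify the resulting functors with $\chi_{\sF}$.

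First, by Theorem \ref{t:main}, the functor
\[
\Phi \coloneqq \Gamma^{\Hecke} : \sH \coloneqq D_{crit}(\Gr_G) \underset{\Rep(\ld{G})}{\otimes} \QCoh(\Op_{\ld{G}}^{reg}) \isom \widehat{\fg}_{crit}\mod_{reg}
\]
is a $t$-exact equivalence. The category $\sH$ carries an evident critical level $G(K)$-action inherited from the first tensor factor (commuting with the $\QCoh(\Op_{\ld{G}}^{reg})$-action on the second). For any compact $\sF \in D_{crit}^*(G(K))$, convolution $\sF \star -$ on $D_{crit}(\Gr_G)$ has bounded cohomological amplitude; since $\QCoh(\Op_{\ld{G}}^{reg})$ is dualizable, this property survives tensoring, giving bounded amplitude for $\sF \star - : \sH \to \sH$. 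Transferring, I define $\chi'_{\sF} \coloneqq \Phi \circ (\sF \star -) \circ \Phi^{-1}$; by $t$-exactness of $\Phi$ this has bounded amplitude, and in particular is left $t$-exact up to shift.

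The remaining task is to match $\chi'_{\sF}$ with $\chi_{\sF}$. I would argue this on the compact generators of $\widehat{\fg}_{crit}\mod_{reg}$ and then extend by continuity. For $\sG = \Phi(\sG') \in \widehat{\fg}_{crit}\mod_{reg}^c$, the naive global sections functor $\rho \circ \Gamma^{\Hecke}: \sH \to \widehat{\fg}_{crit}\mod_{reg,naive}$ is a morphism of $G(K)$-equivariant categories (since $\Gamma^{\IndCoh}$ is so by construction, and passing to the Hecke-tensor product preserves this). Therefore
\[
\rho\bigl(\chi'_{\sF}(\sG)\bigr) \;=\; \sF \star \rho\bigl(\Phi(\sG')\bigr) \;=\; \sF \star \rho(\sG) \;=\; \rho\bigl(\chi_{\sF}(\sG)\bigr),
\]
where the last equality is the very definition of $\chi_{\sF}$ on compacts. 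Both $\chi'_{\sF}(\sG)$ and $\chi_{\sF}(\sG)$ are eventually coconnective (the former by bounded amplitude, the latter by construction), so the fully faithfulness of $\rho$ on $\widehat{\fg}_{crit}\mod_{reg}^+$ from Lemma \ref{l:ren} produces a canonical isomorphism. Continuous extension then gives $\chi'_{\sF} \simeq \chi_{\sF}$ globally, yielding the desired left $t$-exactness up to shift.

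The main obstacle is the compatibility check in the last step: namely, arguing that the transferred $G(K)$-action on $\widehat{\fg}_{crit}\mod_{reg}$ really coincides with the naive convolution after $\rho$. This is essentially formal from the construction of $\Gamma^{\Hecke}$ as a morphism in $G(K)\mod_{crit}$ when landing in $\widehat{\fg}_{crit}\mod_{reg,naive}$, but one must verify it at the level of $(\infty,2)$-categorical data rather than just pointwise, which requires being careful with how Theorem \ref{t:main} is upgraded to a $G(K)$-equivariant statement (and indeed this upgrade is only available because we know Theorem \ref{t:main}; without it, this is precisely the obstruction discussed in \S \ref{ss:eq-renorm}).
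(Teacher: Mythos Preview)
Your proposal is correct and is precisely what the paper means by ``immediate consequence of Theorem \ref{t:main}'': the paper gives no further argument beyond that phrase, and you have supplied the details exactly as intended. The key points---that the $t$-structure on $D_{crit}^{\Heckez}(\Gr_G)$ is strongly compatible with the $G(K)$-action (\S \ref{ss:hecke-t-str}), that $\rho\circ\Gamma^{\Hecke}=\Gamma^{\Hecke,naive}$ is $G(K)$-equivariant by construction, and that $\rho$ is a $t$-exact equivalence on eventually coconnective subcategories (Lemma \ref{l:ren})---are just what you invoke. One small remark: your justification ``since $\QCoh(\Op_{\ld{G}}^{reg})$ is dualizable, this property survives tensoring'' is not quite the argument; rather, bounded amplitude of $\sF\star-$ on $\sH$ follows from Lemma \ref{l:action-tstr-bdd} applied directly, given the strong compatibility already established in \S \ref{ss:hecke-t-str}. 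But the conclusion is the same.
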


Conversely, if we a priori knew Theorem \ref{t:pgl2-action}, then
the proof that the functor in Theorem \ref{t:main} is an equivalence
could be substantially simplified: the proof of 
Lemma \ref{l:gamma-gens}
would be applicable and would directly give the essential
surjectivity of $\Gamma^{\Hecke}$ (c.f. the
outline from \S \ref{ss:sketch}).

\section{The localization theorem}\label{s:localization}

\subsection{} 

This section begins our study of the Frenkel-Gaitsgory
conjecture.

First, we recall the constructions
underlying the Frenkel-Gaitsgory localization conjecture,
following \cite{fg2}.
We include more attention to derived issues than \emph{loc. cit}.,
so our discussion distinguishes between naive and renormalized
categories of regular Kac-Moody modules.

We then formulate our main result, Theorem \ref{t:main}.

Next, we recall the main results of Frenkel-Gaitsgory.
We include some details on how to deduce the corresponding
results in the DG framework from the exact results that they
showed.

Finally, in \S \ref{ss:intermediate-results}, we formulate
three lemmas from which we deduce Theorem \ref{t:main}. 
The proofs of these lemmas occupy the remainder of the
section.

\subsection{Regular Kac-Moody representations}

In the setting of \S \ref{s:central}, for $n = 0$, we prefer
the notation $reg$ to $\ord_0$. So we let:

\[
\begin{gathered}
\widehat{\fg}_{crit}\mod_{reg,naive} \coloneqq 
\widehat{\fg}_{crit}\mod_{\ord_0,naive} \\
\widehat{\fg}_{crit}\mod_{reg} \coloneqq 
\widehat{\fg}_{crit}\mod_{\ord_0} 
\end{gathered}.
\]

We highlight that the subscript \emph{reg} is being used
in a completely different way than it was in \S \ref{s:heisenberg}.
In the Kac-Moody context, this terminology rather 
follows \cite{fg1}. (We believe that this point should not
cause confusion in navigating the paper.)

Finally, we let $\bV_{crit} \coloneqq \bV_{0,crit}$ denote
the critical level vacuum representation.

\subsection{Notation regarding geometric Satake}

Let $\sH_{sph} = D_{crit}(\Gr_G)^{G(O)}$, considered as a monoidal
category via convolution. Recall that for any $\sC \in G(K)\mod_{crit}$,
there is a canonical action of $\sH_{sph}$ on $\sC^{G(O)}$ coming
from the identifications $\sH_{sph} = \TwoEnd_{G(K)\mod_{crit}}(D_{crit}(\Gr_G))$
and $\sC^{G(O)} = \TwoHom_{G(K)\mod_{crit}}(D_{crit}(\Gr_G),\sC)$.

In particular, $\sH_{sph}$ acts canonically on $D_{crit}(\Gr_G) = D_{crit}(G(K))^{G(O)}$.

Next, recall that there is a canonical monoidal functor
$\Rep(\ld{G}) \to \sH_{sph}$. This functor is characterized
by the fact that it is $t$-exact and the monoidal equivalence
on abelian categories defined by \cite{mirkovic-vilonen}. 
As in \cite{twisted-satake}, this functor is actually \emph{more} naturally
defined when the critical twisting is included, unlike in \cite{mirkovic-vilonen}.

We refer to the above functor as the geometric Satake functor and
denote it by $V \mapsto \sS_V$.

In what follows, whenever we consider $D_{crit}(\Gr_G)$ as a 
$\Rep(\ld{G})$-module category, it is via this construction.

\subsection{The canonical torsor}\label{ss:torsor}

Let $\sP_{\Op_{\ld{G}}^{reg}}$ denote the canonical
$\ld{G}$-bundle on $\Op_{\ld{G}}^{reg}$; by definition,
it corresponds to the forgetful map 
$\Op_{\ld{G}}^{reg}  \to \LocSys_{\ld{G}}(\sD) = \bB \ld{G}$. 

We obtain a symmetric monoidal
functor $\Rep(\ld{G}) \to \QCoh(\Op_{\ld{G}}^{reg})$.
We denote this functor 
$V \mapsto V_{\sP_{\Op_{\ld{G}}^{reg}}}$. 
Note that for $V \in \Rep(\ld{G})^{\heart}$ finite-dimensional,
$V_{\sP_{\Op_{\ld{G}}^{reg}}}$
is a vector bundle on $\Op_{\ld{G}}^{reg}$.

Throughout this section, 
whenever we consider $\QCoh(\Op_{\ld{G}}^{reg})$ as 
a $\Rep(\ld{G})$-module category, it is via this construction.

\subsection{Hecke $D$-modules}

Define $D_{crit}^{\Heckez}(\Gr_G)$ as:

\[
D_{crit}^{\Heckez}(\Gr_G) \coloneqq 
D_{crit}(\Gr_G) \underset{\Rep(\ld{G})}{\otimes} 
\QCoh(\Op_{\ld{G}}^{reg}).
\]

By construction, $D_{crit}^{\Heckez}(\Gr_G)$
is canonically a $D_{crit}(G(K)) \otimes \QCoh(\Op_{\ld{G}}^{reg})$-module category.

\begin{rem}

The above may be considered as a variant of
the category:

\[
D_{crit}^{\Hecke}(\Gr_G) 
\coloneqq D_{crit}(\Gr_G) \underset{\Rep(\ld{G})}{\otimes} \Vect
\]
 
\noindent that is suitably parametrized by regular opers. 
The category $D_{crit}^{\Hecke}(\Gr_G)$ is the category of Hecke eigenobjects
in $D_{crit}(\Gr_G)$; its Iwahori equivariant subcategory was studied
in detail in \cite{abbgm}. 

\end{rem}

\subsection{}\label{ss:ind-hecke}

There is a natural functor:

\[
\ind^{\Heckez}:
D_{crit}(\Gr_G) \to D_{crit}^{\Heckez}(\Gr_G)
\]

\noindent defined as the composition:

\[
D_{crit}(\Gr_G) = 
D_{crit}(\Gr_G) \underset{\Rep(\ld{G})}{\otimes} \Rep(\ld{G})
\to D_{crit}(\Gr_G) \underset{\Rep(\ld{G})}{\otimes} 
\QCoh(\Op_{\ld{G}}^{reg}) = D_{crit}^{\Heckez}(\Gr_G).
\]

Because because $\Op_{\ld{G}}^{reg} \to \bB \ld{G}$
is affine, $\Rep(\ld{G}) \to \QCoh(\Op_{\ld{G}}^{reg})$ admits
a continuous, conservative, right adjoint that is a morphism
of $\Rep(\ld{G})$-module categories. 
By functoriality, the same is true of $\ind^{\Heckez}$;
we denote this right adjoint by $\Oblv^{\Heckez}$.

In particular, we deduce that $D_{crit}^{\Heckez}(\Gr_G)$ is compactly generated
with compact generators of the form $\ind^{\Heckez}(\sF)$ for
$\sF \in D_{crit}(\Gr_G)$ compact.

\subsection{}\label{ss:hecke-t-str}

The DG category $D_{crit}^{\Heckez}(\Gr_G)$ carries a canonical
$t$-structure that plays an important role.

We construct the $t$-structure by setting connective objects
to be generated under colimits by objects of the
form $\ind^{\Heckez}(\sF)$ for
$\sF \in D_{crit}(\Gr_G)^{\leq 0}$. 

By construction, the 
composition $\Oblv^{\Heckez}\ind^{\Heckez}:D_{crit}(\Gr_G) \to 
D_{crit}(\Gr_G)$ is given by convolution with a spherical
$D$-module in the heart of the $t$-structure, namely,
the object corresponding under Satake to
functions on $\sP_{\Op_{\ld{G}}^{reg}}$ (considered as an
object of $\Rep(\ld{G})$ in the obvious way). Therefore, 
by \cite{central} (or \cite{fg2} \S 8.4), this
monad is $t$-exact on $D_{crit}(\Gr_G)$.

One deduces by standard methods that
$\Oblv^{\Heckez}$ and $\ind^{\Heckez}$ are $t$-exact.
In particular, because $\Oblv^{\Heckez}$ is $t$-exact, 
conservative, and $G(K)$-equivariant, we find that the
$t$-structure on $D_{crit}^{\Heckez}(\Gr_G)$ is strongly compatible
with the (critical level) $G(K)$-action in the sense of
\cite{methods} \S 10.12.

\subsection{}

In \S \ref{ss:gamma-hecke-start}-\ref{ss:gamma-hecke-finish}, 
following Frenkel-Gaitsgory, 
we will construct canonical global sections functors:

\[
\xymatrix{
D_{crit}^{\Heckez}(\Gr_G) \ar[d]_{\Gamma^{\Hecke}} \ar[dr]^{\Gamma^{\Hecke,naive}} & \\
\widehat{\fg}_{crit}\mod_{reg} \ar[r]^{\rho} & \widehat{\fg}_{crit}\mod_{reg,naive}
}
\]

\noindent that are our central objects of study.

\subsection{The Hecke property of the vacuum representation}\label{ss:gamma-hecke-start}

The construction of global sections functors as above is based on the
following crucial construction of Beilinson-Drinfeld.

\begin{thm}[Beilinson-Drinfeld]\label{t:birth}

For $\bV_{crit} \in \widehat{\fg}_{crit}\mod_{reg}^{\heart} \subset 
\widehat{\fg}_{crit}\mod_{reg,naive} \in G(K)\mod_{crit}$ the
vacuum representation
and $V \in \Rep(\ld{G})^{\heart}$ finite-dimensional,
the convolution $\sS_V \star \bV_{crit} \in 
\widehat{\fg}_{crit}\mod_{reg,naive}^{G(O)}$ lies in the heart
of the $t$-structure.

Moreover, there is a canonical isomorphism:

\[
\beta_V:
\sS_V \star \bV_{crit} \isom 
\bV_{crit} \underset{\Op_{\ld{G}}^{reg}}{\otimes} V_{\sP_{\Op_{\ld{G}}^{reg}}}
\in \widehat{\fg}_{crit}\mod_{reg}^{G(O),\heart}.
\]

For $V,W \in \Rep(\ld{G})^{\heart}$ finite-dimensional,
the following diagram in 
$\widehat{\fg}_{crit}\mod_{reg}^{G(O),\heart} \subset
\widehat{\fg}_{crit}\mod_{reg,naive}^{G(O)}$
commutes:

\[
\xymatrix{
\sS_W \star \sS_V \star \bV_{crit} 
\ar[rr]^{\sS_W \star \beta_V} \ar[d]^{\simeq} & &
\sS_W \star \bV_{crit} \underset{\Op_{\ld{G}}^{reg}}{\otimes} V_{\sP_{\Op_{\ld{G}}^{reg}}} \ar[rr]^{\beta_W} & &
\bV_{crit} \underset{\Op_{\ld{G}}^{reg}}{\otimes} W_{\sP_{\Op_{\ld{G}}^{reg}}}
\underset{\Op_{\ld{G}}^{reg}}{\otimes} V_{\sP_{\Op_{\ld{G}}^{reg}}}\ar@{=}[d]
\\
\sS_{W \otimes V} \star \bV_{crit} \ar[rrrr]^{\beta_{W \otimes V}}
& & & & 
\bV_{crit}
\underset{\Op_{\ld{G}}^{reg}}{\otimes} (W \otimes V)_{\sP_{\Op_{\ld{G}}^{reg}}}
}
\]

\noindent Here the left isomorphism comes from 
geometric Satake.

\end{thm}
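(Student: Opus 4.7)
The plan is to proceed in three steps, the second of which carries the main content and is essentially the Beilinson-Drinfeld ``birth of opers'' computation.

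For the $t$-exactness statement, I would use that $\bV_{crit} = \Gamma^{\IndCoh}(\delta_1)$ together with the fact that $\Gamma^{\IndCoh}$ is a morphism of $G(K)$-module categories; since $\delta_1$ is the convolution unit on $\Gr_G$, this gives $\sS_V \star \bV_{crit} \simeq \Gamma^{\IndCoh}(\sS_V)$. Geometric Satake places $\sS_V \in D_{crit}(\Gr_G)^{G(O),\heart}$, so it suffices to verify $t$-exactness of $\Gamma^{\IndCoh}$ on $G(O)$-equivariant objects. Any such object of the heart has finite-dimensional support inside finitely many Schubert varieties in $\Gr_G$, on which $\Gamma^{\IndCoh}$ coincides with classical $D$-module global sections; $t$-exactness for $G(O)$-equivariant perverse sheaves is then a standard consequence of the finite-dimensional Beilinson-Bernstein theorem (Theorem \ref{t:bb}) applied to the relevant partial affine flag varieties.

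Next, to construct $\beta_V$ I would follow the Beilinson-Drinfeld factorization approach. Globalizing $\bV_{crit}$ over a smooth projective curve $X$ produces a chiral algebra $\sA_X$ whose chiral endomorphism algebra is identified with $\Fun(\Op_{\ld{G}}(X))$ (Feigin-Frenkel together with the Hitchin computation of \cite{hitchin}); Satake convolution then corresponds to insertion of Hecke modifications, and Beilinson-Drinfeld's birth of opers theorem produces a canonical chiral Hecke eigenstructure on $\sA_X$ whose eigenvalue is the canonical $\ld{G}$-torsor $\sP_{\Op_{\ld{G}}(X)}$. Restricting to a formal disc around a fixed point and specializing to the regular (order zero) part yields a $\fz$-linear map
\[
\alpha_V \colon V_{\sP_{\Op_{\ld{G}}^{reg}}} \to
\Hom_{\widehat{\fg}_{crit}\mod_{reg,naive}^{G(O)}}(\bV_{crit}, \sS_V \star \bV_{crit}),
\]
which adjoints (using that $\bV_{crit}$ corepresents the functor of $\fg[[t]]$-invariants on the $G(O)$-equivariant abelian category, with endomorphism ring $\fz$) to the desired morphism $\beta_V$. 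To see $\beta_V$ is an isomorphism I check fiberwise at each $\chi \in \Op_{\ld{G}}^{reg}(k)$: both sides specialize to $\dim(V)$ copies of $\bV_{crit,\chi}$ (the right manifestly, the left via the Hecke eigenproperty of $\bV_{crit,\chi}$), and the map is nonzero on each matrix coefficient by the construction of $\alpha_V$. The tensor compatibility diagram then reduces to the monoidality of geometric Satake together with the symmetric monoidality of $V \mapsto V_{\sP_{\Op_{\ld{G}}^{reg}}}$, which match through $\beta_V$ tautologically once $\alpha_V$ has been set up factorizingly.

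The main obstacle is the construction of $\alpha_V$: the Hecke eigenstructure on $\bV_{crit}$ with eigenvalue $\sP_{\Op_{\ld{G}}^{reg}}$ is an intrinsically global/factorization phenomenon, not available from purely local Kac-Moody representation theory. The appearance of opers rather than arbitrary $\ld{G}$-local systems, and of the canonical $\ld{G}$-torsor thereon, reflects precisely the Beilinson-Drinfeld identification of the Feigin-Frenkel center with $\Fun(\Op_{\ld{G}})$ together with its compatibility with Hecke modifications over a global curve.
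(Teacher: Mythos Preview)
The paper does not give its own proof of this result; it simply refers to \cite{hitchin} \S 5.5--6 and \cite{opers}. Your second and third steps are a reasonable summary of the Beilinson--Drinfeld argument the paper is citing: the construction of $\beta_V$ is indeed a global/factorization phenomenon, and the compatibility diagram reduces to monoidality of Satake together with the symmetric monoidality of $V \mapsto V_{\sP_{\Op_{\ld{G}}^{reg}}}$.

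Your first step, however, has a genuine error. The Schubert varieties $\ol{\Gr}_G^{\lambda}$ are not partial flag varieties of any finite-dimensional reductive group, and the finite-dimensional Beilinson--Bernstein theorem (Theorem \ref{t:bb}) has nothing to say about global sections of $D$-modules on them. More fundamentally, the global sections here land in $\widehat{\fg}_{crit}\mod$, not in $\fg\mod$; the action of the loop algebra on $\Gamma^{\IndCoh}(\sS_V)$ is not visible from finite-dimensional considerations. The $t$-exactness of $\Gamma^{\IndCoh}$ on all of $D_{crit}(\Gr_G)$ is a nontrivial theorem of Frenkel--Gaitsgory (Theorem \ref{t:gamma-exact}, citing \cite{fg1}), proved by entirely different means.

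Fortunately this step is also unnecessary: once you have the isomorphism $\beta_V$, the fact that $\sS_V \star \bV_{crit}$ lies in the heart is immediate, since the right-hand side $\bV_{crit} \otimes_{\Op_{\ld{G}}^{reg}} V_{\sP_{\Op_{\ld{G}}^{reg}}}$ is manifestly in degree $0$. Beilinson--Drinfeld establish $\beta_V$ directly (via a filtration argument on the associated graded), and the $t$-exactness falls out. You should reorganize the argument accordingly and drop the appeal to finite-dimensional localization.
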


We refer to \cite{hitchin} \S 5.5-6 and \cite{opers}
for proofs and further discussion.

\subsection{}

Let us reformulate the Hecke property more categorically.

For any $\sC \in G(K)\mod_{crit}$,
$\Rep(\ld{G})$ acts on $\sC^{G(O)}$ via the 
monoidal functor 
$\Rep(\ld{G}) \to \sH_{sph} \actson
\sC^{G(O)}$, where the first functor is the
geometric Satake functor.

For $\sC = \widehat{\fg}_{crit}\mod_{reg,naive}$,
we also have an action of $\Rep(\ld{G})$ on 
$\widehat{\fg}_{crit}\mod_{reg,naive}$ via
the (symmetric) monoidal functor
$\Rep(\ld{G}) \to \QCoh(\Op_{\ld{G}}^{reg})$ defined
by $\sP_{\Op_{\ld{G}}^{reg}}$. By construction,
this action commutes with the $G(K)$-action.

Therefore, $\widehat{\fg}_{crit}\mod_{reg,naive}^{G(O)}$
is canonically a $\Rep(\ld{G})$-bimodule
category. 

\begin{cor}\label{c:hecke-bimod}

There is a
unique morphism:

\[
\lambda:\Rep(\ld{G}) \to 
\widehat{\fg}_{crit}\mod_{reg,naive}^{G(O)}
\in \Rep(\ld{G})\bimod 
\]

\noindent of $\Rep(\ld{G})$-bimodule categories 
sending the trivial representation
$k \in \Rep(\ld{G})$ to $\bV_{crit}$ and
such that for any finite-dimensional representation
$V \in \Rep(\ld{G})^{\heart}$, the isomorphism:

\[
\sS_V \star \bV_{crit} = 
\lambda(V \otimes k) = \lambda(k \otimes V) = 
\bV_{crit} 
\underset{\Op_{\ld{G}}^{reg}}{\otimes} 
V_{\sP_{\Op_{\ld{G}}^{reg}}}
\]

\noindent is the isomorphism $\beta_V$ of 
Theorem \ref{t:birth}.

\end{cor}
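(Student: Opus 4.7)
The plan is to recast the desired bimodule morphism as the datum of a Hecke eigenobject on $\bV_{crit}$. More precisely, for any $\Rep(\ld{G})$-bimodule DG category $\sM$, restriction along the unit $k \in \Rep(\ld{G})$ identifies the category $\TwoHom_{\Rep(\ld{G})\bimod}(\Rep(\ld{G}),\sM)$ with the DG category of objects $M \in \sM$ equipped with a coherent identification of the two $\Rep(\ld{G})$-actions on $M$. This is standard and follows from the fact that $\Rep(\ld{G})$, viewed as a module over $\Rep(\ld{G}) \otimes \Rep(\ld{G})$ via multiplication, is generated by $k$ under colimits and the bimodule action---a consequence of rigidity of $\Rep(\ld{G})$.

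The first step is then to produce such a Hecke eigenstructure on $\bV_{crit}$. Theorem \ref{t:birth} already provides this at the level of objects: for finite-dimensional $V$ we have the isomorphism $\beta_V$, together with the binary tensor compatibility supplied by the displayed commutative diagram. Extension to all of $\Rep(\ld{G})$ by Ind-completion presents no difficulty, since both $V \mapsto \sS_V \star \bV_{crit}$ and $V \mapsto \bV_{crit} \underset{\Op_{\ld{G}}^{reg}}{\otimes} V_{\sP_{\Op_{\ld{G}}^{reg}}}$ are continuous functors of $V$, and $\Rep(\ld{G})$ is the Ind-completion of its subcategory of finite-dimensional representations.

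The main obstacle is the higher coherence needed for the $\infty$-categorical structure: Theorem \ref{t:birth} is phrased at the level of objects and binary tensor products, whereas $\lambda$ must be a bona fide functor of DG categories. I would handle this by viewing $\beta$ as a symmetric monoidal natural equivalence between two symmetric monoidal functors $\Rep(\ld{G}) \to \sM$: namely, the convolution functor $V \mapsto \sS_V \star \bV_{crit}$ (whose monoidal structure comes from geometric Satake) and the twisting functor $V \mapsto \bV_{crit} \underset{\Op_{\ld{G}}^{reg}}{\otimes} V_{\sP_{\Op_{\ld{G}}^{reg}}}$ (whose monoidal structure comes from the canonical $\ld{G}$-torsor $\sP_{\Op_{\ld{G}}^{reg}}$). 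Because $\Rep(\ld{G})$ is a rigid symmetric monoidal DG category, both the symmetric monoidal structure on each side and the symmetric monoidal natural equivalence between them are determined by their values on finite-dimensional objects together with the binary tensor compatibility---which is exactly the data supplied by Theorem \ref{t:birth}. Packaging this yields the desired bilinear functor $\lambda$; uniqueness then follows immediately from the restriction equivalence above, since any such $\lambda$ is rigidly determined by the image of $k$ together with its Hecke eigenstructure.
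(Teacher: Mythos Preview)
Your overall strategy matches the paper's: both arguments reformulate the bimodule morphism as a Hecke eigenstructure on $\bV_{crit}$, observe that Theorem \ref{t:birth} supplies the objectwise isomorphisms $\beta_V$ together with binary compatibility, and then address the passage to a coherent $\infty$-categorical structure. The paper carries this out by applying Barr--Beck to $H_1 = \ld{G} \hookrightarrow H_2 = \ld{G} \times \ld{G}$, so that a bimodule map $\Rep(\ld{G}) \to \sM$ is equivalent to a $\Fun(\ld{G})$-module structure on $\bV_{crit}$ for the regular representation $\Fun(\ld{G}) \in \ComAlg(\Rep(\ld{G} \times \ld{G}))$.

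The gap in your version is the handling of higher coherence. Your claim that ``because $\Rep(\ld{G})$ is a rigid symmetric monoidal DG category, \ldots the symmetric monoidal natural equivalence between them [is] determined by their values on finite-dimensional objects together with the binary tensor compatibility'' is not justified, and in general is false: rigidity tells you the functors are determined by compact objects, but it does not collapse the tower of coherences needed for a monoidal natural transformation in an $\infty$-category. Moreover, the two functors in question are module-action functors into $\sM = \widehat{\fg}_{crit}\mod_{reg,naive}^{G(O)}$, which is not itself symmetric monoidal, so the phrase ``symmetric monoidal natural equivalence'' is not quite well-posed. The paper's resolution is different and cleaner: it observes that $\Fun(\ld{G})$, $\bV_{crit}$, and all the action maps assembled from the $\beta_V$ lie in the \emph{heart} of the $t$-structure, so the required module structure is data in an ordinary abelian category and ``there are no homotopical issues.'' That is the missing ingredient in your argument; once you note that both $V \mapsto \sS_V \star \bV_{crit}$ and $V \mapsto \bV_{crit} \otimes_{\Op_{\ld{G}}^{reg}} V_{\sP}$ are $t$-exact and land in $\sM^{\heart}$, the binary data of Theorem \ref{t:birth} genuinely suffices.
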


\begin{proof}

Suppose $H_1 \subset H_2$ are affine algebraic
groups with $H_2/H_1$ affine, and let $\sC \in \Rep(H_2)\mod$.
Then the functor:

\[
\TwoHom_{\Rep(H_2)\mod}(\Rep(H_1),\sC) \to \sC
\]

\noindent of evaluation on the trivial representation
is monadic, with the corresponding monad on $\sC$
being given by $\Fun(H_2/H_1) \in 
\ComAlg(\Rep(H_2))$.\footnote{This construction
extends for $H_2/H_1$ quasi-affine as well as long as
$\Fun(H_2/H_1)$ is replaced by the (derived)
global sections $\Gamma(H_2/H_1,\sO_{H_2/H_1})$.}

We apply the above to $H_1 = \ld{G}$ diagonally embedded
into $H_2 = \ld{G} \times \ld{G}$. We then have 
$\Fun((\ld{G} \times \ld{G})/\ld{G}) = \Fun(\ld{G}) \in \Rep(\ld{G} \times 
\ld{G})^{\heart}$, where we consider $\ld{G}$ as equipped with its left and
right $\ld{G}$-actions. We are trying to show that
$\bV_{crit} \in \widehat{\fg}_{crit}\mod_{reg,naive}^{G(O)}$
admits a unique $\Fun(\ld{G})$-module structure satisfying
the stated compatibility. In particular, this structure
corresponds to certain maps in the \emph{abelian} category 
$\widehat{\fg}_{crit}\mod_{reg,naive}^{G(O),\heart}$,
so there are no homotopical issues.

From here, the claim is standard.
For example, for $V$ a finite-dimensional representation of $\ld{G}$,
we have a map $\mu_V:V \otimes V^{\vee} \to \Fun(\ld{G})$
of $\ld{G}$-bimodules. The composition 
of $\mu_V$ with the action map for the $\Fun(\ld{G})$-module structure
on $\bV_{crit}$ is given by the map:

\[
\sS_V \star \bV_{crit} 
\underset{\Op_{\ld{G}}^{reg}}{\otimes} V_{\sP_{\Op_{\ld{G}}^{reg}}}^{\vee}
\simeq \sS_V \star \sS_{V^{\vee}} \star \bV_{crit} = 
\sS_{V \otimes V^{\vee}} \star \bV_{crit} \to 
\bV_{crit}
\]

\noindent where the first isomorphism is induced
by $\beta_{V^{\vee}}$ and the second isomorphism 
and the last map is induced by the pairing 
$V \otimes V^{\vee} \to k \in \Rep(\ld{G})$ (for $k$ the 
trivial representation). It is straightforward from 
Theorem \ref{t:birth} that this defines an action of $\Fun(\ld{G})$ 
as desired.

\end{proof}

\subsection{Construction of the naive functor}

For any $\sC \in G(K)\mod_{crit}$, we have
a canonical identification:

\[
\TwoHom_{G(K)\mod_{crit}}(D_{crit}(\Gr_G),\sC) \isom 
\sC^{G(O)}
\]

\noindent given by evaluation on $\delta_1 \in 
D_{crit}(\Gr_G)^{G(O)}$. (Explicitly, the functor
$D_{crit}(\Gr_G) \to \sC$ corresponding to an object
$\sF \in \sC^{G(O)}$ is given by convolution with $\sF$.)

For $\sC = \widehat{\fg}_{crit}\mod_{reg,naive}$ and
$\bV_{crit} \in \widehat{\fg}_{crit}\mod_{reg,naive}$,
we denote the corresponding functor by 
$\Gamma^{\IndCoh}(\Gr_G,-):D_{crit}(\Gr_G) \to \widehat{\fg}_{crit}\mod_{reg,naive}$.
Note that the composition with the forgetful functor 
$\widehat{\fg}_{crit}\mod_{reg,naive} \to \widehat{\fg}_{crit}\mod$ 
is the usual ($\IndCoh$-)global sections functor by Appendix \ref{a:gamma}.

Now observe that $D_{crit}(\Gr_G)$ and $\widehat{\fg}_{crit}\mod_{reg,naive}$
are each $D_{crit}^*(G(K))\otimes \Rep(\ld{G})$-module categories.
We claim that Corollary \ref{c:hecke-bimod} naturally upgrades
$\Gamma^{\IndCoh}(\Gr_G,-)$ to a morphism of $D_{crit}^*(G(K))\otimes \Rep(\ld{G})$-module
categories. 

Indeed, suppose more generally that $\sC$ is a 
$D_{crit}^*(G(K))\otimes \Rep(\ld{G})$-module category. We then have:

\[
\begin{gathered} 
\TwoHom_{D_{crit}^*(G(K)) \otimes \Rep(\ld{G})\mod}(D_{crit}(\Gr_G),\sC) = 
\TwoHom_{\Rep(\ld{G})\bimod}(\Rep(\ld{G}),
\TwoHom_{G(K)\mod_{crit}}(D_{crit}(\Gr_G),\sC)) = \\
\TwoHom_{\Rep(\ld{G})\bimod}(\Rep(\ld{G}),
\sC^{G(O)}). 
\end{gathered}
\]

\noindent Therefore, Corollary \ref{c:hecke-bimod} has the claimed
effect.

Because the action of $\Rep(\ld{G})$ on $\widehat{\fg}_{crit}\mod_{reg,naive}$
comes from an action of $\QCoh(\Op_{\ld{G}}^{reg})$, we obtain 
an induced functor:

\[
D_{crit}^{\Heckez}(\Gr_G) = 
D_{crit}(\Gr_G) \underset{\Rep(\ld{G})}{\otimes} 
\QCoh(\Op_{\ld{G}}^{reg}) \to \widehat{\fg}_{crit}\mod_{reg,naive} 
\in D_{crit}^*(G(K)) \otimes \QCoh(\Op_{\ld{G}}^{reg})\mod.
\]

\noindent In what follows, we denote\footnote{A comment on the
notation: 

We use $\Heckez$ rather than $\Hecke$ 
in $D_{crit}^{\Heckez}(\Gr_G)$ to distinguish
this category from $D_{crit}^{\Hecke}(\Gr_G)$. But the global sections functor
is defined only on $D_{crit}^{\Heckez}(\Gr_G)$, not on 
$D_{crit}^{\Hecke}(\Gr_G)$, so we simplify the
notation here by omitting the subscript $\fz$.}
this functor by:

\[
\Gamma^{\Hecke,naive} = \Gamma^{\Hecke,naive}(\Gr_G,-).
\]

\subsection{Construction of the renormalized functor}\label{ss:gamma-hecke-finish}

Next, we construct a functor $\Gamma^{\Hecke}$ valued
in $\widehat{\fg}_{crit}\mod_{reg}$.

First, we need the following observation.

\begin{lem}\label{l:conv-adj}

Suppose $H$ is a Tate group indschenme,
$K \subset H$ is a polarization (i.e., a compact open
subgroup with $H/K$ ind-proper). 
Let $\sF \in D(H/K)$ be compact. Then 
for any $\sC \in H\mod$, the functor:

\[
\sF \star - :\sC^K \to \sC  
\]

\noindent admits a continuous right adjoint.

\end{lem}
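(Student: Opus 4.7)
The plan is to construct the continuous right adjoint by hand using Verdier duality, exploiting that compactness of $\sF$ together with ind-properness of $H/K$ confines the relevant geometry to a proper, finite-type locus. Specifically, choose a closed subscheme $Y \subset H/K$ of finite type (necessarily proper, by ind-properness) on which $\sF$ is supported, and form the Verdier dual $\bD \sF \in D(Y)$.

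\emph{Construction of the right adjoint.} Using that $H \to H/K$ is a $K$-torsor together with the symmetry between left and right translation on $H$, view $\bD \sF$ naturally as a compact object of $D(K \backslash H)$ supported on a proper subscheme. Define $R : \sC \to \sC^K$ to be right convolution with $\bD \sF$: for $c \in \sC$ the object $\bD \sF \star c \in \sC$ inherits a canonical $K$-equivariance from the left $K$-equivariance of $\bD \sF$, so $R$ takes values in $\sC^K$. Continuity of $R$ is immediate from continuity of convolution.

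\emph{Verification of the adjunction.} To check that $R$ is right adjoint to $\sF \star -$, use the canonical equivalence $\sC^K \simeq \TwoHom_{H\mod}(D(H/K), \sC)$, valid because $K$ is a polarization. Under this identification $\sF \star -$ becomes evaluation at $\sF$, and the asserted adjunction reduces to the universal property of $\bD \sF$ on the proper locus $Y$, where classical Verdier--Serre duality applies.

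\emph{Main obstacle.} The principal technical point is to set up Verdier duality cleanly between $D(H/K)$ and $D(K \backslash H)$ in the Tate group indscheme setting while respecting the $D(H)$-bimodule structures. Compactness of $\sF$ is essential: it localizes the problem to the proper, finite-type scheme $Y$, where classical duality applies directly, and the $K$-equivariance structures transport along the $K$-torsor without difficulty.
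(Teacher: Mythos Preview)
Your approach is essentially the paper's first proof: the right adjoint is convolution with $\on{inv}\,\bD\sF \in D(K\backslash H)$, where $\on{inv}$ is pullback along the inversion map $H \to H$ (this is what your ``symmetry between left and right translation'' must mean---it would be cleaner to say so explicitly, and your phrase ``right convolution'' is misleading since $\sC$ only carries a left action). The paper cites \cite{fg2} Proposition 22.10.1 for this formula rather than reproving the adjunction from scratch.

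The paper also offers a second, more structural argument you might find cleaner than the Verdier-duality verification you sketch: factor convolution as
\[
D(H)^K \otimes \sC^K \to D(H) \overset{K}{\otimes} \sC \to \sC,
\]
and observe that each arrow admits a continuous right adjoint---the first because $K$ is a group scheme (so $\Oblv$ from $K$-invariants has right adjoint $\Av_*$), the second because $H/K$ is ind-proper. This bypasses any explicit manipulation of $\bD\sF$ and makes the role of the ind-properness hypothesis transparent. Your verification step via $\sC^K \simeq \TwoHom_{H\mod}(D(H/K),\sC)$ and reduction to the proper locus $Y$ is workable but leaves more to the reader than either of the paper's arguments.
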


\begin{proof}

Let $\bD \sF \in D(H/K)$ denote the Verdier dual
to $\sF$, and let $\on{inv} \bD \sF \in D(K\backslash H)$
denote the pullback along the inversion map.
As in \cite{fg2} Proposition 22.10.1,
the functor:

\[
\on{inv} \bD \sF \star - :\sC \to \sC^K
\]

\noindent canonically identifies with the desired right adjoint.

Alternatively, we may write convolution as
a composition:

\[
D(H)^K \otimes \sC^K \to D(H) \overset{K}{\otimes} \sC \to 
\sC
\]

\noindent and each of these functors admits a continuous
right adjoint (the former because $K$ is a group scheme,
and the latter because $H/K$ is ind-proper).
This formally implies the claim.

\end{proof} 

By Lemma \ref{l:conv-adj}, the global sections
functor $\rho \circ \Gamma^{\IndCoh}(\Gr_G,-):D_{crit}(\Gr_G) \to 
\widehat{\fg}_{crit}\mod_{reg,naive}$ preserves compact objects;
indeed, it is given as convolution with 
$\bV_{crit} \in \widehat{\fg}_{crit}\mod^{G(O)}$, which 
is compact.

Therefore, the functor $\Gamma^{\IndCoh}(\Gr_G,-)$
maps $D_{crit}(\Gr_G)^c$ to 
$\widehat{\fg}_{crit}\mod_{reg}^c$. 

From \S \ref{ss:ind-hecke},
we deduce that $\Gamma^{\Hecke,naive}$
maps compact objects in 
$D_{crit}^{\Heckez}(\Gr_G)$ into $\widehat{\fg}_{crit}\mod_{reg}^c$.

We now define:

\[
\Gamma^{\Hecke} = \Gamma^{\Hecke}(\Gr_G,-):
D_{crit}^{\Heckez} \to 
\widehat{\fg}_{crit}\mod_{reg}
\]

\noindent as the ind-extension of:

\[
\Gamma^{\Hecke,naive}|_{D_{crit}^{\Heckez}(\Gr_G)^c}:
D_{crit}^{\Heckez}(\Gr_G))^c
\to \widehat{\fg}_{crit}\mod_{reg}^c.
\]

\subsection{}

By abuse of notation, we let $\Gamma^{\IndCoh}(\Gr_G,-)$
denote the induced functor $\Gamma^{\Hecke} \circ \ind^{\Heckez}$,
so we have a commutative diagram:

\[
\xymatrix{
D_{crit}(\Gr_G) \ar[rr]^{\Gamma^{\IndCoh}(\Gr_G,-)} 
\ar[rrd]_{\Gamma^{\IndCoh}(\Gr_G,-)} & &
\widehat{\fg}_{crit}\mod_{reg} \ar[d]^{\rho} \\
&& \widehat{\fg}_{crit}\mod_{reg,naive}.
}
\]

\noindent (This abuse is mild because of
Corollary \ref{c:gamma-ren} below.) 

\subsection{Main result}

We can now state the main theorem of this paper in its precise form.

\begin{thm}\label{t:main}

For $G$ of semisimple rank $1$, 
the functor $\Gamma^{\Hecke}$ is a
$t$-exact equivalence.

\end{thm}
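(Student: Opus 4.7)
The plan is to follow the Platonic blueprint sketched in \S\ref{ss:sketch}, but with careful attention to the renormalization issue flagged in \S\ref{ss:intro-eq-renorm}. First, I would reduce to $G = PGL_2$: any rank one semisimple group is isogenous to $SL_2$ or $PGL_2$, and under the oper conventions of Remark \ref{r:opers-defin} the dual side and the conjecture are insensitive to isogeny, so one can always work with $G = PGL_2$. Next, by Theorem \ref{t:fg-summary} the functor $\Gamma^{\Hecke}$ is fully faithful and preserves compact objects, so the content of Theorem \ref{t:main} reduces to showing essential surjectivity of $\Gamma^{\Hecke}$ together with $t$-exactness.

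For essential surjectivity, the only genuine $G(K)$-action lives on the naive category $\widehat{\fg}_{crit}\mod_{reg,naive}$, so I would work with $\Gamma^{\Hecke,naive} = \rho \circ \Gamma^{\Hecke}$. Let $\sE \subset \widehat{\fg}_{crit}\mod_{reg,naive}$ be the full subcategory generated under colimits by the image of $\Gamma^{\Hecke,naive}$; because $\Gamma^{\Hecke,naive}$ is a morphism of $G(K)\mod_{crit}$ by construction, $\sE$ is $G(K)$-stable. Theorem \ref{t:generic} for $PGL_2$ then reduces the task to checking that $\sE$ contains both $\Whit(\widehat{\fg}_{crit}\mod_{reg,naive})$ and $\widehat{\fg}_{crit}\mod_{reg,naive}^{\o{I}}$. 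The second inclusion is the $\o{I}$-equivariant equivalence of Theorem \ref{t:fg-summary}, and the first is the Whittaker equivalence (Thm.~\ref{t:whit-equiv}). Thus $\sE = \widehat{\fg}_{crit}\mod_{reg,naive}$.

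To convert this naive essential surjectivity back to essential surjectivity of the renormalized $\Gamma^{\Hecke}$, I would use Lemma \ref{l:ren}: $\rho$ is $t$-exact and an equivalence on $\widehat{\fg}_{crit}\mod_{reg}^+$. Since $\Gamma^{\Hecke}$ is fully faithful and its essential image is closed under colimits, it suffices to verify that every compact generator of $\widehat{\fg}_{crit}\mod_{reg}$ lies in the essential image. Compact generators can be taken inside the eventually coconnective part (e.g.\ the objects $\bV_{\ord_0, m}$), where $\rho$ is an equivalence; combined with the naive essential surjectivity just proved, this delivers the result. Some bookkeeping is required here about boundedness, which presumably is the role of \S\ref{s:bddness}.

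For $t$-exactness, I would run the descent argument of \S\ref{ss:descent} a second time. Right $t$-exactness of $\Gamma^{\Hecke}$ on $\ind^{\Heckez}$ of compact generators is immediate from the $t$-structure on $D_{crit}^{\Heckez}(\Gr_G)$ (\S\ref{ss:hecke-t-str}) together with the right $t$-exactness of $\Gamma^{\IndCoh}$. Left $t$-exactness is the hard part: by compatibility of the $G(K)$-action with the $t$-structures (both on $D_{crit}^{\Heckez}(\Gr_G)$ and, after renormalization, on $\widehat{\fg}_{crit}\mod_{reg,naive}$), I would descend along the same filtration $K_n \supset K_{n+1} \supset \ldots$ used in \S\ref{s:adolescent}: left $t$-exactness on $\o{I}$-equivariant objects is Theorem \ref{t:fg-summary}, left $t$-exactness on Whittaker objects follows from Theorem \ref{t:whit-equiv} (the Whittaker $t$-structure is transported), and the inductive step uses Theorem \ref{t:cons} together with Corollary \ref{c:reg-eff} to control the ``regular kernel'' at each congruence level by its Whittaker invariants with appropriate boundedness. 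The main obstacle, throughout both halves of the argument, is the absence of a $G(K)$-action on $\widehat{\fg}_{crit}\mod_{reg}$: one must formulate every statement on the naive side, apply the descent/inflation machinery there, and then import the conclusion back through $\rho$ using only the equivalence of eventually coconnective subcategories, which is what forces the finer analysis of \S\ref{s:exactness}--\S\ref{s:bddness}.
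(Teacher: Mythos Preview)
Your overall strategy matches the paper's: Theorem~\ref{t:generic} for essential surjectivity, descent for $t$-exactness, with the renormalization issue as the central obstacle. But the conversion step from naive to renormalized essential surjectivity has a genuine gap, and the paper resolves it by reversing your order of arguments. Your argument there reads: compact generators lie in $\widehat{\fg}_{crit}\mod_{reg}^+$, $\rho$ is an equivalence on $+$, and the image of $\Gamma^{\Hecke,naive}$ generates under colimits, so done. This does not work: knowing that $\rho(\bV_{\ord_0,m})$ lies in the subcategory \emph{generated under colimits} by the image of $\Gamma^{\Hecke,naive}$ does not put $\bV_{\ord_0,m}$ in the essential image of $\Gamma^{\Hecke}$, because $\rho$ is not conservative and you cannot pull the colimit generation back through it. The paper instead proves $t$-exactness of $\Gamma^{\Hecke}$ \emph{first}; then $\tau^{\geq 0}\circ\Gamma^{\Hecke,naive}$ factors as $\rho\circ\Gamma^{\Hecke}\circ\tau^{\geq 0}$, and since by Lemma~\ref{l:gamma-gens} the former generates $\widehat{\fg}_{crit}\mod_{reg,naive}^{\geq 0}\simeq\widehat{\fg}_{crit}\mod_{reg}^{\geq 0}$ under colimits, the (colimit-closed) essential image of $\Gamma^{\Hecke}$ contains all of $\widehat{\fg}_{crit}\mod_{reg}^{\geq 0}$.

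The $t$-exactness argument itself also needs to be split in two, which your proposal conflates. The descent criterion (Proposition~\ref{p:pgl2-exact}) requires a $G_n$-action with compatible $t$-structure on the \emph{target}, but $\widehat{\fg}_{crit}\mod_{reg}$ has no such action. The paper therefore proves separately: Lemma~\ref{l:naive-t-exact} ($\Gamma^{\Hecke,naive}$ is $t$-exact, via the equivariant descent Proposition~\ref{p:pgl2-exact}) and Lemma~\ref{l:gamma-hecke-bdded} ($\Gamma^{\Hecke}|_{D_{crit}^{\Heckez}(\Gr_G)^K}$ is left $t$-exact \emph{up to shift}, via the non-equivariant variant Proposition~\ref{p:pgl2-bdded}). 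Boundedness gets you into $\widehat{\fg}_{crit}\mod_{reg}^+$ where $\rho$ is an equivalence, and then naive $t$-exactness finishes. A smaller point: the $\o{I}$-equivariant equivalence of Theorem~\ref{t:fg-summary} is only established on eventually coconnective parts (Theorem~\ref{t:iwahori}), which is why Lemma~\ref{l:gamma-gens} targets only $\widehat{\fg}_{crit}\widetilde{\mod}_{reg,naive}$ rather than all of $\widehat{\fg}_{crit}\mod_{reg,naive}$ as you claim.
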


In the remainder of this section, we review some general results of 
Frenkel-Gaitsgory on $\Gamma^{\Hecke}$
and then formulate some
intermediate results in this case from which we will deduce
Theorem \ref{t:main}. The proofs of those intermediate results
occupy the remainder of the paper. 

\subsection{Review of some results of Frenkel-Gaitsgory}

The following exactness result was essentially shown 
in \cite{fg1}. 

\begin{thm}[\cite{fg1}, Theorem 1.2]\label{t:gamma-exact}

The functor:

\[
\Gamma^{\IndCoh}(\Gr_G,-) = \Gamma^{\Hecke,naive} \circ \ind^{\Heckez}: 
D_{crit}(\Gr_G) \to \widehat{\fg}_{crit}\mod_{reg,naive}
\]

\noindent is $t$-exact.

\end{thm}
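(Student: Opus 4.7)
The plan is to split the $t$-exactness claim into right and left $t$-exactness. By construction, $\Gamma^{\IndCoh}(\Gr_G,-)$ is convolution with $\bV_{crit} \in \widehat{\fg}_{crit}\mod_{reg,naive}^{G(O),\heart}$, so right $t$-exactness is essentially formal: the $t$-structure on $\widehat{\fg}_{crit}\mod_{reg,naive}$ is strongly compatible with the critical level $G(K)$-action (inherited from the corresponding property on $\widehat{\fg}_{crit}\mod$ via Proposition \ref{p:central-naive} \eqref{i:naive-1}-\eqref{i:naive-2}), and convolution with a heart object preserves connectivity in this situation; cf. \cite{methods} \S 10.

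For left $t$-exactness, I would reduce to showing that $\Gamma^{\IndCoh}(\Gr_G,\sF)$ lies in the heart for $\sF \in D_{crit}(\Gr_G)^{\heart}$ (perverse $t$-structure), by a standard cohomological truncation argument together with continuity of $\Gamma^{\IndCoh}$. Next, using $G(K)$-equivariance of $\Gamma^{\IndCoh}$ and the generation of $D_{crit}(\Gr_G)$ by its Iwahori-equivariant subcategory under the $D_{crit}^*(G(K))$-action, I would reduce to the case where $\sF$ is $I$-equivariant. Any such $\sF$ fits into a costandard filtration with subquotients of the form $j_{w,*,dR} \sF_w$ indexed by $I$-orbits $j_w : \Gr_G^w \into \Gr_G$, and by continuity it suffices to compute global sections of each subquotient.

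The key calculation is that $\Gamma^{\IndCoh}(\Gr_G, j_{w,*,dR}\sF_w)$ identifies with a Wakimoto-type module: a semi-infinite parabolic induction parametrized by the data $(w,\sF_w)$. By the Feigin--Frenkel analysis of Wakimoto modules at critical level (\cite{frenkel-wakimoto}, building on \cite{ff-critical}), such modules are concentrated in the heart, which together with the spectral sequence associated to the stratification yields left $t$-exactness on $D_{crit}(\Gr_G)^{I,\heart}$. The $G(K)$-equivariant extension then gives the general case.

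The main obstacle is the Wakimoto identification in the DG framework: at the abelian-categorical level, this is essentially the content of \cite{fg1}, Theorem 1.2, but here one must verify that no higher $\Tor$-style contributions arise in the naive regular category $\widehat{\fg}_{crit}\mod_{reg,naive}$, which fails to be closed under extensions in $\widehat{\fg}_{crit}\mod$. Handling this carefully — ensuring that the passage from $\widehat{\fg}_{crit}\mod$ to $\widehat{\fg}_{crit}\mod_{reg,naive}$ respects the Wakimoto filtration and that the relevant stratification spectral sequence degenerates — is the essential technical issue, and constitutes the bulk of the translation of Frenkel--Gaitsgory's original result into the present DG setting.
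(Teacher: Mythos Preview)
Your approach tries to reprove the content of \cite{fg1} in the DG setting via a Wakimoto-type d\'evissage, whereas the paper takes a much shorter route that reduces directly to the classical result of \cite{fg1} as a black box. The paper argues as follows: reduce to compact objects in the heart; push forward along the $t$-exact conservative functor $i_{0,*}: \widehat{\fg}_{crit}\mod_{reg,naive} \to \widehat{\fg}_{crit}\mod$ (Proposition~\ref{p:central-naive}); invoke Lemma~\ref{l:action-tstr-bdd} to see that $\sF \star \bV_{crit}$ is eventually coconnective in $\widehat{\fg}_{crit}\mod$ for compact $\sF$; and then, since the forgetful functor $\widehat{\fg}_{crit}\mod \to \Vect$ is $t$-exact and conservative on eventually coconnective objects, reduce to showing that the underlying vector space lies in $\Vect^{\heart}$. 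That last claim is precisely the abelian-categorical statement of \cite{fg1} Theorem~1.2 (via the comparison of Appendix~\ref{a:gamma}). This completely bypasses the ``main obstacle'' you identify: there is no need to redo Wakimoto theory, control a stratification spectral sequence, or worry about extension-closure in $\widehat{\fg}_{crit}\mod_{reg,naive}$.

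There is also a genuine gap in your reduction. The step ``using $G(K)$-equivariance \ldots\ reduce to the case where $\sF$ is $I$-equivariant'' does not work as stated: convolution by objects of $D_{crit}^*(G(K))$ does not preserve the heart of the $t$-structure, so knowing that $\Gamma^{\IndCoh}$ sends $D_{crit}(\Gr_G)^{I,\heart}$ into the heart does not propagate to all of $D_{crit}(\Gr_G)^{\heart}$ via the $G(K)$-action. You could instead stratify an arbitrary compact heart object directly by $I$-orbits, with no equivariance hypothesis --- but then you are simply redoing the proof of \cite{fg1}, and the DG issues you flag in your final paragraph remain unresolved. The paper's reduction via eventual coconnectivity is exactly the device that dissolves those issues.
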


There is something to do to properly deduce this from the Frenkel-Gaitsgory 
result, so we include a few comments.

Because $D_{crit}(\Gr_G)$ is compactly generated
and compact objects are closed under truncations,
it suffices to show that compact objects in
$D_{crit}(\Gr_G)$ lying in the heart of the
$t$-structure map into 
$\widehat{\fg}_{crit}\mod_{reg,naive}^{\heart}$.

By Proposition \ref{p:central-naive}, we are reduced
to verifying this result after composing with the functor
$\widehat{\fg}_{crit}\mod_{reg,naive} \to \widehat{\fg}_{crit}\mod$.

By Lemma \ref{l:action-tstr-bdd}, for $\sF \in D_{crit}(\Gr_G)$
compact, $\Gamma^{\IndCoh}(\Gr_G,\sF) = \sF \star \bV_{crit} \in \widehat{\fg}_{crit}\mod$
is eventually coconnective. Therefore, it suffices to show that 
when considered as an object of $\Vect$,
$\Gamma^{\IndCoh}(\Gr_G,\sF)$ lies in $\Vect^{\heart}$.

Now the result follows from \cite{fg1} Theorem 1.2 and 
the comparison results of Appendix \ref{a:gamma}.\footnote{In fact, 
that $\Gamma^{\IndCoh}(\Gr_G,-)$ as a functor
$D_{crit}(\Gr_G)^{\heart} \to \Vect$ coincides with the standard
global sections functor is one of the easier results in 
Appendix \ref{a:gamma}; it is shown directly in 
\S \ref{ss:conv-vac-gamma}.}\footnote{Formally, 
\cite{fg1} Theorem 1.2 only asserts that
the non-derived global sections functor is exact on 
$D_{crit}(\Gr_G)^{\heart}$, not exactly that
higher cohomology groups vanish. As the argument is
missing in the literature, we indicate the details here.

For any 
formally smooth $\aleph_0$-indscheme $S$ of ind-finite
type, we claim that if 
$H^0\Gamma^{\IndCoh}(S,-):
D(S)^{\heart} \to \Vect^{\heart}$
is exact, then $\Gamma^{\IndCoh}(S,-):D(S) \to \Vect$
is $t$-exact, and similarly for twisted $D$-modules.
 
Indeed, we are reduced to
showing that the restriction to $D(S)^+$ is $t$-exact.
This category is the bounded below derived category of its heart
by \cite{whit} Lemma 5.4.3 and the corresponding
assertion for finite type schemes.
Therefore, it suffices to show that 
$\Gamma^{\IndCoh}(S,-)$ is the derived functor
of $H^0\Gamma^{\IndCoh}(S,-)$, or equivalently, 
that $\Gamma^{\IndCoh}(S,-)$ a priori maps injective
objects in $D(S)^{\heart}$ into $\Vect^{\heart}$.

Formal smoothness of $S$ implies that
$\ind:\IndCoh(S) \to D(S)$ is $t$-exact, 
so its $t$-exact right adjoint
$\Oblv:D(S) \to \IndCoh(S)$ preserves injective objects.
Therefore, we are reduced to showing that 
$\Gamma^{\IndCoh}(S,-)$ maps injective objects
in $\IndCoh(S)^{\heart}$ into $\Vect^{\heart}$.

As $S$ is a classical indscheme by \cite{indschemes}, 
an argument along the lines of the proof of \cite{whit} Lemma 5.4.3
reduces us to the corresponding assertion for
finite type classical schemes. As $\IndCoh(S)^{\heart} = 
\QCoh(S)^{\heart}$ with $\Gamma^{\IndCoh}$ corresponding
to $\Gamma$, the assertion here is standard.}

\begin{cor}\label{c:gamma-ren}

The functor $\Gamma^{\IndCoh}(\Gr_G,-):D_{crit}(\Gr_G) \to 
\widehat{\fg}_{crit}\mod_{reg}$ is $t$-exact.

\end{cor}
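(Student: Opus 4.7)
\medskip

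The plan is to bootstrap from Theorem \ref{t:gamma-exact} using the intertwining $\rho \circ \Gamma^{\IndCoh} = \Gamma^{\IndCoh,naive}$ and the two key properties of $\rho$ provided by Lemma \ref{l:ren}: namely, that $\rho$ is $t$-exact and restricts to an equivalence on eventually coconnective subcategories. I will treat right and left $t$-exactness separately.

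Right $t$-exactness is almost by design. Since $\Gamma^{\IndCoh}:D_{crit}(\Gr_G) \to \widehat{\fg}_{crit}\mod_{reg}$ is continuous and $D_{crit}(\Gr_G)^{\leq 0}$ is generated under colimits by compact connective objects, it suffices to take $\sF \in D_{crit}(\Gr_G)^{c,\leq 0}$. By construction of $\Gamma^{\IndCoh}$ in \S\ref{ss:gamma-hecke-finish}, $\Gamma^{\IndCoh}(\sF)$ lies in $\widehat{\fg}_{crit}\mod_{reg}^c$. By Theorem \ref{t:gamma-exact}, $\rho(\Gamma^{\IndCoh}(\sF)) = \Gamma^{\IndCoh,naive}(\sF)$ lies in $\widehat{\fg}_{crit}\mod_{reg,naive}^{\leq 0}$. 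These two facts combined are exactly the criterion for membership in $\widehat{\fg}_{crit}\mod_{reg}^{\leq 0}$ according to the definition given just before Lemma \ref{l:ren}.

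For left $t$-exactness, the key observation is that on eventually coconnective subcategories, the $t$-structure on $\widehat{\fg}_{crit}\mod_{reg}$ can be detected through $\rho$: since $\rho|^+$ is a $t$-exact equivalence, an object $X \in \widehat{\fg}_{crit}\mod_{reg}^+$ lies in $\widehat{\fg}_{crit}\mod_{reg}^{\geq 0}$ if and only if $\rho(X) \in \widehat{\fg}_{crit}\mod_{reg,naive}^{\geq 0}$. First I handle compact $\sF \in D_{crit}(\Gr_G)^{c,\geq 0}$: then $\Gamma^{\IndCoh}(\sF) \in \widehat{\fg}_{crit}\mod_{reg}^c$, which is contained in $\widehat{\fg}_{crit}\mod_{reg}^+$ by Step 1 of the proof of Lemma \ref{l:ren}, and its $\rho$-image is coconnective by Theorem \ref{t:gamma-exact}. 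For general $\sF \in D_{crit}(\Gr_G)^{\geq 0}$, write $\sF = \colim_\alpha \sF_\alpha$ as a filtered colimit of compacts in $D_{crit}(\Gr_G)^{\geq 0}$ (using that truncations of compacts are compact), pass to $\Gamma^{\IndCoh}$, and use that $\widehat{\fg}_{crit}\mod_{reg}^{\geq 0}$ is closed under filtered colimits. The latter closure property holds because the generators of $\widehat{\fg}_{crit}\mod_{reg}^{\leq 0}$ are by definition compact in $\widehat{\fg}_{crit}\mod_{reg}$, so the $t$-structure is accessible.

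There is no genuine mathematical obstacle here; all of the real work is contained in Theorem \ref{t:gamma-exact} and Lemma \ref{l:ren}. The only thing to be careful about is the bookkeeping around the distinction between compactness in $\widehat{\fg}_{crit}\mod_{reg}$ versus compactness or coconnectivity in $\widehat{\fg}_{crit}\mod_{reg,naive}$, and the verification that $\widehat{\fg}_{crit}\mod_{reg}^{\geq 0}$ is closed under filtered colimits (which is what lets one reduce left $t$-exactness from the compact case to the general case).
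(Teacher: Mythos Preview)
Your proof is correct and follows essentially the same approach as the paper's. The paper handles both directions at once by taking $\sF$ compact in the heart, noting that $\Gamma^{\IndCoh}(\sF)$ is compact hence lies in $\widehat{\fg}_{crit}\mod_{reg}^+$, applying Theorem \ref{t:gamma-exact} together with Lemma \ref{l:ren} to place it in the heart, and then extending by filtered colimits; your separation into right and left $t$-exactness is a minor organizational variation on the same ingredients.
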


\begin{proof}

For $\sF \in D_{crit}(\Gr_G)^{\heart}$ compact and
hence, compact in $D_{crit}(\Gr_G)$,
$\Gamma^{\IndCoh}(\Gr_G,\sF)$ is compact in
$\widehat{\fg}_{crit}\mod_{reg}$ by construction,
so lies in 
$\widehat{\fg}_{crit}\mod_{reg}^+$.
Therefore, by Theorem \ref{t:gamma-exact}, we deduce
that $\Gamma^{\IndCoh}(\Gr_G,\sF) \in 
\widehat{\fg}_{crit}\mod_{reg}^{\heart}$.

Because $D_{crit}(\Gr_G)^{\heart}$
is compactly generated and our $t$-structures are
compatible with filtered colimits, we obtain the claim.

\end{proof}

\begin{cor}\label{c:gamma-hecke-right-exact}

The functor $\Gamma^{\Hecke}:D_{crit}^{\Heckez}(\Gr_G) \to 
\widehat{\fg}_{crit}\mod_{reg}$ is right $t$-exact.

\end{cor}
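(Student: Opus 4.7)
The plan is to reduce the claim directly to the $t$-exactness of $\Gamma^{\IndCoh}(\Gr_G,-)$ established in Corollary \ref{c:gamma-ren}, exploiting the way the $t$-structure on $D_{crit}^{\Heckez}(\Gr_G)$ was set up in \S \ref{ss:hecke-t-str}. By definition, $D_{crit}^{\Heckez}(\Gr_G)^{\leq 0}$ is the smallest full subcategory closed under colimits and containing the essential image of $\ind^{\Heckez}(D_{crit}(\Gr_G)^{\leq 0})$. So to show right $t$-exactness, it is enough to verify that $\Gamma^{\Hecke}$ sends each such generator into $\widehat{\fg}_{crit}\mod_{reg}^{\leq 0}$, provided $\Gamma^{\Hecke}$ is continuous and the target's connective part is closed under colimits.

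For the first preservation property: $\Gamma^{\Hecke}$ was constructed in \S \ref{ss:gamma-hecke-finish} as the ind-extension of a functor on compact objects, hence commutes with arbitrary colimits by construction. For the second: the $t$-structure on $\widehat{\fg}_{crit}\mod_{reg}$ was itself defined by declaring the connective part to be generated under colimits from a collection of compact connective objects, so $\widehat{\fg}_{crit}\mod_{reg}^{\leq 0}$ is closed under colimits.

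It remains to check the claim on generators. But for $\sF \in D_{crit}(\Gr_G)^{\leq 0}$, the commutative diagram following \S \ref{ss:gamma-hecke-finish} (together with the convention of writing $\Gamma^{\IndCoh}(\Gr_G,-) = \Gamma^{\Hecke} \circ \ind^{\Heckez}$) gives
\[
\Gamma^{\Hecke}(\ind^{\Heckez}(\sF)) = \Gamma^{\IndCoh}(\Gr_G,\sF),
\]
and the right-hand side lies in $\widehat{\fg}_{crit}\mod_{reg}^{\leq 0}$ by the $t$-exactness half of Corollary \ref{c:gamma-ren}. There is no real obstacle here; the only point requiring any care is to confirm that the ``ind-extension'' definition of $\Gamma^{\Hecke}$ genuinely agrees with $\Gamma^{\IndCoh}(\Gr_G,-)$ on the (not necessarily compact) generators $\ind^{\Heckez}(\sF)$ for $\sF \in D_{crit}(\Gr_G)^{\leq 0}$, which follows by writing $\sF$ as a filtered colimit of its compact truncations/approximations and invoking continuity of both functors.
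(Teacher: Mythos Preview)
Your proof is correct and follows essentially the same approach as the paper: both use that $D_{crit}^{\Heckez}(\Gr_G)^{\leq 0}$ is generated under colimits by $\ind^{\Heckez}(\sF)$ for connective $\sF$, then reduce to $t$-exactness of $\Gamma^{\IndCoh}(\Gr_G,-)$. The only cosmetic difference is that you cite Corollary~\ref{c:gamma-ren} directly (the renormalized statement), whereas the paper cites Theorem~\ref{t:gamma-exact} and passes through $\rho$; your citation is arguably cleaner since $\Gamma^{\IndCoh}(\Gr_G,-) \coloneqq \Gamma^{\Hecke}\circ\ind^{\Heckez}$ by definition, so your final caveat about agreement on non-compact generators is unnecessary.
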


\begin{proof}

By construction,
$D_{crit}^{\Heckez}(\Gr_G)^{\leq 0}$ is 
generated under colimits by objects of the form
$\ind^{\Heckez}(\sF)$ for $\sF \in D_{crit}(\Gr_G)^{\leq 0}$.
Then $\rho\Gamma^{\Hecke}(\ind^{\Heckez}(\sF)) = 
\Gamma^{\IndCoh}(\Gr_G,\sF)$ lies in degrees $\leq 0$
by Theorem \ref{t:gamma-exact},
so $\Gamma^{\Hecke}(\ind^{\Heckez}(\sF))$
lies in degrees $\leq 0$ and we obtain the claim.

\end{proof}

\subsection{Fully faithfulness}

Next, we review the fully faithfulness of $\Gamma^{\Hecke}$,
which was essentially shown in \cite{fg2} Theorem 8.7.1.
For the sake of completeness, we include the reduction to a calculation
performed in \cite{fg2}.

\begin{thm}[Modified Frenkel-Gaitsgory]\label{t:ff}

For any reductive $G$, the functor $\Gamma^{\Hecke}$ is fully faithful.

\end{thm}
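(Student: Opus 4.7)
The plan is to prove fully faithfulness of $\Gamma^{\Hecke}$ by showing that the unit of the adjunction $\id \to \Loc^{\Hecke}\Gamma^{\Hecke}$ is an equivalence of endofunctors on $D_{crit}^{\Heckez}(\Gr_G)$. First, the right adjoint $\Loc^{\Hecke}$ exists and is continuous: by Lemma \ref{l:conv-adj} the compact object $\bV_{crit} \in \widehat{\fg}_{crit}\mod_{reg}^{G(O)}$ gives that $\Gamma^{\Hecke}$ preserves compacts, and $\Loc^{\Hecke}$ automatically inherits a $D_{crit}^*(G(K)) \otimes \QCoh(\Op_{\ld{G}}^{reg})$-linear structure from $\Gamma^{\Hecke}$. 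In particular, $\Loc^{\Hecke}\Gamma^{\Hecke}$ is a linear endofunctor for this combined structure.

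Next I would exploit that $D_{crit}^{\Heckez}(\Gr_G)$ is generated as a module category over $D_{crit}^*(G(K)) \otimes \QCoh(\Op_{\ld{G}}^{reg})$ by the single compact object $\ind^{\Heckez}(\delta_1)$: the skyscraper $\delta_1$ generates $D_{crit}(\Gr_G)$ under convolution against $G(K)$, and $\ind^{\Heckez}$ surjects onto a family of compact generators (\S\ref{ss:ind-hecke}). By the standard classification of colimit-preserving linear endofunctors on such module categories, the unit of adjunction is an equivalence if and only if its evaluation on $\ind^{\Heckez}(\delta_1)$ is, i.e., if and only if the natural map of $\QCoh(\Op_{\ld{G}}^{reg})$-algebras
\[
\ul{\Hom}_{D_{crit}^{\Heckez}(\Gr_G)^{G(O)}}(\ind^{\Heckez}(\delta_1), \ind^{\Heckez}(\delta_1)) \to \ul{\Hom}_{\widehat{\fg}_{crit}\mod_{reg}^{G(O)}}(\bV_{crit}, \bV_{crit})
\]
is an equivalence.

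The source side computes by geometric Satake to $\Fun(\Op_{\ld{G}}^{reg})$ concentrated in cohomological degree zero, since $\Oblv^{\Heckez}\ind^{\Heckez}(\delta_1)$ is convolution with the spherical $D$-module corresponding via Satake to $\Fun(\ld{G}) \in \Rep(\ld{G})$ twisted by the canonical torsor $\sP_{\Op_{\ld{G}}^{reg}}$. On the target side, Theorem \ref{t:feigin-frenkel} identifies $H^0$ of the endomorphism algebra with $\fz \cong \Fun(\Op_{\ld{G}}^{reg})$, and Theorem \ref{t:birth} (``birth of opers'') guarantees the induced map on $H^0$'s is the tautological identification.

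The hard part will be showing the higher $\Ext$ groups $\Ext^i_{\widehat{\fg}_{crit}\mod_{reg}^{G(O)}}(\bV_{crit}, \bV_{crit})$ (taken relative to $\QCoh(\Op_{\ld{G}}^{reg})$) vanish for $i > 0$. Rather than attempting the direct spectral-sequence computation of \cite{fg2} \S 8, I would deduce this vanishing from the Whittaker equivalence of Theorem \ref{t:whit-equiv}: composing the endomorphism comparison above with the functor of Whittaker invariants transfers the higher-$\Ext$ problem to the Whittaker-Hecke side, where affine Skryabin \cite{whit} together with the classical results of \cite{fgv} render the relevant endomorphism algebra manifestly $\Fun(\Op_{\ld{G}}^{reg})$ concentrated in degree zero. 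Combined with $G(K)$-equivariance of the unit of adjunction and the right $t$-exactness of $\Gamma^{\Hecke}$ (Corollary \ref{c:gamma-hecke-right-exact}), this Whittaker-level vanishing should propagate to the $G(O)$-level vanishing via a rigidity argument, completing the proof of fully faithfulness.
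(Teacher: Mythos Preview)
Your overall strategy --- reduce to a $G(O)$-equivariant endomorphism comparison for the vacuum object, then leverage the Whittaker equivalence --- is close to the paper's, but there is a genuine gap and a vague final step.

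\textbf{The gap.} You assert that $\Loc^{\Hecke}$, and hence $\Loc^{\Hecke}\Gamma^{\Hecke}$, is $D_{crit}^*(G(K))$-linear. This presupposes a critical-level $G(K)$-action on $\widehat{\fg}_{crit}\mod_{reg}$, which does not exist a priori; this is precisely the difficulty isolated in \S\ref{ss:eq-renorm} and Conjecture~\ref{conj:ord-n-conv}. (For the same reason, the expression $\widehat{\fg}_{crit}\mod_{reg}^{G(O)}$ you write down has no evident meaning.) Only the \emph{naive} category $\widehat{\fg}_{crit}\mod_{reg,naive}$ carries the $G(K)$-action, but $\Gamma^{\Hecke,naive}$ is not the functor whose fully faithfulness you need. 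The paper's proof navigates this by first reducing to $\ul{\Hom}$ between compact objects, then observing that compacts land in $\widehat{\fg}_{crit}\mod_{reg}^+$ where $\rho$ is an equivalence, and only \emph{after} passing to the naive category invoking $G(K)$-equivariance (via Lemma~\ref{l:conv-adj}) to reduce to the spherical category. Your module-category reduction short-circuits this and therefore fails as written.

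\textbf{The vagueness.} The phrase ``propagate Whittaker-level vanishing to the $G(O)$-level via a rigidity argument'' is not an argument. The paper's mechanism is concrete: for $\sC = D_{crit}^{\Heckez}(\Gr_G)$ and $\sC = \widehat{\fg}_{crit}\mod_{reg,naive}$, the functor $\Av_!^{\psi}:\sC^{G(O),+} \to \Whit(\sC)^+$ is $t$-exact and conservative on hearts, hence comonadic (Proposition~\ref{p:sph-+}); since $\Gamma^{\Hecke,naive}$ is $G(K)$-equivariant it intertwines the comonads, and the Whittaker equivalence then forces an equivalence on $\sC^{G(O),+}$ (Corollary~\ref{c:ff-sph}). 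The required inputs on the Kac--Moody side are that $\Av_!^{\psi}$ is $t$-exact (\cite{whit} Theorem 7.2.1) and an equivalence on hearts (\cite{fg1} Theorem 5.3), neither of which you invoke.
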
 

This result can be deduced from \cite{fg2} Theorem 8.7.1.
As the argument in \emph{loc. cit}. is quite involved,
we present a simpler one in Appendix \ref{a:ff} based on the
ideas of the current paper (especially the use of Whittaker
categories).

\subsection{Intermediate results}\label{ss:intermediate-results}

We now formulate three results whose proofs we defer to subsequent sections.

For each of the following results, we assume $G$ has 
semisimple rank $1$; we do not
do not know how to prove any of these lemmas for $GL_3$.

\begin{lem}\label{l:gamma-gens}

Let $\widehat{\fg}_{crit}\widetilde{\mod}_{reg,naive} \subset
\widehat{\fg}_{crit}\mod_{reg,naive}$ be the full subcategory
generated by $\widehat{\fg}_{crit}\mod_{reg,naive}^+$ under
colimits.\footnote{This is a technical distinction. 
It may perfectly well be the case that
$\widehat{\fg}_{crit}\widetilde{\mod}_{reg,naive}$ coincides
with $\widehat{\fg}_{crit}\mod_{reg,naive}$. But we do not see an 
argument and do not need to consider this question for the application
to Theorem \ref{t:main}.}

Then the essential image of $\Gamma^{\Hecke,naive}$ lies in 
$\widehat{\fg}_{crit}\widetilde{\mod}_{reg,naive}$ and generates
it under colimits.

\end{lem}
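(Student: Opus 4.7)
The proof splits into two claims: that the essential image of $\Gamma^{\Hecke,naive}$ is contained in $\widehat{\fg}_{crit}\widetilde{\mod}_{reg,naive}$, and that it generates this subcategory under colimits. The engine for the second claim will be Theorem \ref{t:generic}; the first claim is comparatively straightforward.

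For the containment, the plan is as follows. By the construction in \S \ref{ss:gamma-hecke-finish} (via Lemma \ref{l:conv-adj}), $\Gamma^{\Hecke}$ sends compact objects of $D_{crit}^{\Heckez}(\Gr_G)$ to compact objects of $\widehat{\fg}_{crit}\mod_{reg}$; by definition of the renormalized category, such objects lie in $\widehat{\fg}_{crit}\mod_{reg}^+$. Lemma \ref{l:ren} then identifies their images under $\rho$ with objects of $\widehat{\fg}_{crit}\mod_{reg,naive}^+$. Since $D_{crit}^{\Heckez}(\Gr_G)$ is compactly generated (\S \ref{ss:ind-hecke}) and $\Gamma^{\Hecke,naive} = \rho \circ \Gamma^{\Hecke}$ is continuous, the full essential image lies in the cocomplete subcategory generated by $\widehat{\fg}_{crit}\mod_{reg,naive}^+$, i.e., in $\widehat{\fg}_{crit}\widetilde{\mod}_{reg,naive}$.

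For the generation statement, let $\sC' \subset \widehat{\fg}_{crit}\widetilde{\mod}_{reg,naive}$ denote the cocomplete subcategory generated by the essential image of $\Gamma^{\Hecke,naive}$. Since $\Gamma^{\Hecke,naive}$ is a morphism in $G(K)\mod_{crit}$, the subcategory $\sC'$ is $G(K)$-stable. Furthermore, $\widehat{\fg}_{crit}\widetilde{\mod}_{reg,naive}$ is itself $G(K)$-stable inside $\widehat{\fg}_{crit}\mod_{reg,naive}$, because convolution with any compact object of $D^*_{crit}(G(K))$ is $t$-bounded and therefore preserves the cocomplete subcategory generated by bounded-below objects. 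Since $G = PGL_2$ (or more generally has semisimple rank $1$), Theorem \ref{t:generic} reduces us to verifying that the Iwahori-equivariant and Whittaker-invariant subcategories of $\widehat{\fg}_{crit}\widetilde{\mod}_{reg,naive}$ both lie in $\sC'$. For the Iwahori part, Theorem \ref{t:fg-summary} provides a $t$-exact equivalence on $\o{I}$-equivariant categories for $\Gamma^{\Hecke}$; composing with $\rho$ and invoking Lemma \ref{l:ren} shows that every bounded-below $\o{I}$-equivariant object of $\widehat{\fg}_{crit}\mod_{reg,naive}$ lies in the essential image of $\Gamma^{\Hecke,naive}$, and since the $t$-structure is strongly compatible with the $G(K)$-action, such bounded-below objects generate $\widehat{\fg}_{crit}\widetilde{\mod}_{reg,naive}^{\o{I}}$ under colimits. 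For the Whittaker part, Theorem \ref{t:whit-equiv} plays the exact analogue of Theorem \ref{t:fg-summary}.

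The main obstacle is the management of the naive versus renormalized distinction: one must verify that $\widehat{\fg}_{crit}\widetilde{\mod}_{reg,naive}$ is $G(K)$-stable, and that its $\o{I}$- and Whittaker-invariant subcategories are indeed generated under colimits by their bounded below parts, so that the equivalences produced by Theorem \ref{t:fg-summary} and Theorem \ref{t:whit-equiv} (stated for $\Gamma^{\Hecke}$ on the renormalized side) can be transported across $\rho$ via the eventually coconnective equivalence of Lemma \ref{l:ren}. Once these compatibilities are in place, Theorem \ref{t:generic} closes the argument.
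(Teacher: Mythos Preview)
Your proposal is correct and follows essentially the same route as the paper: Theorem~\ref{t:generic} reduces the generation claim to the $\o{I}$-equivariant and Whittaker subcategories, handled by the Frenkel--Gaitsgory result and Theorem~\ref{t:whit-equiv} respectively, with the $G(K)$-stability of $\widehat{\fg}_{crit}\widetilde{\mod}_{reg,naive}$ (your remark on $t$-bounded convolution) being exactly the paper's Lemma~\ref{l:tilde}, and your observation that bounded-below objects generate the $\o{I}$-invariants of the tilde category being Corollary~\ref{c:tilde-gen}. One minor correction: the Iwahori input you need is Theorem~\ref{t:iwahori}, which is stated directly for $\Gamma^{\Hecke,naive}$ on bounded-below $\o{I}$-equivariant categories, so your detour through $\Gamma^{\Hecke}$, $\rho$, and Lemma~\ref{l:ren} is unnecessary---and in fact slightly ill-posed, since (per \S~\ref{ss:eq-renorm}) there is no a priori $G(K)$-action on $\widehat{\fg}_{crit}\mod_{reg}$, so Theorem~\ref{t:fg-summary}'s phrase ``equivalence on $\o{I}$-equivariant categories for $\Gamma^{\Hecke}$'' has no literal meaning on the renormalized side.
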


\begin{lem}\label{l:naive-t-exact}

The functor $\Gamma^{\Hecke,naive}$ is $t$-exact.

\end{lem}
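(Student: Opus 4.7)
The plan combines descent (Theorem \ref{t:generic}) with Frenkel-Gaitsgory's Iwahori exactness (Theorem \ref{t:fg-summary}) and the Whittaker comparison (Theorem \ref{t:whit-equiv}). Since $\Gamma^{\Hecke,naive} = \rho \circ \Gamma^{\Hecke}$ and $\rho$ is $t$-exact by Lemma \ref{l:ren}, right $t$-exactness follows from Corollary \ref{c:gamma-hecke-right-exact}, so the remaining task is left $t$-exactness. Since $\Gamma^{\Hecke,naive}$ is continuous and $G(K)$-equivariant, with both source and target $t$-structures strongly compatible with the $G(K)$-action (\S\ref{ss:hecke-t-str} for the source; for the target this follows from Proposition \ref{p:central-naive} together with strong compatibility on $\widehat{\fg}_{crit}\mod$), it suffices to prove left $t$-exactness on each $K_n$-equivariant subcategory: given $\sF\in D_{crit}^{\Heckez}(\Gr_G)^{\geq 0}$, one writes $\sF=\colim_n \Oblv \, \Av_*^{K_n}\sF$ with $\Av_*^{K_n}\sF$ coconnective (by left $t$-exactness of $\Av_*$ for prounipotent $K_n$), and equivariance carries the cohomological estimate through.

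Next, I would induct on $n$. For the base case $n=1$, the $t$-exact conservative forgetful functor $(-)^{K_1}\to (-)^{\o{I}}$ reduces to $\o{I}$-equivariant parts, where $t$-exactness is Theorem \ref{t:fg-summary}. For the inductive step, the canonical $G_{n+1}$-action on $\sC^{K_{n+1}}$ (with $\sC = D_{crit}^{\Heckez}(\Gr_G)$, and similarly on the target) yields the adjunction $j^!\dashv j_{*,dR}$ of \S\ref{ss:fourier-decomp} along $j:\fg_{reg}\hookrightarrow\fg$. For $\sF\in\sC^{K_{n+1},\geq 0}$ the fiber sequence $i_*i^!\sF\to\sF\to j_{*,dR}j^!\sF$ splits the problem in two. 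The descent argument of \S\ref{ss:descent}, exploiting $\fg_{reg}=\fg\setminus 0$ for $\fg=\sl_2$ and the extension $1\to\fg\otimes\bG_a\to K_{n+1}\to K_n\to 1$, identifies $\Ker(j^!)$ with $\sC^{K_n}$, so the inductive hypothesis handles the singular part $i_*i^!\sF$.

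The regular part $j_{*,dR}j^!\sF$ is controlled by Corollary \ref{c:reg-eff}: any coconnective regular object lies in the finite-colimit-summand closure of objects $\sG_1\star\sG_2$ with $\sG_1\in D(G_{n+1})^{N_{n+1},-\psi,+}$ and $\sG_2\in\sC^{N_{n+1},\psi,+}$. This reduces the regular case to left $t$-exactness of $\Gamma^{\Hecke,naive}$ on the adolescent Whittaker category $(\sC^{K_{n+1}})^{N_{n+1},\psi}$, which via Theorem \ref{t:whit} embeds into the full Whittaker $\Whit(\sC)$. The main obstacle is this Whittaker step: Theorem \ref{t:whit-equiv} identifies $\Gamma^{\Hecke,naive}$ on Whittaker parts with an equivalence built from affine Skryabin and \cite{fgv}, but verifying that this equivalence is $t$-exact requires separate analysis (the content of \S\ref{s:iwahori-and-whittaker}). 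Secondary difficulties include tracking cohomological shifts through the non-$t$-exact functor $j_{*,dR}$ and through convolution by $G_{n+1}$-objects with only a bounded-below hypothesis; these are precisely the quantitative control issues addressed by \S\ref{s:bddness}, whose boundedness bounds must be propagated uniformly across the induction.
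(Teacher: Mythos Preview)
Your overall architecture (reduce to $K_n$-invariants, induct on $n$, split into singular and regular parts) matches the paper, but both the base case and the inductive step contain genuine gaps that lose exactly the precision the lemma requires.

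For $n=1$: there is no forgetful functor $\sC^{K_1}\to\sC^{\o{I}}$. Since $K_1\subset\o{I}$, forgetting goes the other way, and the functor you presumably intend, $\Av_*^{\o{I}}:\sC^{K_1}\to\sC^{\o{I}}$, is neither $t$-exact nor conservative (already for $\sC=D(G)$ the exponential $D$-module on $N\simeq\bG_a$ is killed). One cannot detect left $t$-exactness on $\sC^{K_1}$ from $\sC^{\o{I}}$ alone; the paper's base case (Proposition~\ref{p:pgl2-exact} for $n=1$) requires \emph{both} $\sC^N=\sC^{\o{I}}$ and $\sC^{N,\psi}=\Whit^{\leq 1}(\sC)$, with the latter controlling the complement of $\sC^{\o{I}}$ via the $t$-exact equivalence $D(\bG_m)\otimes\sC^{N,\psi}\simeq\o{\sC}$.

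For the inductive step: your use of Corollary~\ref{c:reg-eff} proves only that $\Gamma^{\Hecke,naive}$ is left $t$-exact \emph{up to shift} on the regular part, not left $t$-exact. The corollary exhibits $j_{*,dR}j^!\sF$ as a retract of a finite colimit of convolutions $\sG_1\star\sG_2$, but convolution by $\sG_1\in D(G_{n+1})^{N_{n+1},-\psi,+}$ is only left $t$-exact up to a shift depending on $\sG_1$, and finite colimits do not preserve coconnectivity. This is precisely the method of Proposition~\ref{p:pgl2-bdded}, which is used in \S\ref{s:bddness} to prove Lemma~\ref{l:gamma-hecke-bdded} (boundedness only). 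The ``secondary difficulties'' you flag are therefore primary: \S\ref{s:bddness} cannot be invoked to close this gap, as it proves a strictly weaker statement. The paper's fix is Proposition~\ref{p:pgl2-exact}: equip $\sC_{reg}$ with the quotient $t$-structure of Lemma~\ref{l:quot-t} (legitimized by Proposition~\ref{p:eq-subobj}), then use that $\Av_!^{\psi,-\psi}[-\dim N_n]=\Av_*^{\psi,-\psi}[\dim N_n]$ is $t$-exact (Theorem~\ref{t:bbm-2-adolescent}) and conservative on $\sC_{reg}$ (Corollary~\ref{c:av-!-cons}) to transfer exact left $t$-exactness from the Whittaker side.
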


\begin{lem}\label{l:gamma-hecke-bdded}

For every $K \subset G(O)$ a compact open subgroup, the composition:

\[
D_{crit}^{\Heckez}(\Gr_G)^K \to 
D_{crit}^{\Heckez}(\Gr_G) \xar{\Gamma^{\Hecke}}
\widehat{\fg}_{crit}\mod_{reg}
\]

\noindent is left $t$-exact up to shift.

\end{lem}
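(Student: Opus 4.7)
The plan is to reduce the statement, via Lemma \ref{l:naive-t-exact} (stating $\Gamma^{\Hecke,naive}$ is $t$-exact) together with Lemma \ref{l:ren} (saying $\rho$ is a $t$-exact equivalence on eventually coconnective subcategories), to the assertion that $\Gamma^{\Hecke}(\sF)$ is eventually coconnective for every $\sF \in D_{crit}^{\Heckez}(\Gr_G)^{K,+}$. Indeed, once that is known, the identity $\rho \Gamma^{\Hecke}(\sF) = \Gamma^{\Hecke,naive}(\sF)$ combined with $\rho$ being an equivalence on $+$ determines the cohomological amplitude of $\Gamma^{\Hecke}(\sF)$ from the $t$-exactness of $\Gamma^{\Hecke,naive}$. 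Writing $\sC = D_{crit}^{\Heckez}(\Gr_G)$ and observing that any compact open $K \subset G(O)$ contains an $n$-th congruence subgroup $K_n$, the proof further reduces to the case $K = K_n$, which will be handled by induction on $n$.

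For the base of the induction, namely $K \supseteq \o{I}$, Theorem \ref{t:fg-summary} gives strict $t$-exactness of $\Gamma^{\Hecke}$ on $\sC^{\o{I}}$, so the image lies a fortiori in $\widehat{\fg}_{crit}\mod_{reg}^+$. To pass from $\sC^{\o{I}}$ to $\sC^{K_1}$ — not covered directly by Frenkel-Gaitsgory because $K_1 \subsetneq \o{I}$ — one uses that $\sC^{K_1}$ is acted on by the finite-dimensional group $G_1 = G(O)/K_1 = G$ together with the identity $\sC^{K_1,N} = \sC^{\o{I}}$; by \cite{bzgo} (as in the $n=1$ base case of \S \ref{ss:descent}), the category $\sC^{K_1}$ is a colocalization of something built from $\sC^{\o{I}}$ under the finite-dimensional $G$-action, preserving eventual coconnectivity with amplitude bounded in terms of $\dim G$.

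For the inductive step $K_n \rightsquigarrow K_{n+1}$ with $n \geq 1$, apply the Fourier decomposition of \S \ref{ss:fourier-decomp} to $\sC^{K_{n+1}}$ regarded as a $G_{n+1}$-module, using that $\fg_{reg} = \fg \setminus 0$ for $\fg = \sl_2$. For $\sF \in \sC^{K_{n+1},+}$, the recollement fiber sequence reads
\[
\sF_0 \to \sF \to j_{*,dR} j^!(\sF),
\]
with $\sF_0 \in \sC^{K_n}$ (via the short exact sequence $1 \to \fg \otimes \bG_a \to K_{n+1} \to K_n \to 1$ of \S \ref{ss:descent}) and $j_{*,dR}j^!(\sF) \in (\sC^{K_{n+1}})_{reg}$, both terms remaining eventually coconnective. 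The inductive hypothesis takes care of $\Gamma^{\Hecke}(\sF_0)$. For the regular piece, invoke Corollary \ref{c:reg-eff} ($n+1 \geq 2$): it lies in the full subcategory of $\sC^{K_{n+1}}$ generated under finite colimits and direct summands by convolutions $\sG_1 \star \sG_2$ with $\sG_1 \in D(G_{n+1})^{N_{n+1},-\psi,+}$ compact and $\sG_2 \in \sC^{N_{n+1},\psi,+} = \Whit^{\leq n+1}(\sC)^+$. Since $\Gamma^{\Hecke}$ is $t$-exact on Whittaker-equivariants — a consequence of Theorem \ref{t:whit-equiv} as established in \S \ref{s:iwahori-and-whittaker} — the image $\Gamma^{\Hecke}(\sG_2)$ lies in $\widehat{\fg}_{crit}\mod_{reg}^+$; convolution by the compact $\sG_1$ and passage through finite colimits and direct summands then each preserve the subcategory $\widehat{\fg}_{crit}\mod_{reg}^+$.

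The principal obstacle is that Corollary \ref{c:reg-eff} delivers \emph{membership} in a subcategory generated under finite colimits and direct summands, not a resolution of uniformly bounded length. Fortunately, eventual coconnectivity is precisely the property stable under finite colimits, direct summands, and convolution with compact objects, so membership alone suffices — provided one has the Whittaker $t$-exactness as input. A secondary subtlety is the intermediate step from $\o{I}$ to $K_1$, where the Fourier technology of \S \ref{s:heisenberg} is unavailable (the jet group $G_1 = G$ is not unipotent), forcing reliance on the finite-dimensional \cite{bzgo}-style generation argument. Both inputs yield shift bounds depending only on the congruence level, hence only on $K$.
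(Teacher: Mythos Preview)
Your overall strategy matches the paper's: induction on the congruence level, with the inductive step handled by the descent decomposition of \S \ref{ss:descent} and Corollary \ref{c:reg-eff}. However, there is a genuine gap at the key step.

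You write that once $\Gamma^{\Hecke}(\sG_2) \in \widehat{\fg}_{crit}\mod_{reg}^+$ is known for $\sG_2 \in \Whit^{\leq n+1}(\sC)^+$, ``convolution by the compact $\sG_1$ \ldots\ preserve[s] the subcategory $\widehat{\fg}_{crit}\mod_{reg}^+$.'' This presupposes that $\Gamma^{\Hecke}(\sG_1 \star \sG_2)$ can be computed as $\sG_1 \star \Gamma^{\Hecke}(\sG_2)$, i.e.\ that $\Gamma^{\Hecke}$ is $G_{n+1}$-equivariant. But there is no a priori $G(K)$-action on $\widehat{\fg}_{crit}\mod_{reg}$: this is precisely the content of Conjecture \ref{conj:ord-n-conv} and the discussion in \S \ref{ss:eq-renorm}, and is exactly why Lemma \ref{l:gamma-hecke-bdded} requires separate work rather than being an immediate consequence of the naive case. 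The same issue occurs in your passage from $\sC^{\o{I}}$ to $\sC^{K_1}$ via \cite{bzgo}: generating $\sC^{K_1}$ from $\sC^{\o{I}}$ under the $G$-action is useful only if $\Gamma^{\Hecke}$ intertwines that action with something on the target.

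The paper addresses this via Proposition \ref{p:pgl2-bdded}, which is formulated so that the target $\sD$ need not carry a $G_n$-action: the hypothesis is that $F(\sG \star \sF) \in \sD^+$ for $\sG \in D(G_n)^+$ and $\sF \in \sC^{N_n,\psi,+}$, with the convolution occurring \emph{in the source}. Verifying this hypothesis for $F = \Gamma^{\Hecke}$ is the content of Proposition \ref{p:hecke-true-whit} \eqref{i:true-whit-2}, whose proof is not formal: one first reduces to $\sF$ corresponding to the structure sheaf under $\Whit^{\leq n}(\sC) \simeq \QCoh(\Op_{\ld{G}}^{reg})$, where the claim becomes a statement about $\Gamma^{\IndCoh}$ on $D_{crit}(\Gr_G)$; one then extends to compact $\sF$ by perfectness; and only after establishing eventual coconnectivity does one apply $\rho$ and invoke the $G(K)$-equivariance of $\Gamma^{\Hecke,naive}$ (which \emph{is} available) to obtain a uniform bound and pass to general $\sF$ by filtered colimits. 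Your argument would go through if you substituted an appeal to Proposition \ref{p:hecke-true-whit} \eqref{i:true-whit-2} for the problematic convolution step.
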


Assuming these results, let us show Theorem \ref{t:main}.

\begin{proof}[Proof of Theorem \ref{t:main}]

\step 

First, we show that $\Gamma^{\Hecke}$ is $t$-exact.

By Theorem \ref{t:gamma-exact} and the definition of the
$t$-structure on $D_{crit}^{\Heckez}(\Gr_G)$, 
$\Gamma^{\Hecke}$ is right $t$-exact.

To see left $t$-exactness, it suffices to see that for any
compact open subgroup $K \subset G(O)$, 
$\Gamma^{\Hecke}|_{D_{crit}^{\Heckez}(\Gr_G)^K}$ is left $t$-exact.
Indeed, for any $\sF \in D_{crit}^{\Heckez}(\Gr_G)$, 
$\sF = \colim_K \Oblv\Av_*^K(\sF)$, and $\Oblv\Av_*^K$
is left $t$-exact by the discussion of \S \ref{ss:hecke-t-str}.

By Lemma \ref{l:gamma-hecke-bdded}, 
$\Gamma^{\Hecke}|_{D_{crit}^{\Heckez}(\Gr_G)^K}$ is left $t$-exact up
to shift. Because $\rho:\widehat{\fg}_{crit}\mod_{reg}^+ \to
\widehat{\fg}_{crit}\mod_{reg,naive}^+$ is a $t$-exact equivalence, it
suffices to see that $\rho \circ \Gamma^{\Hecke}|_{D_{crit}^{\Heckez}(\Gr_G)^K}$
is left $t$-exact. But this is immediate from Lemma \ref{l:naive-t-exact}.

\step 

By Theorem \ref{t:ff}, it suffices to show that $\Gamma^{\Hecke}$ is essentially
surjective.

First, the composition:

\begin{equation}\label{eq:trun-gamma}
D_{crit}^{\Heckez}(\Gr_G) \xar{\Gamma^{\Hecke,naive}}
\widehat{\fg}_{crit}\widetilde{\mod}_{reg,naive} \xar{\tau^{\geq 0}}
\widehat{\fg}_{crit}\mod_{reg,naive}^{\geq 0}
\end{equation}

\noindent generates the target under colimits.
Indeed, the first functor generates under colimits by Lemma \ref{l:gamma-gens},
and the second functor is essentially surjective because  
$\widehat{\fg}_{crit}\widetilde{\mod}_{reg,naive}$ contains
$\widehat{\fg}_{crit}\mod_{reg,naive}^{\geq 0}$ by definition.

By the previous step, if we identify 
$\widehat{\fg}_{crit}\mod_{reg,naive}^{\geq 0}$ with
$\widehat{\fg}_{crit}\mod_{reg}^{\geq 0}$ via $\rho$, then 
\eqref{eq:trun-gamma} factors through 
$D_{crit}^{\Heckez}(\Gr_G)^{\geq 0}$, where it coincides with
$\Gamma^{\Hecke}|_{D_{crit}^{\Heckez}(\Gr_G)^{\geq 0}}$.

It therefore follows that the essential image of $\Gamma^{\Hecke}$
contains $\widehat{\fg}_{crit}\mod_{reg}^{\geq 0}$.
Because $\widehat{\fg}_{crit}\mod_{reg}$ is compactly generated
with compact objects lying in $\widehat{\fg}_{crit}\mod_{reg}^+$,
we deduce that the essential image of
$\Gamma^{\Hecke}$ is all of $\widehat{\fg}_{crit}\mod_{reg}$.

\end{proof}

\section{Equivariant categories}\label{s:iwahori-and-whittaker}

\subsection{} In this section, we collect some
results about $\Gamma^{\Hecke,naive}$ and
$\Gamma^{\Hecke}$ in the presence of
$\o{I}$ and Whittaker invariants. These
results will be used to establish the results formulated
in \S \ref{ss:intermediate-results}. 

We emphasize that we have nothing new to say about 
$\o{I}$-invariants; our proofs here 
consist only of references to \cite{fg-loc}.

\begin{rem}

All of the results of this section are valid for a general
reductive group $G$.

\end{rem}

\subsection{Iwahori equivariance}

The main result in this setting is the following.

\begin{thm}[Frenkel-Gaitsgory, \cite{fg-loc} Theorem 1.7]\label{t:iwahori}

The functor $\Gamma^{\Hecke,naive}$ induces a $t$-exact 
equivalence:

\[
D_{crit}^{\Heckez}(\Gr_G)^{\o{I},+} \isom 
\widehat{\fg}_{crit}\mod_{reg,naive}^{\o{I},+}
\]

\noindent on eventually coconnective 
$\o{I}$-equivariant categories.

\end{thm}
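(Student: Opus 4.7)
The plan is to reduce this statement directly to \cite{fg-loc} Theorem 1.7, which establishes the analogous equivalence at the abelian level in the Frenkel-Gaitsgory framework. The remaining work lies in bridging the formulations: showing that the eventually coconnective $\o{I}$-equivariant DG categories on both sides are bounded-below derived categories of their hearts, and that $\Gamma^{\Hecke,naive}$ between them is $t$-exact and is the derived functor of the abelian equivalence of \emph{loc. cit}.

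First I would verify the basic compatibilities. The functor $\Gamma^{\Hecke,naive}$ is $G(K)$-equivariant by its very construction, hence induces a functor on $\o{I}$-equivariant subcategories. Right $t$-exactness on the $\o{I}$-equivariant level follows from Theorem \ref{t:gamma-exact} combined with the definition of the $t$-structure on $D_{crit}^{\Heckez}(\Gr_G)$ via $\ind^{\Heckez}$, together with the fact that $\Oblv:\sC^{\o{I}}\to\sC$ is $t$-exact thanks to prounipotence of $\o{I}$.

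Next, I would identify both eventually coconnective categories with bounded-below derived categories of their hearts. On the Kac-Moody side, applying the $\o{I}$-equivariant analogue of Lemma \ref{l:pro-reg} together with \cite{methods} Proposition 3.7.1 (using an appropriate modification of the pro-object $\sP_{reg}$ that corepresents the $\o{I}$-invariants of the forgetful functor) yields that $\widehat{\fg}_{crit}\mod_{reg,naive}^{\o{I},+}$ is the bounded-below derived category of $\widehat{\fg}_{crit}\mod_{reg,naive}^{\o{I},\heart}$. On the geometric side, one uses the corresponding fact for $D_{crit}(\Gr_G)^{\o{I}}$ (as a category of twisted $D$-modules on a reasonable indscheme) together with the $t$-exactness of $\Oblv^{\Heckez}$ and $\ind^{\Heckez}$ from \S \ref{ss:hecke-t-str} to promote this to $D_{crit}^{\Heckez}(\Gr_G)^{\o{I},+}$.

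With these preparations in hand, the theorem reduces to showing that $\Gamma^{\Hecke,naive}$ induces an equivalence of abelian categories on hearts, which is precisely the content of \cite{fg-loc} Theorem 1.7. The main obstacle is the bookkeeping required to match Frenkel-Gaitsgory's formulation, which works with modules parametrized by opers in a slightly different language, to the setup of $\QCoh(\Op_{\ld{G}}^{reg})$-module categories used here; one must verify that the Hecke-type module structure coming from \S \ref{ss:torsor} and Corollary \ref{c:hecke-bimod} intertwines with the opers-side constructions of \emph{loc. cit}. The $t$-exactness of the resulting equivalence on the abelian level — which is the genuinely substantive input — is built into the Frenkel-Gaitsgory argument and not reproved here.
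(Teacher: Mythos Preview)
Your strategy is a plausible alternative to the paper's, but it has a genuine gap: you never establish \emph{left} $t$-exactness of $\Gamma^{\Hecke,naive}$ on the $\o{I}$-equivariant level. You only argue right $t$-exactness, and then write that ``the $t$-exactness of the resulting equivalence on the abelian level \ldots is built into the Frenkel-Gaitsgory argument.'' But an equivalence of abelian categories is automatically exact; that says nothing about whether the DG functor $\Gamma^{\Hecke,naive}$ sends $D_{crit}^{\Heckez}(\Gr_G)^{\o{I},\heart}$ into $\widehat{\fg}_{crit}\mod_{reg,naive}^{\o{I},\heart}$ rather than into $\widehat{\fg}_{crit}\mod_{reg,naive}^{\o{I},\leq 0}$. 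Without $t$-exactness you cannot even say that $\Gamma^{\Hecke,naive}$ restricts to a functor between hearts, so you cannot directly invoke \cite{fg-loc} Theorem 1.7 as an abelian equivalence in the way you intend.

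The paper's proof handles precisely this point, and it is the substantive step. It uses the structural results \cite{fg-loc} Lemma 3.6 and Proposition 3.18 to write every $\sF \in D_{crit}^{\Heckez}(\Gr_G)^{\o{I},\heart}$ as a filtered colimit of objects with finite filtrations whose subquotients have the form $\ind^{\Heckez}(\sF') \otimes_{\Op_{\ld{G}}^{reg}} \sH$, and then invokes Proposition 3.17 of \emph{loc.\ cit.} (flatness of $\Gamma^{\IndCoh}(\Gr_G,\sF')$ over $\fz$) to see that $\Gamma^{\Hecke,naive}$ of each such piece is concentrated in degree $0$. Fully faithfulness is then taken from Theorem \ref{t:ff}, and essential surjectivity is argued symmetrically. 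Your reduction to ``derived category of the heart plus abelian equivalence'' is not wrong in principle, but it cannot bypass this flatness input; once you grant $t$-exactness, the remaining steps in your outline (both sides are bounded-below derived categories of their hearts, and a $t$-exact functor that is an equivalence on hearts is then an equivalence) are reasonable, though the latter implication still deserves a word of justification.
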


\begin{proof}

Because our setting
is slightly different from that
of \cite{fg-loc}, especially as regards
derived categories and derived functors,
we indicate the deduction from the results of \emph{loc. cit}.

First, we show $t$-exactness.
By \cite{fg-loc} Lemma 3.6 and Proposition 3.18, 
every object $\sF \in D_{crit}^{\Heckez}(\Gr_G)^{\o{I},\heart}$
can be written as a filtered colimit $\sF = \colim_i \sF_i$ for
$\sF_i \in D_{crit}^{\Heckez}(\Gr_G)^{\o{I},\heart}$
admitting a finite filtration with subquotients of the
form 
$\ind^{\Heckez}(\sF_{i,j}) 
{\otimes}_{\Op_{\ld{G}}^{reg}} \sH_{i,j}$ for 
$\sF_{i,j} \in D_{crit}^{\Heckez}(\Gr_G)^{\o{I},\heart}$ and
$\sH_{i,j} \in \QCoh(\Op_{\ld{G}}^{\heart})$.

We then have:

\[
\Gamma^{\Hecke,naive}(\ind^{\Heckez}(\sF_{i,j}) 
\underset{\Op_{\ld{G}}^{reg}}{\otimes} \sH_{i,j}) = 
\Gamma^{\IndCoh}(\Gr_G,\sF_{i,j}) 
\underset{\Op_{\ld{G}}^{reg}}{\otimes} \sH_{i,j}.
\]

\noindent By \emph{loc. cit}. Proposition 3.17, 
$\Gamma^{\IndCoh}(\Gr_G,\sF_{i,j})$ is flat
as an $\fz$-module, so the displayed tensor product
is concentrated in cohomological degree $0$.
This shows that $\Gamma^{\Hecke,naive}(\sF)$ is in 
degree $0$ as well, providing the $t$-exactness.

Next, observe that fully faithfulness follows from
Theorem \ref{t:ff}.

Finally, we show essential surjectivity. 
By \cite{fg-loc} 
Theorem 1.7, Lemma 3.6, Proposition 3.17,
and Proposition 3.18,
any $\sG \in \widehat{\fg}_{crit}\mod_{reg,naive}^{\o{I},\heart}$ 
can be written as a filtered colimit
$\sG = \colim_i \sG_i$ with 
$\sG_i \in \widehat{\fg}_{crit}\mod_{reg,naive}^{\o{I},\heart}$
and such that $\sG_i$ admits a finite filtration
with associated graded terms of the form:

\[
\Gamma^{\IndCoh}(\Gr_G,\widetilde{\sG}) 
\underset{\Op_{\ld{G}}^{reg}}{\otimes} \sH
\]

\noindent for $\widetilde{\sG} \in D_{crit}(\Gr_G)^{\o{I},\heart}$ 
and $\sH \in \QCoh(\Op_{\ld{G}}^{reg})^{\heart}$
(and where we are using the notation of \S \ref{ss:op-action}),
and where the displayed (derived) tensor product is concentrated
in cohomological degree $0$.
Clearly each associated graded term lies in the essential image
of $\Gamma^{\Hecke,naive}$, so $\sG$ does as 
well. This implies the essential image of $\Gamma^{\Hecke,naive}$
contains 
$\widehat{\fg}_{crit}\mod_{reg,naive}^{\o{I},\heart}$,
so all of $\widehat{\fg}_{crit}\mod_{reg,naive}^{\o{I},+}$.

\end{proof}

We include one other result in a similar spirit.

\begin{prop}\label{p:hecke-true}

The functor: 

\[
\Gamma^{\Hecke}|_{D_{crit}^{\Heckez}(\Gr_G)^{\o{I}}}:
D_{crit}^{\Heckez}(\Gr_G)^{\o{I}} \to  
\widehat{\fg}_{crit}\mod_{reg}
\]

\noindent is $t$-exact. 

\end{prop}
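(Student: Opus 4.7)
The plan is to combine right $t$-exactness (which is automatic) with left $t$-exactness on $\o{I}$-equivariants, which I extend from the eventually coconnective case handled by Theorem~\ref{t:iwahori}. I do not anticipate any serious obstacle; the main work is a careful assembly of compatibility between the several $t$-structures at play.

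Right $t$-exactness of $\Gamma^{\Hecke}$ is Corollary~\ref{c:gamma-hecke-right-exact}. For left $t$-exactness, I first observe that $\Gamma^{\Hecke}$ restricts to a $t$-exact functor on the eventually coconnective subcategory $D_{crit}^{\Heckez}(\Gr_G)^{\o{I},+}$. Indeed, by construction of $\Gamma^{\Hecke}$ as the ind-extension of $\Gamma^{\Hecke,naive}$ on compacts, one has $\rho \circ \Gamma^{\Hecke} = \Gamma^{\Hecke,naive}$; Theorem~\ref{t:iwahori} provides a $t$-exact equivalence on $+$-subcategories for the naive functor; and by Lemma~\ref{l:ren}, $\rho$ is a $t$-exact equivalence on $+$-subcategories, so in particular it reflects (eventual) coconnectivity.

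To extend left $t$-exactness to all of $D_{crit}^{\Heckez}(\Gr_G)^{\o{I},\geq 0}$, given such an $\sF$ I would write $\sF = \colim_{n \geq 0} \tau^{\leq n}\sF$. This colimit description holds because the $t$-structure on $D_{crit}^{\Heckez}(\Gr_G)^{\o{I}}$ is right-separated: the $t$-exact conservative forgetful functors $\Oblv^{\Heckez}$ of \S\ref{ss:hecke-t-str} and $D_{crit}^{\Heckez}(\Gr_G)^{\o{I}} \to D_{crit}^{\Heckez}(\Gr_G)$ reduce the statement to the standard right-separatedness for $D_{crit}(\Gr_G)$. Each truncation $\tau^{\leq n}\sF$ lies in $D_{crit}^{\Heckez}(\Gr_G)^{\o{I},[0,n]}$, hence by the previous paragraph $\Gamma^{\Hecke}(\tau^{\leq n}\sF) \in \widehat{\fg}_{crit}\mod_{reg}^{[0,n]} \subset \widehat{\fg}_{crit}\mod_{reg}^{\geq 0}$.

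Since $\Gamma^{\Hecke}$ is continuous, $\Gamma^{\Hecke}(\sF) = \colim_n \Gamma^{\Hecke}(\tau^{\leq n}\sF)$, and this colimit remains coconnective because the $t$-structure on $\widehat{\fg}_{crit}\mod_{reg}$ is compactly generated (so $\tau^{\leq 0}$ commutes with filtered colimits, whence $\widehat{\fg}_{crit}\mod_{reg}^{\geq 0}$ is closed under filtered colimits). This yields $\Gamma^{\Hecke}(\sF) \in \widehat{\fg}_{crit}\mod_{reg}^{\geq 0}$, completing the proof.
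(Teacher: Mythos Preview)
Your argument has a genuine gap at the step where you claim that $\rho$ ``reflects (eventual) coconnectivity.'' Knowing that $\rho$ is a $t$-exact equivalence on the $+$-subcategories does \emph{not} imply this. Concretely: if $\rho(X)\in\widehat{\fg}_{crit}\mod_{reg,naive}^{\geq 0}$, then $t$-exactness gives $\rho(\tau^{<0}X)=0$, and since $\rho|_{+}$ is an equivalence one deduces $\tau^{\geq -n}(\tau^{<0}X)=0$ for every $n$; hence $\tau^{<0}X\in\bigcap_n\widehat{\fg}_{crit}\mod_{reg}^{\leq -n}$. To conclude $\tau^{<0}X=0$ you would need the $t$-structure on $\widehat{\fg}_{crit}\mod_{reg}$ to be \emph{left separated}. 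That is neither proved nor expected: renormalized categories of $\IndCoh$ type typically contain nonzero infinitely connective objects (for instance $\on{colim}_n\, C[2n]$ along a nontrivial class in $\Ext^2$ between compacts), and the paper explicitly warns about nonconservativity of forgetful functors in this setting. This is precisely the difficulty that the distinction between $\widehat{\fg}_{crit}\mod_{reg}$ and $\widehat{\fg}_{crit}\mod_{reg,naive}$ encodes; see also the discussion around Conjecture~\ref{conj:ord-n-conv} and Lemma~\ref{l:gamma-hecke-bdded}.

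The paper's proof avoids this trap by \emph{first} establishing that $\Gamma^{\Hecke}$ sends $D_{crit}^{\Heckez}(\Gr_G)^{\o{I},\heart}$ into $\widehat{\fg}_{crit}\mod_{reg}^{+}$, and only \emph{then} invoking Theorem~\ref{t:iwahori} together with the equivalence $\rho|_{+}$ to pin down the exact amplitude. The eventual coconnectivity is obtained from the structural results of \cite{fg-loc}: every $\sF$ in the heart is a filtered colimit of objects with finite filtrations whose subquotients have the form $\ind^{\Heckez}(\sF')\otimes_{\Op_{\ld{G}}^{reg}}\sH$ with $\sF'\in D_{crit}(\Gr_G)^{\o{I},\heart}$ and $\sH\in\QCoh(\Op_{\ld{G}}^{reg})^{\heart}$. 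For such pieces one bootstraps: when $\sF'$ is compact, $\Gamma^{\IndCoh}(\Gr_G,\sF')$ is compact in $\widehat{\fg}_{crit}\mod_{reg}$ and hence lies in $+$, so Theorem~\ref{t:iwahori} places it in the heart; a filtered colimit then handles general $\sF'$; tensoring with coherent (hence perfect) $\sH$ keeps one in $+$, and another application of Theorem~\ref{t:iwahori} plus a further colimit finishes. This two-step pattern---first land in $+$ by compactness/perfectness, then use $\rho|_{+}$ to sharpen---is exactly the maneuver your shortcut tried to bypass. (Incidentally, once $t$-exactness on $+$ is established your step~3 is unnecessary: any $\sF\in D_{crit}^{\Heckez}(\Gr_G)^{\o{I},\geq 0}$ is already in $+$.)
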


\begin{proof}

By Corollary \ref{c:gamma-hecke-right-exact}, the functor
is right $t$-exact. Therefore, we need to show
left $t$-exactness. 
By Theorem \ref{t:iwahori}, it suffices to show 
that objects in $D_{crit}^{\Heckez}(\Gr_G)^{\o{I},\heart}$ 
map to eventually coconnective objects.

Suppose $\sF \in D_{crit}^{\Heckez}(\Gr_G)^{\o{I},\heart}$.
As in the proof of Theorem \ref{t:iwahori}, 
the results of \cite{fg-loc} imply that 
$\sF$ can be written as a filtered colimit $\sF = \colim_i \sF_i$
for $\sF_i \in D_{crit}^{\Heckez}(\Gr_G)^{\o{I},\heart}$
admitting a finite filtration with subquotients of the
form 
$\ind^{\Heckez}(\sF_{i,j}) 
{\otimes}_{\Op_{\ld{G}}^{reg}} \sH_{i,j}$ for 
$\sF_{i,j}$ and $\sH_{i,j}$ as in the proof of 
Theorem \ref{t:iwahori}. 

Therefore, we are reduced to showing that:

\begin{equation}\label{eq:f-h}
\Gamma^{\IndCoh}(\Gr_G,\sF) 
\underset{\Op_{\ld{G}}^{reg}}{\otimes} \sH \in 
\widehat{\fg}_{crit}\mod_{reg}
\end{equation}

\noindent is eventually coconnective for any 
$\sF \in D_{crit}(\Gr_G)^{\o{I},\heart}$ and 
$\sH \in \QCoh(\Op_{\ld{G}}^{reg})^{\heart}$.

If $\sF$ is compact, then $\Gamma^{\IndCoh}(\Gr_G,\sF) \in 
\widehat{\fg}_{crit}\mod_{reg}$ is compact by construction
of the functor. In particular, this object is
eventually coconnective. By Theorem \ref{t:iwahori},
we deduce $\Gamma^{\IndCoh}(\Gr_G,\sF) \in 
\widehat{\fg}_{crit}\mod_{reg}^{\heart}$ in this case.
As the $t$-structures are compatible with filtered colimits,
and every object of $D_{crit}(\Gr_G)^{\o{I},\heart}$
is a filtered colimit of objects in 
$D_{crit}(\Gr_G)^{\o{I},\heart}$ that are compact
in $D_{crit}(\Gr_G)$, we obtain the claim for
general $\sF$ and $\sH$ being the structure sheaf. 

Now if $\sH$ is coherent,\footnote{I.e., $\sH$ corresponds
to a finitely presented $\fz$-module.} then because
$\Op_{\ld{G}}^{reg} = \Spec(\fz)$ with $\fz$ an (infinite) 
polynomial algebra, $\sH$ is perfect. 
Therefore, the object \eqref{eq:f-h} is eventually
coconnective for general $\sF$ and coherent $\sH$. 
Applying Theorem \ref{t:iwahori} again, we
deduce that \eqref{eq:f-h} lies in the heart
of the $t$-structure under these same assumptions.
Finally, the general case follows as any $\sH \in 
\QCoh(\Op_{\ld{G}}^{reg})^{\heart}$ is a filtered
colimit of coherent objects.

\end{proof}

\subsection{Whittaker equivariance}

We now study the behavior of $\Gamma^{\Hecke,naive}$ under 
the Whittaker functor, following \cite{whit} and \cite{methods}.

Our main result is the following.

\begin{thm}\label{t:whit-equiv}

\begin{enumerate}

\item\label{i:whit-equiv-1} 

The functor $\Gamma^{\Hecke,naive}$
induces an equivalence:

\[
\Whit(D_{crit}^{\Heckez}(\Gr_G)) \isom 
\Whit(\widehat{\fg}_{crit}\mod_{reg,naive}).
\]

\item\label{i:whit-equiv-2}

For $n>0$, the functor:

\[
\Gamma^{\Hecke,naive}:
\Whit^{\leq n}(D_{crit}^{\Heckez}(\Gr_G)) \to 
\Whit^{\leq n}(\widehat{\fg}_{crit}\mod_{reg,naive})
\]

\noindent is a $t$-exact equivalence for the natural\footnote{We
remind that $\Whit^{\leq n}$ is defined as equivariance
against a character for a compact open subgroup. For our
two categories, the $t$-structures are compatible with the
$G(K)$-action, so there
are natural $t$-structures on such equivariant categories.}
$t$-structures on both sides.

\end{enumerate}

\end{thm}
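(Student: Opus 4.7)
The plan is to reduce both parts essentially formally to two external inputs: the geometric Casselman--Shalika theorem of \cite{fgv}, and the affine Skryabin theorem from \cite{whit}.

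The former provides a $t$-exact equivalence
\[
\Whit(D_{crit}(\Gr_G)) \simeq \Rep(\ld{G})
\]
of $\Rep(\ld{G})$-module categories, sending the vacuum Whittaker object to the trivial representation and intertwining Satake convolution with tensor product. The latter furnishes a $t$-exact equivalence
\[
\Whit(\widehat{\fg}_{crit}\mod_{reg,naive}) \simeq \QCoh(\Op_{\ld{G}}^{reg})
\]
of $\QCoh(\Op_{\ld{G}}^{reg})$-module categories, obtained as the regular part of the general affine Skryabin identification of $\Whit(\widehat{\fg}_{crit}\mod)$ with $\IndCoh^*(\Op_{\ld{G}})$.

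For part (1), Whittaker invariants commute with the relative tensor product over $\Rep(\ld{G})$, since they are implemented by tensoring with a $(G(K),G(K))$-bimodule object. Therefore
\[
\Whit(D_{crit}^{\Heckez}(\Gr_G)) \simeq \Whit(D_{crit}(\Gr_G)) \underset{\Rep(\ld{G})}{\otimes} \QCoh(\Op_{\ld{G}}^{reg}) \simeq \QCoh(\Op_{\ld{G}}^{reg}),
\]
matching the target of affine Skryabin. That $\Gamma^{\Hecke,naive}$ corresponds to the identity under these identifications is a verification on the vacuum object, controlled by the Hecke property of $\bV_{crit}$ recorded in Corollary \ref{c:hecke-bimod}: the birth-of-opers isomorphism is precisely the statement that the geometric Satake action on $\delta_1 \in D_{crit}(\Gr_G)$ matches the $\QCoh(\Op_{\ld{G}}^{reg})$-action on $\bV_{crit}$ through the canonical $\ld{G}$-torsor on $\Op_{\ld{G}}^{reg}$, so the two equivalences above are intertwined by $\Gamma^{\Hecke,naive}$.

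For part (2), the equivalence of part (1) restricts to each $\Whit^{\leq n}$: by Theorem \ref{t:whit}, the colimit presentation $\Whit = \colim_n \Whit^{\leq n}$ has transition maps $\iota_{n,n+1,!}$ given by convolution with a $G(K)$-equivariant kernel, and these kernels are preserved by the manifestly $G(K)$-equivariant functor $\Gamma^{\Hecke,naive}$. The $t$-exactness on $\Whit^{\leq n}$ reduces, via the definition of that category as $(K_n, N_n, \psi)$-invariants, to $t$-exactness of $\Gamma^{\IndCoh}$ from Theorem \ref{t:gamma-exact}, together with the observation that the cohomological shifts introduced by the averaging functors $\Av_*^{K_n}$ and $\Av_*^{N_n,\psi}$ depend only on dimensions of finite-dimensional unipotent groups, hence are the same on both sides.

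The main obstacle in executing this plan is the compatibility verification for the vacuum object in part (1): one must check that the Casselman--Shalika and affine Skryabin identifications are normalized in such a way that $\Gamma^{\Hecke,naive}$ genuinely corresponds to the identity of $\QCoh(\Op_{\ld{G}}^{reg})$, and not some nontrivial auto-equivalence. This is primarily a bookkeeping exercise in tracking Hecke structures through the three equivalences, but the normalizations in \cite{fgv}, \cite{whit}, and Beilinson--Drinfeld's construction must be pinned down carefully; no new mathematical ingredient is needed.
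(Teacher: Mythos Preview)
Your approach to part (1) is essentially the paper's: compute both Whittaker categories as $\QCoh(\Op_{\ld{G}}^{reg})$ via FGV and affine Skryabin respectively, then verify on the vacuum object that $\Gamma^{\Hecke,naive}$ corresponds to the identity. The paper makes the last step more explicit, using the commutative square with $\Av_!^{\psi}$ and identifying the right vertical arrow with the Drinfeld--Sokolov functor $\Psi$, so that the verification becomes $\Psi(\bV_{crit}) = \sO_{\Op_{\ld{G}}^{reg}}$; your appeal to Corollary \ref{c:hecke-bimod} is the right moral input but leaves this unwinding implicit.

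Part (2), however, has genuine gaps. First, your claim that ``the equivalence of part (1) restricts to each $\Whit^{\leq n}$'' does not follow from $G(K)$-equivariance alone: compatibility with the transition maps $\iota_{n,n+1,!}$ only gives a map of filtered diagrams, and an equivalence on the colimit need not restrict to an equivalence termwise. The paper's argument is different and relies on \emph{stabilization}: one shows that the structural functors $\Whit^{\leq n} \to \Whit$ are already equivalences for all $n \geq 1$ on both sides, using Corollary \ref{c:crit-whit-naive} for $\widehat{\fg}_{crit}\mod_{reg,naive}$ and the result from \cite{cpsii} that $\Whit^{\leq 1}(D_{crit}(\Gr_G)) \to \Whit(D_{crit}(\Gr_G))$ is an equivalence for the source. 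This is a substantive input you have not supplied.

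Second, your $t$-exactness argument is circular. You reduce to $t$-exactness of $\Gamma^{\IndCoh}$, but the functor on $\Whit^{\leq n}(D_{crit}^{\Heckez}(\Gr_G))$ is $\Gamma^{\Hecke,naive}$, not $\Gamma^{\IndCoh}$, and $t$-exactness of $\Gamma^{\Hecke,naive}$ on the ambient category is precisely Lemma \ref{l:naive-t-exact}, which is proved only for rank $1$ and logically \emph{depends} on Theorem \ref{t:whit-equiv}. The paper instead argues indirectly: it shows that the transition functors $\iota_{n,n+1,!}$ are $t$-exact up to a uniform shift on both sides, and that the resulting $t$-structures on $\Whit$ match the canonical one on $\QCoh(\Op_{\ld{G}}^{reg})$ under the identifications already established. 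Since $\Gamma^{\Hecke,naive}$ corresponds to the identity on $\QCoh(\Op_{\ld{G}}^{reg})$, $t$-exactness follows. Your shortcut via averaging shifts would only work if you already knew the ambient functor was $t$-exact.
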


We will verify the above result in what follows
after recalling some results on Whittaker categories 
in this setting.

\subsection{}

We recall the following result, which appears as 
\cite{methods} Theorem 11.19.1 
and is an enhancement of the affine Skryabin theorem
\cite{whit} Theorem 5.1.1.

\begin{thm}\label{t:whit-crit}

There is a canonical equivalence of $\IndCoh^*(\Op_{\ld{G}})$-comodule categories:

\[
\Whit(\widehat{\fg}_{crit}\mod) \simeq \IndCoh^*(\Op_{\ld{G}}).
\]

\noindent Under this equivalence, the full subcategory (c.f. \S \ref{ss:ad-whit-review})
$\Whit^{\leq m}(\widehat{\fg}_{crit}\mod) \subset 
\Whit(\widehat{\fg}_{crit}\mod)$ identifies with the full subcategory
$\IndCoh_{\Op_{\ld{G}}^{\leq m}}^*(\Op_{\ld{G}}) \subset 
\IndCoh^*(\Op_{\ld{G}})$ generated under colimits by pushforwards
from $\QCoh(\Op_{\ld{G}}) \simeq \IndCoh^*(\Op_{\ld{G}}^{\leq m}) 
\to \IndCoh^*(\Op_{\ld{G}})$.

\end{thm}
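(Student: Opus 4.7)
The plan is to deduce the theorem by combining the affine Skryabin theorem \cite{whit} Theorem 5.1.1 with Feigin-Frenkel (Theorem \ref{t:feigin-frenkel}) and the adolescent Whittaker decomposition (Theorem \ref{t:whit}). The overall strategy is first to produce an equivalence $\Whit^{\leq m}(\widehat{\fg}_{crit}\mod) \simeq \IndCoh^*(\Op_{\ld{G}}^{\leq m})$ at each finite level $m$, and then to pass to the colimit over $m$.

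For fixed $m$, the affine Skryabin theorem (in its $G(K)$-equivariant form, applied through the $K_m$-invariant reduction) identifies $\Whit^{\leq m}(\widehat{\fg}_{crit}\mod)$ with the module category for a certain finite-type $W$-algebra at level $m$, characterized as the Whittaker reduction of the quotient of $U(\widehat{\fg}_{crit})$ controlling modules of depth $\leq m$. By the Feigin-Frenkel theorem (refined as in Theorem \ref{t:feigin-frenkel} and \cite{hitchin} \S 3), this Whittaker-reduced algebra is canonically $\fz_m = \Fun(\Op_{\ld{G}}^{\leq m})$, with $1 \in \fz_m$ corresponding to the image of the vacuum module $\bV_{crit,m}$ in the Whittaker category. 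Since $\Op_{\ld{G}}^{\leq m}$ is the spectrum of a polynomial algebra, $\IndCoh^*(\Op_{\ld{G}}^{\leq m}) = \QCoh(\Op_{\ld{G}}^{\leq m}) = \fz_m\mod$, yielding the level-$m$ equivalence. One must additionally check that this equivalence respects the $\IndCoh^*(\Op_{\ld{G}})$-comodule structure on the left (from \cite{methods} Theorem 11.18.1), which reduces to the assertion that the topological central embedding $\fZ \into U(\widehat{\fg}_{crit})$ descends through Whittaker reduction to the natural surjection $\fZ \onto \fz_m$, a consequence of Feigin-Frenkel.

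To assemble the $m$-level equivalences into a global one, I would use Theorem \ref{t:whit}, which gives
\[
\Whit(\widehat{\fg}_{crit}\mod) = \underset{m,\iota_{m,m+1,!}}{\colim} \, \Whit^{\leq m}(\widehat{\fg}_{crit}\mod),
\]
alongside the standard presentation $\IndCoh^*(\Op_{\ld{G}}) = \colim_m \IndCoh^*(\Op_{\ld{G}}^{\leq m})$ under $*$-pushforwards coming from the indscheme structure on $\Op_{\ld{G}}$. It remains to check that $\iota_{m,m+1,!}$ matches the $*$-pushforward along $\Op_{\ld{G}}^{\leq m} \into \Op_{\ld{G}}^{\leq m+1}$ under the level-$m$ equivalences. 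Since both are left adjoints to natural restriction-type functors, this compatibility follows formally from the compatibility of the right adjoints (forgetful functors for equivariance, $!$-pullbacks for $\IndCoh^*$) with the $\fz_m$-actions established in the previous step. Colimiting gives the first assertion; the second (identification of $\Whit^{\leq m}$ with $\IndCoh^*_{\Op_{\ld{G}}^{\leq m}}(\Op_{\ld{G}})$) then reads off directly from the definitions, since the latter is by construction generated by pushforwards from $\IndCoh^*(\Op_{\ld{G}}^{\leq m})$, which correspond under the equivalence precisely to the images of the structural functors $\Whit^{\leq m} \to \Whit$.

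The principal technical obstacle is promoting the affine Skryabin equivalence from an equivalence of plain DG categories to one of $\IndCoh^*(\Op_{\ld{G}})$-comodule categories. The equivariant Skryabin statement from \cite{whit} provides the necessary $G(K)$-compatibility, but tracking the comodule structure through Whittaker reduction simultaneously with the pro-structure on $\fZ$ requires the renormalization formalism of \cite{methods} \S 3 and \S 11, and this is where the bulk of the work lies. Once compatibility at a single level $m$ is established, the rest of the argument is a formal manipulation with colimits and adjunctions.
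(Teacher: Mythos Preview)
The paper does not give a proof of this theorem: it is stated with the preface ``We recall the following result, which appears as \cite{methods} Theorem 11.19.1 and is an enhancement of the affine Skryabin theorem \cite{whit} Theorem 5.1.1.'' So there is no proof in the paper to compare against; the result is imported wholesale from \cite{methods}.

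Your outline is a correct reconstruction of how such a result is proved, and you have correctly located the nontrivial content. The affine Skryabin theorem from \cite{whit} gives the equivalence of plain DG categories, Feigin--Frenkel identifies the reduced algebra with $\fz_m$, and the adolescent decomposition from Theorem~\ref{t:whit} lets you pass to the colimit. You are also right that the genuinely new ingredient beyond \cite{whit} is the $\IndCoh^*(\Op_{\ld{G}})$-comodule compatibility, and that this is exactly what the machinery of \cite{methods} \S 11 is set up to handle (the $\IndCoh^*(\Op_{\ld{G}})$-coaction on $\widehat{\fg}_{crit}\mod$ is constructed there as Theorem 11.18.1, and Theorem 11.19.1 is the enhancement making Skryabin compatible with it). Your final paragraph accurately diagnoses that this comodule upgrade is where the work lies and that it is not a formality.

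One small correction: your phrasing ``the affine Skryabin theorem (in its $G(K)$-equivariant form, applied through the $K_m$-invariant reduction)'' is slightly off. The identification $\Whit^{\leq m}(\widehat{\fg}_{crit}\mod) \simeq \fz_m\mod$ is not obtained by first passing to $K_m$-invariants and then applying a finite-dimensional Skryabin; rather, the affine Skryabin theorem is proved directly using the adolescent filtration, and the identification of the endomorphism algebra of the generator with $\fz_m$ is a separate computation (essentially Drinfeld--Sokolov reduction of $\bV_{crit,m}$). But this is a matter of packaging rather than a gap.
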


\begin{cor}\label{c:crit-whit-naive}

For any $n$, there is a canonical equivalence of 
$\QCoh(\Op_{\ld{G}}^{\leq n})$-module categories:

\[
\Whit(\widehat{\fg}_{crit}\mod_{\ord_n,naive}) \simeq \QCoh(\Op_{\ld{G}}^{\leq m}).
\]

\noindent Moreover, for any positive $m$ with $m \geq n$, the embedding:

\[
\Whit^{\leq m}(\widehat{\fg}_{crit}\mod_{\ord_n,naive}) \to 
\Whit(\widehat{\fg}_{crit}\mod_{\ord_n,naive})
\] 

\noindent is an equivalence.

\end{cor}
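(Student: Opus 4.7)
The plan is to deduce the corollary from Theorem \ref{t:whit-crit} by commuting $\Whit$ (respectively $\Whit^{\leq m}$) past the cotensor product that defines $\widehat{\fg}_{crit}\mod_{\ord_n,naive}$.

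First I would observe that, by the definition recalled in \S \ref{ss:naive-defin},
\[
\Whit(\widehat{\fg}_{crit}\mod_{\ord_n,naive}) = \Whit\Big(\IndCoh^*(\Op_{\ld{G}}^{\leq n}) \overset{\IndCoh^*(\Op_{\ld{G}})}{\otimes} \widehat{\fg}_{crit}\mod\Big).
\]
The functor $\Whit$ is a limit (twisted invariants under $N(K) \subset G(K)$), carried out in the $2$-category $G(K)\mod_{crit}$; meanwhile, by \cite{methods} Theorem 11.18.1, the $\IndCoh^*(\Op_{\ld{G}})$-comodule structure on $\widehat{\fg}_{crit}\mod$ lives in that same $2$-category, so the cotensor is formed inside $G(K)\mod_{crit}$. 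Consequently $\Whit$ commutes past this cotensor (the totalization defining it). Applying Theorem \ref{t:whit-crit} and simplifying gives
\[
\Whit(\widehat{\fg}_{crit}\mod_{\ord_n,naive}) \simeq \IndCoh^*(\Op_{\ld{G}}^{\leq n}) \overset{\IndCoh^*(\Op_{\ld{G}})}{\otimes} \IndCoh^*(\Op_{\ld{G}}) \simeq \IndCoh^*(\Op_{\ld{G}}^{\leq n}).
\]
Since $\Op_{\ld{G}}^{\leq n}$ is the spectrum of a polynomial algebra on infinitely many variables (as noted in \S \ref{ss:op-action}), $\IndCoh^*(\Op_{\ld{G}}^{\leq n}) \simeq \QCoh(\Op_{\ld{G}}^{\leq n})$, giving the first statement. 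The compatibility with the $\QCoh(\Op_{\ld{G}}^{\leq n})$-module structure is visible from the construction.

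For the second statement, applying the same commutation argument to $\Whit^{\leq m}$ and using the identification of $\Whit^{\leq m}(\widehat{\fg}_{crit}\mod)$ with $\IndCoh^*_{\Op_{\ld{G}}^{\leq m}}(\Op_{\ld{G}})$ from Theorem \ref{t:whit-crit} yields
\[
\Whit^{\leq m}(\widehat{\fg}_{crit}\mod_{\ord_n,naive}) \simeq \IndCoh^*(\Op_{\ld{G}}^{\leq n}) \overset{\IndCoh^*(\Op_{\ld{G}})}{\otimes} \IndCoh^*_{\Op_{\ld{G}}^{\leq m}}(\Op_{\ld{G}}).
\]
Since $m \geq n$, the closed embedding $\Op_{\ld{G}}^{\leq n} \into \Op_{\ld{G}}$ factors through $\Op_{\ld{G}}^{\leq m}$, so the comodule morphism $\IndCoh^*(\Op_{\ld{G}}^{\leq n}) \to \IndCoh^*(\Op_{\ld{G}})$ factors through the $\IndCoh^*(\Op_{\ld{G}})$-sub-comodule category $\IndCoh^*_{\Op_{\ld{G}}^{\leq m}}(\Op_{\ld{G}})$. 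A formal manipulation of the bar complex then shows that the inclusion $\IndCoh^*_{\Op_{\ld{G}}^{\leq m}}(\Op_{\ld{G}}) \into \IndCoh^*(\Op_{\ld{G}})$ becomes an equivalence upon cotensoring with $\IndCoh^*(\Op_{\ld{G}}^{\leq n})$ over $\IndCoh^*(\Op_{\ld{G}})$, which combined with the first part gives the claim.

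The main obstacle is the rigorous justification of the commutation of $\Whit$ (and $\Whit^{\leq m}$) with the cotensor product. While this is morally clear---both $\Whit$ and the cotensor are limits in $\DGCat_{cont}$, and they commute because the two underlying structures mutually commute---the actual verification passes through the $2$-categorical formalism of \cite{methods}, which requires tracking the interaction of the comodule structure with the equivariance on the nose. This is the step where I would need to be most careful, and it is likely where the bulk of the writing would go.
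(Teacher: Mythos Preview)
Your approach is essentially identical to the paper's. The ``main obstacle'' you flag is dispatched in one line there: the paper cites \cite{whit} Theorem 2.1.1 (or its refinement Theorem 2.7.1, recalled as Theorem \ref{t:whit}), which says that the functor $\sC \mapsto \Whit(\sC)$ from $G(K)\mod_{crit}$ to $\DGCat_{cont}$ is a morphism of $\DGCat_{cont}$-module categories commuting with limits and colimits. Since the cotensor product is a totalization in $\DGCat_{cont}$ with terms of the form $\IndCoh^*(\Op_{\ld{G}}^{\leq n}) \otimes \IndCoh^*(\Op_{\ld{G}})^{\otimes r} \otimes \widehat{\fg}_{crit}\mod$, and $\Whit$ is $\DGCat_{cont}$-linear and continuous in both directions, it passes inside termwise; the same applies to $\Whit^{\leq m}$. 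So you need not track any $2$-categorical formalism by hand.

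For the second statement, the paper's argument is slightly more concrete than your bar-complex remark: after the same commutation, it observes that the inclusion $\IndCoh^*_{\Op_{\ld{G}}^{\leq m}}(\Op_{\ld{G}}) \hookrightarrow \IndCoh^*(\Op_{\ld{G}})$ is fully faithful and admits a right adjoint which is a morphism of $\IndCoh^*(\Op_{\ld{G}})$-comodule categories, so fully faithfulness survives cotensoring with $\IndCoh^*(\Op_{\ld{G}}^{\leq n})$; essential surjectivity for $m \geq n$ is then clear. Your factoring observation amounts to the same thing.
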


\begin{proof}

By \cite{whit} Theorem 2.1.1 (or its refinement Theorem 2.7.1,
which we recalled above as Theorem \ref{t:whit}),
the functor $G(K)\mod_{crit} \xar{\sC \mapsto \Whit(\sC)} \DGCat_{cont}$ 
is a morphism of $\DGCat_{cont}$-module categories that commutes
with limits and colimits.

Therefore, from the definitions, we have:

\[
\begin{gathered}
\Whit(\widehat{\fg}_{crit}\mod_{\ord_n,naive}) =
\IndCoh^*(\Op_{\ld{G}}^{\leq n}) 
\overset{\IndCoh^*(\Op_{\ld{G}})}{\otimes} 
\Whit(\widehat{\fg}_{crit}\mod) \overset{Thm. \ref{t:whit-crit}}
{\simeq} \\
\IndCoh^*(\Op_{\ld{G}}^{\leq n}) 
\overset{\IndCoh^*(\Op_{\ld{G}})}{\otimes} 
\IndCoh^*(\Op_{\ld{G}}) \simeq \IndCoh^*(\Op_{\ld{G}}^{\leq n}) \simeq
\QCoh(\Op_{\ld{G}}^{\leq n}).
\end{gathered}
\]

The stabilization of adolescent Whittaker
models is proved similarly.
For $m$ positive, we have:

\[
\begin{gathered}
\Whit^{\leq m}(\widehat{\fg}_{crit}\mod_{\ord_n,naive}) =
\IndCoh^*(\Op_{\ld{G}}^{\leq n}) 
\overset{\IndCoh^*(\Op_{\ld{G}})}{\otimes} 
\Whit^{\leq m}(\widehat{\fg}_{crit}\mod) \overset{Thm. \ref{t:whit-crit}}
{\simeq} \\
\IndCoh^*(\Op_{\ld{G}}^{\leq n}) 
\overset{\IndCoh^*(\Op_{\ld{G}})}{\otimes} 
\IndCoh_{\Op_{\ld{G}}^{\leq m}}^*(\Op_{\ld{G}}) \subset \\
\IndCoh^*(\Op_{\ld{G}}^{\leq n}) 
\overset{\IndCoh^*(\Op_{\ld{G}})}{\otimes} 
\IndCoh^*(\Op_{\ld{G}}) \simeq \IndCoh^*(\Op_{\ld{G}}^{\leq n})
\simeq \QCoh(\Op_{\ld{G}}^{\leq n}).
\end{gathered}
\]

\noindent The functor at the end of the second line is indeed
fully faithful because 
$\IndCoh_{\Op_{\ld{G}}^{\leq m}}^*(\Op_{\ld{G}}) \to 
\IndCoh^*(\Op_{\ld{G}})$ is fully faithful (by definition) and
admits a right adjoint that is a morphism
of $\IndCoh^*(\Op_{\ld{G}})$-module categories. Clearly this
functor is essentially surjective for $m \geq n$.

\end{proof}

\subsection{}\label{ss:av-!-psi-sph}

Before proceeding,
we recall that for 
$\sC \in G(K)\mod_{crit}$, the functor:

\[
\Whit(\sC) \xar{\Oblv} \sC \xar{\Av_*} \sC^{G(O)}
\]

\noindent admits a left adjoint, which we denote
$\Av_!^{\psi}$ in what follows.
That this left adjoint is 
defined is the special case 
$n = 0$, $m = \infty$ of \cite{whit} Theorem 2.7.1. 

\subsection{}

We now recall the following result.

\begin{thm}[Frenkel-Gaitsgory-Vilonen, \cite{fgv}]\label{t:whit-sph}

The composition:

\[
\Rep(\ld{G}) \xar{V \mapsto \sS_V} \sH_{sph} =
D_{crit}(\Gr_G)^{G(O)} \xar{\Av_!^{\psi}} 
\Whit(D_{crit}(\Gr_G))
\]

\noindent is an equivalence. 

\end{thm}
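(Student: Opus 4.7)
The plan is to prove this classical result, due to Frenkel-Gaitsgory-Vilonen, by enhancing their abelian-category equivalence to the DG setting. The strategy combines a geometric analysis of the semi-infinite stratification on $\Gr_G$ with an adjunction computation of morphism spaces.

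First, I would analyze the semi-infinite stratification. The group $N(K)$ acts on $\Gr_G$ with orbits $S^{\lambda} = N(K) \cdot t^{\lambda} G(O)/G(O)$ for $\lambda \in \Lambda$; a direct calculation shows that $\psi$ extends to a non-trivial character on the $N(K)$-stabilizer of $t^{\lambda}$ precisely for dominant $\lambda \in \Lambda^+$. For each such $\lambda$, let $\Psi^{\lambda} \in \Whit(D_{crit}(\Gr_G))$ denote the canonical (clean) Whittaker $D$-module supported on $\ol{S^{\lambda}}$. I would then identify the image of each irreducible: for $V^{\lambda}$ the irreducible $\ld{G}$-representation of highest weight $\lambda \in \Lambda^+$, I expect $\Av_!^{\psi}(\sS_{V^{\lambda}}) \simeq \Psi^{\lambda}$ up to cohomological shift. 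The input is Mirkovic-Vilonen, which identifies the $*$-fibers of $\sS_{V^{\lambda}}$ along $S^{\mu}$ with the $\mu$-weight space of $V^{\lambda}$; combined with a cleanness argument showing that contributions from non-open strata vanish after $\psi$-averaging, this pins down $\Av_!^{\psi}(\sS_{V^{\lambda}})$.

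Second, I would establish fully faithfulness by an adjunction calculation. Using the adjunction between $\Av_!^{\psi}$ and $\Av_* \circ \Oblv$ (\S \ref{ss:av-!-psi-sph}), one has
\[
\ul{\Hom}_{\Whit}(\Av_!^{\psi} \sS_{V^{\lambda}}, \Av_!^{\psi} \sS_{V^{\mu}}) \simeq
\ul{\Hom}_{\sH_{sph}}(\sS_{V^{\lambda}}, \Av_*^{\psi} \Oblv \Av_!^{\psi} \sS_{V^{\mu}}).
\]
The cleanness/vanishing from the previous paragraph should imply that $\Av_*^{\psi} \Oblv \Av_!^{\psi} \sS_{V^{\mu}} \simeq \sS_{V^{\mu}}$ up to a uniform shift depending on $\dim \ol{S^{\mu}}$; the morphism computation then reduces to the known Hom spaces in $\sH_{sph} \simeq \Rep(\ld{G})$ via geometric Satake.

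Third, for essential surjectivity and the passage to DG categories, I would note that $\Whit(D_{crit}(\Gr_G))^{\heart}$ is generated under colimits by the $\Psi^{\lambda}$ for $\lambda \in \Lambda^+$, since by the stratification any Whittaker object is built from strata-wise Whittaker data and only the dominant strata carry non-zero such data. By the identification above, these generators all lie in the essential image. Combined with fully faithfulness on the compact generators $\sS_{V^{\lambda}}$ and ind-extension, this yields the desired equivalence of presentable DG categories.

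The main obstacle will be the cleanness/vanishing statement underlying the identification $\Av_!^{\psi}(\sS_{V^{\lambda}}) \simeq \Psi^{\lambda}$: one must control the interaction of the Whittaker character with the boundary of semi-infinite orbit closures $\ol{S^{\lambda}} \cap \ol{\Gr_G^{\lambda}}$. This is the technical heart of the FGV argument, traditionally executed via the geometry of Drinfeld's compactification $\ol{\Bun}_N$ or the Zastava spaces, and requires nontrivial input beyond formal categorical manipulations.
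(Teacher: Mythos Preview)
The paper does not supply a proof of this theorem: it is stated with attribution to \cite{fgv}, and the only additional content is the remark immediately following it, which points to \cite{whit-comp} for the comparison between the setting of \cite{fgv} and the DG/critical-level framework used here. In other words, the paper treats this as a black-box input from the literature.

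Your proposal, by contrast, sketches the internal structure of the FGV argument itself: the semi-infinite stratification, the identification $\Av_!^{\psi}(\sS_{V^{\lambda}}) \simeq \Psi^{\lambda}$ via Mirkovic--Vilonen and cleanness, the adjunction computation for fully faithfulness, and generation by the $\Psi^{\lambda}$ for essential surjectivity. As an outline of how the result is actually proved in \cite{fgv} (and its derived upgrades), this is broadly accurate, and you correctly flag the cleanness statement on $\ol{S^{\lambda}}$ as the technical crux. One small wording issue: what matters is that $\psi$ restricts \emph{trivially} to the $N(K)$-stabilizer of $t^{\lambda}$ exactly on the relevant cone of coweights, so that an $(N(K),\psi)$-equivariant object exists on $S^{\lambda}$; your phrase ``extends to a non-trivial character'' says the opposite of what you intend. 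Beyond that, your sketch is not a proof but a roadmap---which is appropriate here, since the paper itself offers no more than a citation.
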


\begin{rem}

Formally, the setting of \cite{fgv} is somewhat different.
We refer to \cite{whit-comp} for the necessary comparison results.

\end{rem}

\subsection{}

We now can prove the main result on Whittaker categories.

\begin{proof}[Proof of Theorem \ref{t:whit-equiv}]

We begin with \eqref{i:whit-equiv-1}. We first
construct some equivalence, and then
we show that $\Gamma^{\Hecke,naive}$ induces the corresponding
functor.

By Corollary \ref{c:crit-whit-naive} (for $n = 0$),
we have:

\[
\Whit(\widehat{\fg}_{crit}\mod_{reg,naive}) \simeq \QCoh(\Op_{\ld{G}}^{reg}).
\]

Moreover, as Whittaker invariants coincide with coinvariants
by \cite{whit} Theorem 2.1.1, we can calculate:

\[
\Whit(D_{crit}^{\Heckez}(\Gr_G)) = 
\Whit(D_{crit}(\Gr_G)) \underset{\Rep(\ld{G})}{\otimes} 
\QCoh(\Op_{\ld{G}}^{reg}).
\]

\noindent By Theorem \ref{t:whit-sph}, 
$\Whit(D_{crit}(\Gr_G))$ identifies canonically with 
$\Rep(\ld{G})$ as a $\Rep(\ld{G})$-module category.
Therefore, we obtain:

\[
\Whit(D_{crit}^{\Heckez}(\Gr_G)) = 
\Rep(\ld{G}) \underset{\Rep(\ld{G})}{\otimes} 
\QCoh(\Op_{\ld{G}}^{reg}) = \QCoh(\Op_{\ld{G}}^{reg}).
\]

We now show that $\Gamma^{\Hecke,naive}$ induces the
evident equivalence on Whittaker categories.
By construction, $\Gamma^{\Hecke,naive}$ is a morphism
of $\QCoh(\Op_{\ld{G}}^{reg})$-module categories.
Therefore, it suffices to show that it sends the
structure sheaf $\sO_{\Op_{\ld{G}}^{reg}}$ to
itself.

This follows from the following diagram, 
which is commutative by functoriality:

\[
\xymatrix{
D_{crit}(\Gr_G)^{G(O)} \ar[d]^{\Av_!^{\psi}} \ar[rr]^{\Gamma^{\Hecke,naive}} &&
\widehat{\fg}_{crit}\mod_{reg,naive}^{G(O)} \ar[d]^{\Av_!^{\psi}} \ar[dr] \\
\Whit(D_{crit}(\Gr_G)) \ar[rr]^{\Gamma^{\Hecke,naive}} &&
\Whit(\widehat{\fg}_{crit}\mod_{reg,naive}) \ar@{=}[r] & \QCoh(\Op_{\ld{G}}^{reg}).
}
\]

\noindent By construction of the equivalence
of Theorem \ref{t:whit-crit}, 
the diagonal arrow in the diagram above
is the Drinfeld-Sokolov functor $\Psi$.
Therefore, if 
we consider the $\delta$ $D$-module 
$\delta_1 \in D_{crit}(\Gr_G)^{G(O)}$
supported at the origin $1 \in \Gr_G$, apply Hecke induction 
and the above diagram,
we find:

\[
\begin{gathered}
\Gamma^{\Hecke,naive}(\Av_!^{\psi}\ind^{\Heckez}\delta_1) = 
\Psi(\Gamma^{\IndCoh}(\Gr_G,\delta_1)) = \Psi(\bV_{crit}).
\end{gathered}
\]

\noindent Clearly $\Av_!^{\psi}\ind^{\Heckez}\delta_1 \in \Whit(D_{crit}^{\Heckez}(\Gr_G))$ corresponds
to $\sO_{\Op_{\ld{G}}^{reg}} \in \QCoh(\Op_{\ld{G}}^{reg})$.
Moreover, $\Psi(\bV_{crit})$ corresponds to the structure sheaf
$\sO_{\Op_{\ld{G}}^{reg}}$
by design. 

We now verify \eqref{i:whit-equiv-2}. 
For $n > 0$, we have natural functors:

\begin{equation}\label{eq:sph-whit-n}
\begin{gathered}
\Whit^{\leq n}(D_{crit}^{\Heckez}(\Gr_G)) \to 
\Whit(D_{crit}^{\Heckez}(\Gr_G)) \\
\Whit^{\leq n}(\widehat{\fg}_{crit}\mod_{reg,naive}) \to 
\Whit(\widehat{\fg}_{crit}\mod_{reg,naive})
\end{gathered} 
\end{equation}

\noindent as in Theorem \ref{t:whit},
and that we claim are equivalences. In the second case,
this assertion is part of Corollary \ref{c:crit-whit-naive}.
In the first case, this follows from the fact that:

\[
\Whit^{\leq 1}(D_{crit}(\Gr_G)) \to \Whit(D_{crit}(\Gr_G))
\]

\noindent is an equivalence; see \cite{cpsii} Theorem 7.3.1
for a stronger assertion.

It now follows by functoriality and \eqref{i:whit-equiv-1}
that $\Gamma^{\Hecke,naive}$ is an equivalence on
$\Whit^{\leq n}$ for all $n>0$.

Finally, we need to show that $\Gamma^{\Hecke,naive}$
is $t$-exact on $\Whit^{\leq n}$ for all $n$. 

In \cite{whit}, the functors:

\[
\iota_{n,n+1,!}[-2(\check{\rho},\rho)]:
\Whit^{\leq n}(\widehat{\fg}_{crit}\mod) \to 
\Whit^{\leq n+1}(\widehat{\fg}_{crit}\mod)
\]

\noindent were shown to be $t$-exact.
Moreover, by the proof of the affine Skryabin theorem 
Theorem \ref{t:whit-crit}, the resulting $t$-structure on 
$\Whit(\widehat{\fg}_{crit}\mod)$ identifies with
the canonical one on $\IndCoh^*(\Op_{\ld{G}})$.
We deduce parallel results for 
$\widehat{\fg}_{crit}\mod_{reg,naive}$
in place of $\widehat{\fg}_{crit}\mod$ in the setting of 
Corollary \ref{c:crit-whit-naive}.

Similarly, the functors:

\[
\iota_{n,n+1,!}[-2(\check{\rho},\rho)]:
\Whit^{\leq n}(D_{crit}(\Gr_G)) \to 
\Whit^{\leq n+1}(D_{crit}(\Gr_G))
\]

\noindent are $t$-exact. The resulting $t$-structure
on $\Whit(D_{crit}(\Gr_G))$ identifies with the canonical
one on $\Rep(\ld{G})$ under Theorem \ref{t:whit-sph}; 
indeed, the geometric Satake functor $\Rep(\ld{G}) \to \sH_{sph}$ is
$t$-exact by construction, and $\Av_!^{\psi}$ is $t$-exact
by \cite{whit} Remark B.7.1. As $\Oblv^{\Heckez}$ is $t$-exact,
we obtain similar results for 
$D_{crit}^{\Heckez}(\Gr_G)$.

Finally, we deduce $t$-exactness. Indeed, we have equivalences:

\[
\Whit^{\leq n}(D_{crit}^{\Heckez}(\Gr_G)) \simeq 
\QCoh(\Op_{\ld{G}}^{reg}) \simeq  
\Whit^{\leq n}(\widehat{\fg}_{crit}\mod_{reg,naive})
\]

\noindent with the $t$-structures on the left and
right hand sides corresponding to the canonical
$t$-structure on $\QCoh(\Op_{\ld{G}}^{reg})$, and the
composition being given by 
$\Gamma^{\Hecke,naive}$.

\end{proof}

\subsection{Exactness of renormalized global sections}

We will also need the following parallel
to Proposition \ref{p:hecke-true}.

\begin{prop}\label{p:hecke-true-whit}

\begin{enumerate}

\item\label{i:true-whit-1}

For any $n \geq 1$, the functor: 

\[
\Gamma^{\Hecke}|_{\Whit^{\leq n}(D_{crit}^{\Heckez}(\Gr_G))}:
\Whit^{\leq n}(D_{crit}^{\Heckez}(\Gr_G)) \to  
\widehat{\fg}_{crit}\mod_{reg}
\]

\noindent is $t$-exact. 

\item\label{i:true-whit-2}

More generally, suppose
$\sG \in D_{crit}(G(K))$ has the following properties: 

\begin{itemize}

\item For $m \gg 0$, $\sG$ is right $K_m$-equivariant
(where $K_m \subset G(O)$ is the $m$th
congruence subgroup). 

\item There exists a $K_m$-stable closed subscheme 
$S \subset G(K)$ such that $\sG$ is supported on $S$.

\item As an object of $D(S/K_m)$, $\sG$ is eventually coconnective.

\end{itemize}

Then for every $\sF \in 
\Whit^{\leq n}(D_{crit}^{\Heckez}(\Gr_G))^+$, 
$\Gamma^{\Hecke}(\Gr_G,\sG \star \sF) \in 
\widehat{\fg}_{crit}\mod_{reg}^+$.

\end{enumerate}

\end{prop}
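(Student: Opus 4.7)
The plan is to establish part \eqref{i:true-whit-2} first and then deduce part \eqref{i:true-whit-1} from it as the special case $\sG = \delta_1$. Given \eqref{i:true-whit-2}, right $t$-exactness of $\Gamma^{\Hecke}|_{\Whit^{\leq n}(D_{crit}^{\Heckez}(\Gr_G))}$ follows from Corollary \ref{c:gamma-hecke-right-exact}; for left $t$-exactness, given $\sF \in \Whit^{\leq n}(D_{crit}^{\Heckez}(\Gr_G))^{\geq 0}$, part \eqref{i:true-whit-2} with $\sG = \delta_1$ yields $\Gamma^{\Hecke}(\sF) \in \widehat{\fg}_{crit}\mod_{reg}^+$. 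Combining with $\rho \circ \Gamma^{\Hecke} = \Gamma^{\Hecke,naive}$, the $t$-exactness of $\Gamma^{\Hecke,naive}|_{\Whit^{\leq n}}$ from Theorem \ref{t:whit-equiv}\eqref{i:whit-equiv-2}, and the fact that $\rho$ is a $t$-exact equivalence on eventually coconnective subcategories (Lemma \ref{l:ren}), this forces $\Gamma^{\Hecke}(\sF) \in \widehat{\fg}_{crit}\mod_{reg}^{\geq 0}$.

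For part \eqref{i:true-whit-2}, the key idea is to realize $\sG \star \sF$ as a filtered colimit of compact objects of $D_{crit}^{\Heckez}(\Gr_G)$ whose images under $\Gamma^{\Hecke}$ lie in a uniform cohomological range bounded below. Using compact generation and compatibility of the relevant $t$-structures with filtered colimits, I would write $\sG = \colim_j \sG_j$ and $\sF = \colim_i \sF_i$ with $\sG_j$ compact in $D(S/K_m)$ and $\sF_i$ compact in $\Whit^{\leq n}(D_{crit}^{\Heckez}(\Gr_G))$, chosen so that each sits in a uniformly bounded cohomological range inherited from the bounded-below hypothesis on $\sG$ and $\sF$. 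The forgetful functor $\Oblv\colon \Whit^{\leq n}(D_{crit}^{\Heckez}(\Gr_G)) \to D_{crit}^{\Heckez}(\Gr_G)$ preserves compactness (its right adjoint, a composition of averaging functors with respect to the prounipotent groups $K_n$ and $N_n$, is continuous), so each $\Oblv \sF_i$ is compact. Viewing each $\sG_j$ as a compact object of $D_{crit}(G(K))^{\text{right }K_m}$ via the closed embedding $S/K_m \hookrightarrow G(K)/K_m$, a variant of Lemma \ref{l:conv-adj} shows that convolution with $\sG_j$ preserves compactness; consequently each $\sG_j \star \sF_i$ is compact in $D_{crit}^{\Heckez}(\Gr_G)$, and by construction of $\Gamma^{\Hecke}$ via ind-extension from compact objects, $\Gamma^{\Hecke}(\sG_j \star \sF_i)$ is compact in $\widehat{\fg}_{crit}\mod_{reg}$, hence cohomologically bounded both above and below.

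The crucial step is extracting a uniform lower cohomological bound on these compact images. Here I would use $\rho \circ \Gamma^{\Hecke} = \Gamma^{\Hecke,naive}$ together with the $G(K)$-equivariance of $\Gamma^{\Hecke,naive}$ to compute
\[
\rho\big(\Gamma^{\Hecke}(\sG_j \star \sF_i)\big) = \sG_j \star \Gamma^{\Hecke,naive}(\sF_i).
\]
Theorem \ref{t:whit-equiv}\eqref{i:whit-equiv-2} places $\Gamma^{\Hecke,naive}(\sF_i)$ in a uniformly bounded cohomological range in $\widehat{\fg}_{crit}\mod_{reg,naive}$, and the strong compatibility of the critical-level $G(K)$-action with the $t$-structure on $\widehat{\fg}_{crit}\mod_{reg,naive}$ (inherited from that of $\widehat{\fg}_{crit}\mod$ via Proposition \ref{p:central-naive}) ensures that convolution with the cohomologically bounded $\sG_j$ incurs only a bounded cohomological shift. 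Therefore $\rho(\Gamma^{\Hecke}(\sG_j \star \sF_i)) \in \widehat{\fg}_{crit}\mod_{reg,naive}^{\geq -M}$ uniformly in $i,j$ for some $M$, and Lemma \ref{l:ren} transfers the bound to $\Gamma^{\Hecke}(\sG_j \star \sF_i) \in \widehat{\fg}_{crit}\mod_{reg}^{\geq -M}$. Passing to the filtered colimit (using compatibility of the $t$-structure on $\widehat{\fg}_{crit}\mod_{reg}$ with filtered colimits) finally gives $\Gamma^{\Hecke}(\sG \star \sF) \in \widehat{\fg}_{crit}\mod_{reg}^{\geq -M} \subset \widehat{\fg}_{crit}\mod_{reg}^+$.

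The main obstacle will be making the uniform cohomological shift estimate in the convolution step rigorous; this requires carefully applying the formalism of strong compatibility between loop group actions and $t$-structures from \cite{methods} \S 10 in conjunction with Proposition \ref{p:central-naive}, and perhaps verifying that the compact approximations $\sG_j$ and $\sF_i$ can indeed be chosen within uniformly bounded cohomological ranges reflecting the bounded-below hypotheses.
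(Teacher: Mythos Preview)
Your overall architecture---approximate by compacts, use $\rho\circ\Gamma^{\Hecke}=\Gamma^{\Hecke,naive}$ together with $G(K)$-equivariance of the naive functor to get a lower bound, then transfer via Lemma \ref{l:ren}---is exactly the shape of the paper's argument. But there is a genuine gap at the step where you assert that $\sG_j\star\sF_i$ is compact in $D_{crit}^{\Heckez}(\Gr_G)$.

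The appeal to ``a variant of Lemma \ref{l:conv-adj}'' does not go through: that lemma requires $H/K$ to be ind-proper, and $G(K)/K_m$ is not (it is a $G_m$-bundle over the ind-proper $\Gr_G$, with $G_m$ affine). Concretely, take $\sG_j$ to be the compact generator $D_{G_m}\in D(G(O)/K_m)$ and $\sF_i=\ind^{\Heckez}(\delta_1)$; then $\sG_j\star\sF_i$ has underlying object $\Fun(G_m)\otimes\ind^{\Heckez}(\delta_1)$, which is not compact. So you cannot conclude that $\Gamma^{\Hecke}(\sG_j\star\sF_i)$ lands in $\widehat{\fg}_{crit}\mod_{reg}^+$, and without that Lemma \ref{l:ren} gives you nothing: a bound on $\rho(X)$ says nothing about $X$ unless $X$ is already known to be eventually coconnective.

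The paper fixes this by \emph{not} approximating $\sG$ and instead exploiting the identification $\Whit^{\leq n}(D_{crit}^{\Heckez}(\Gr_G))\simeq\QCoh(\Op_{\ld{G}}^{reg})$. Since $\Op_{\ld{G}}^{reg}$ is the spectrum of a polynomial ring, compacts are perfect complexes generated under finite operations by the structure sheaf, which corresponds to $\ind^{\Heckez}(\delta_n)$. For that single object one computes $\Gamma^{\Hecke}(\sG\star\ind^{\Heckez}(\delta_n))=\Gamma^{\IndCoh}(\Gr_G,\sG\star\delta_n)$, and $\sG\star\delta_n\in D_{crit}(\Gr_G)^+$ by the hypotheses on $\sG$ (this is where Lemma \ref{l:action-tstr-bdd} enters), so Corollary \ref{c:gamma-ren} gives eventual coconnectivity directly. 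This bootstraps to all compact $\sF$ by finite operations, and only then does the uniform bound via $\rho$ and the filtered-colimit argument kick in. (A minor point: taking $\sG=\delta_1$ to deduce \eqref{i:true-whit-1} from \eqref{i:true-whit-2} is delicate since $\delta_1$ need not satisfy the equivariance hypothesis; the paper instead proves \eqref{i:true-whit-1} first by the same scheme, which is cleaner.)
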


\begin{proof}

We begin with \eqref{i:whit-equiv-1}.

As above, we have a $t$-exact equivalence:

\begin{equation}\label{eq:whit-n-hecke}
\Whit^{\leq n}(D_{crit}^{\Heckez}(\Gr_G)) \simeq 
\QCoh(\Op_{\ld{G}}^{reg}).
\end{equation}

\noindent As $\Op_{\ld{G}}^{reg}$ is the
spectrum of a polynomial algebra (however infinitely
generated), we deduce that every
object of 
$\Whit^{\leq n}(D_{crit}^{\Heckez}(\Gr_G))^{\heart}$
is a filtered colimit of objects that
are compact in 
$\Whit^{\leq n}(D_{crit}^{\Heckez}(\Gr_G))$,
hence in $D_{crit}^{\Heckez}(\Gr_G)$.

By construction, $\Gamma^{\Hecke}$ maps compact
objects to compact objects, and in particular
maps compact objects to 
$\widehat{\fg}_{crit}\mod_{reg}^+$. 
By Theorem \ref{t:whit-equiv}, we deduce
that it maps compact objects 
of $\Whit^{\leq n}(D_{crit}^{\Heckez}(\Gr_G))$
that lie in the heart of the $t$-structure
into $\widehat{\fg}_{crit}\mod_{reg}^{\heart}$.
As every object of
$\Whit^{\leq n}(D_{crit}^{\Heckez}(\Gr_G))^{\heart}$
is a filtered colimit of such (by the above), we
obtain the result.

We now proceed to \eqref{i:whit-equiv-2}.
We begin by noting that our assumptions imply that
for any $\sC \in G(K)\mod_{crit}$ equipped with a $t$-structure
that is strongly compatible with the $G(K)$-action (in the
sense of \cite{methods} \S 10.12), the
functor $\sG \star - : \sC \to \sC$ is left $t$-exact up to shift
(see the proof of Lemma \ref{l:action-tstr-bdd} below).
This is the key property we will use about $\sG$.
By \cite{methods} Lemma 10.14.1, this property is true for
$\sC = \widehat{\fg}_{crit}\mod$.

Next, if $\sF$ is the object corresponding
under \eqref{eq:whit-n-hecke}
to the structure sheaf on
$\Op_{\ld{G}}^{reg}$, then 
$\sF = \ind^{\Heckez}(\delta_n)$ 
for $\delta_n \in \Whit^{\leq n}(D_{crit}(\Gr_G))^{\heart}
\simeq \Rep(\ld{G})^{\heart}$
corresponding to the trivial representation 
(by construction of \eqref{eq:whit-n-hecke}).
Therefore, $\Gamma^{\Hecke}(\Gr_G,\sG \star \sF)
= \Gamma^{\IndCoh}(\Gr_G,\sG \star \delta_n)$.
As $\sG \star \delta_n \in D_{crit}(\Gr_G)$ is eventually
coconnective by the above, the resulting object of
$\widehat{\fg}_{crit}\mod_{reg}$ is eventually coconnective as well by
Corollary \ref{c:gamma-ren}.

We deduce from \eqref{eq:whit-n-hecke}
that for $\sF \in \Whit^{\leq n}(D_{crit}(\Gr_G))$ compact,
$\Gamma^{\Hecke}(\Gr_G,\sG \star \sF)$ is
eventually coconnective. We claim that in fact there
is a universal integer $r$ such that
for compact $\sF$ lying in $\Whit^{\leq n}(D_{crit}(\Gr_G))^{\geq 0}$, 
we have:

\[
\Gamma^{\Hecke}(\Gr_G,\sG \star \sF) \in 
\widehat{\fg}_{crit}\mod_{reg}^{\geq -r}.
\]

Indeed, choose $r$ such that $\sG \star -$
maps $\widehat{\fg}_{crit}\mod^{\geq 0}$ into 
$\widehat{\fg}_{crit}\mod^{\geq -r}$. 
As we know the above object is eventually coconnective,
it suffices to verify the boundedness after applying $\rho$.
Then the resulting object is 
$\sG \star \Gamma^{\Hecke,naive}(\Gr_G,\sF)$,
which lies in degrees $\geq -r$ by construction of $r$
and Theorem \ref{t:whit-equiv}.

Finally, the same claim for general (possibly non-compact) 
$\sF \in \Whit^{\leq n}(D_{crit}(\Gr_G))^{\geq 0}$ 
follows by the same argument as in \eqref{i:true-whit-1}:
such $\sF$ is a filtered colimit of objects
of $\Whit^{\leq n}(D_{crit}(\Gr_G))^{\geq 0}$ that
are compact in $\Whit^{\leq n}(D_{crit}(\Gr_G))$.

\end{proof}

\section{Generation under colimits}\label{s:essential-surjectivity}

\subsection{} In this section, we prove Lemma \ref{l:gamma-gens}.

\subsection{Preliminary observations}

We begin with the following basic result. 

\begin{lem}\label{l:tilde}

The subcategory $\widehat{\fg}_{crit}\widetilde{\mod}_{reg,naive} \subset
\widehat{\fg}_{crit}\mod_{reg,naive}$ is a $D_{crit}^*(G(K))$-submodule
category.

\end{lem}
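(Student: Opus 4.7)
The plan is to reduce the assertion to a boundedness statement for convolution with suitable ``finite-type'' objects of $D_{crit}^*(G(K))$.

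First, since $\widehat{\fg}_{crit}\widetilde{\mod}_{reg,naive}$ is by definition closed under colimits, and since for any $\sG \in D_{crit}^*(G(K))$ the convolution functor $\sG \star -$ is continuous, it suffices to show that $\sG \star \sF \in \widehat{\fg}_{crit}\widetilde{\mod}_{reg,naive}$ whenever $\sF \in \widehat{\fg}_{crit}\mod_{reg,naive}^+$. By the symmetric argument in the $\sG$-variable, using continuity of $-\star\sF$ and closure of the subcategory under colimits, I would then reduce to the case that $\sG$ belongs to a class of generators of $D_{crit}^*(G(K))$. Specifically, I would take this class to consist of those $\sG$ satisfying the hypotheses of Proposition \ref{p:hecke-true-whit} \eqref{i:true-whit-2}: right $K_m$-equivariant for some $m$, supported on a $K_m$-stable closed subscheme $S \subset G(K)$ of finite presentation, and eventually coconnective as an object of $D(S/K_m)$. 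Such objects generate $D_{crit}^*(G(K))$ under colimits via the standard presentation of $D^*$ on a Tate group indscheme (colimit over finite-type substacks and over increasing right congruence equivariance).

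For $\sG$ in this generating class, the key input is that $\sG \star -$ is left $t$-exact up to shift on any DG category $\sC \in G(K)\mod_{crit}$ whose $t$-structure is strongly compatible with the $G(K)$-action in the sense of \cite{methods} \S 10.12. This is recorded in the argument of Proposition \ref{p:hecke-true-whit} \eqref{i:true-whit-2} and relies on \cite{methods} Lemma 10.14.1 applied to $\widehat{\fg}_{crit}\mod$. In particular, such $\sG$ preserve eventually coconnective subcategories up to uniform shift.

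It then remains to verify that the $t$-structure on $\widehat{\fg}_{crit}\mod_{reg,naive}$ is itself strongly compatible with its critical level $G(K)$-action. This transfers from the corresponding fact for $\widehat{\fg}_{crit}\mod$ via Proposition \ref{p:central-naive}: the functor $i_{0,*}: \widehat{\fg}_{crit}\mod_{reg,naive} \to \widehat{\fg}_{crit}\mod$ is $t$-exact, conservative, and a morphism of categories with critical level $G(K)$-actions. Combining these points, $\sG \star \sF$ lies in $\widehat{\fg}_{crit}\mod_{reg,naive}^+$, hence in $\widehat{\fg}_{crit}\widetilde{\mod}_{reg,naive}$, completing the argument. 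The only genuinely delicate step is the reduction in $\sG$, which is essentially a matter of unwinding the definition of $D^*$ on a Tate group indscheme; everything else is a formal consequence of the machinery already in place.
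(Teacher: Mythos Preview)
Your proposal is correct and follows essentially the same approach as the paper: reduce in $\sF$ to the eventually coconnective case, reduce in $\sG$ to a generating class, and use that convolution with such $\sG$ is left $t$-exact up to shift once the $t$-structure on $\widehat{\fg}_{crit}\mod_{reg,naive}$ is strongly compatible with the $G(K)$-action. The only cosmetic difference is that the paper reduces to \emph{compact} $\sG$ and invokes Lemma \ref{l:action-tstr-bdd} directly, whereas you describe the generating class by the slightly more explicit conditions from Proposition \ref{p:hecke-true-whit} \eqref{i:true-whit-2}; these are the same objects, and the left-$t$-exactness argument you cite is exactly what goes into Lemma \ref{l:action-tstr-bdd}.
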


\begin{proof}

By definition of $\widehat{\fg}_{crit}\widetilde{\mod}_{reg,naive}$, 
we need to show that for $\sF \in D_{crit}^*(G(K))$, the functor $\sF \star -$
maps $\widehat{\fg}_{crit}\mod_{reg,naive}^+$ into 
$\widehat{\fg}_{crit}\widetilde{\mod}_{reg,naive}$.
As $D_{crit}^*(G(K))$ is compactly generated, we are reduced to the
case where $\sF$ is compact. In that case, we claim that 
$\sF \star -$ maps $\widehat{\fg}_{crit}\mod_{reg,naive}^+$ into
itself. 

Indeed, this follows from Lemma \ref{l:action-tstr-bdd} and the
observation that the action of $G(K)$ on $\widehat{\fg}_{crit}\mod_{reg,naive}$
is strongly compatible with the $t$-structure; the latter claim
reduces via Lemma \ref{l:ren} 
to the same claim for $\widehat{\fg}_{crit}\mod$,
which is shown as \cite{methods} Lemma 10.14.1 (3).

\end{proof}

Above, we used the following result.

\begin{lem}\label{l:action-tstr-bdd}

Let $H$ be a
Tate group indscheme with prounipotent tail 
acting strongly on $\sC \in \DGCat_{cont}$.
Suppose $\sC$ is equipped with a $t$-structure strongly
compatible with the $H$-action in the sense of 
\cite{methods} \S 10.12.
Then for any $\sF \in D^*(H)$ compact, 
the functor $\sF \star -:\sC \to \sC$ is left $t$-exact up to shift.

\end{lem}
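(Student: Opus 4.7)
The plan is to exploit the prounipotent tail to reduce the claim to a finite-type situation. First, I would pick a prounipotent compact open subgroup $K \subset H$; by the standard structural description of $D^*(H)$ for Tate group indschemes (c.f.\ \cite{dmod}), any compact $\sF \in D^*(H)$ is, after possibly shrinking $K$, right $K$-equivariant and set-theoretically supported on a $K$-stable closed subscheme $Z \subset H$ with $Z/K$ a finite-type scheme. Under the tautological identification $D^*(Z)^{K,r} \simeq D(Z/K)$, the compact $\sF$ corresponds to a cohomologically bounded object $\overline{\sF} \in D(Z/K)$.

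Right $K$-equivariance of $\sF$ together with associativity of convolution then yields a factorization
\[
\sF \star (-) \;\simeq\; \overline{\sF} \star \Oblv\,\Av_*^K(-) : \sC \to \sC^K \to \sC,
\]
up to cohomological shift, where the second arrow is induced by the restricted action $D(Z/K) \otimes \sC^K \to \sC$; this uses the standard identity $k_K \star (-) \simeq \Oblv\,\Av_*^K(-)$ up to shift, valid because $K$ is prounipotent. It then suffices to check that each factor is left $t$-exact up to shift. For $\Av_*^K$ this is immediate from strong compatibility: by definition $\Oblv:\sC^K \to \sC$ is $t$-exact, so its right adjoint is left $t$-exact, and prounipotence of $K$ further forces $\Av_*^K$ to agree with $\Av_!^K$ up to a canonical shift, giving boundedness on both sides. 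For convolution with $\overline{\sF}$ I would d\'evisser: since $\overline{\sF}$ is bounded with finite-type support, a finite $K$-stable stratification of $Z$ reduces to the case of a single $K$-orbit, where the convolution decomposes as left translation by a $k$-point of $H$ (which is $t$-exact by strong compatibility) composed with $*$-averaging along a finite-dimensional unipotent subgroup (which is $t$-bounded by strong compatibility).

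The main obstacle I anticipate is purely bookkeeping: verifying that a compact object of $D^*(H)$ really is induced from a cohomologically bounded $D$-module on some finite-type quotient $Z/K$, and checking that the precise form of ``strongly compatible'' recalled from \cite{methods} \S 10.12 is strong enough to support the final d\'evissage. Once those foundational points are in place, the argument is just a routine reduction from the Tate setting to the finite-type one, where $t$-boundedness of convolution with a bounded $D$-module is standard.
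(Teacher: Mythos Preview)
Your overall reduction is correct and matches the paper: use right $K$-equivariance of $\sF$ for some prounipotent compact open $K$, descend to a bounded $\overline{\sF}$ on a finite-type $Z/K \subset H/K$, and factor $\sF \star -$ through $\Av_*^K$, which is left $t$-exact because $\sC^K \subset \sC$ is closed under truncations. (Your side remark that prounipotence forces $\Av_*^K$ to agree with $\Av_!^K$ up to shift is not correct in this infinite-dimensional setting and is in any case unnecessary.)

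The genuine gap is in your final d\'evissage. You propose to stratify $Z$ by $K$-orbits and, on each stratum, write convolution as translation by a $k$-point of $H$ followed by $*$-averaging over a finite-dimensional unipotent subgroup. But $Z/K$ is just a finite-type scheme sitting inside $H/K$; there is no residual group action to stratify by, a general closed subscheme of $H/K$ need not decompose into pieces of this shape, and even for something like a Schubert cell the relevant stabilizers are not unipotent. So the mechanism by which strong compatibility is supposed to control convolution with $\overline{\sF}$ on $\sC^K$ is missing. The paper handles this step differently: rather than d\'evissage on the $D$-module side, it uses the finite resolution of $\overline{\sF}$ by objects $\on{ind}(i_*^{\IndCoh}(\sG))$ with $\sG \in \IndCoh(S)$ compact, so that the remaining convolution becomes a \emph{weak} convolution $i_*^{\IndCoh}(\sG) \overset{K,w}{\star} -$ on $\sC^{K,w}$, and then invokes \cite{methods} Proposition 10.16.1, which gives left $t$-exactness of such weak convolutions directly from strong compatibility. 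In other words, the passage through $\sC^{K,w}$ and the $\IndCoh$-level input from \cite{methods} is exactly the piece your outline is missing.
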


\begin{proof}

Because $\sF$ is compact and $H$ has a prounipotent tail, 
$\sF \in D^*(H)^K \simeq D(H/K)$ for some prounipotent compact open subgroup
$K \subset H$.
Again because $\sF$ is compact, as an object of $D(H/K)$, it 
is supported on a closed subscheme $S \subset H/K$. 
By \cite{finiteness}, 
$\sF$ has a bounded resolution by compact objects
of the form $\ind(i_*^{\IndCoh}(\sG))$ for $i:S \to H/K$
the embedding, $\sG \in \IndCoh(S)$ compact,
and $\ind$ the functor of (right) $D$-module induction.
Therefore, we may consider $\sF$ of this form.

The functor $\sF \star -$ then factors as:

\[
\sC \xar{\Av_*^K} \sC^K \xar{\Oblv} \sC^{K,w} 
\xar{i_*^{\IndCoh}(\sG) \overset{K_0,w}{\star} -}
\sC
\]

\noindent where $-\overset{K_0,w}{\star} -$
indicates the appropriate relative convolution
functor $\IndCoh(H/K)^{K_0/K,w} \otimes \sC^{K_0,w}
= \IndCoh(H/K_0) \otimes \sC^{K_0,w} \to \sC$.

As the $H$-action on $\sC$ is compatible with the $t$-structure,
$\sC^K \subset \sC$ is closed under truncations; it follows
that $\Av_*^K$ is left $t$-exact. 
By \cite{methods} \S 10.13,
$\sC^{K,w}$ has a canonical $t$-structure for
which $\Oblv:\sC^K \to \sC^{K,w}$ is $t$-exact. 
Finally, the functor of convolution with $\sG$ is
left $t$-exact by \cite{methods} Proposition 10.16.1.\footnote{There 
is a polarizability assumption at this point in \emph{loc. cit}
that we have omitted here.
This assumption is only needed in \emph{loc. cit}. to 
deduce a stronger result.
The beginning of that argument from \emph{loc. cit}. 
is all that is needed here,
and for that the polarizability is not needed. (Regardless,
we only apply this result to $G(K)$, 
which is polarizable.)}

\end{proof}

\begin{cor}

$\Gamma^{\Hecke,naive}$ factors through 
$\widehat{\fg}_{crit}\widetilde{\mod}_{reg,naive}$.

\end{cor}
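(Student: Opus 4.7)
The plan is to verify the factorization by a compact-generation argument. First I would recall from \S \ref{ss:ind-hecke} that $D_{crit}^{\Heckez}(\Gr_G)$ is compactly generated. Then, by the construction of $\Gamma^{\Hecke}$ in \S \ref{ss:gamma-hecke-finish}, this functor takes compact objects of $D_{crit}^{\Heckez}(\Gr_G)$ into $\widehat{\fg}_{crit}\mod_{reg}^c$, and these in turn lie in $\widehat{\fg}_{crit}\mod_{reg}^+$ by the remark immediately following the definition of $\widehat{\fg}_{crit}\mod_{\ord_n}^c$. Since $\rho$ is $t$-exact (Lemma \ref{l:ren}) and, by construction, $\rho \circ \Gamma^{\Hecke} = \Gamma^{\Hecke,naive}$ (verified on compact objects directly from \S \ref{ss:gamma-hecke-finish}, and extended to all objects by continuity of both functors and the fact that $\Gamma^{\Hecke}$ is the ind-extension of its restriction to compacts), we conclude that $\Gamma^{\Hecke,naive}$ sends compact objects of $D_{crit}^{\Heckez}(\Gr_G)$ into $\widehat{\fg}_{crit}\mod_{reg,naive}^+$.

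By continuity of $\Gamma^{\Hecke,naive}$ and compact generation of its source, the essential image of $\Gamma^{\Hecke,naive}$ is contained in the smallest full cocomplete subcategory of $\widehat{\fg}_{crit}\mod_{reg,naive}$ containing $\widehat{\fg}_{crit}\mod_{reg,naive}^+$, and this subcategory is $\widehat{\fg}_{crit}\widetilde{\mod}_{reg,naive}$ by definition. (Alternatively, one could appeal directly to Lemma \ref{l:tilde}: $\bV_{crit} = \Gamma^{\Hecke,naive}(\ind^{\Heckez}(\delta_1)) \in \widehat{\fg}_{crit}\mod_{reg,naive}^{\heart}$ already lies in $\widehat{\fg}_{crit}\widetilde{\mod}_{reg,naive}$, and one uses $D_{crit}^*(G(K))\otimes\QCoh(\Op_{\ld{G}}^{reg})$-equivariance of $\Gamma^{\Hecke,naive}$ together with closure of $\widehat{\fg}_{crit}\widetilde{\mod}_{reg,naive}$ under the $D_{crit}^*(G(K))$-action to propagate the claim; closure under the $\QCoh(\Op_{\ld{G}}^{reg})$-action is automatic because $\Op_{\ld{G}}^{reg}$ is the spectrum of a polynomial algebra, so compact objects are perfect and act by functors bounded up to shift.) There is no substantive obstacle here—the statement is essentially formal once one has the $t$-exactness of $\rho$ and the compact generation of the source.
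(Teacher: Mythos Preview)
Your main argument is correct, but it takes a slightly more circuitous route than the paper. The paper's proof is essentially your parenthetical alternative, streamlined: since $\ind^{\Heckez}$ generates $D_{crit}^{\Heckez}(\Gr_G)$ under colimits (its right adjoint $\Oblv^{\Heckez}$ is conservative), it suffices to show that $\Gamma^{\IndCoh} = \Gamma^{\Hecke,naive}\circ\ind^{\Heckez}: D_{crit}(\Gr_G) \to \widehat{\fg}_{crit}\mod_{reg,naive}$ lands in the tilde subcategory. But $\Gamma^{\IndCoh}$ is convolution with $\bV_{crit} \in \widehat{\fg}_{crit}\mod_{reg,naive}^{\heart} \subset \widehat{\fg}_{crit}\widetilde{\mod}_{reg,naive}$, so Lemma \ref{l:tilde} finishes the argument directly. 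By reducing first to $\Gamma^{\IndCoh}$, the paper avoids any discussion of the $\QCoh(\Op_{\ld{G}}^{reg})$-action and never needs to invoke the renormalized functor $\Gamma^{\Hecke}$ or $\rho$.

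Your main route (compacts map to $\widehat{\fg}_{crit}\mod_{reg}^c \subset \widehat{\fg}_{crit}\mod_{reg}^+$ via $\Gamma^{\Hecke}$, then apply $\rho$) is logically fine and independent of Lemma \ref{l:tilde}, but it leans on the construction of $\Gamma^{\Hecke}$ from \S \ref{ss:gamma-hecke-finish}, which in turn uses Lemma \ref{l:conv-adj}; so the dependency chain is a bit longer for no real gain. Either way, the statement is indeed essentially formal.
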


\begin{proof}

By \S \ref{ss:ind-hecke}, 
it suffices to show $\Gamma^{\IndCoh} = \Gamma^{\Hecke,naive} \circ \ind^{\Hecke}$
factors through $\widehat{\fg}_{crit}\widetilde{\mod}_{reg,naive}$. 
This functor is given by convolution with 
$\bV_{crit} \in 
\widehat{\fg}_{crit}\mod_{reg,naive}^{\heart} \subset 
\widehat{\fg}_{crit}\widetilde{\mod}_{reg,naive}$, so the claim follows
from Lemma \ref{l:tilde}.

\end{proof}

\begin{cor}\label{c:tilde-gen}

Let $K \subset G(O)$ be a prounipotent\footnote{This assumption
can be omitted, but the argument requires some additional details.}
group subscheme. Then
$\widehat{\fg}_{crit}\widetilde{\mod}_{reg,naive}^K$ is the subcategory
of $\widehat{\fg}_{crit}\mod_{reg,naive}^K$ generated under colimits
by $\widehat{\fg}_{crit}\mod_{reg,naive}^{K,+}$.

\end{cor}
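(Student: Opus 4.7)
The plan is to verify both inclusions directly, relying on the standard properties of $K$-invariants when $K$ is prounipotent: the forgetful functor $\Oblv \colon \sC^K \to \sC$ (with $\sC \coloneqq \widehat{\fg}_{crit}\mod_{reg,naive}$) is fully faithful, $t$-exact, and has a continuous right adjoint $\Av_*^K$ that is left $t$-exact. (All of this is recorded in \cite{methods} \S 10, and uses that the $t$-structure on $\sC$ is strongly compatible with the $G(K)$-action, which follows from Lemma \ref{l:ren} applied to the corresponding fact for $\widehat{\fg}_{crit}\mod$.) Let $\widetilde{\sC} \coloneqq \widehat{\fg}_{crit}\widetilde{\mod}_{reg,naive}$, and let $\sE \subset \sC^K$ denote the subcategory generated under colimits by $\sC^{K,+}$.

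For the easy inclusion $\sE \subset \widetilde{\sC}^K$, I would note that since $\Oblv$ is fully faithful and $t$-exact, the composition $\sC^{K,+} \hookrightarrow \sC^K \xrightarrow{\Oblv} \sC$ lands in $\sC^+ \subset \widetilde{\sC}$. Hence $\sC^{K,+} \subset \widetilde{\sC}^K$. The full subcategory $\widetilde{\sC}^K \subset \sC^K$ is closed under colimits (it is the intersection of $\sC^K$ with $\widetilde{\sC}$, and both are closed under colimits in $\sC$; moreover $\Oblv$ preserves and reflects colimits as it is fully faithful with a right adjoint), so $\sE \subset \widetilde{\sC}^K$.

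For the reverse inclusion, given $\sF \in \widetilde{\sC}^K$, I would first express $\Oblv(\sF) \in \widetilde{\sC}$ as a colimit $\Oblv(\sF) \simeq \colim_i \sG_i$ in $\sC$ with $\sG_i \in \sC^+$; this is possible because $\widetilde{\sC}$ is by definition the cocompletion of $\sC^+$ inside $\sC$, so every object of $\widetilde{\sC}$ is a colimit of objects in $\sC^+$. Because $K$ is prounipotent, the counit $\Av_*^K \Oblv \to \id_{\sC^K}$ is an equivalence, and because $\Av_*^K$ is continuous, we compute
\[
\sF \simeq \Av_*^K\Oblv(\sF) \simeq \Av_*^K\big(\colim_i \sG_i\big) \simeq \colim_i \Av_*^K(\sG_i)
\]
in $\sC^K$. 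Finally, since $\Av_*^K$ is left $t$-exact, each $\Av_*^K(\sG_i)$ lies in $\sC^{K,+}$, so $\sF \in \sE$.

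The argument is essentially formal; there is no significant obstacle once one has the three black-boxed properties of $\Av_*^K$ for prounipotent $K$. The only point that might warrant care is the claim that every object in the cocompletion $\widetilde{\sC}$ is presentable as a single colimit of objects in the generating class $\sC^+$, but this is a standard fact: the class of objects of $\sC$ that are colimits of diagrams in $\sC^+$ is itself closed under colimits (colimits of colimits are colimits in an $\infty$-category) and contains $\sC^+$, so it contains $\widetilde{\sC}$.
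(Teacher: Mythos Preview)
Your proof is correct and follows essentially the same approach as the paper: both arguments use that $\Av_*^K$ is continuous, left $t$-exact, and satisfies $\Av_*^K\Oblv \simeq \id$ for prounipotent $K$, with the paper packaging this as a commutative-diagram argument and you spelling out the same steps explicitly. One minor terminological slip: the isomorphism you invoke is the \emph{unit} $\id_{\sC^K} \to \Av_*^K\Oblv$ of the adjunction $\Oblv \dashv \Av_*^K$, not the counit, and it points in the opposite direction to what you wrote; this does not affect the argument since it is an isomorphism.
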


\begin{proof}

We have a commutative diagram: 

\[
\xymatrix{
\widehat{\fg}_{crit}\mod_{reg,naive}^+ \ar@{^(->}[rr] \ar[d]_{\Av_*^K} & &
\widehat{\fg}_{crit}\widetilde{\mod}_{reg,naive} \ar[d]^{\Av_*^K} \\
\widehat{\fg}_{crit}\mod_{reg,naive}^{K,+} \ar@{^(->}[rr] & &
\widehat{\fg}_{crit}\widetilde{\mod}_{reg,naive}^K.
}
\]

\noindent The top and right functors generate under colimits, so the same is true of their
composition. This implies that the bottom arrow generates under colimits, 
as desired. 

\end{proof}

\subsection{Proof for $PGL_2$}

To simplify the discussion, we first assume $G = PGL_2$.
We indicate the necessary modifications for general $G$ of
semisimple rank $1$ in \S \ref{ss:rk1}.  

By construction, $\Gamma^{\Hecke,naive}$ is 
a $G(K)$-equivariant functor (at critical level). 
In particular, the subcategory of
$\widehat{\fg}_{crit}\widetilde{\mod}_{reg,naive}$ 
generated under colimits by its
essential image is closed under the $G(K)$-action.

Therefore, by Theorem \ref{t:generic}, to prove Lemma \ref{l:gamma-gens}
it suffices to show that
the essential image of $\Gamma^{\Hecke,naive}$ contains
$\widehat{\fg}_{crit}\widetilde{\mod}_{reg,naive}^{\o{I}}$ and
$\Whit(\widehat{\fg}_{crit}\widetilde{\mod}_{reg,naive})$.
The former follows from Theorem \ref{t:iwahori}, while
the latter follows from Theorem 
\ref{t:whit-equiv}.\footnote{In the latter
case, it is shown that
$\Gamma^{\Hecke,naive}$ even induces
an equivalence on Whittaker categories
with $\widehat{\fg}_{crit}\mod_{reg,naive}$,
i.e., the distinction with
$\widehat{\fg}_{crit}\widetilde{\mod}_{reg,naive}$
is not necessary for the Whittaker part of the argument.}

\subsection{Generalization to groups of semisimple rank $1$}\label{ss:rk1}

We briefly indicate the argument for general $G$ of
semisimple rank $1$. 

First, for $\vph:G_1 \to G_2$ an isogeny of reductive groups,
the natural functor:

\[
D_{crit}^{\Heckez}(\Gr_{G_1}) \to D_{crit}^{\Heckez}(\Gr_{G_2})
\]

\noindent is an equivalence. Indeed, this
follows as:

\[
D_{crit}(\Gr_{G_1}) 
\underset{\Rep(\ld{G}_1)}{\otimes} \Rep(\ld{G}_2) \to 
D_{crit}(\Gr_{G_2})
\]

\noindent and:

\[
\Op_{\ld{G}_2}^{reg} \to \Op_{\ld{G}_1}^{reg}
\]

\noindent are equivalences (the latter being a consequence
of Remark \ref{r:opers-defin}).

In particular, one deduces that
$G(K)$ acts (with critical level) 
on $D_{crit}^{\Heckez}(\Gr_G)$
through $G^{ad}(K)$ (e.g., it is easy to see directly 
that the action is trivial for $G$ a torus).
The same is evidently true
for the action on 
$\widehat{\fg}_{crit}\mod_{reg,naive}$. Moreover,
$\Gamma^{\Hecke,naive}$ is $G^{ad}(K)$-equivariant.

Next, one observes that the Whittaker category with 
respect to the $G^{ad}(K)$ action coincides with the
Whittaker category for the $G(K)$ action, and similarly
for the radical of Iwahori. For later reference,
we also highlight that for $n>0$, the
invariants for the $n$th subgroup
of $G(K)$ coincide with the similar invariants 
for the $G^{ad}(K)$-action.

Finally, we observe that for $G$ of semisimple rank
$1$, $G^{ad} = PGL_2$, so we can apply the above
observations and Theorem \ref{t:generic}.

\section{Exactness}\label{s:exactness}

\subsection{} In this section, we prove 
Lemma \ref{l:naive-t-exact}.
The main idea is Proposition \ref{p:pgl2-exact}.

\subsection{$t$-structures on quotient categories}\label{ss:quot-t}

We will need the following construction.

Suppose $\sC \in \DGCat_{cont}$ is equipped with a $t$-structure that
is compatible with filtered colimits. Let $i_*:\sC_0 \into \sC$
be a fully faithful functor admitting a continuous right adjoint $i^!$.
We suppose the full subcategory $\sC_0 \subset \sC$ is closed
under truncation functors for the $t$-structure; in particular,
$\sC_0$ admits a unique $t$-structure for which $i_*$ is $t$-exact.

Define $\o{\sC}$ as $\Ker(i^!:\sC \to \sC_0)$. We denote the
embedding of $\o{\sC}$ into $\sC$ by $j_*$. This embedding 
admits a left adjoint $\sF \mapsto \Coker(i_*i^! \sF \to \sF)$,
which we denote by $j^*:\sC \to \o{\sC}$.

\begin{lem}\label{l:quot-t}

Suppose that the functor $j_*j^*:\sC \to \sC$ is left $t$-exact.
Then there is a unique $t$-structure on $\o{\sC}$ such that
$j^*:\sC \to \sC$ is $t$-exact. 

\end{lem}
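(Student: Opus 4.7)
The plan is to define the $t$-structure on $\o{\sC}$ directly in terms of the given $t$-structure on $\sC$ and the fully faithful inclusion $j_*:\o{\sC} \into \sC$, then verify the $t$-structure axioms by using the hypothesis that $j_*j^*$ is left $t$-exact.

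Concretely, I will set
\[
\o{\sC}^{\geq 0} := \{F \in \o{\sC} : j_*F \in \sC^{\geq 0}\},
\]
and take $\o{\sC}^{\leq 0}$ to be the essential image of $j^*(\sC^{\leq 0})$ (equivalently, the left orthogonal to $\o{\sC}^{\geq 1}$). The closure of $\o{\sC}^{\leq 0}$ under colimits and of $\o{\sC}^{\geq 0}$ under limits are immediate from continuity of $j^*$ and $j_*$, and the shift compatibilities are tautological. The orthogonality $\Hom_{\o{\sC}}(F,G[-1]) = 0$ for $F \in \o{\sC}^{\leq 0}$, $G \in \o{\sC}^{\geq 0}$ follows by writing $F = j^*F'$ with $F' \in \sC^{\leq 0}$ and using the adjunction $\Hom_{\o{\sC}}(j^*F',G) = \Hom_{\sC}(F',j_*G)$ together with $j_*G \in \sC^{\geq 0}$.

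The main step, which uses the hypothesis, is the construction of truncation triangles. For $F \in \o{\sC}$, I would take the truncation triangle $\tau^{\leq 0}(j_*F) \to j_*F \to \tau^{\geq 1}(j_*F)$ in $\sC$ and apply $j^*$ to obtain a candidate triangle $j^* \tau^{\leq 0}(j_*F) \to F \to j^*\tau^{\geq 1}(j_*F)$ in $\o{\sC}$ (using $j^* j_* F \simeq F$). The term $j^*\tau^{\leq 0}(j_*F)$ lies in $\o{\sC}^{\leq 0}$ by construction. The claim that $j^* \tau^{\geq 1}(j_*F) \in \o{\sC}^{\geq 1}$ is exactly the assertion that $j_*j^*\tau^{\geq 1}(j_*F) \in \sC^{\geq 1}$, which is the hypothesis that $j_*j^*$ is left $t$-exact applied to $\tau^{\geq 1}(j_*F) \in \sC^{\geq 1}$. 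This is the one place where the hypothesis enters, and it is the key (though short) obstacle.

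For uniqueness, suppose $\o{\sC}$ carries any $t$-structure for which $j^*$ is $t$-exact. Then its right adjoint $j_*$ is left $t$-exact, so $F \in \o{\sC}^{\geq 0}$ implies $j_*F \in \sC^{\geq 0}$. Conversely, if $j_*F \in \sC^{\geq 0}$, then left $t$-exactness of $j^*$ gives $F = j^*j_*F \in \o{\sC}^{\geq 0}$. Hence $\o{\sC}^{\geq 0}$ must coincide with the category defined above, pinning down the $t$-structure.
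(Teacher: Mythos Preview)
Your proof is correct and follows essentially the same approach as the paper: define $\o{\sC}^{\geq 0}$ via the condition $j_*F \in \sC^{\geq 0}$, then obtain truncation triangles by applying $j^*$ to the truncation in $\sC$ of $j_*F$, with the hypothesis on $j_*j^*$ being used precisely to show that $j^*\tau^{\geq 1}(j_*F)$ lands in $\o{\sC}^{\geq 1}$. The only cosmetic differences are that the paper defines $\o{\sC}^{\leq 0}$ directly as the left orthogonal to $\o{\sC}^{>0}$ (rather than as the essential image of $j^*(\sC^{\leq 0})$, which coincides a posteriori), and that you include an explicit uniqueness argument which the paper omits.
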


\begin{rem}\label{r:epaisse}

The hypothesis of the lemma is equivalent to the assertion that
for $\sF \in \sC^{\heart}$, the map $H^0(i_*i^!\sF) \to \sF$ is
a monomorphism in the abelian category $\sC^{\heart}$. In turn,
this assertion is well-known to be equivalent to $\sC_0^{\heart} \subset 
\sC^{\heart}$ being closed under subobjects. 

\end{rem}

\begin{proof}[Proof of Lemma \ref{l:quot-t}]

Define $\o{\sC}^{>0} \subset \o{\sC}$ as the
full subcategory of $\sF \in \o{\sC}$ with $j_*(\sF) \in \sC^{> 0}$.
Define $\o{\sC}^{\leq 0} \subset \o{\sC}$ as the left orthogonal
to $\sC^{>0}$. 

The functor $j^*:\sC \to \o{\sC}$ maps
$\sC^{\leq 0}$ to $\o{\sC}^{\leq 0}$ immediately from the
definition, and maps $\sC^{>0}$ to $\o{\sC}^{>0}$ by our
assumption that $j_*j^*$ is left $t$-exact.

In particular, for $\sF \in \o{\sC}$, 
$j^*\tau^{>0} j_*(\sF) \in \o{\sC}^{>0}$ 
and $j^*\tau^{\leq 0} j_*(\sF) \in \o{\sC}^{\leq 0}$.
As $j^*j_*(\sF) \isom \sF$, we see that we have in fact
defined a $t$-structure on $\o{\sC}$. By the previous paragraph,
the functor $j^*$ is $t$-exact as desired.

\end{proof}

\subsection{Subobjects in equivariant categories}

To apply the previous material, we use the following
result. 

\begin{prop}\label{p:eq-subobj}

Suppose $H$ is a connected, 
affine algebraic group acting strongly on
$\sC \in \DGCat_{cont}$. Suppose that $\sC$ is equipped with 
a $t$-structure compatible with the $H$-action. 

Then the functor $\sC^{H,\heart} \to \sC^{\heart}$ is fully faithful
and the resulting subcategory is closed under subobjects.

\end{prop}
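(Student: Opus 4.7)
My plan is to reduce both assertions to cohomological bounds on the unit and counit of the adjunction $\Oblv\colon \sC^H \rightleftarrows \sC : \Av_*^H$, and then to establish those bounds via a K\"unneth-type computation that crucially uses the connectedness of $H$. The compatibility hypothesis makes $\Oblv$ $t$-exact, and the right adjoint $\Av_*^H$ exists with $\Oblv\Av_*^H = k_H \star (-)$.

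For fully faithfulness on hearts, observe that for $X,Y\in \sC^{H,\heart}$ the natural map $\Hom_{\sC^H}(X,Y) \to \Hom_{\sC}(\Oblv X,\Oblv Y)$ factors, via the adjunction isomorphism $\Hom_{\sC^H}(X,\Av_*^H\Oblv Y)\simeq \Hom_{\sC}(\Oblv X,\Oblv Y)$, through the unit $\eta_Y\colon Y \to \Av_*^H\Oblv Y$; it therefore suffices to show the cofiber of $\eta_Y$ lies in $(\sC^H)^{\geq 1}$. By $t$-exactness and conservativity of $\Oblv$, this reduces to bounding the cofiber of $\Oblv Y \to k_H \star \Oblv Y$ in $\sC^{\geq 1}$. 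I would exhibit a K\"unneth identification $k_H \star \Oblv Y \simeq \Oblv Y \otimes_k H^*_{dR}(H)$, valid for objects coming from $\sC^{H,\heart}$ via their intrinsic equivariance, under which $\eta_Y$ is the inclusion of the $H^0_{dR}(H)$-summand; since $H$ is connected, $H^0_{dR}(H) = k$ and the cofiber $\Oblv Y\otimes H^{\geq 1}_{dR}(H)$ lies in $\sC^{\geq 1}$ as needed.

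For closure under subobjects, Remark \ref{r:epaisse} reduces the statement to showing that for every $\sF\in \sC^{\heart}$, the map $H^0(k_H\star \sF) \to \sF$ induced by the counit is a monomorphism in $\sC^\heart$. This is the categorical analogue of the classical observation that, for $H$ connected, a subobject of an equivariant classical object inherits a (necessarily unique) equivariance because the locus of equivariance is closed and contains the identity section; I would deduce it from the K\"unneth decomposition above, which realizes $H^0(k_H\star \sF)$ as the maximal equivariant subobject of $\sF$. The main obstacle is the K\"unneth identification itself, which needs to be set up rigorously in the framework of strong $D^*(H)$-actions; I would carry this out via the semidirect-product decomposition $H = U\rtimes L$, handling connected unipotent $U$ by the prounipotent invariants theory of \cite{beraldo-*/!} (where $H^*_{dR}(U) = k$), and reducing connected reductive $L$ to a maximal torus $T$ via a Borel subgroup, where $H^*_{dR}(T) = \Lambda^*(\ft^{\vee})$ is explicit and the decomposition can be built from the bar complex of $T$ acting on $\sC$.
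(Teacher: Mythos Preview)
Your treatment of fully faithfulness is fine and matches the paper's: for equivariant $Y$, the unit $Y \to \Av_*^H\Oblv Y$ is controlled by $k \to H^*_{dR}(H)$, and connectedness gives the degree-zero isomorphism.

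The gap is in the closure-under-subobjects half. Remark~\ref{r:epaisse} asks you to show that $H^0(k_H\star\sF) \to \sF$ is a monomorphism for \emph{arbitrary} $\sF \in \sC^{\heart}$, not just equivariant ones. But your K\"unneth identification $k_H \star \Oblv Y \simeq \Oblv Y \otimes_k H^*_{dR}(H)$ is only valid for $Y \in \sC^{H,\heart}$; for non-equivariant $\sF$ there is no such splitting (e.g.\ take $H=\bG_m$ acting on $\sC=D(\bG_m)$ by translation: $k_{\bG_m}\star\delta_1 = k_{\bG_m}$, not $\delta_1 \oplus \delta_1[-1]$). So the sentence ``I would deduce it from the K\"unneth decomposition above, which realizes $H^0(k_H\star\sF)$ as the maximal equivariant subobject'' is circular: identifying $H^0(k_H\star\sF)$ as a \emph{subobject} of $\sF$ is precisely the claim to be proved. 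Your proposed Levi/torus reduction does not escape this, since the K\"unneth-type formula still fails for non-equivariant objects over a torus.

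The paper avoids this entirely. It rewrites the counit $\Oblv\Av_*^H\sF \to \sF$ as convolution with $k_H \to \delta_1$, so the needed statement becomes: convolution with the cokernel $\jmath_!(k_{\o{H}})[1]$ is left $t$-exact on $\sC$. The key move (the paper's Step~2) is a universal reduction: using the $t$-exact, conservative coaction $\sC \to D(H)\otimes\sC$, one shows that if $\sG\star(-)$ is left $t$-exact on $D(H)$ then it is left $t$-exact on any $\sC$ with compatible $t$-structure. This reduces to $\sC = D(H)$, where the assertion is the elementary fact that a sub-$D$-module of a constant $D$-module on a connected variety is constant. This ``reduce to the regular representation via coaction'' step is what your argument is missing.
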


\begin{proof}

In what follows, we let $\iota:\Spec(k) \to H$ denote the unit
for the group structure and we let $\o{H} \coloneqq H \setminus 1$
the complementary open with embedding $\jmath:\o{H} \to H$.

We let $\delta_1 = \iota_{*,dR}(k) \in D(H)$ denote the $\delta$ $D$-module
on $G$ supported at $1 \in H$, and we let $k_H \in D(H)$ 
(resp. $k_{\o{H}} \in D(\o{H})$) denote the constant $D$-module
(i.e., the $*$-dR pullback of $k \in D(\Spec(k)) = \Vect$).

\step\label{st:subobj-1}

We begin with reductions.

First, that $\sC^{H,\heart} \to \sC^{\heart}$ is fully faithful
for $H$ connected is 
well-known.\footnote{We recall the argument
for the reader's convenience. 
For $\sF \in \sC^{H,\heart}$,
we need to show that $\sF \to \Av_*^H\Oblv(\sF)$ 
gives an isomorphism after applying $H^0$.
Moreover, it suffices to do so after applying $\Oblv$.

The resulting map is obtained by 
($H$-equivariant) convolution with the canonical
map $k_H \to k_H \star k_H \in D(H)^{H}$.
Under the identification $D(H)^H = \Vect$ with $k \in \Vect$ corresponding
to $k_H \in D(H)^H$, the resulting map corresponds to 
$k \to \Gamma_{dR}(H,k_H)$.
Because $H$ is connected (hence, geometrically connected), 
this map is an isomorphism in degree $0$, giving the claim.}

By Remark \ref{r:epaisse}, it suffices to show that for
$\sF \in \sC^{H,\heart}$, the map:

\[
\Oblv\Av_*^H(\sF) \to \sF
\]

\noindent induces a monomorphism on $H^0$, or equivalently,
the (homotopy) cokernel of this map is coconnective.
As the above map is obtained by convolution with 
the map $k_H \to \delta_1 \in D(H)$, it suffices
to show that convolution with its cokernel,
which is $\jmath_!(k_{\o{H}})[1]$, is left $t$-exact.

\step\label{st:subobj-2} 

Let $\sF \in D(H)$ be given. Suppose the functor $\sF \star -:D(H) \to D(H)$
is left $t$-exact. We claim that the functor $\sF \star -:\sC \to \sC$
is left $t$-exact.

Indeed, by definition of the $t$-structure on $\sC$ being
compatible with the $H$-action, the functor
$\coact:\sC \to D(H) \otimes \sC$ is $t$-exact up to shift.
The functor $\coact$ is 
$H$-equivariant for the $H$-action on $D(H) \otimes \sC$ on the first
factor alone. Moreover, $\coact$ is conservative: its composition 
with $!$-restriction along the origin $\Spec(k) \into H$ is the
identity functor for $\sC$.

Therefore, the claim follows from \cite{whit} Lemma B.6.2.

\step 

By Step \ref{st:subobj-2}, we are reduced to showing that convolution with 
$\jmath_!(k_{\o{H}})[1]$ defines a left $t$-exact functor
$D(H) \to D(H)$. By the reasoning of Step \ref{st:subobj-1}, it is equivalent
to say that the essential image of the functor
$D(H)^{H,\heart} = \Vect^{\heart} \into D(H)^{\heart}$ is closed
under subobjects, which is evident: a sub $D$-module of a constant
one is itself constant.

\end{proof}

\subsection{An exactness criterion}

We begin with a scheme for checking that 
a functor between categories with (finite jets into)
$PGL_2$-actions is $t$-exact.

\begin{prop}\label{p:pgl2-exact}

Let $G = PGL_2$ and let $G_n$ be as in \S \ref{ss:gn-notation}.

Let $\sC, \sD \in G_n\mod$ be equipped with $t$-structures
compatible with the $G_n$-actions. 

Suppose $F:\sC \to \sD$ is a $G_n$-equivariant functor.

Then $F$ is left $t$-exact if and only if the functors:

\[
\sC^{N_n,\psi} \to \sD^{N_n,\psi}  
\text{ and }\begin{cases}
\sC^{N} \to \sD^{N} & n = 1 \\
\sC^{\fg \otimes \bG_a} \to \sD^{\fg \otimes \bG_a} & n \geq 2
\end{cases}
\]

\noindent are left $t$-exact, where $\fg \otimes \bG_a$
is embedded into $G_n$ via \eqref{eq:gn-lower-central}.

\end{prop}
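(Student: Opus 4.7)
The ``only if'' direction is routine: for any prounipotent subgroup $H \subset G_n$ (possibly with a character), the forgetful functor $\Oblv:\sC^H \to \sC$ is $t$-exact and the $t$-structure on $\sC^H$ is characterized by it, so left $t$-exactness of $F$ restricts to $F^H$. I will therefore focus on the converse direction, treating the two cases separately.

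For $n \geq 2$, my plan is to exploit the Fourier decomposition along the normal abelian subgroup $\fg \otimes \bG_a \subset G_n$ from \eqref{eq:gn-lower-central}; since $\fg = \sl_2$, this corresponds to the decomposition $\fg = \{0\} \sqcup \fg_{reg}$. For $\sF \in \sC^{\geq 0}$, I will invoke the recollement triangle $i_* i^!\sF \to \sF \to j_{*,dR} j^!\sF$ along the closed point $i:\{0\}\into \fg$ and its open complement $j:\fg_{reg}\into \fg$. Standard $t$-exactness properties ($t$-exactness of $i_*$ and $j^!$, left $t$-exactness of $i^!$ and $j_{*,dR}$) place both endpoints in $\sC^{\geq 0}$. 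The term $i_*i^!\sF$ has the form $i_*(\sH)$ with $\sH \in \sC^{\fg \otimes \bG_a, \geq 0}$, so $G_n$-equivariance of $F$ gives $F(i_*\sH) = i_* F^{\fg \otimes \bG_a}(\sH) \in \sD^{\geq 0}$ by the hypothesis on $F^{\fg \otimes \bG_a}$. It will then suffice to verify that the induced functor $F_{reg}:\sC_{reg} \to \sD_{reg}$ is left $t$-exact for the $t$-structure on $\sC_{reg}$ making $j^!$ $t$-exact.

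The crucial input for the regular part will be that the Whittaker invariants functor $(-)^{N_n,\psi}$ is both $t$-exact and conservative on $\sC_{reg}, \sD_{reg} \in G_n\mod_{reg}$: $t$-exactness follows from $N_n$ being prounipotent together with $G_n$-compatibility of the $t$-structures, while conservativeness is exactly Theorem \ref{t:cons}. Such a functor reflects coconnectivity: if $X \in \sD_{reg}$ satisfies $X^{N_n,\psi} \in \sD^{N_n,\psi,\geq 0}$, then $(\tau^{<0}X)^{N_n,\psi} = \tau^{<0}(X^{N_n,\psi}) = 0$, and conservativeness forces $\tau^{<0}X = 0$. I will apply this with $X = F_{reg}(\sG)$ for $\sG \in \sC_{reg}^{\geq 0}$: the hypothesis on $F^{N_n,\psi}$ gives $F^{N_n,\psi}(\sG^{N_n,\psi}) \in \sD^{N_n,\psi,\geq 0}$, which by $G_n$-equivariance coincides with $F_{reg}(\sG)^{N_n,\psi}$, yielding $F_{reg}(\sG) \in \sD_{reg}^{\geq 0}$ and completing the $n \geq 2$ case.

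The case $n = 1$ requires a modified approach since $\fg \otimes \bG_a$ is not a subgroup of $G_1 = G$. My plan is to Fourier transform instead along the additive subgroup $N \simeq \bG_a \subset G$, giving $\sC$ a $D(\bA^1)$-module structure with $\bA^1 = N^\vee$, and apply the analogous recollement along $\{0\} \sqcup \bG_m$. The $\{0\}$-stratum yields $\sC^N$, handled by the hypothesis on $F^N$, while the $\bG_m$-stratum is a single free transitive $T$-orbit under the conjugation action of $T \subset B \subset G$ on $N^\vee$; the strong $T$-equivariance of $F$ inherited from $G$-equivariance should reduce left $t$-exactness on $\sC|_{\bG_m}$ to that on the fiber $\sC^{N,\psi}$. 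The main obstacle I anticipate is exactly this last descent: because $N$ is not normal in $G$, care will be needed to justify that a strongly $T$-equivariant, $D(\bG_m)$-linear category over a free transitive $T$-orbit is controlled by its fiber at a single point, which I expect to address via a descent argument in the spirit of Lemma \ref{l:heis-bundle}, using triviality of the $T$-stabilizer of $\psi$.
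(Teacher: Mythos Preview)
Your overall architecture matches the paper's closely: recollement along $\{0\} \sqcup \fg_{reg}$ (for $n \geq 2$) or $\{0\} \sqcup \bG_m$ in $N^{\vee}$ (for $n=1$), handle the closed stratum by hypothesis, and control the open stratum via Whittaker. For $n=1$ your plan is fine and coincides with the paper's; the obstacle you anticipate is not real, since only the $B = T \ltimes N$ action is needed and $N$ is normal there --- the paper simply exhibits the action map $D(\bG_m) \otimes \sC^{\bG_a,\psi} \to \o{\sC}$ as a $t$-exact equivalence.

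For $n \geq 2$ there is a genuine gap. You invoke Theorem~\ref{t:cons} for the claim that $\Av_*^{N_n,\psi}:\sC_{reg} \to \sC^{N_n,\psi}$ is conservative on \emph{objects}. But Theorem~\ref{t:cons} asserts conservativity of the $2$-functor $\sE \mapsto \sE^{N_n,\psi}$ on $G_n\mod_{reg}$, which is a different (weaker) statement. In fact your object-level claim is false: take $\sC = D(G_2)$ with the left regular $G_2$-action and Fourier-transform along $\fg \otimes \bG_a$ as in \S\ref{ss:n=2}. One finds $\sC_{reg} \simeq D(G \times \fg_{reg})$, and $\Av_*^{\fn\otimes\bG_a,\psi}$ becomes $i^!$ along the closed locus $\{(g,\eta):\Ad_g\eta \in f+\fb\}$. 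A skyscraper at any $(g_0,\eta_0)$ with $\Ad_{g_0}\eta_0 \in \fg_{reg}\setminus(f+\fb)$ is nonzero in $\sC_{reg}$ but is killed by $\Av_*^{N_2,\psi}$. So your ``reflects coconnectivity'' step cannot run with $(-)^{N_n,\psi}$ alone.

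The paper repairs this by using the richer functor $\Av_!^{\psi,-\psi}\circ j_{*,dR}:\sC_{reg} \to D(G_n)^{N_n,-\psi}\otimes \sC^{N_n,\psi}$ from Theorem~\ref{t:whit-conv}: it is $t$-exact (being $\Av_*^{\psi,-\psi}$ up to shift) and conservative by Corollary~\ref{c:av-!-cons}, the latter resting on Corollary~\ref{c:reg-eff} rather than Theorem~\ref{t:cons}. The hypothesis on $F^{N_n,\psi}$ still suffices, since $G_n$-equivariance makes $F$ intertwine $\Av_!^{\psi,-\psi}$ with $\id \otimes F^{N_n,\psi}$, which is left $t$-exact by \cite{whit} Lemma~B.6.2. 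A smaller point: your ``standard'' left $t$-exactness of $j_{*,dR}$ (equivalently, the existence of a $t$-structure on $\sC_{reg}$ with $j^!$ $t$-exact) is not automatic --- it needs Proposition~\ref{p:eq-subobj} via Lemma~\ref{l:quot-t}.
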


Below, we give the proofs separately for $n = 1$ and $n \geq 2$.
We remark that in both cases, the ``only if" direction
is obvious.

\begin{proof}[Proof of Proposition \ref{p:pgl2-exact} for $n = 1$]

As we will see, in this case we 
only need the action of the 
Borel $B = T \ltimes N = \bG_m \ltimes \bG_a$ 
of $G = PGL_2$.

Define $\o{\sC}$ as $\Ker(\sC \xar{\Av_*^{\bG_a}} \sC^{\bG_a})$.
The embedding $\o{\sC} \into \sC$ admits a
left adjoint 
calculated as 
${\sF \mapsto \Coker(\Oblv\Av_*^{\bG_a}(\sF) \to \sF)}$.
By Lemma \ref{l:quot-t} and Proposition \ref{p:eq-subobj},
$\o{\sC}$ admits a unique
$t$-structure such that this functor $\sC \to \o{\sC}$
is $t$-exact.

The action functor $\act:D(\bG_m) \otimes \sC \to \sC$
maps $D(\bG_m) \otimes \sC^{\bG_a,\psi}$ into $\o{\sC}$,
and the resulting functor is an equivalence
(by Fourier transforming, c.f. \cite{beraldo-*/!}).
We claim that this equivalence is $t$-exact, where
$D(\bG_m) \otimes \sC^{\bG_a,\psi}$ is given the
tensor product $t$-structure.

To verify this, we will need the following commutative diagram:

\begin{equation}\label{eq:gm-ga}
\vcenter{\xymatrix{
D(\bG_m) \otimes \sC^{\bG_a,\psi} \ar[rr]^{\act} 
\ar[d]^{\id_{D(\bG_m)} \otimes \Oblv} &&
\o{\sC} \ar[d] \\
D(\bG_m) \otimes \o{\sC} \ar[rr] &&
D(\bG_m) \otimes \o{\sC} 
}}
\end{equation}

\noindent with morphisms as follows. 
The top arrow is
induced by the action functor from above.
The left arrow is $\id_{\bG_m}$ tensored
with the embedding $\sC^{\bG_a,\psi} \into \o{\sC}$ ($\subset \sC$).
For the right arrow, note that
$\o{\sC} \subset \sC$ is closed under the $\bG_a$-action,
and the corresponding coaction functor
$\coact:\o{\sC} \to D(\bG_a) \otimes \o{\sC}$ composed
with the Fourier transform $D(\bG_a) \simeq D(\bA^1)$ (tensored
with $\id_{\sC}$) maps into $D(\bA^1\setminus 0) \otimes \o{\sC}$;
we have identified $\bA^1 \setminus 0$ with $\bG_m$ here.
Finally, the bottom arrow is the unique
map of $D(\bG_m)$-comodule categories
whose composition with $\Gamma_{dR}(\bG_m,-) \otimes \id_{\o{\sC}}$
is $\act$ (the action functor for the $\bG_m$-action on $\o{\sC}$);
here $D(\bG_m)$ is a coalgebra in $\DGCat_{cont}$ via
diagonal pushforwards, and both sides are considered
as cofree comodules coinduced from $\o{\sC}$. (That the
diagram commutes is immediate.)

Now in \eqref{eq:gm-ga}, the bottom arrow is $t$-exact
by \cite{whit} Lemma B.6.2. By \cite{whit} Lemma B.6.2, 
the left arrow is $t$-exact
because $\sC^{\bG_a,\psi} \into \o{\sC}$ is
(as this functor coincides with the composition
$\sC^{\bG_a,\psi} \into \sC \to \o{\sC}$ of $t$-exact
functors). The right functor is $t$-exact because
the $t$-structure on $\sC$ is compatible with the $\bG_a$-action.
As the vertical arrows are fully faithful and the
bottom arrow is an equivalence, we obtain that the
top arrow is a $t$-exact equivalence as well.

We can now conclude the argument. By assumption 
and \cite{whit} Lemma B.6.2, the functor:

\[
\o{\sC} \simeq D(\bG_m) \otimes \sC^{\bG_a,\psi} \to 
D(\bG_m) \otimes \sD^{\bG_a,\psi} \simeq \o{\sD}
\]

\noindent is left $t$-exact.

Suppose 
$\sF \in \sC^{\geq 0}$. Then 
$\Oblv\Av_*^{\bG_a}(\sF) \in \sC^{\bG_a,\geq 0}$, so
$F(\Oblv\Av_*^{\bG_a}(\sF)) \in \sD^{\bG_a,\geq 0}$. 
Moreover, defining: 

\[
\o{\sF} \coloneqq \Coker(\Oblv\Av_*^{\bG_a}(\sF) \to \sF) \in 
\o{\sC}
\]

\noindent we have $\o{\sF} \in \o{\sC}^{\geq 0}$ by definition
of the $t$-structure on $\o{\sC}$. Therefore, 
$F(\o{\sF}) \in \o{\sD}^{\geq 0}$. Because the
embedding $\o{\sD} \into \sD$ is left $t$-exact
(being right adjoint to a $t$-exact functor), 
we obtain:

\[
\Oblv\Av_*^{\bG_a}F(\sF), \, \,
\Coker(\Oblv\Av_*^{\bG_a}F(\sF) \to F(\sF)) \in \sD^{\geq 0}
\]

\noindent implying $F(\sF) \in \sD^{\geq 0}$.
 
\end{proof}

\begin{proof}[Proof of Proposition \ref{p:pgl2-exact} for $n \geq 2$]

Let $\sC_{reg} \subset \sC$ be defined as in \S \ref{ss:fourier-decomp}.
The embedding $\sC_{reg} \into \sC$ admits a left adjoint $j^!$
as in \emph{loc. cit}. Moreover, because $G = PGL_2$,
the argument from \S \ref{ss:descent} shows that
$\Ker(j^!) = \sC^{\fg \otimes \fG_a}$. 
Applying Lemma \ref{l:quot-t} and Proposition \ref{p:eq-subobj}, 
we find that $\sC_{reg}$ admits a unique $t$-structure for
which $j^!$ is $t$-exact.

By Theorem \ref{t:whit-conv}, the convolution functor:

\[
D(G_n)^{N_n,\psi} \otimes \sC^{N_n,\psi} \to \sC
\]

\noindent admits a left adjoint 
$\Av_!^{\psi,-\psi} = \Av_*^{\psi,-\psi}[2\dim N_n]$.
By \cite{whit} Lemma B.6.1, $\Av_!^{\psi,-\psi}[-\dim N_n] = 
\Av_*^{\psi,-\psi}[\dim N_n]$ is $t$-exact.

Because the above convolution functor factors through
$\sC_{reg}$, $\Av_!^{\psi,-\psi}:\sC \to 
D(G_n)^{N_n,\psi} \otimes \sC^{N_n,\psi}$ coincides
with $\Av_!^{\psi,-\psi}j_{*,dR}j^!$.
By the above, we find that $\Av_!^{\psi,-\psi} \circ j_{*,dR}$
is $t$-exact. Moreover, by Corollary \ref{c:av-!-cons},
$\Av_!^{\psi,-\psi} \circ j_{*,dR}$ is conservative.

Therefore, as:

\[
D(G_n)^{N_n,\psi} \otimes \sC^{N_n,\psi} \xar{\id \otimes F} 
D(G_n)^{N_n,\psi} \otimes \sD^{N_n,\psi}
\]

\noindent is left $t$-exact by assumption and \cite{whit} Lemma B.6.2, 
the resulting functor $\sC_{reg} \to \sD_{reg}$ is left $t$-exact.

As $\sC^{\fg \otimes \bG_a} \to \sD_{\fg \otimes \bG_a}$ is
left $t$-exact by assumption, the argument concludes as in 
the $n = 1$ case.

\end{proof}

\subsection{Exactness of $\Gamma^{\Hecke,naive}$}

We can now show $t$-exactness.

\begin{proof}[Proof of Lemma \ref{l:naive-t-exact}]

For simplicity, we take $G = PGL_2$; the argument for
general $G$ of semisimple rank $1$ follows by the 
considerations of \S \ref{ss:rk1}.

By Corollary \ref{c:gamma-hecke-right-exact}, it remains to show
left $t$-exactness.
It suffices to show that for every $n \geq 1$, the functor:

\[
\Gamma^{\Hecke,naive}:D_{crit}^{\Heckez}(\Gr_G)^{K_n} \to 
\widehat{\fg}_{crit}\mod_{reg,naive}^{K_n}
\]

\noindent is left $t$-exact; here $K_n \subset G(O)$ is the $n$th congruence
subgroup.
We show this by induction on $n$.

First, we treat the $n = 1$ case. By Proposition \ref{p:pgl2-exact},
it suffices to show (left) $t$-exactness for the corresponding
functors:

\[
\begin{gathered}
D_{crit}^{\Heckez}(\Gr_G)^{\o{I}} \to 
\widehat{\fg}_{crit}\mod_{reg,naive}^{\o{I}} \\
\Whit^{\leq 1}(D_{crit}^{\Heckez}(\Gr_G)) \to 
\Whit^{\leq 1}(\widehat{\fg}_{crit}\mod_{reg,naive})
\end{gathered}
\]

\noindent These results follow from Theorems \ref{t:iwahori} and
\ref{t:whit-equiv}.

We now proceed by induction; we suppose the result is true for $n \geq 1$ 
and deduce it for $n+1$. By Proposition \ref{p:pgl2-exact}, it
suffices to show that the functors:

\[
\begin{gathered}
D_{crit}^{\Heckez}(\Gr_G)^{K_n} \to 
\widehat{\fg}_{crit}\mod_{reg,naive}^{K_n} \\
\Whit^{\leq n+1}(D_{crit}^{\Heckez}(\Gr_G)) \to 
\Whit^{\leq n+1}(\widehat{\fg}_{crit}\mod_{reg,naive})
\end{gathered}
\]

\noindent are (left) $t$-exact. The former is the inductive hypothesis
and the latter is Theorem \ref{t:whit-equiv}.

\end{proof}

\section{The renormalized category}\label{s:bddness}

\subsection{} In this section, we prove
Lemma \ref{l:gamma-hecke-bdded}. The argument is
quite similar to the proof of Lemma \ref{l:naive-t-exact}.

\subsection{A boundedness criterion}

The following result is a cousin of Proposition \ref{p:pgl2-exact}.

\begin{prop}\label{p:pgl2-bdded}

Let $G = PGL_2$ and let $G_n$ be as in \S \ref{ss:gn-notation}.

Let $\sC \in G_n\mod$ be equipped with a $t$-structure
compatible with the $G_n$-action. Suppose that $\sD$ is 
equipped with a $t$-structure compatible with filtered
colimits. Suppose $F:\sC \to \sD \in \DGCat_{cont}$ is given.

Then $F$ is left $t$-exact up to shift 
if and only if:

\[
\begin{cases}
F|_{\sC^N} & n = 1 \\
F|_{\sC^{\fg \otimes \bG_a}} & n \geq 2
\end{cases}
\]

\noindent is left $t$-exact up to shift, and
$F(\sG \star \sF) \in \sD^+$ for
every: 

\[
\sG \in D(G_n)^+, \,
\sF \in \sC^{N_n,\psi,+}.
\]

\end{prop}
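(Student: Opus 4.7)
The ``only if'' direction is immediate: if $F$ is left $t$-exact up to some shift $r$, then its restriction to any $t$-structure-compatible subcategory inherits this property, and for $\sG \in D(G_n)^+$ compact and $\sF \in \sC^{N_n,\psi,+}$, the convolution $\sG \star \sF$ is bounded below by Lemma \ref{l:action-tstr-bdd}, so $F(\sG \star \sF) \in \sD^+$.

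For the ``if'' direction, I will mirror the proof of Proposition \ref{p:pgl2-exact}, replacing exactness assertions by boundedness ones. I treat $n \geq 2$ in detail; the case $n = 1$ proceeds analogously with the roles of $\sC_{reg}$ and $\sC^{\fg \otimes \bG_a}$ played by $\o{\sC}$ and $\sC^{\bG_a}$, exploiting the $t$-exact equivalence $\o{\sC} \simeq D(\bG_m) \otimes \sC^{\bG_a,\psi}$ established in \emph{loc. cit}. Equip $\sC_{reg}$ with the unique $t$-structure (provided by Lemma \ref{l:quot-t} and Proposition \ref{p:eq-subobj}) for which $j^! : \sC \to \sC_{reg}$ is $t$-exact, so that $j_{*,dR}$ is left $t$-exact. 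By the descent argument of \S \ref{ss:descent}, $\Ker(j^!) = \sC^{\fg \otimes \bG_a}$, yielding a fiber sequence $i_* i^! \sF \to \sF \to j_{*,dR} j^! \sF$. For $\sF \in \sC^{\geq 0}$, left $t$-exactness of $\Av_*^{\fg \otimes \bG_a}$ ensures $i_* i^! \sF$ is uniformly bounded below in $\sC$, so $F(i_* i^! \sF)$ is uniformly bounded below in $\sD$ by the first hypothesis. It therefore suffices to show that $F \circ j_{*,dR} : \sC_{reg} \to \sD$ is left $t$-exact up to shift.

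The key input is Theorem \ref{t:whit-conv}: the convolution $\on{conv} : D(G_n)^{N_n,-\psi} \otimes \sC^{N_n,\psi} \to \sC$ admits a left adjoint $\Av_!^{\psi,-\psi} = \Av_*^{\psi,-\psi}[2\dim N_n]$, which is $t$-exact up to a fixed shift, and by Corollary \ref{c:av-!-cons} is moreover conservative after restriction along $j_{*,dR}$. The plan is to analyze $F(j_{*,dR}(\sF'))$ for $\sF' \in \sC_{reg}^{\geq 0}$ by resolving $j_{*,dR}(\sF')$ via the Bar construction for the monad $\on{conv} \circ \Av_!^{\psi,-\psi}$; each term of this resolution has the form $\sG \star \sH$ with $\sG \in D(G_n)^{N_n,-\psi,+}$ and $\sH \in \sC^{N_n,\psi,+}$, whose lower cohomological bounds are controlled by the (fixed) shift produced by $\Av_!^{\psi,-\psi}$ applied to $\sF'$.

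The main obstacle, and the step requiring the most care, will be extracting a \emph{uniform} lower bound from the pointwise second hypothesis. To navigate this, I would first reduce to the case of compact $\sF' \in \sC_{reg}^{\geq 0}$, using continuity of $F$, compatibility of the $t$-structures with filtered colimits, and compact generation of $\sC_{reg}$ inherited from $\sC$; for such $\sF'$, Corollary \ref{c:reg-eff} provides a \emph{finite} presentation of $j_{*,dR}(\sF')$ as a colimit and direct summand of convolutions $\sG \star \sH$ with $\sG \in D(G_n)^{N_n,-\psi,+}$ and $\sH \in \sC^{N_n,\psi,+}$ compact and bounded below by fixed amounts depending on the shift of $\sF'$. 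The second hypothesis then applies to each piece of this finite presentation, and the uniformity is recovered by tracing the $t$-exactness shifts of $\Av_!^{\psi,-\psi}$ through the presentation. Propagating back along the fiber sequence gives the desired uniform shift for $F$ on all of $\sC^{\geq 0}$.
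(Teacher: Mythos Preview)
Your overall strategy coincides with the paper's: decompose via the fiber sequence with one piece in $\sC^{N}$ (resp.\ $\sC^{\fg\otimes\bG_a}$) and the other in $\o{\sC}$ (resp.\ $\sC_{reg}$), handle the first piece by the first hypothesis, and handle the second via Corollary~\ref{c:reg-eff}. The Bar construction you mention is not needed.

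The gap is in your handling of uniformity. You correctly flag that the second hypothesis is pointwise, but your proposed fix---reducing to compact $\sF'\in\sC_{reg}^{\geq 0}$---does not work: $\sC$ is not assumed compactly generated, and even if it were, compactness of $\sF'$ does not by itself turn the pointwise hypothesis into a uniform bound (nor does Corollary~\ref{c:reg-eff} produce compact $\sH$'s). The paper resolves the uniformity issue far more simply, by the elementary observation (stated at the outset of the $n=1$ proof) that for continuous $F$ between categories whose $t$-structures are compatible with filtered colimits, $F(\sC^+)\subset\sD^+$ is already equivalent to $F$ being left $t$-exact up to shift. Indeed, if no uniform shift existed, choose $\sF_n\in\sC^{\geq 0}$ with $F(\sF_n)\notin\sD^{\geq -n}$; then $\bigoplus_n\sF_n\in\sC^{\geq 0}$, but $F(\bigoplus_n\sF_n)=\bigoplus_n F(\sF_n)\notin\sD^+$ since $\sD^{\geq -R}$ is closed under summands. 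With this in hand, one applies Corollary~\ref{c:reg-eff} directly to an arbitrary $\o{\sF}\in\sC_{reg}\cap\sC^+$ (no compactness needed): the finite presentation coming from the \emph{fixed} resolution of $\delta_1^{reg}$ (Lemma~\ref{l:delta-reg}) exhibits $\o{\sF}$ as built from finitely many $\sG_1^{(i)}\star(\sG_2^{(i)}\star\o{\sF})$ with $\sG_2^{(i)}\star\o{\sF}\in\sC^{N_n,\psi,+}$, and the second hypothesis places each $F(\sG_1^{(i)}\star(\sG_2^{(i)}\star\o{\sF}))$ in $\sD^+$.

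For $n=1$ your sketch is too vague. The paper does not have an analogue of Corollary~\ref{c:reg-eff} available; instead it uses Lemma~\ref{l:boxtimes}: any object of $D(\bG_m)\otimes\sC^{\bG_a,\psi}$ lies in the subcategory generated under finite colimits and retracts by $D_{\bG_m}\boxtimes(\Gamma(\bG_m,-)\otimes\id)(\o{\sF})$. Since $(\Gamma(\bG_m,-)\otimes\id)(\o{\sF})\in\sC^{N,\psi,+}$ and $D_{\bG_m}$ (pushed into $D(G)$) lies in $D(G)^+$, the second hypothesis applies. You would need this lemma or an equivalent to make your $n=1$ argument go through.
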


\begin{rem}

We emphasize that there is no assumption here that $G_n$ acts
on $\sD$, in contrast to Proposition \ref{p:pgl2-exact}.

\end{rem}

\begin{rem}

The ``only if" direction of Proposition \ref{p:pgl2-bdded} 
is clear, as $\sG \star -: \sC \to \sC$ is left $t$-exact up to 
shift for $\sG \in D(G_n)^+$.

\end{rem}

\begin{proof}[Proof of Proposition \ref{p:pgl2-bdded} for $n = 1$]

As the $t$-structures on $\sC$ and $\sD$ are compatible with
filtered colimits, $F$ is left $t$-exact up to shift if and only
if $F(\sC^+) \subset \sD^+$. We verify the result in this form.

Suppose $\sF \in \sC^+$. Then $\Oblv\Av_*^N \sF \in \sC^{N,+}$, so 
by assumption $F(\Oblv\Av_*^N \sF) \in \sD^+$.
Therefore, setting
$\o{\sF} \coloneqq \Coker(\Oblv\Av_*^N \sF \to \sF)$, it suffices
to show that $F(\o{\sF}) \in \sD^+$.

As in the proof of Proposition \ref{p:pgl2-exact} (for $n = 1$), 
$\o{\sF}$ is
in the essential image of the fully faithful, $t$-exact convolution functor
$D(T) \otimes \sC^{N,\psi} \to \sC$. Therefore, it suffices to 
show that the composition:

\[
D(T) \otimes \sC^{N,\psi} \to \sC \xar{F} \sD
\]

\noindent is left $t$-exact up to shift. For convenience, in what
follows, we identify
$\o{\sF}$ with the correpsonding object of $D(T) \otimes \sC^{N,\psi}$.

For this, we observe that any object $\o{\sF} \in D(T) \otimes \sC^{N,\psi}$
lies in the full subcategory of $D(T) \otimes \sC^{N,\psi}$
generated under finite colimits and direct summands
by objects of the form $D_T \boxtimes (\Gamma(T,-) \otimes \id)(\o{\sF})$,
where $D_T \in D(T)^{\heart}$ is the sheaf of differential operators;
c.f. Lemma \ref{l:boxtimes} below.
Then by \cite{whit} Lemma B.6.2, 
for $\o{\sF} \in D(T) \otimes \sC^{N,\psi}$, we have:

\[
(\Gamma(T,-) \otimes \id)(\o{\sF}) \in \sC^{N,\psi,+}.
\]

\noindent Therefore, by assumption, 
$F(D_T \star (\Gamma(T,-) \otimes \id)(\o{\sF})) \in \sD^+$,
so we find that the same is true of $F(\o{\sF})$.

\end{proof}

Above, we used the following result.

\begin{lem}\label{l:boxtimes}

Let $S$ be a smooth affine scheme (over $\Spec(k)$). 

As is standard, let $\Oblv:D(S) \to \IndCoh(S) \overset{\Psi}{\simeq} \QCoh(S)$ 
denote the ``right" $D$-module 
forgetful functor from \cite{grbook} and let $\ind:\QCoh(S) \to D(S)$ denote
its left adjoint. Let $D_S \coloneqq \ind(\sO_S) \in D(S)^{\heart}$.
Let $\Gamma(S,-):D(S) \to \Vect$ denote the composition of
$\Oblv$ with the usual global sections functor on $\QCoh(S)$.

Then for any $\sC \in \DGCat_{cont}$ and any 
$\sF \in D(S) \otimes \sC$, $\sF$ lies in the full subcategory
of $D(S) \otimes \sC$ generated under finite colimits and
direct summands by objects of the form:

\[
D_S \boxtimes (\Gamma(S,-) \otimes \id_{\sC})(\sF).
\]

\end{lem}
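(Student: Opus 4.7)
The plan is to exploit the smoothness of $S$ to construct a finite resolution of $\sF$ whose terms are controlled direct summands of $D_S \boxtimes (\Gamma(S,-)\otimes \id_\sC)(\sF)$ itself. First, I would use the adjunction $D_S \boxtimes - \dashv \Gamma(S,-)\otimes \id_\sC$, which is monadic by Barr-Beck (since $\Gamma(S,-):D(S)\to \Vect$ is conservative for $S$ smooth affine, using conservativity of $\Oblv:D(S)\to \QCoh(S)$ and affineness of $S$), to identify $D(S) \otimes \sC$ with the category $\mathcal{D}_S\mod(\sC)$ of modules internal to $\sC$ over the algebra $\mathcal{D}_S = \Gamma(S, D_S)$. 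Under this identification, $D_S \boxtimes V$ corresponds to the free module $\mathcal{D}_S \otimes_k V$, and the task is to show $\sF$ lies in the thick Karoubi subcategory generated by the canonical free cover $\mathcal{D}_S \otimes_k \Oblv(\sF)$, where $\Oblv := \Gamma(S,-)\otimes \id_\sC$.

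Second, I would exploit the smoothness of $S$ of dimension $d$ via the Spencer resolution of $\mathcal{D}_S$ as a $(\mathcal{D}_S,\mathcal{D}_S)$-bimodule, which has length $d$ with terms $\mathcal{D}_S \otimes_R \wedge^k T_S \otimes_R \mathcal{D}_S$ where $T_S$ is the tangent sheaf and $R=\sO(S)$. Tensoring with $\sF$ on the left yields a finite resolution of $\sF$ with terms of the form $\ind_R^{\mathcal{D}_S}(\sF \otimes_R \wedge^k T_S)$. Since each $\wedge^k T_S$ is a finite-rank projective $R$-module (by smoothness), hence a direct summand of $R^{N_k}$ for some $N_k$, each term in the resolution is a direct summand of $\ind_R^{\mathcal{D}_S}(\sF)^{N_k}$. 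This reduces the claim to showing that the induced module $\ind_R^{\mathcal{D}_S}(\sF) = \sF \otimes_R \mathcal{D}_S$ lies in the Karoubi envelope of the thick subcategory generated by $\mathcal{D}_S \otimes_k \Oblv(\sF)$.

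The main obstacle is this last reduction: the natural $\mathcal{D}_S$-linear surjection $\mathcal{D}_S \otimes_k \Oblv(\sF) \twoheadrightarrow \sF \otimes_R \mathcal{D}_S$ killing the $R$-relations does not split in general. To handle it, I would apply a companion Koszul resolution of $R$ itself as an $R$-bimodule (of finite length $d$, with terms $R \otimes_k \wedge^k \Omega_R^1 \otimes_k R$ available because $R$ is smooth), pushed through $\mathcal{D}_S$ to yield a further finite decomposition of $\sF \otimes_R \mathcal{D}_S$. The finite rank of $\wedge^k \Omega_R^1$ as projective $R$-module is the critical input, allowing the resulting terms to be expressed as direct summands of finite sums of $\mathcal{D}_S \otimes_k \Oblv(\sF)$ rather than infinite ones. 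In the special case $S = T$ (the case needed in the application to Proposition~\ref{p:pgl2-bdded}), the tangent and cotangent bundles are trivial, the Spencer resolution reduces to an explicit two-term complex analogous to $A_1 \xrightarrow{\cdot \partial} A_1 \to k[x]$ on $\bA^1$, and this last bookkeeping becomes elementary.
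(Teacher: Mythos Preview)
Your approach is correct and reaches the same conclusion, but by a genuinely different route than the paper. The paper's proof is a one-step abstract argument: it identifies $D(S \times S)$ with $\TwoEnd_{\DGCat_{cont}}(D(S))$ via integral kernels, observes that the identity functor corresponds to $\Delta_{dR,*}(\omega_S)$ while the endofunctor $D_S \otimes \Gamma(S,-)$ corresponds to $D_S \boxtimes D_S$, and then uses only that $\Delta_{dR,*}(\omega_S)$ is compact in a category compactly generated by the single object $D_S \boxtimes D_S$, hence lies in its thick subcategory. Tensoring the resulting ``resolution of the identity'' with $\id_\sC$ and evaluating at $\sF$ finishes. No explicit complex is ever written down.

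Your argument instead builds the resolution by hand, in two stages: the Chevalley--Eilenberg/Spencer resolution of $\mathcal{D}_S$ as a bimodule (for the Lie algebroid $T_S$), followed by perfectness of $R$ over $R \otimes_k R$. This is valid, though note that the two stages collapse into one if you directly invoke that $\mathcal{D}_S$ is perfect over $\mathcal{D}_S \otimes_k \mathcal{D}_S^{op}$ --- which is precisely the compactness of $\Delta_{dR,*}(\omega_S)$ restated in module-theoretic language. One small imprecision: for general smooth affine $S$ the terms of the bimodule resolution of $R$ are not literally $R \otimes_k \wedge^k \Omega_R^1 \otimes_k R$ (as $\Omega_R^1$ is an $R$-module, not a $k$-vector space); rather they are finite projective $R^e$-modules, hence direct summands of finite free ones, which is what you actually need and what you correctly flag as the critical input. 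Your explicit route buys a concrete bound on the length of the resolution; the paper's abstract route buys brevity and makes transparent that the only input is compactness of the diagonal.
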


\begin{proof}

As $S$ is affine, $D(S \times S)$ is compactly generated by
objects of the form $D_S \boxtimes D_S$. As $\Delta_{dR,*}(\omega_S)$
is compact and connective, 
it lies in the full subcategory generated under finite colimits and
direct summands by objects of the form $D_S \boxtimes D_S$. 

Identifying $D(S \times S)$ in the usual way with 
$\TwoEnd_{\DGCat_{cont}}(D(S))$ (c.f. \cite{grbook}), the object
$\Delta_{dR,*}(\omega_S)$ corresponds to the identity functor,
while $D_S \boxtimes D_S$ corresponds to $D_S \otimes \Gamma(S,-)$.

Therefore, $\id_{D(S) \otimes \sC}$ lies in the full subcategory
of $\TwoEnd_{\DGCat_{cont}}(D(S) \otimes \sC)$ generated under
finite colimits and direct summands by endofunctors of the
form $(D_S \otimes \Gamma(S,-)) \otimes \id_{\sC}$. Applying such a resolution
to the object $\sF$, we obtain the claim.

\end{proof}

We now turn to the higher $n$ case.

\begin{proof}[Proof of Proposition \ref{p:pgl2-bdded} for $n \geq 2$]

Suppose $\sF \in \sC^+$. Then $\Oblv\Av_*^{\fg \otimes \bG_a} \sF \in 
\sC^{\fg \otimes \bG_a,+}$, so by assumption 
$F(\Oblv\Av_*^{\fg \otimes \bG_a} \sF) \in \sD^+$.
Therefore, setting
$\o{\sF} \coloneqq \Coker(\Oblv\Av_*^{\fg \otimes \bG_a} \sF \to \sF)$, 
it suffices to show that $F(\o{\sF}) \in \sD^+$.

As $G = PGL_2$, $\o{\sF} \in \sC_{reg}$, hence in $\sC_{reg}^+$. 
Now the claim follows as in the $n = 1$ case 
from our assumption and Corollary \ref{c:reg-eff}.

\end{proof}

\subsection{Boundedness of $\Gamma^{\Hecke}$}

We now boundedness of the non-naive version of the Hecke 
global sections functor.

\begin{proof}[Proof of Lemma \ref{l:gamma-hecke-bdded}]

We again assume $G = PGL_2$ for simplicity, referring
to \S \ref{ss:rk1} for indications on general $G$ of
semisimple rank $1$.

It suffices to show the result for $K$ being the $n$th congruence
subgroup of $G(O)$ for some $n \geq 1$. We proceed by induction on 
$n$.

For $\sF \in D_{crit}^{\Heckez}(\Gr_G)^{\o{I},+}$,
$\Gamma^{\Hecke}(\Gr_G,\sF) \in 
\widehat{\fg}_{crit}\mod_{reg}^+$: 
this follows from Proposition \ref{p:hecke-true}.

Next, suppose that $\sG \in D(G)^+$ and
$\sF \in \Whit^{\leq 1}(D_{crit}^{\Heckez}(\Gr_G))^+$.
Then $\Gamma^{\Hecke}(\Gr_G,\sG \star \sF) \in 
\widehat{\fg}_{crit}\mod_{reg}^+$ by 
Proposition \ref{p:hecke-true-whit}. 

Therefore, Proposition \ref{p:pgl2-bdded} implies the
$n = 1$ case of the claim.

We now suppose the result is true for some $n$ and
deduce it for $n+1$. 
The inductive hypothesis
states that $\Gamma^{\Hecke}(\Gr_G,\sG \star \sF)$ is eventually
coconnective
for $\sF \in D_{crit}^{\Heckez}(\Gr_G)^{K_n,+}$,
while Proposition \ref{p:hecke-true-whit} implies the result
if $\sF \in \Whit^{\leq n+1}(D_{crit}^{\Heckez}(\Gr_G))^+$.
Therefore, Proposition \ref{p:pgl2-bdded} gives the
result for general $\sF \in D_{crit}^{\Heckez}(\Gr_G)^{K_{n+1},+}$.

\end{proof}

\appendix

\section{The global sections functor}\label{a:gamma}

\subsection{}

Let $\kappa$ be a level for $\fg$. In this appendix,
we define a global sections functor:

\[
\Gamma(G(K),-):D_{\kappa}^*(G(K)) \to 
\widehat{\fg}_{\kappa}\mod \otimes
\widehat{\fg}_{-\kappa+2\cdot crit}\mod.
\]

Moreover, we show the following basic property:

\begin{prop}\label{p:gamma-exact}

The functor $\Gamma(G(K),-)$ is $t$-exact
for the natural $t$-structure on $D^*(G(K))$.

\end{prop}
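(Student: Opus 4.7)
The plan is to reduce the $t$-exactness claim to a finite-dimensional check by exhausting $G(K)$ by reasonable bi-equivariant subschemes. First I would note that $\Gamma(G(K),-)$ commutes with filtered colimits (it is constructed from an action-type functor), and that the $t$-structures on both $D_{\kappa}^*(G(K))$ and the target tensor product category $\widehat{\fg}_{\kappa}\mod \otimes \widehat{\fg}_{-\kappa+2\cdot crit}\mod$ are compatible with filtered colimits. Consequently, it suffices to verify that each compact object of $D_{\kappa}^*(G(K))^{\heart}$ maps into the heart of the target.

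Next, any such compact object is bi-equivariant (up to shift) for some pair of congruence subgroups $(K_m, K_n)$ acting by left and right translation, and is supported on a finite union of double cosets in $K_m \backslash G(K) / K_n$. After translating by an element of $T(K)$, each such double coset is of finite type. On such a bi-equivariant, finite-support $\sF$, the functor $\Gamma(G(K),-)$ admits an explicit description: one computes the finite-dimensional vector space $V \coloneqq \Gamma_{dR}(S, \sF)$ for $S$ the support, and then produces the Kac-Moody bimodule by inducing the natural left/right actions along the surjections from $\widehat{\fg}_{\kappa}$ (respectively $\widehat{\fg}_{-\kappa+2\cdot crit}$) onto the finite-dimensional quotient Lie algebras that actually act on $V$.

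The key then is to verify two things: (i) $\Gamma_{dR}(S,-)$ is $t$-exact on the bi-equivariant heart; this holds because after quotienting by the right $K_n$-action, $S$ becomes affine (up to a prounipotent quotient), and $\Gamma_{dR}$ is $t$-exact on bi-equivariant holonomic $D$-modules on such schemes; and (ii) the induction functor from finite-dimensional Lie modules to the Kac-Moody bimodule category is $t$-exact. The latter should follow from (topological) flatness of $U(\widehat{\fg}_{\kappa})$ over enveloping algebras of subalgebras of the form $t^N \fg[[t]]$ for $N$ large, combined with the analogous statement for $\widehat{\fg}_{-\kappa+2\cdot crit}$.

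The main obstacle will be formulating the induction step cleanly at the level of DG categories: since $\widehat{\fg}_{\kappa}\mod$ is defined in the ind-coherent style of \cite{dmod-aff-flag} rather than as modules over a fixed algebra, identifying the induction with a manifestly $t$-exact operation requires unraveling that construction and translating topological flatness of the enveloping algebras into $t$-exactness of the relevant $\ind$ functors. A secondary subtlety is tracking the origin of the $-\kappa+2\cdot crit$ level on the second factor; this shift arises from the canonical trivialization of the critical twist along the anti-diagonal $G(K)$-action on $G(K)$, which forces the sum of the left and right levels in the $D_{\kappa}^*$-setting to equal $2\cdot crit$.
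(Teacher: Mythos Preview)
Your reduction to compact objects in the heart is correct and matches the paper. The gap is in the next step: you invoke an ``explicit description'' of $\Gamma(G(K),-)$ as ``compute $\Gamma_{dR}(S,\sF)$ and then induce from a finite quotient Lie algebra,'' but this is not the definition of the functor and you have not justified it. The functor is defined abstractly as the composite $D_{\kappa}^*(G(K)) \to \TwoEnd_{\DGCat_{cont}}(\widehat{\fg}_{\kappa}\mod) \simeq \widehat{\fg}_{\kappa}\mod \otimes \widehat{\fg}_{-\kappa+2\cdot crit}\mod$, and any ``sections then induce'' description must be proved. In the paper, the comparison with the Arkhipov--Gaitsgory construction (which is the closest thing to your explicit formula) is established separately as Proposition~\ref{p:gamma-comparison}, and its proof \emph{uses} Proposition~\ref{p:gamma-exact} as input (e.g.\ to know $\on{CDO}_{G,\kappa} \in \Vect^{\heart}$). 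So your route, as written, is circular. There is also a minor but real confusion: $\Gamma_{dR}(S,\sF)$ is de~Rham cohomology, which kills the infinitesimal structure you need; the relevant object is $\Gamma^{\IndCoh}$ of the descent of $\sF$ to $G(K)/K_r$.

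The paper's argument stays with the abstract definition. For $\sF$ compact in the heart, right $K_r$-equivariant and supported on $S$ with $S/K_r$ affine, one tests $\Gamma(G(K),\sF)$ against the compact generators $\bV_{\kappa,m_1} \boxtimes \bV_{-\kappa+2\cdot crit,m_2}$ of the target. By the very definition of $\Gamma$ as an integral kernel, this $\Hom$ is $\ul{\Hom}_{\widehat{\fg}_{\kappa}\mod}(\bV_{\kappa,m_1}, \sF \star \bD\bV_{-\kappa+2\cdot crit,m_2})$. The key non-formal input is the semi-infinite duality computation $\bD\bV_{-\kappa+2\cdot crit,m_2} \simeq \bV_{\kappa,m_2}[m_2\dim G]$, after which one is reduced to showing that $\sF \star \bV_{\kappa,r}[r\dim G] \in \widehat{\fg}_{\kappa}\mod^{\heart}$. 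This follows because its underlying vector space is $\Gamma^{\IndCoh}(G(K)/K_r,\sF)$ (a direct convolution computation), and that sits in a single degree since $S/K_r$ is affine. To salvage your approach you would need to prove your explicit formula for $\Gamma(G(K),-)$ on the heart \emph{without} already knowing $t$-exactness, which the paper does not do and which does not appear to be easier than the direct computation.
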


To define both $\Gamma$ and the ``natural" $t$-structure
mentioned above, there is an implicit choice
of compact open subgroup of $G(K)$ (or rather,
its Tate extension)
that goes into the definitions.
For definiteness, we choose $G(O)$ in what follows.

Abelian categorically, this construction
is well-known from \cite{cdo}. Our setup is a little
different from \emph{loc. cit}., so we indicate
basic definitions and properties. We compare
our construction to theirs in 
Proposition \ref{p:gamma-comparison}.

\subsection{Definition of the functor}

By \cite{methods} \S 11.9, we have a canonical
isomorphism:

\[
\widehat{\fg}_{\kappa}\mod^{\vee} \simeq
\widehat{\fg}_{-\kappa+2\cdot crit}\mod.
\]

\noindent Here the left hand side is the
dual in $\DGCat_{cont}$. This isomorphism
depends (mildly) on our choice $G(O)$ of
compact open subgroup of $G(K)$.
This isomorphism is a refinement of
the usual semi-infinite
cohomology construction; more precisely, by \emph{loc. cit}., 
the pairing:

\[
\widehat{\fg}_{\kappa}\mod \otimes 
\widehat{\fg}_{-\kappa+2\cdot crit}\mod \to \Vect
\] 

\noindent is calculated by tensoring Kac-Moody 
representations and then taking semi-infinite 
cohomology for the diagonal action.

In addition, by \cite{methods} \S 8,
we have a level $\kappa$ $G(K)$-action on 
$\widehat{\fg}_{\kappa}\mod$. 

Therefore, we obtain a functor:

\[
D_{\kappa}^*(G(K)) \to
\TwoEnd_{\DGCat_{cont}}(\widehat{\fg}_{\kappa}\mod) 
\simeq \widehat{\fg}_{\kappa}\mod \otimes
\widehat{\fg}_{-\kappa+2\cdot crit}\mod.
\]

\noindent By definition, the resulting
functor is $\Gamma(G(K),-)$.

\subsection{Definition of the $t$-structure}

The choice of $G(O)$ also defines a $t$-structure
on $D_{\kappa}^*(G(K))$: we write $D_{\kappa}^*(G(K))$ as 
$\colim_n D_{\kappa}(G(K)/K_n)$ under 
$*$-pullback functors;
the structure functors are $t$-exact
up to shift by smoothness of the structure maps, 
so there is a unique
$t$-structure such that the pullback
functor $\pi_n^{*,dR}[-\dim G(O)/K_n]:
D_{\kappa}(G(K)/K_n) \to 
D_{\kappa}^*(G(K))$ is $t$-exact for all $n$.

\subsection{$t$-exactness}

Below, we prove
Proposition \ref{p:gamma-exact}.

\subsection{}

Because compact objects in $D_{\kappa}^*(G(K))$
are bounded in the $t$-structure and 
closed under truncations, it suffices to show
that for 
$\sF \in D_{\kappa}^*(G(K))^{\heart}$ compact
in $D_{\kappa}^*(G(K))$, 
$\Gamma(G(K),\sF) \in 
(\widehat{\fg}_{\kappa}\mod \otimes
\widehat{\fg}_{-\kappa+2\cdot crit}\mod)^{\heart}$.

We fix such an $\sF$ in what follows.

\subsection{}

Because $\sF$ is compact, there exists a
positive integer $r$ such that $\sF$ is 
$K_r$-equivariant on the right.
Moreover, by compactness again, $\sF$ is
supported on some closed subscheme $S \subset G(K)$,
which we may assume is preserved under the
right $K_r$-action. 

Note that $S$ is necessarily
affine as $G(K)$ is ind-affine. 
We have $S = \lim S/K_{r+r^{\prime}}$, so 
by Noetherian approximation, $S/K_{r+r^{\prime}}$
is affine for some $r^{\prime} \geq 0$.
Up to replacing $r$ by $r+r^{\prime}$, we may
assume $S/K_r$ itself is affine.

\subsection{}

For any two integers 
$m_1,m_2>0$, we have:

\[
\begin{gathered}
\ul{\Hom}_{\widehat{\fg}_{\kappa}\mod
\otimes \widehat{\fg}_{-\kappa+2\cdot crit}\mod}
(\bV_{\kappa,m_1}
\boxtimes \bV_{-\kappa+2\cdot crit,m_2},
\Gamma(G(K),\sF)) = \\
\ul{\Hom}_{\widehat{\fg}_{\kappa}\mod}(\bV_{\kappa,m_1},
\sF \star \bD \bV_{-\kappa+2\cdot crit,m_2}).
\end{gathered}
\]

\noindent by definition of $\Gamma$.
Here $\bD: (\widehat{\fg}_{-\kappa+2\cdot crit}\mod^c)^{op} 
\simeq \widehat{\fg}_{\kappa}\mod^c$ 
is the isomorphism
defined by the (semi-infinite) duality
$\widehat{\fg}_{\kappa}\mod \simeq 
\widehat{\fg}_{\kappa}\mod^{\vee}$ used above.

To see that $\Gamma(G(K),\sF)$ is in degrees
$\geq 0$, it suffices to see that the
above complex is in degrees $\geq 0$ for
all $m_1,m_2$. Moreover, it suffices to check
this for all sufficiently large $m_1,m_2$; 
we will do so for $m_1,m_2\geq r$.

Then to see that $\Gamma(G(K),\sF)$ is in 
degree $0$, it suffices to show that when
we pass to the limits $m_1,m_2 \to \infty$ 
(using the standard structure maps between
our modules as we vary these parameters),
we obtain a complex in degree $0$. 
In fact, we will see that already
$\sF \star \sF \star \bD \bV_{-\kappa+2\cdot crit,m_2}$
is in the heart of the $t$-structure 
(for $m_2 \geq r$), which clearly suffices.

\subsection{}

By \cite{methods} Lemma 9.17.1,
$\bD \bV_{-\kappa+2\cdot crit,m_2} \simeq
\bV_{\kappa,m_2}[\dim G(O)/K_{m_2}] = 
\bV_{\kappa,m_2}[m_2 \cdot \dim G]$.

We then have 
$\sF \star \bV_{\kappa,r} = 
\Gamma^{\IndCoh}(G(K)/K_r,\sF)$; here
we have descended $\sF$ by $K_r$-equivariance
to a $D$-module on $G(K)/K_r$ and then 
we have calculated its $\IndCoh$-global sections.

Putting these together, we find:

\[
\begin{gathered}
\ul{\Hom}_{\widehat{\fg}_{\kappa}\mod}(\bV_{\kappa,m_1},
\sF \star \bD \bV_{\kappa,m_2}) = \\
\ul{\Hom}_{\widehat{\fg}_{\kappa}\mod}(\bV_{\kappa,m_1},
\sF \star 
\bV_{\kappa,r}[m_2 \cdot \dim G-(m_2-r) \dim G]) = \\
\ul{\Hom}_{\widehat{\fg}_{\kappa}\mod}(\bV_{\kappa,m_1},
\sF \star 
\bV_{\kappa,r}[r \cdot \dim G]).
\end{gathered}
\]

By Lemma \ref{l:action-tstr-bdd} (and 
\cite{methods} Proposition 10.16.1), 
$\sF \star \bV_{\kappa,r} \in 
\widehat{\fg}_{\kappa}\mod^+$. Moreover,
by \S \ref{ss:conv-vac-gamma} below, 
$\sF \star \bV_{\kappa,r}$ maps under the
forgetful functor to $\Vect$ to
$\Gamma^{\IndCoh}(G(K)/K_r,\sF) \in \Vect$
(i.e., descend $\sF$ to $G(K)/K_r$ and take 
$\IndCoh$-global sections).

As $\sF \in D_{\kappa}^*(G(K))^{\heart}$, 
when we consider $\sF$ as an object of
$D_{\kappa}(G(K)/K_r)$, it lies in 
cohomological degree $\dim(G(O)/K_r) = r\cdot \dim G$.
Therefore, the same is true when we forget to
$\IndCoh(G(K)/K_r)$, as that forgetful functor
is $t$-exact (c.f. \cite{grbook}). Finally,
as $\sF$ is supported on an affine subscheme
of $G(K)/K_r$ by construction, 
$\Gamma^{\IndCoh}(G(K)/K_r,\sF)$ is in 
cohomological degree $r \cdot \dim G$.

Combining this with the above, we find that
$\sF \star \bV_{\kappa,r}[r \cdot \dim G]) 
\in \widehat{\fg}_{\kappa}\mod^{\heart}$. This
gives the desired claims, proving
Proposition \ref{p:gamma-exact} modulo the
above assertion.

\subsection{}\label{ss:conv-vac-gamma}

Above, we needed the following 
observation.

Suppose $\sF \in D_{\kappa}^*(G(K))^{K_r}$.
We claim that $\Oblv(\sF \star \bV_{\kappa,r}) = 
\Gamma^{\IndCoh}(G(K)/K_r,\sF) \in \Vect$,
where we implicitly descend 
$\sF$ to $G(K)/K_r$ through 
equivariance. 

To simplify the notation, we omit the level
$\kappa$ and work with a general Tate group
indscheme $H$ and a compact open subgroup $K$.
(Then the level may easily be reincorporated in a standard way
by taking $H$ to be the Tate extension of $G(K)$, c.f.
\cite{methods} \S 11.3.)

For any $\sC \in H\mod_{weak}$,
suppose $\sG \in \sC^{K,w}$ and 
$\sF \in D(H/K)$. As in 
\cite{methods} \S 8, 
$D(H/K)$ is canonically isomorphic
to $\IndCoh^*(H/K)_{H_K^{\wedge}}$,
with $H_K^{\wedge}$ the formal completion
of $H$ along $K$. Moreover, the functor
$\Oblv:\sC^{H_K^{\wedge},w} \to 
\sC^{K,w}$ admits a left adjoint, 
which we denote by $\Av_!^w$. 

Then we claim that we have isomorphisms:

\[
\sF \overset{H_K^{\wedge},w}{\star} 
\Av_!^w(\sG) = \Oblv(\sF) \overset{K,w}{\star}\sG 
\in \sC
\]

\noindent functorial in $\sF$ and $\sG$
(i.e., an isomorphism of functors
$D(H/K) \otimes \sC^{K,w} \to \sC$).
Here for the convolution on the
left, we regard $\sF$ as an object of
$\IndCoh^*(H)_{H_K^{\wedge}}$ as above. 
The notation $\overset{H_K^{\wedge},w}{\star} $
means we convolve (in the setting of weak actions)
over $H_K^{\wedge}$, and similarly on the 
right hand side.
Then $\Av_!^w(\sG) = 
\omega_{H_K^{\wedge}/K} \overset{K,w}{\star} \sG$, and 
$\sF \overset{H_K^{\wedge},w}{\star}
\omega_{H_K^{\wedge}/K} = \Oblv(\sF)$, so we obtain
the claim.

Now taking $\sC = \Vect$ and $\sG = k$ the trivial
representation in $\Vect^{K,w} = \Rep(K)$, 
we obtain:

\[
\sF \star \ind_{\fk}^{\fh}(k) = \Oblv(\sF) \star k
\in \Vect.
\]

\noindent The right hand side calculates 
$\Gamma^{\IndCoh}(H/K,\Oblv(\sF))$ as desired.

\subsection{Comparison with Arkhipov-Gaitsgory}

To conclude, we observe that our construction
above recovers the one given by Arkhipov-Gaitsgory.

More precisely, $D^*(G(K))^{\heart}$
manifestly coincides with the abelian category denoted
$D\mod(G((t)))$ in \S 6.10 of \cite{cdo}, and 
similarly with a level $\kappa$ included
(which they discuss only in passing).

Below, we outline the proof of the following
comparison result.

\begin{prop}\label{p:gamma-comparison}

The functor:

\[
\Gamma(G(K),-):D_{\kappa}^*(G(K))^{\heart} \to 
(\widehat{\fg}_{\kappa}\mod \otimes 
\widehat{\fg}_{-\kappa+2\cdot crit}\mod)^{\heart} =
\widehat{\fg \times \fg}_{(\kappa,-\kappa+2\cdot crit)}
\mod^{\heart} 
\]

\noindent constructed above coincides with the
one constructed in \cite{cdo}.

\end{prop}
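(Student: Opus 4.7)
The plan is to identify both functors with the unique $D_\kappa^*(G(K))$-bi-equivariant functor satisfying a normalization condition on a distinguished object, and then observe that both satisfy this characterization. Both constructions produce functors $D_\kappa^*(G(K)) \to \widehat{\fg}_\kappa\mod \otimes \widehat{\fg}_{-\kappa+2\cdot crit}\mod$ that are tautologically morphisms of $D_\kappa^*(G(K))$-bimodule categories, where the bimodule structure on the target comes from the level-$\kappa$ (resp. level $-\kappa+2\cdot crit$) $G(K)$-action on each factor. Any such morphism out of the regular bimodule $D_\kappa^*(G(K))$ is determined, via Yoneda in the appropriate 2-category, by its value on the delta $D$-module at the identity (interpreted as the appropriate pro-object in $D_\kappa^*(G(K))^{G(O) \times G(O)}$), and this value must be a $G(K) \times G(K)$-invariant object of the target.

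First I would make this characterization precise: by passing to $(K_n \times K_n)$-invariants on both sides and taking the limit, I reduce to checking that both functors agree on the objects $\delta_{K_n \times K_n} \in D_\kappa^*(G(K))^{K_n \times K_n,\heart}$ (suitably shifted), with compatibility under the $n \leq m$ structure maps. This in turn reduces to comparing the resulting pro-objects $\lim_n \Gamma(\delta_{K_n \times K_n})$ in the appropriate pro-category.

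The key step is the computation for our functor: by the observation in \S \ref{ss:conv-vac-gamma} combined with the semi-infinite duality isomorphism of \cite{methods} \S 11.9 (which is itself normalized using $G(O)$ and the vacuum modules), our functor sends $\delta_{K_n}$ to $\bV_{\kappa,n} \boxtimes \bV_{-\kappa+2\cdot crit,n}$ up to a shift by $\dim(G(O)/K_n)$. Equivalently, the identity endofunctor of $\widehat{\fg}_\kappa\mod$ corresponds under semi-infinite duality to the pro-object $\lim_n \bV_{\kappa,n} \boxtimes \bV_{-\kappa+2\cdot crit,n}$. A direct inspection of the Arkhipov--Gaitsgory recipe in \cite{cdo} \S 6.10 shows that their abelian-categorical construction produces the same pro-object: they build $\Gamma(G(K),\sF)$ by convolving $\sF$ with the vacuum modules on both sides, so that on $\delta_{K_n}$ the output is (up to the same $\dim(G(O)/K_n)$-shift) the pair of level-$n$ vacuum modules.

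The main obstacle will be tracking the cohomological shifts and normalization conventions through the comparison --- in particular, matching the $\dim(G(O)/K_n)$-shift arising from the $D$-module conventions used here against the conventions implicit in \cite{cdo}, and verifying that the identification $\widehat{\fg}_\kappa\mod \otimes \widehat{\fg}_{-\kappa+2\cdot crit}\mod \simeq \widehat{\fg \times \fg}_{(\kappa,-\kappa+2\cdot crit)}\mod$ stated in the proposition intertwines both bimodule structures. Once these conventions are aligned, the agreement on the test objects $\delta_{K_n \times K_n}$ together with the bi-$D_\kappa^*(G(K))$-equivariance forces the two functors to coincide on all of $D_\kappa^*(G(K))^\heart$.
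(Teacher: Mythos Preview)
Your proposal has a genuine error in the central computation, and the overall strategy does not match the paper's approach.

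The specific claim that $\Gamma(G(K),\delta_{K_n}) \simeq \bV_{\kappa,n} \boxtimes \bV_{-\kappa+2\cdot crit,n}$ (up to shift) is incorrect. Under the identification $\widehat{\fg}_\kappa\mod \otimes \widehat{\fg}_{-\kappa+2\cdot crit}\mod \simeq \TwoEnd_{\DGCat_{cont}}(\widehat{\fg}_\kappa\mod)$, the object $\Gamma(G(K),\delta_{K_n})$ is the kernel of the endofunctor $\Oblv\Av_*^{K_n}$, whereas $\bV_{\kappa,n} \boxtimes \bV_{-\kappa+2\cdot crit,n}$ is the kernel of the ``rank-one'' functor $M \mapsto \bV_{\kappa,n} \otimes C^{\sinf}(\fg((t)),\fg[[t]];\bV_{-\kappa+2\cdot crit,n}\otimes M)$. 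These are different: already for $n=0$ the paper computes $\Gamma(G(K),\delta_{G(O)}) = \on{CDO}_{G,\kappa} \simeq \ind_{\fg[[t]]}^{\widehat{\fg}_\kappa}(\Fun(G(O)))$, which is far larger than $\bV_\kappa \boxtimes \bV_{-\kappa+2\cdot crit}$. Relatedly, your assertion that the identity endofunctor corresponds to the \emph{pro}-object $\lim_n \bV_{\kappa,n}\boxtimes \bV_{-\kappa+2\cdot crit,n}$ is backwards: the diagonal kernel is a colimit over compact generators, not a limit.

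There is also a structural gap: you assert that the Arkhipov--Gaitsgory functor is ``tautologically'' a morphism of $D_\kappa^*(G(K))$-bimodule categories. Their construction is purely abelian-categorical, and lifting it to a strongly $G(K)\times G(K)$-equivariant functor of DG categories is essentially the content one is trying to establish; it cannot be taken as input to a uniqueness argument.

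The paper's actual proof proceeds quite differently, via factorization/vertex-algebra structures: it identifies $\Gamma(G(K),\delta_{G(O)})$ with the chiral differential operators $\on{CDO}_{G,\kappa}$ as a vertex algebra (using the unit structure of $\delta_{G(O)}$ in the factorization category $D_\kappa^*(G(K))$), matches the two Kac-Moody actions on $\on{CDO}_{G,\kappa}$ with those of \cite{cdo} by translating both into semi-infinite cohomology formulas, and then uses the factorization-module upgrade of $\Gamma$ together with a direct check of the $\widehat{\fg}_\kappa$ and $\Fun(G(K))$-actions on $\Gamma(G(K),\sF)$ for general $\sF$ to conclude.
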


\begin{proof}

\step 

Define $\on{CDO}_{G,\kappa} \in \Vect$ as 
$\Gamma(G(K),\delta_{G(O)})$, where
$\delta_{G(O)} \in D_{\kappa}^*(G(K))$ is the
$*$-pullback of $\delta_1 \in D_{\kappa}(\Gr_G)$.

As $\delta_{G(O)} \in D^*(G(K))^{\heart}$,
$\on{CDO}_{G,\kappa} \in \Vect^{\heart}$.

The object $\delta_{G(O)}$ manifestly upgrades
to a factorization algebra in the factorization
category with fiber $D_{\kappa}^*(G(K))$
(defined using the standard unital factorization 
structure on $G(K)$, c.f. \cite{cpsii} \S 2).
Therefore, by \cite{chiral},
$\on{CDO}_{G,\kappa}$ has a natural
vertex algebra structure.

Note that $\on{CDO}_{G,\kappa}$ has
commuting $\widehat{\fg}_{\kappa}$ and
$\widehat{\fg}_{-\kappa+2\cdot crit}$-actions.

There is a tautological map $\Fun(G(O)) \to 
\on{CDO}_{G,\kappa} \in \Vect^{\heart}$, 
which is compatible with
factorization and is a morphism of 
$\fg[[t]]$-bimodules. Regarding 
$\on{CDO}_{G,\kappa}$ as a 
$\widehat{\fg}_{\kappa}$-module,
we obtain an induced map:

\[
\ind_{\fg[[t]]}^{\widehat{\fg}_{\kappa}}(\Fun(G(O)))
\to \on{CDO}_{G,\kappa} \in \Vect^{\heart}.
\]

\noindent In \cite{cdo}, a natural vertex
algebra structure is defined on the left hand side.
We claim that this map is an isomorphism of vertex
algebras.

Indeed, the construction of the vertex
algebra structure from \cite{cdo}
exactly uses factorization geometry, showing
that the map above is a map of vertex algebras.

This map is an isomorphism because both sides
have standard filtrations and
the map is an isomorphism at the associated graded level.

\step 

Next, \cite{cdo} constructs a 
$\widehat{\fg}_{-\kappa+2\cdot crit}$-action
on 
$\ind_{\fg[[t]]}^{\widehat{\fg}_{\kappa}}(\Fun(G(O)))$.
We claim that the above isomorphism 
is an isomorphism of 
$\widehat{\fg}_{-\kappa+2\cdot crit}$-modules as well.

We regard both sides as objects of:

\[
(\widehat{\fg}_{\kappa}\mod \otimes \widehat{\fg}_{-\kappa+2\cdot crit}\mod)^{\heart} \subset 
\widehat{\fg}_{\kappa}\mod \otimes \widehat{\fg}_{-\kappa+2\cdot crit}\mod \simeq 
\TwoEnd_{\DGCat_{cont}}(\widehat{\fg}_{\kappa}\mod).
\]

By construction, 
$\on{CDO}_{G,\kappa}$ corresponds to the endofunctor
$\Oblv\Av_*^{G(O)}: \widehat{\fg}_{\kappa}\mod \to 
\widehat{\fg}_{\kappa}\mod$.

By \cite{methods} Theorem 9.16.1, 
the functor:

\[
\widehat{\fg}_{\kappa}\mod^+ \to 
\widehat{\fg}_{\kappa}\mod
\]

\noindent corresponding to an object:

\[
M \in 
(\widehat{\fg}_{\kappa}\mod \otimes 
\widehat{\fg}_{-\kappa+2\cdot crit}\mod)^{\heart}
\]

\noindent is the functor:

\[
N \mapsto C^{\sinf}(\fg((t)),\fg[[t]];M \otimes N).
\]

\noindent Here the right hand side is the functor
of $G(O)$-integrable semi-infinite cohomology,
which is defined because $M \otimes N$ is 
a Kac-Moody module with level $2\cdot crit$.

By \cite{cdo} Theorem 5.5, 
we have:

\[
C^{\sinf}(\fg((t)),\fg[[t]];
\ind_{\fg[[t]]}^{\widehat{\fg}_{\kappa}}(\Fun(G(O)))
 \otimes N) = 
\Oblv\Av_*^{G(O)}(N)
\]

\noindent as desired.
(More precisely, one needs to upgrade \cite{cdo}
a bit; this is done in \cite{fg2} Lemma 22.6.2,
where we note that the definition of convolution
in \emph{loc. cit}. involves tensoring
and forming semi-infinite cohomology.)

This gives the desired isomorphism of
modules with two commuting Kac-Moody symmetries; 
this isomorphism is readily seen
to coincide with the one constructed earlier.

\step\label{st:km-cdo}

The functor:

\[
\Gamma(G(K),-):D_{\kappa}^*(G(K)) \to 
\widehat{\fg}_{\kappa}\mod \otimes 
\widehat{\fg}_{-\kappa+2\cdot crit}\mod
\]

\noindent canonically upgrades to a functor
between factorization categories.
This induces a canonical morphism of vertex
algebras:

\[
\bV_{\fg,\kappa} \otimes \bV_{\fg,-\kappa+2\cdot crit}
\to \on{CDO}_{G,\kappa}.
\]  

This map coincides with the one constructed
in \cite{cdo}; indeed, both are given by acting
on the unit vector $1 \in \Fun(G(O)) \subset 
\on{CDO}_{G,\kappa}$ using the
Kac-Moody action, and we have shown that our
Kac-Moody action coincides with the one 
in \cite{cdo}.

\step\label{st:ag-actions}

Now suppose $\sF \in D_{\kappa}^*(G(K))^{\heart}$.
By construction, $\Gamma(G(K),\sF) \in \Vect^{\heart}$
carries an action of $\widehat{\fg}_{\kappa}$
and of $\Fun(G(K))$ (considered as a topological
algebra).

These two actions coincide with the ones
considered in \cite{cdo}. 
Indeed, this is tautological for $\Fun(G(K))$.

For $\widehat{\fg}_{\kappa}$, we are reduced
to showing that for $K_n \subset G(O)$ the
$n$th congruence subgroup and 
$\sF \in D_{\kappa}(G(K)/K_n)^{\heart}$, the
two actions of $\widehat{\fg}_{\kappa}$
on $H^0(\Gamma(G(K)/K_n,\sF))$ coincide.

This is a general assertion about Tate Lie algebras:
for $H$ a Tate group indscheme and $S$ a classical
indscheme with an action of $H$, the above
logic defines $\Gamma^{\IndCoh}(S,-):D(S) \to 
\fh\mod$, and we claim that 
$\sF \in D(S)^{\heart}$, this
action of $\fh$ on
$H^0\Gamma(S,\sF)$ coincides with the standard
one. This can be checked element by element in $\fh$,
so reduces to the case where $\fh$
is 1-dimensional. There it follows by
the construction of the 
comparison results in \cite{crystals}.

\step 

Because $\delta_{G(O)}$ is the 
\emph{unit} object in the unital factorization
category $D_{\kappa}^*(G(K))$ (see \cite{cpsii} \S 2), 
\cite{chiral} Proposition 8.14.1 shows that
$\Gamma$ upgrades to a functor:

\[
\Gamma(G(K),-):D_{\kappa}^*(G(K)) \to 
\on{CDO}_{G,\kappa}\mod_{un}^{\on{fact}}.
\]

\noindent Here we use the notation from \cite{chiral},
and are not distinguishing in the notation 
between our factorization
algebra and its fiber at a point.

Comparing with the construction in \cite{cdo}
and applying Step \ref{st:ag-actions},
we find that on abelian categories that the
functor:

\[
D_{\kappa}^*(G(K))^{\heart} \to 
\on{CDO}_{G,\kappa}\mod_{un}^{\on{fact},\heart} \simeq 
\ind_{\fg[[t]]}^{\widehat{\fg}_{\kappa}}(\Fun(G(O)))
\mod_{un}^{\on{fact},\heart}
\]

\noindent coincides with the one constructed
in \emph{loc. cit}. 

Now the assertion follows from Step \ref{st:km-cdo}.

\end{proof}

\begin{cor}\label{c:conv-comparison}

For every $\sF \in D_{\kappa}^*(G(K))^{\heart}$
compact, the functor:

\[
\sF \star -: \widehat{\fg}_{\kappa}\mod^+ 
\to \widehat{\fg}_{\kappa}\mod^+
\]

\noindent coincides with the similarly-named
functor constructed in \cite{fg2} \S 22.

\end{cor}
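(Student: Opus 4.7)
The plan is to derive the comparison directly from Proposition \ref{p:gamma-comparison} by writing both convolution functors in a common form involving semi-infinite cohomology, where the only varying input is the bimodule $\Gamma(G(K),\sF)$.

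First, I would unwind the definition of $\sF \star - $ in our setup. By construction of $\Gamma(G(K),-)$, the convolution endofunctor $\sF \star -$ of $\widehat{\fg}_{\kappa}\mod$ corresponds, under the identification
\[
\widehat{\fg}_{\kappa}\mod \otimes \widehat{\fg}_{-\kappa+2\cdot crit}\mod \simeq \TwoEnd_{\DGCat_{cont}}(\widehat{\fg}_{\kappa}\mod),
\]
to the bimodule $\Gamma(G(K),\sF)$. Since $\sF$ lies in $D_{\kappa}^*(G(K))^{\heart}$ and is compact, Proposition \ref{p:gamma-exact} ensures $\Gamma(G(K),\sF)$ is concentrated in degree $0$. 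Invoking \cite{methods} Theorem 9.16.1 exactly as in the proof of Proposition \ref{p:gamma-comparison}, the endofunctor of $\widehat{\fg}_{\kappa}\mod^+$ attached to a heart bimodule $B$ is given by $N \mapsto C^{\sinf}(\fg((t)),\fg[[t]];B \otimes N)$, so in particular
\[
\sF \star N \simeq C^{\sinf}(\fg((t)),\fg[[t]];\Gamma(G(K),\sF) \otimes N).
\]

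Next, I would recall that the Frenkel-Gaitsgory convolution of \cite{fg2} \S 22 is defined by precisely the same recipe, but starting from the chiral $\on{CDO}_{G,\kappa}$-module associated with $\sF$: one takes the tensor product with $N$ over the base field and forms $\fg[[t]]$-integrable semi-infinite cohomology with respect to the diagonal Kac-Moody action at level $2 \cdot crit$. (This is made explicit in \cite{fg2} Lemma 22.6.2, which was already noted in the proof of Proposition \ref{p:gamma-comparison}.) Proposition \ref{p:gamma-comparison} then supplies the missing input: it identifies $\Gamma(G(K),\sF)$ with the $\on{CDO}_{G,\kappa}$-module assigned to $\sF$ in \cite{cdo}, \cite{fg2} \S 22, as an object of $(\widehat{\fg}_{\kappa}\mod \otimes \widehat{\fg}_{-\kappa+2\cdot crit}\mod)^{\heart}$ (equivalently, with both commuting Kac-Moody actions matched). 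Substituting this identification into the semi-infinite cohomology formula above yields the Frenkel-Gaitsgory expression for $\sF \star N$.

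The only subtlety to verify is that the semi-infinite cohomology functors on both sides are the same; this amounts to matching conventions for $C^{\sinf}(\fg((t)),\fg[[t]];-)$ between \cite{methods} and \cite{fg2}, which is implicit in our choice of $G(O)$ as polarization (used both to build the duality $\widehat{\fg}_{\kappa}\mod^{\vee} \simeq \widehat{\fg}_{-\kappa+2\cdot crit}\mod$ and to normalize the $t$-structure on $D_{\kappa}^*(G(K))$). The main obstacle, then, is really just this bookkeeping of conventions; all substantive work has already been done in establishing Proposition \ref{p:gamma-comparison} and in the general machinery of \cite{methods}. Once these are in place, the corollary is immediate.
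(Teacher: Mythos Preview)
Your proposal is correct and follows essentially the same approach as the paper: both express $\sF \star -$ as $N \mapsto C^{\sinf}(\fg((t)),\fg[[t]];\Gamma(G(K),\sF) \otimes N)$ via \cite{methods} Theorem 9.16.1, observe that Frenkel--Gaitsgory's convolution is given by the same semi-infinite cohomology formula (citing \cite{fg2} \S 22.5--22.7), and then invoke Proposition \ref{p:gamma-comparison} to identify the bimodules. The paper's version is terser but logically identical.
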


\begin{proof}

By construction of
$\Gamma(G(K),-)$, the following diagram commutes:

\[
\xymatrix{
D_{\kappa}^*(G(K)) \otimes \widehat{\fg}_{\kappa}\mod
\ar[rr]^(.4){\Gamma(G(K),-) \otimes \id} \ar[drr]_{-\star-}
& & \widehat{\fg}_{\kappa}\mod \otimes  
\widehat{\fg}_{-\kappa+2\cdot crit}\mod \otimes    
\widehat{\fg}_{\kappa}\mod 
\ar[d]^{\id \otimes \langle -,-\rangle} 
\\ 
& & 
\widehat{\fg}_{\kappa}\mod.
}
\] 

By \cite{methods} Theorem 9.16.1, this means
that for $M \in \widehat{\fg}_{\kappa}\mod^+$,
we have:

\[
C^{\sinf}(\fg((t)),\fg[[t]];
\Gamma(G(K),\sF) \otimes
M) \in \widehat{\fg}_{\kappa}\mod.
\]

\noindent Here we tensor and use the
diagonal action mixing the level $-\kappa+2\cdot crit$
action on $\Gamma(G(K),\sF)$ and the given 
level $\kappa$ action on $M$, and then 
we form the semi-infinite cochain complex,
which retains a level $\kappa$ action from
the corresponding action on 
$\Gamma(G(K),\sF)$. 

By Proposition \ref{p:gamma-comparison},
the latter amounts to the definition of
convolution given in \cite{fg2} \S 22.5
(see also \emph{loc. cit}. \S 22.7).

\end{proof}

One can similarly show that this isomorphism
is compatible with the associativity isomorphisms
constructed in \emph{loc. cit}. \S 22.9.

\section{Fully faithfulness}\label{a:ff}

\subsection{}

In this appendix, we present a different proof of 
Theorem \ref{t:ff} (fully faithfulness of 
$\Gamma^{\Hecke}$) than
the one given in \cite{fg2}.

\subsection{}

We have the following general criterion.

\begin{prop}\label{p:sph-+}

Suppose $\sC_i \in G(K)\mod_{crit}$ are given for $i = 1,2$.
Suppose that each $\sC_i$ is equipped with a $t$-structure
such that:

\begin{itemize}

\item The $t$-structure that
is strongly compatible with the $G(K)$-action.

\item The functor 
$\Av_!^{\psi}:\sC_i^{G(O)} \to \Whit(\sC_i)$ is $t$-exact
for $\Av_!^{\psi}$ as in \S \ref{ss:av-!-psi-sph}.
Here $\Whit(\sC_i)$ is equipped with the $t$-structure
coming from \cite{whit} Theorem 2.7.1 and \S B.7.

\item The functor $\Av_!^{\psi}:\sC_i^{G(O),\heart} \to \Whit(\sC_i)^{\heart}$
is conservative.

\end{itemize}

Suppose that $F:\sC_1 \to \sC_2 \in G(K)\mod_{crit}$ is given.
We suppose that the induced functor $\sC_1^{G(O)} \to \sC_2^{G(O)}$
is $t$-exact.

Then if the induced functor $\Whit(\sC_1) \to \Whit(\sC_2)$ is a
$t$-exact equivalence, the functor $\sC_1^{G(O),+} \to \sC_2^{G(O),+}$ is
as well.

\end{prop}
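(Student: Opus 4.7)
The strategy is to exhibit both $\sC_i^{G(O),+}$ as categories of comodules over a canonical comonad on $\Whit(\sC_i)^+$, show that $F$ intertwines these two comonads under the given Whittaker equivalence, and conclude by comonadic descent.

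First I would check that $L_i \coloneqq \Av_!^{\psi}: \sC_i^{G(O),+} \to \Whit(\sC_i)^+$ is comonadic. Its right adjoint $R_i \coloneqq \Av_*^{G(O)} \circ \Oblv$ is left $t$-exact (being the right adjoint to a right $t$-exact functor), so the adjunction $L_i \dashv R_i$ restricts to eventually coconnective subcategories. Since $L_i$ is $t$-exact by hypothesis and conservative on the heart, it is conservative on $\sC_i^{G(O),+}$: if $L_i \sF = 0$ for $\sF \in \sC_i^{G(O),+}$, then $L_i H^n(\sF) = H^n(L_i \sF) = 0$ for every $n$, forcing $H^n(\sF) = 0$ and hence $\sF = 0$. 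The remaining Beck condition — preservation of $L_i$-split equalizers — is automatic in the presentable $\infty$-categorical framework, since such limits are absolute.

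Next, I would verify that under the hypothesized equivalence $F: \Whit(\sC_1)^+ \simeq \Whit(\sC_2)^+$, the comonads $T_i \coloneqq L_i R_i$ on $\Whit(\sC_i)^+$ are canonically identified. The right adjoint $R$ is manifestly functorial in the $G(K)$-equivariant category $\sC$, so $F \circ R_1 \simeq R_2 \circ F$. By Theorem~\ref{t:whit}, the left adjoint $L = \Av_!^{\psi}$ is implemented by convolution with a $D$-module on $G(K)$ that is uniform in $\sC \in G(K)\mod_{crit}$; since $F$ is $G(K)$-equivariant, convolution with this $D$-module commutes with $F$, yielding an isomorphism $F \circ L_1 \simeq L_2 \circ F$. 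Composing these identifications gives $F \circ T_1 \simeq T_2 \circ F$.

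Comonadic descent then produces equivalences $\sC_i^{G(O),+} \simeq T_i\comod$ inside $\Whit(\sC_i)^+$, and the intertwining of comonads assembles these into an equivalence $\sC_1^{G(O),+} \simeq \sC_2^{G(O),+}$; naturality of the comparison functors identifies this equivalence with the functor induced by $F$, and $t$-exactness is part of the hypothesis on $F$. The step where I expect to need the most care is the identification $F \circ L_1 \simeq L_2 \circ F$: a priori one only obtains a Beck--Chevalley natural transformation, and upgrading it to an equivalence relies crucially on the convolution-theoretic realization of $\Av_!^{\psi}$ from \cite{whit}. Once that input is in hand, the remainder of the argument is formal.
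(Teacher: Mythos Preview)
Your proposal is correct and follows essentially the same comonadicity argument as the paper: show $\Av_!^{\psi}$ is comonadic on $\sC_i^{G(O),+}$, observe that $G(K)$-equivariance of $F$ intertwines the comonads $\Av_!^{\psi}\Av_*^{G(O)}$ on $\Whit(\sC_i)^+$, and conclude. The only imprecision is your appeal to presentability for the Beck condition (the $+$ subcategories are not presentable) --- the paper instead invokes \cite{methods} Lemma 3.7.2, which packages the comonadicity of a $t$-exact conservative left adjoint on eventually coconnective objects --- and your closing worry about the Beck--Chevalley map is unnecessary, since both $\Av_!^{\psi}$ and $\Av_*^{G(O)}$ are functorial in $\sC \in G(K)\mod_{crit}$, so $F$ commutes with each on the nose.
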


\begin{proof}

For $i = 1,2$, the functor 
$\Av_!^{\psi}:\sC_i^{G(O),+} \to \Whit(\sC_i)^+$ is $t$-exact and
conservative by assumption. Moreover, this functor admits
the right adjoint $\Av_*^{G(O)}$. By \cite{methods} Lemma 3.7.2, 
the functor $\sC_i^{G(O),+} \to \Whit(\sC_i)^+$ is comonadic.

Being $G(K)$-equivariant, the functor $F$ intertwines the comonads
$\Av_!^{\psi}\Av_*^{G(O)}$ on $\Whit(\sC_1)$ and $\Whit(\sC_2)$. 
Therefore, as we have assumed $F$ induces an equivalence
$\Whit(\sC_1)^+ \isom \Whit(\sC_2)^+$, we obtain the result.

\end{proof}

\subsection{}

We now deduce the following result.

\begin{cor}\label{c:ff-sph}

The functor:

\[
\Gamma^{\Hecke,naive}:D_{crit}^{\Heckez}(\Gr_G)^{G(O),+} \to 
\widehat{\fg}_{crit}\mod_{reg,naive}^{G(O),+}
\]

\noindent is a $t$-exact equivalence.

\end{cor}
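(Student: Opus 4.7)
The plan is to apply Proposition \ref{p:sph-+} directly with $\sC_1 = D_{crit}^{\Heckez}(\Gr_G)$, $\sC_2 = \widehat{\fg}_{crit}\mod_{reg,naive}$, and $F = \Gamma^{\Hecke,naive}$; the conclusion of that proposition is exactly the statement of Corollary \ref{c:ff-sph}. Thus it suffices to verify the two inputs to the proposition (a $t$-exact equivalence on Whittaker categories, and $t$-exactness on $G(O)$-equivariants) together with the three hypotheses on each $\sC_i$ (strong compatibility of the $t$-structure with the $G(K)$-action, $t$-exactness of $\Av_!^{\psi}$ on $G(O)$-equivariants, and conservativity of $\Av_!^{\psi}$ on the hearts of $G(O)$-equivariants).

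The two inputs are nearly at hand. The induced functor on Whittaker categories is a $t$-exact equivalence by Theorem \ref{t:whit-equiv}(\ref{i:whit-equiv-1}), combined with part (\ref{i:whit-equiv-2}) for $n=1$ and the stabilization $\Whit^{\leq 1} = \Whit$ recorded inside its proof. For $t$-exactness of $\Gamma^{\Hecke,naive}$ on $G(O)$-equivariants, right $t$-exactness follows from Theorem \ref{t:gamma-exact} together with the description of the $t$-structure on $D_{crit}^{\Heckez}(\Gr_G)$ via $\ind^{\Heckez}$ (\S \ref{ss:hecke-t-str}); left $t$-exactness goes by adapting the argument of Theorem \ref{t:iwahori} to the $G(O)$-equivariant setting, using that every object of $D_{crit}^{\Heckez}(\Gr_G)^{G(O),\heart}$ is a filtered colimit of objects admitting finite filtrations with subquotients of the form $\ind^{\Heckez}(\sF') \underset{\Op_{\ld{G}}^{reg}}{\otimes} \sH$ with $\sF' \in D_{crit}(\Gr_G)^{G(O),\heart}$ and $\sH \in \QCoh(\Op_{\ld{G}}^{reg})^{\heart}$, and invoking flatness of $\Gamma^{\IndCoh}(\Gr_G,\sF')$ over $\fz$ (\cite{fg-loc} Proposition 3.17).

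The three per-category hypotheses will be verified by reduction. Strong compatibility for $\sC_1$ is \S \ref{ss:hecke-t-str}; for $\sC_2$ it transports along the $t$-exact, conservative, $G(K)$-equivariant functor $i_{0,*}$ (Proposition \ref{p:central-naive}) from the corresponding statement for $\widehat{\fg}_{crit}\mod$ in \cite{methods}. Both hypotheses on $\Av_!^{\psi}$ reduce, via the $t$-exact, conservative, $G(K)$-equivariant functors $\Oblv^{\Heckez}$ and $i_{0,*}$, to the corresponding statements for $D_{crit}(\Gr_G)$ and $\widehat{\fg}_{crit}\mod$ respectively. On the geometric side, $t$-exactness of $\Av_!^{\psi}:\sH_{sph} \to \Whit(D_{crit}(\Gr_G))$ is \cite{whit} Remark B.7.1, and conservativity on the heart follows from Theorem \ref{t:whit-sph} combined with the classical Mirkovi\'c--Vilonen equivalence on hearts. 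On the Kac-Moody side, the corresponding properties come from affine Skryabin (Theorem \ref{t:whit-crit}) together with the matching of $t$-structures given there, plus the observation that under Theorem \ref{t:whit-crit} the functor induced by $i_{0,*}$ on Whittaker categories identifies with the fully faithful pushforward $\QCoh(\Op_{\ld{G}}^{reg}) \to \IndCoh^*(\Op_{\ld{G}})$.

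The step I expect to require genuine care is the conservativity of $\Av_!^{\psi}$ on the heart of $\widehat{\fg}_{crit}\mod^{G(O)}$, which is the input through which the reduction above is funneled. The statement is folklore and should be deducible from affine Skryabin combined with the $\sH_{sph}$-action on $\widehat{\fg}_{crit}\mod^{G(O)}$ and the description of the spherical Hecke category via regular opers, but writing a self-contained deduction without circularity (especially with respect to \cite{fg2}, whose delicate results we are attempting to bypass) will be the main technical obstacle.
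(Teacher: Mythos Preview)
Your overall strategy matches the paper's exactly: apply Proposition \ref{p:sph-+} with $\sC_1 = D_{crit}^{\Heckez}(\Gr_G)$, $\sC_2 = \widehat{\fg}_{crit}\mod_{reg,naive}$, $F = \Gamma^{\Hecke,naive}$, and verify the hypotheses. Two of your verification steps are more roundabout than necessary, however.

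First, for $t$-exactness of $\Gamma^{\Hecke,naive}$ on $G(O)$-equivariants, there is no need to adapt the proof of Theorem \ref{t:iwahori}: since $\o{I} \subset G(O)$, one has $D_{crit}^{\Heckez}(\Gr_G)^{G(O),+} \subset D_{crit}^{\Heckez}(\Gr_G)^{\o{I},+}$, and Theorem \ref{t:iwahori} applies directly.

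Second, and more substantively, the conservativity of $\Av_!^{\psi}$ on $\widehat{\fg}_{crit}\mod_{reg,naive}^{G(O),\heart}$ that you flag as the main obstacle is handled by the paper without any reduction to the non-regular case. The paper simply invokes \cite{fg1} Theorem 5.3, which gives that
\[
\Av_!^{\psi}:\widehat{\fg}_{crit}\mod_{reg,naive}^{G(O),\heart} \to \Whit(\widehat{\fg}_{crit}\mod_{reg,naive})^{\heart} \simeq \QCoh(\Op_{\ld{G}}^{reg})^{\heart}
\]
is an equivalence (this is the spherical Drinfeld--Sokolov equivalence). This is a result from the earlier paper \cite{fg1}, not from \cite{fg2}, so there is no circularity with the fully-faithfulness argument being reproved. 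Your proposed reduction through $i_{0,*}$ to conservativity on $\widehat{\fg}_{crit}\mod^{G(O),\heart}$ is logically sound but unnecessary, and as you suspected, proving that statement cleanly would require more work. Relatedly, the $t$-exactness of $\Av_!^{\psi}$ on $\widehat{\fg}_{crit}\mod^{G(O)}$ is \cite{whit} Theorem 7.2.1 rather than affine Skryabin.
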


\begin{proof}

We apply Proposition \ref{p:sph-+} with $\sC_1 = D_{crit}^{\Heckez}(\Gr_G)$,
$\sC_2 = \widehat{\fg}_{crit}\mod_{reg,naive}$, and $F = \Gamma^{\Hecke,naive}$.
It remains to check the hypotheses.

Both $t$-structures are strongly compatible with $t$-structures
by \cite{methods}. 

The functor $\Av_!^{\psi}:D_{crit}(\Gr_G)^{G(O)} \to 
\Whit(D_{crit}(\Gr_G))$ is $t$-exact and an equivalence (in particular,
conservative)
on the hearts of the $t$-structures by Theorem \ref{t:whit-sph}.
We deduce the same for $D_{crit}^{\Heckez}(\Gr_G)$
by \S \ref{ss:ind-hecke}-\ref{ss:hecke-t-str}.

The functor $\Av_!^{\psi}:\widehat{\fg}_{crit}\mod^{G(O)} \to 
\Whit(\widehat{\fg}_{crit}\mod)$ is $t$-exact by \cite{whit} Theorem 7.2.1.
We immediately deduce the same for $\widehat{\fg}_{crit}\mod_{reg,naive}$.
The functor:

\[
\Av_!^{\psi}:\widehat{\fg}_{crit}\mod_{reg,naive}^{G(O),\heart}
\to \Whit(\widehat{\fg}_{crit}\mod_{reg,naive})^{\heart} 
\overset{Cor. \ref{c:crit-whit-naive}}{=} 
\QCoh(\Op_{\ld{G}}^{reg})^{\heart}
\]

\noindent is an equivalence by \cite{fg1} Theorem 5.3. 

Finally, $\Gamma^{\Hecke,naive}$ restricted to 
$D_{crit}^{\Heckez}(\Gr_G)^{G(O)}$ is $t$-exact by 
Theorem \ref{t:iwahori}, and similarly for
$\Whit(D_{crit}^{\Heckez}(\Gr_G))$ by Theorem \ref{t:whit-equiv}.

\end{proof}

\subsection{}

We now prove Theorem \ref{t:ff}. The reductions
follow \cite{fg2}; only the last step differs.

\begin{proof}[Proof of Theorem \ref{t:ff}]

\step 

Recall from \S \ref{ss:ind-hecke}
that $D_{crit}^{\Hecke}(\Gr_G)$ is compactly generated
by objects of the form $\ind^{\Heckez}(\sF)$ for
$\sF \in D_{crit}(\Gr_G)$ compact. Moreover, $\Gamma^{\Hecke}$
preserves compact objects by construction.
Therefore, it suffices to show
that the map:

\[
\ul{\Hom}_{D_{crit}^{\Heckez}(\Gr_G)}(\ind^{\Heckez}(\sF),
\ind^{\Heckez}(\sG)) \to 
\ul{\Hom}_{\widehat{\fg}_{crit}\mod_{reg}}(\Gamma^{\IndCoh}(\sF),
\Gamma^{\IndCoh}(\sG)) 
\]

\noindent is an equivalence for $\sF,\sG \in D_{crit}(\Gr_G)$ compact.

As $\Gamma^{\IndCoh}(\sF) \in \widehat{\fg}_{crit}\mod_{reg}^c \subset
\widehat{\fg}_{crit}\mod_{reg}^+$, it suffices to show that
if we apply $\rho$, then the induced map:

\[
\ul{\Hom}_{D_{crit}^{\Heckez}(\Gr_G)}(\ind^{\Heckez}(\sF),
\ind^{\Heckez}(\sG)) \to 
\ul{\Hom}_{\widehat{\fg}_{crit}\mod_{reg,naive}}(\Gamma^{\IndCoh}(\sF),
\Gamma^{\IndCoh}(\sG))
\]

\noindent is an equivalence.

We will show this below with the weaker assumption that 
$\sG \in D_{crit}(\Gr_G)^+$.

\step 

By Lemma \ref{l:conv-adj} (and its proof), we can rewrite the
above terms as:

\[
\begin{gathered}
\ul{\Hom}_{D_{crit}^{\Heckez}(\Gr_G)^{G(O)}}(\ind^{\Heckez}(\delta_1),
\ind^{\Heckez}(\on{inv}\bD(\sF) \star \sG)) \to \\
\ul{\Hom}_{\widehat{\fg}_{crit}\mod_{reg,naive}^{G(O)}}(\Gamma^{\IndCoh}(\delta_1),
\Gamma^{\IndCoh}(\on{inv}\bD(\sF) \star \sG)).
\end{gathered}
\]

Noting that all the terms that appear here are eventually
coconnective in the relevant $t$-structures,  
the claim follows from Corollary \ref{c:ff-sph}.

\end{proof}

\bibliography{bibtex.bib}{}
\bibliographystyle{alphanum}

\end{document}